\def\C{\mathbb{C}}
\def\Z{\mathbb{Z}}
\def\Q{\mathbb{Q}}
\def\A{\mathbb{A}}
\def\beqn{\begin{equation}}
\def\eeqn{\end{equation}}
\def\beqnn{\begin{equation*}}
\def\eeqnn{\end{equation*}}
\def\beqna{\begin{eqnarray}}
\def\eeqna{\end{eqnarray}}
\def\beqnan{\begin{eqnarray*}}
\def\eeqnan{\end{eqnarray*}}
\newtheorem{theorem}{Theorem}[section]
\newtheorem{lemma}[theorem]{Lemma}
\newtheorem{proposition}[theorem]{Proposition}
\newtheorem{corollary}[theorem]{Corollary}
\newtheorem{conjecture}[theorem]{Conjecture}
\newtheorem{caution}[theorem]{Caution}
\newtheorem{definition}[theorem]{Definition}
\newtheorem{remark}[theorem]{Remark}
\theoremstyle{remark}
\numberwithin{equation}{section}
\begin{document}

\title{The Refined Gross-Prasad Conjecture for Unitary Groups}

\author{R. Neal Harris}
\email{rnharris@math.ucsd.edu}
\address{Department of Mathematics\\ University of California, San Diego\\ 9500 Gilman Drive \#0112 \\ La Jolla, CA\\ 92093-0112}

\subjclass[2010]{Primary 11F67, 11F70, 11F27}
\keywords{automorphic forms, unitary groups, theta correspondence, $L$-functions, period integrals}

\begin{abstract}
Let $F$ be a number field, $\A_F$ its ring of ad\`eles, and let $\pi_n$ and $\pi_{n+1}$ be irreducible, cuspidal, automorphic representations of $SO_n(\mathbb{A}_F)$ and $SO_{n+1}(\mathbb{A}_F)$, respectively.  In 1991, Benedict Gross and Dipendra Prasad conjectured the non-vanishing of a certain period integral attached to $\pi_n$ and $\pi_{n+1}$ is equivalent to the non-vanishing of $L(1/2, \pi_n\boxtimes\pi_{n+1})$  \cite{gp}.  More recently, Atsushi Ichino and Tamotsu Ikeda gave a refinement of this conjecture as well as a proof of the first few cases ($n=2,3$) \cite{rgp}.   Their conjecture gives an explicit relationship between the aforementioned $L$-value and period integral.  We make a similar conjecture for unitary groups, and prove the first few cases.  The first case of the conjecture will be proved using a theorem of Waldspurger \cite{wald}, while the second case will use the machinery of the $\Theta$-correspondence.
\end{abstract}
\maketitle

\section{The Gross-Prasad Conjecture}\label{intro}
In \cite{gp}, Benedict Gross and Dipendra Prasad give a conjecture that relates the non-vanishing of a period integral to the non-vanishing of a certain $L$-value.  In this section, we'll discuss that conjecture, as well as a recent refinement due to Atsushi Ichino and Tamotsu Ikeda.

Let $F$ be a number field with ad\`ele ring $\A_F$, and let $V_{n}\subset V_{n+1}$ be quadratic spaces of dimensions $n$ and $n+1$ over $F$, respectively.  Assume that $n\geq 2$ and that $V_{n}$ is not a hyperbolic plane.  We consider the algebraic groups $\operatorname{SO}(V_{n})\subset \operatorname{SO}(V_{n+1})$ defined over $F$.  We denote $G_i := \operatorname{SO}(V_i)$.  Let $\pi_n$ and $\pi_{n+1}$ be irreducible tempered cuspidal automorphic representations of $G_n(\A_F)$ and $G_{n+1}(\A_F)$ respectively.  We fix ismorphisms $\pi_n \cong \otimes_v \pi_{n,v}$ and $\pi_{n+1} \cong \otimes_v \pi_{n+1,v}$.  Suppose that $\operatorname{Hom}_{G_n(k_v)}(\pi_{n+1,v}\otimes \pi_{n,v},\C)\neq 0$ for every place $v$ of $F$.

We briefly recall the notion of an automorphic $L$-function.  We encourage the interest reader to consult \cite{borel} for details.  For $G$ a reductive algebraic group over $F$, $\pi$ an irreducible admissible automorphic representation of $G(\A_F)$, and $r$ a smooth homomorphism $r:{^LG}\to GL_m(\C)$,\footnote{Here, ${^LG}$ is the so=called $L$-group of $G$.} one has the notion of an automorphic $L$-function.  As one expects, this $L$-function is given by a product of local $L$-factors:
$$
L(s,\pi,r) = \prod_v L_v(s,\pi_v,r).
$$

For $v$ outside a sufficiently large set of places (including all archimedean places), such that for $v\notin S$ all relevant data is unramified, one can understand the local $L$-factors using the Satake Isomorphism \cite{satake}.  To wit, for each local factor $\pi_v$, we have an associated conjugacy class $[t_v]$ where $t_v\in {^LG_v}$.  Then, we have the following definition for the local $L$-factors
$$
L_v(s,\pi_v, r) := \det\big(I_m - q_v^{-s}r(t_v)\big)^{-1}.
$$

We now state the Gross-Prasad Conjecture:
\begin{conjecture}[Original Gross-Prasad Conjecture] There exist vectors $\varphi_i\in\pi_i$ such that
$$
\int_{G_n(F)\backslash G_n(\A_F)} \varphi_{n+1}(g_n)\varphi_{n}(g_n)\ dg_n\neq 0
$$
if and only if
$$
L(1/2, \pi_{n+1}\boxtimes \pi_{n})\neq 0.
$$
The $L$-function here is the product $L$-function.
\end{conjecture}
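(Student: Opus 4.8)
The cleanest route is to reduce the statement to the refined conjecture of Ichino and Ikeda that will be recalled below. That refinement asserts an \emph{identity}: for suitable decomposable $\varphi_i=\otimes_v\varphi_{i,v}$ the square of the absolute value of the period integral equals $\prod_v I_v(\varphi_{n+1,v},\varphi_{n,v})$, a product of local $G_n(F_v)$-invariant bilinear functionals which, for $v$ outside a finite set $S$, coincides with the ratio of local $L$-factors $L_v(1/2,\pi_{n+1,v}\boxtimes\pi_{n,v}) / \bigl(L_v(1,\pi_{n,v},\mathrm{Ad})\,L_v(1,\pi_{n+1,v},\mathrm{Ad})\bigr)$ read off from the Satake parameters. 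Granting this, the original conjecture follows at once: the adjoint $L$-values $L(1,\pi_i,\mathrm{Ad})$ are finite and nonzero because the $\pi_i$ are tempered cuspidal, and the local hypothesis $\operatorname{Hom}_{G_n(F_v)}(\pi_{n+1,v}\otimes\pi_{n,v},\C)\neq0$ is exactly what makes the local functional $I_v$ not identically zero for $v\in S$, so the identity says that the period is nonzero for some choice of vectors if and only if $L(1/2,\pi_{n+1}\boxtimes\pi_n)\neq0$.

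The work therefore lies in proving the identity. First I would establish that the period is Eulerian and compute the unramified factor: unfolding the period against the relevant theta kernel --- equivalently, running a doubling/Rankin--Selberg integral --- turns it into a product of local zeta integrals, and an explicit Macdonald-type calculation at the unramified places identifies each with the claimed $L$-ratio. Next I would handle the places $v\in S$: one must show the local integral converges and is, up to a nonzero constant absorbed into the normalization, the local $L$-factor, and in particular is nonzero whenever the local $\operatorname{Hom}$ space is. In the first cases this is made completely explicit through accidental isomorphisms: $\operatorname{SO}(V_3)\cong\mathrm{PGL}_2$ turns the period into a toric period, for which the whole identity is Waldspurger's theorem, while $\operatorname{SO}(V_4)\cong(\mathrm{GL}_2\times\mathrm{GL}_2)/\mathrm{GL}_1$ turns it into a triple-product period, handled by the see-saw identities of the $\Theta$-correspondence --- and it is these two strategies that the paper carries over to unitary groups.

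The main obstacle is twofold. The delicate local analysis at the ramified places --- convergence, meromorphic continuation, and the precise match with the local $L$-factor --- has to be carried out by hand, which is feasible only in low rank. More seriously, for the full converse implication in arbitrary rank one cannot produce the required test vectors by soft arguments at all: the known mechanism is a relative trace formula of Jacquet--Rallis type, whose geometric comparison needs the fundamental lemma and smooth transfer of orbital integrals, and these are unavailable for general special orthogonal groups. This is precisely why the present work treats only the first few cases and substitutes the $\Theta$-correspondence and Waldspurger's results for the trace formula, as stated in the abstract.
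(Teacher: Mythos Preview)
The statement you are attempting to prove is labeled a \emph{conjecture} in the paper, and the paper does not supply a proof of it. It is stated purely as background and motivation: the paper recalls the original Gross--Prasad conjecture for orthogonal groups, then the Ichino--Ikeda refinement, and then formulates and partially proves the analogous refined conjecture for \emph{unitary} groups. There is no ``paper's own proof'' of this statement to compare your proposal against.

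Your proposal is therefore not really a proof but a sketch of the implication (Refined Conjecture) $\Rightarrow$ (Original Conjecture). That implication is essentially correct as you describe it, with one caveat you gloss over: to get the converse direction (nonvanishing of the $L$-value implies nonvanishing of the period for some choice of vectors) one needs to know that whenever the local $\operatorname{Hom}$ space is nonzero the local functional $\mathcal{P}_v$ is not identically zero. This is itself a nontrivial local statement that does not follow formally from the refined identity; it is part of the local Gross--Prasad conjecture and requires separate work. Beyond that, your reduction is circular as a proof strategy, since the refined conjecture is at least as hard as, and strictly stronger than, the original. Your closing paragraph correctly acknowledges that neither conjecture is known in general and that only low-rank cases are accessible by the methods of this paper.
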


Recently in \cite{rgp}, a refinement to this conjecture was proposed which gives a precise relationship between the period integral above and the $L$-value.

Consider the following $G_{n}(\A_F)\times G_{n}(\A_F)$-invariant functional
$$
\mathcal{P}: (V_{\pi_{n+1}}\boxtimes \bar{V}_{\pi_{n+1}})\otimes ({V}_{\pi_{n}}\boxtimes \bar{V}_{\pi_{n}})\to\C
$$
defined by
\beqn\label{pdef}
\mathcal{P}(\phi_1,\phi_2;f_1,f_2):=\left(\int_{[G_n]} \phi_1(g)f_1(g) dg\right)\cdot\left(\int_{[G_n]} \overline{\phi_2(g)f_2(g)} dg\right)
\eeqn
for $\phi_i\in V_{\pi_{n+1}}$ and $f_i\in V_{\pi_n}$.  If $\phi_1=\phi_2=\phi$ and $f_1=f_2=f$, we simply write $\mathcal{P}(\phi,f):=\mathcal{P}(\phi_1,\phi_2;f_1,f_2).$  We call $\mathcal{P}$ the global period.

We also construct another $G_n(\A_F)\times G_n(\A_F)$-invariant functional, this time constructed from local integrals.  For each place $v$ of $F$, denote $G_{i,v}:=G_i(F_v)$.  We fix local pairings
$$
\mathcal{B}_{\pi_{i,v}}:\pi_{i,v}\otimes\bar{\pi}_{i,v}\to\C
$$
such that
$$
\mathcal{B}_{\pi_i} = \prod_v \mathcal{B}_{\pi_{i,v}}
$$
where the $\mathcal{B}_{\pi_i}$ are the Petersson pairings
\beqnan
\mathcal{B}_{\pi_n}(f_1,f_2) &:=& \int_{[G_n]} f_1(g_n)\overline{f_2(g_n)}dg_n\\
\mathcal{B}_{\pi_{n+1}}(\phi_1,\phi_2) &:=& \int_{[G_{n+1}]} \phi_1(g_{n+1})\overline{\phi_2(g_{n+1})}dg_{n+1}
\eeqnan
and the $dg_i$ are Tamagawa measures on $G_i(\A_F)$.
For each $v$, we define a $G_{n,v}\times G_{n,v}$ invariant functional
$$
\mathcal{P}_v^\natural: (\pi_{n+1,v}\boxtimes \bar{\pi}_{n+1,v})\otimes ({\pi}_{n,v}\boxtimes \bar{\pi}_{n,v})
$$
by
$$
\mathcal{P}_v^\natural(\phi_{1,v},\phi_{2,v}; f_{1,v}, f_{2,v}) := \int_{G_{n,v}} \mathcal{B}_{\pi_{n+1,v}}(\pi_{n+1,v}(g_{n,v})\phi_{1,v},\phi_{2,v})\mathcal{B}_{\pi_{n,v}}(\pi_{n,v}(g_{n,v})f_1,f_2) dg_{n,v}.
$$
Here, the $dg_{n,v}$ are local Haar measures, chosen so that $\prod_v dg_{n,v}=dg_n$.

Again, we denote $\mathcal{P}_v^\natural(\phi_{v},\phi_{v}; f_{v}, f_{v})=: \mathcal{P}_v^\natural(\phi_v,f_v).$  We set
\beqnan
\Delta_{G_i} &:=& L(M^\vee_{i}(1),0)\\
\Delta_{G_{i,v}} &:=& L_v(M^\vee_{i}(1),0)
\eeqnan
where $M^\vee_{n+1}(1)$ is the twisted dual of the motive $M_{i}$ associated to $G_{i}$ by Gross in \cite{motive}.
It is a result of Ichino and Ikeda (Theorem 1.2 in \cite{rgp}) that the $\mathcal{P}_v^\natural$ converge absolutely if the $\pi_{i,v}$ are tempered.  Furthermore, when the $\mathcal{P}_v^\natural$ converge, we have
$$
\mathcal{P}_v^\natural(\phi_v,f_v) = \Delta_{G_{n+1,v}}\frac{L_v(1/2,\pi_{n,v}\boxtimes\pi_{n+1,v})}{L_v(1,\pi_{n,v},\operatorname{Ad})L_v(1,\pi_{n+1,v},\operatorname{Ad})}
$$
for unramified data $\phi_v,f_v$ satisfying conditions $(U1)-(U6)$ in \cite{rgp}.  So we normalize as follows:
$$
\mathcal{P}_v := \Delta_{G_{n+1,v}}^{-1}\frac{L_v(1,\pi_{n,v},\operatorname{Ad})L_v(1,\pi_{n+1,v},\operatorname{Ad})}{L_v(1/2,\pi_{n,v}\boxtimes\pi_{n+1,v})}\mathcal{P}_v^\natural.
$$
Now we have another $G_n(\A_F)\times G_n(\A_F)$-invariant functional:
$$
\prod_v\mathcal{P}_v: (V_{\pi_{n+1}}\boxtimes \bar{V}_{\pi_{n+1}})\otimes ({V}_{\pi_{n}}\boxtimes \bar{V}_{\pi_{n}})\to\C.
$$
The Refined Gross-Prasad Conjecture gives the explicit constant of proportionality between $\mathcal{P}$ and $\prod_v\mathcal{P}_v$:
\begin{conjecture}[Refined Gross-Prasad Conjecture]
$$
\mathcal{P}(\phi,f) = \frac{\Delta_{G_{n+1}}}{2^\beta}\frac{L(1/2,\pi_n\boxtimes\pi_{n+1})}{L(1,\pi_n,\operatorname{Ad})(1,\pi_{n+1},\operatorname{Ad})}\prod_v \mathcal{P}_v(\phi_v,f_v).
$$
Here, $\beta$ is an integer such that $2^\beta = |S_{\psi_{n+1}}|\cdot |S_{\psi_n}|$, where $\psi_i$ is the conjectural $L$-parameter for $\pi_i$, and $S_{\psi_i}:=\operatorname{Cent}_{\widehat{G_i}}(\operatorname{Im}(\psi_i))$ is the associated component group.
\end{conjecture}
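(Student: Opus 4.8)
The conjecture in general lies very deep; the plan is to establish the first two cases, $n=2$ and $n=3$, where the small special orthogonal groups admit exceptional descriptions that convert the Gross--Prasad period into one for which a refined period formula is already available. The overarching strategy is the same in both cases: observe that \emph{both} sides of the asserted identity factor as an explicit ratio of completed $L$-values times a convergent Euler product of local functionals, reduce the global period $\mathcal{P}(\phi,f)$ to a known period, quote the corresponding refined formula, and then carry out the bookkeeping that identifies the normalized local functionals $\mathcal{P}_v$ with the local integrals appearing there, identifies the motivic constants $\Delta_{G_i}$ with Gross's recipe, and pins down the power $2^\beta$.

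\emph{Case $n=2$.} Since $V_2$ is not a hyperbolic plane it is anisotropic, so $G_2=\operatorname{SO}(V_2)$ is a one-dimensional anisotropic torus $T$ and $G_3=\operatorname{SO}(V_3)\cong B^{\times}/F^{\times}$ for a quaternion algebra $B$ over $F$. Via Jacquet--Langlands, $\pi_3$ corresponds to a tempered cuspidal representation of $\operatorname{PGL}_2(\A_F)$, the subgroup $T$ becomes an (anisotropic) maximal torus, $\pi_2$ is realized as an automorphic character $\chi$ of $T(\A_F)$, and $\mathcal{P}(\phi,f)$ becomes the square of the toric period $\int_{[T]}\varphi_3(t)\chi(t)\,dt$. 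Waldspurger's theorem \cite{wald} evaluates this as an explicit constant times $L(1/2,\pi_3\boxtimes\pi_2)/\bigl(L(1,\pi_3,\operatorname{Ad})L(1,\pi_2,\operatorname{Ad})\bigr)$ times a product of local integrals. I would then rewrite Waldspurger's local terms as the $\mathcal{P}_v$ of the excerpt, identify $\Delta_{G_3}$ with the completed zeta value attached to $\operatorname{SO}_3$ by Gross's motive, and compute the component groups $S_{\psi_2}$ and $S_{\psi_3}$ from the parameters to extract $\beta$.

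\emph{Case $n=3$.} Here $G_3\cong B_0^{\times}/F^{\times}$ and $G_4=\operatorname{SO}(V_4)$ is, up to central isogeny, $(B_1^{\times}\times B_2^{\times})/\mathbb{G}_m$ (diagonal center) when $\operatorname{disc}(V_4)=1$, and a twisted form over the discriminant extension otherwise; in the Brauer group $B_1\cdot B_2=B_0$. Pulling $\pi_4$ back along $B_1^{\times}\times B_2^{\times}\to G_4$ and applying Jacquet--Langlands produces cuspidal representations $\sigma_1,\sigma_2$ of $\operatorname{GL}_2(\A_F)$ with compatible central characters, and $\mathcal{P}(\phi,f)$ unwinds to the square of the triple-product period $\int_{[\operatorname{SO}(V_3)]}\varphi_3(g)\,\varphi_{\sigma_1}(g)\,\varphi_{\sigma_2}(g)\,dg$. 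Ichino's refined triple-product formula (building on Garrett and Piatetski-Shapiro--Rallis) gives the exact identity between this period, the completed value $L(1/2,\sigma_1\times\sigma_2\times\pi_3)=L(1/2,\pi_3\boxtimes\pi_4)$, the adjoint $L$-values, and a product of local integrals; once more I would match those local integrals with the $\mathcal{P}_v$, identify $\Delta_{G_4}$ with the product of completed $L$-values ($\zeta_F(2)$ together with an $L(2,\cdot)$ for the discriminant character) that Gross's motive of $\operatorname{SO}_4$ produces, and read off $\beta$ from $|S_{\psi_3}|$ and $|S_{\psi_4}|$.

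I expect the structural reductions to be essentially formal; the main obstacle is the reconciliation of normalizations. Waldspurger's and Ichino's identities carry their own choices of local measures, their own local zeta integrals --- whose unramified values must be checked against those in Theorem 1.2 of \cite{rgp} --- and global constants that are only implicitly motivic, whereas the refined conjecture insists on Tamagawa measures, the factorization $dg_n=\prod_v dg_{n,v}$, Gross's motives $M_{G_i}$, and the convergent normalized functionals $\mathcal{P}_v^{\natural}$. Matching these requires genuine care at the archimedean places and at the finitely many ramified places where the unramified computation is unavailable. The single most delicate point is the constant $2^\beta$: it is produced by the self-dual nature of the $L$-parameters and the structure of the component groups, and it must be tracked through each exceptional isomorphism and, crucially, through the passage to inner forms, since the choice of $B$, $B_1$, $B_2$ is itself governed by local root numbers and sign characters that feed back into $S_{\psi_i}$. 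The analogous conjecture for unitary groups, to be formulated below, admits the same template, with Waldspurger's theorem again supplying the lowest case and the $\Theta$-correspondence supplying the dictionary needed for the next.
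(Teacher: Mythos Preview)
The statement you are attempting to prove is not proved in the paper: it is recorded there as a conjecture (the Refined Gross--Prasad Conjecture for orthogonal groups), and the paper immediately says that it ``is known unconditionally for $n=2$ \cite{wald}, for $n=3$ \cite{triple}, and for $n=4$, assuming that $\pi_5$ is a $\Theta$-lift of a representation on $SO(4)$ \cite{so4so5}.'' There is therefore no ``paper's own proof'' to compare against; the paper's contribution is the formulation and partial proof of the \emph{unitary} analogue (Conjecture~\ref{theconjecture}), not of this orthogonal statement.

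That said, your outline for $n=2$ and $n=3$ is a fair summary of how the cited references establish those cases: Waldspurger's toric period formula for $\mathrm{SO}_2\subset\mathrm{SO}_3\cong PB^\times$, and Ichino's refined triple-product formula for $\mathrm{SO}_3\subset\mathrm{SO}_4$ via the exceptional isogeny with a product of two quaternionic $\mathrm{GL}_2$'s. Your caveats about normalization of measures, matching of local integrals, and tracking $2^\beta$ through inner forms are exactly the issues those papers (and \cite{rgp}) must resolve. One small correction: in your $n=3$ discussion you should also allow the non-split discriminant case, where $\mathrm{SO}(V_4)$ is (up to isogeny) $\operatorname{Res}_{K/F}B^\times/\mathbb{G}_m$ for the discriminant quadratic extension $K$; Ichino's formula in \cite{triple} covers this as well. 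But again, none of this is carried out in the present paper --- it is cited background, and the paper's actual proofs concern the unitary cases $U(1)\times U(2)$ and $U(2)\times U(3)$.
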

This conjecture is known unconditionally for $n=2$ \cite{wald}, for $n=3$ \cite{triple}, and for $n=4$, assuming that $\pi_5$ is a $\Theta$-lift of a representation on $SO(4)$ \cite{so4so5}.

Our goal is to provide evidence for an analogous conjecture for unitary groups.  We remark that the original Gross-Prasad Conjecture has already been extended to unitary groups in \cite{ggp}. The conjecture for unitary groups is:
\begin{conjecture}[Refined Gross-Prasad Conjecture for Unitary Groups]\label{theconjecture}
Let $\pi_n$ and $\pi_{n+1}$ be irreducible, cuspidal, tempered, automorphic representations of $G_n(\A_F)$ and $G_{n+1}(\A_F)$, respectively.  Let $\mathcal{P}$ be as in line \ref{pdef}.  Then
$$
\mathcal{P} = \frac{\Delta_{G_{n+1}}}{2^\beta}\frac{L_E(1/2, BC(\pi_{n+1})\boxtimes BC(\pi_n))}{L_F(1,\pi_{n+1}, \operatorname{Ad})L_F(1,\pi_n,\operatorname{Ad})}\prod_v \mathcal{P}_v.
$$
Here, $BC(\pi_i)$ denotes the quadratic base-change of $\pi_i$ to a representation of $GL_i(\A_E)$.  Also, $\beta$ is an integer such that $2^\beta = |S_{\psi_{n+1}}|\cdot |S_{\psi_n}|$, where $\psi_i$ is the conjectural $L$-parameter of $\pi_i$, and $S_{\psi_i}$ is the associated component group.
\end{conjecture}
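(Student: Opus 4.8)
The plan is to establish the conjecture in the two lowest-rank cases, $U(V_1)\subset U(V_2)$ and $U(V_2)\subset U(V_3)$, by reducing each to a known explicit formula, the general statement remaining conjectural.

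\emph{The case $n=1$.} Here $\pi_1$ is a Hecke character of $U(V_1)(\A_F)$ and $\mathcal{P}$ is a toric period on the two-dimensional unitary group $U(V_2)$. First I would use the exceptional isogeny relating $U(V_2)$ to $GL_2$ (equivalently to the unit group of a quaternion algebra over $F$) to rewrite $\mathcal{P}(\phi,f)$ as the square of a Waldspurger-type integral $\int_{[T]}\phi_{GL_2}(t)\,\chi^{-1}(t)\,dt$ over a suitable torus $T$, for a character $\chi$ built from $\pi_1$. The refined Waldspurger formula of \cite{wald}, in its adelic Euler-product form, then evaluates this square as $L(1/2,\pi_{GL_2}\times\chi)$ times an explicit product of local terms. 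Unwinding the base-change identities turns $L(1/2,\pi_{GL_2}\times\chi)$ into $L_E(1/2,BC(\pi_2)\boxtimes BC(\pi_1))$ and the adjoint factors into $L_F(1,\pi_2,\operatorname{Ad})L_F(1,\pi_1,\operatorname{Ad})$, so what remains is bookkeeping: matching Tamagawa and local Haar measures, identifying $\Delta_{G_2}$ with the appropriate completed zeta value, and checking that the power of $2$ produced by Waldspurger's formula equals $2^\beta=|S_{\psi_2}|\cdot|S_{\psi_1}|$.

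\emph{The case $n=2$.} I would assume that $\pi_3$ is the theta lift $\theta_\psi(\chi)$ of a Hecke character $\chi$ of $U(W_1)(\A_F)\cong U(1)(\A_F)$, where $W_1$ is a one-dimensional skew-Hermitian space. Fix an orthogonal decomposition $V_3=V_2\oplus V_1$ and the corresponding factorization $\varphi=\varphi_2\otimes\varphi_1$ of the Schwartz data. Substituting the theta integral for $\phi_3$ into $\mathcal{P}$ and interchanging the order of integration, the inner integral over $[U(V_2)]$ factors: it splits off the rank-one theta function $\theta_{V_1,W_1}(\,\cdot\,;\varphi_1)$, essentially a character of $U(1)$, and leaves $\int_{[U(V_2)]}\phi_2(g)\,\theta_{V_2,W_1}(g,h;\varphi_2)\,dg$, which is the theta lift of $\pi_2$ (or its contragredient) from $U(V_2)$ to $U(W_1)$. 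Thus $\mathcal{P}$ becomes, up to explicit constants, the pairing on $[U(1)]$ of this lift against $\chi$; its value is computed from the see-saw identity for the pairs $(U(V_3),U(W_1))$ and $(U(V_2),U(W_1))$ together with the Rallis inner product formula, which expresses the two occurring inner products in terms of doubling (Hecke) $L$-values. The final identity emerges after the standard first-occurrence/tower argument guarantees that the lifts involved are cuspidal and after matching each local factor against $\mathcal{P}_v$ using the local doubling zeta integrals.

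\emph{Main obstacle.} The hard part in both cases is the normalization bookkeeping. For unitary groups the Weil representation depends on auxiliary splitting characters of $\A_E^\times$, and one must track how these, the Tamagawa measures, the local Haar measures subject to $\prod_v dg_{n,v}=dg_n$, and Gross's motivic factors $\Delta_{G_i}$ interact with the constants in the Rallis inner product formula and in the doubling integrals. Pinning down the exact power of $2$ — showing it is $|S_{\psi_{n+1}}|\cdot|S_{\psi_n}|$ — requires identifying the component groups $S_{\psi_i}$ with those transported through the theta correspondence. A secondary difficulty in the case $n=2$ is the hypothesis that $\pi_3$ comes from $U(1)$: it restricts the class of representations covered, and removing it would require the analogous computation for lifts from $U(2)$ together with a comparison of the two towers.
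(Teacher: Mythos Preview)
Your $n=1$ sketch matches the paper's argument: lift $\pi_2$ to $GU(2)\cong (B^\times\times E^\times)/\Delta F^\times$, write the extension as $\Sigma\boxtimes\eta$, and apply Waldspurger's formula to the resulting toric period on $B^\times$; the remaining work is exactly the measure and $L$-function bookkeeping you describe.

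For $n=2$, however, your approach diverges from the paper's and has a genuine gap. You take $\pi_3=\Theta(\chi)$ with $\chi$ on $U(W_1)\cong U(1)$, so the seesaw collapses the period to an integral over $[U(1)]$ of $\chi$ against the going-down lift $\Theta(\pi_2)$ (from $U(2)$ to $U(1)$) and $\Theta(\pi_1)$. The paper instead assumes $\pi_3=\Theta(\bar\sigma)$ for $\sigma$ on $U(W)\cong U(2)$; the seesaw then produces a \emph{triple product} integral on $[U(2)]$ of $\sigma$, $\Theta(\bar\pi)$, and $\Theta(\bar\mu)$, and the crucial extra input is Ichino's triple product formula, which supplies both the central $L$-value $L_E(1/2,BC(\Theta(\bar\pi))\boxtimes BC(\sigma)\boxtimes\gamma^{-1})$ and the Euler product $\prod_v\mathcal{I}_v$ of local trilinear forms. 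The three Rallis inner product formulae ($U(1)\!\to\!U(2)$, $U(2)\!\to\!U(2)$, $U(2)\!\to\!U(3)$) then convert $\prod_v\mathcal{I}_v$ into $\prod_v\mathcal{P}_v$ via a local seesaw identity, and the auxiliary $L$-values they introduce combine with Ichino's to give $L_E(1/2,BC(\Theta(\bar\sigma))\boxtimes BC(\pi))$.

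In your scheme this mechanism is missing. Once you land on $[U(1)]$, the lifts $\Theta(\pi_2)$ and $\Theta(\pi_1)$ are (if nonzero) scalars times fixed characters, so the period is essentially a product of those scalars; Rallis inner product formulae compute the \emph{squares} of these scalars as $L$-values of $\pi_2$ and $\pi_1$ separately, but there is no ingredient that produces the degree-six Rankin--Selberg value $L_E(1/2,BC(\pi_3)\boxtimes BC(\pi_2))$, nor any local identity matching the doubling zeta integrals to the $\mathcal{P}_v$ on $U(2)_v$. Moreover, the going-down lift $U(2)\to U(1)$ vanishes unless $\pi_2$ is itself dihedral, so your hypothesis silently restricts $\pi_2$ as well as $\pi_3$. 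Ironically, your ``secondary difficulty'' --- lifting from $U(2)$ --- is precisely the route the paper takes, because only there does one have a refined period formula (Ichino's) on the far side of the seesaw to feed back into the argument.
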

Michael Harris has recently found an application of the conjecture above \cite{adjoint}.

We remark that in \cite{rog}, the author constructs the quadratic base-change for automorphic representations of $U(n)$ to $GL_n$ for $n=2,3$.  So, in this case, we needn't assume the existence of the quadratic base-change.  Furthermore, the $\psi_i$ are not conjectural in this case.
\begin{remark} One might ask why the $L$-values on the RHS above are non-zero.  Indeed, in \cite{rgp} the authors comment that for orthogonal groups it is believed that $L(1,\pi_i,\operatorname{Ad})\neq 0$ for $\pi_i$ tempered.  However, we can say something stronger for unitary groups.  Namely, by results in \cite{ggp2}, we see that
$$
L_F(s, \pi_i, \operatorname{Ad}) = L_F(s, BC(\pi_i), \operatorname{As}^{(-1)^i}).
$$
Here, we are viewing $BC(\pi_i)$ as a representation of $GL_i(\A_F)$ via $\operatorname{Res}_{E/F}$.  The $L$-function on the RHS above is the `Asai' (if $i$ is even) or `twisted Asai' (if $i$ is odd) $L$-function.  Now, since the $\pi_i$ are assumed to be tempered, by Theorem 5.1 in \cite{shahidi} we have that $L_F(s, BC(\pi_i), \operatorname{As}^{(-1)^i})$ is holomorphic and nonzero at $s=1$.
\end{remark}

The rest of the paper is organized as follows: in Section \ref{localchapt}, we compute $\mathcal{P}_v$ for unramified data.  Then, in Section \ref{globalchapt}, we show how a result of Waldspurger can be used to obtain Conjecture \ref{theconjecture} for $n=1$ without too much pain.  Section \ref{triplechapt} focusses on Ichino's triple product formula, a tool we need to obtain Conjecture \ref{theconjecture} for $n=2$.  In Section \ref{thetachapt}, we introduce the theta correspondence for unitary groups, as well as give several versions of the Rallis Inner Product Formula.  Section \ref{seesawchapt} is local in nature; we develop a local seesaw identity, which allows us to relate the $\mathcal{P}_v$ to the local integrals considered by Ichino in his triple product formula.  Finally, Section \ref{finalchapt} puts everything together; we give a proof of Conjecture \ref{theconjecture} for $n=2$ (assuming the representation of $U(3)$ is a theta lift).

\section{Local Integrals of Matrix Coefficients}\label{localchapt} In this section we'll prove the convergence of the $\mathcal{P}_v^\natural$, assuming $\pi_{n,v}$ and $\pi_{n+1,v}$ are tempered.  This comes down to some standard arguments based on well-known bounds of matrix coefficients.  We'll also compute them for $v\notin S$, where $S$ is a sufficiently large finite set of `bad' places of $F$, including all even places and archimedean ones.  We also make the following assumptions for $v\notin S$ (cf. $(U1) - (U6)$ on page 5 of \cite{rgp}):
\begin{enumerate}
\item The extension $E/F$ is unramified at $v$.
\item $G_{i,v}$ is unramified over $F_v$.
\item $K_{i,v}\subset G_{i,v}$ is a hyperspecial maximal compact subgroup.
\item $K_{i,v}\subset K_{i+1,v}$.
\item $\pi_{i,v}$ is an unramified representation of $G_{i,v}.$
\item The local Haar measures $dg_{i,v}$ are chosen so that the $K_{i,v}$ have volume $1$.
\item The vectors $f_{i,v}\in\pi_{i,v}$ are $K_{i,v}$-fixed and $||f_{i,v}||=1$.
\end{enumerate}
Note that even for $v\in S$, we still fix a maximal compact subgroup $K_i\subset G_i$.  For the remainder of this section, we will omit $v$ from the notation, though everything is local.

Put
$$
L_{\pi_{i+1}, \pi_i}(s) := \frac{L_E(s, BC(\pi_{i+1})\boxtimes BC(\pi_{i}))}{L_F(s+1/2, \pi_{i+1}, \operatorname{Ad}) L_F(s+1/2, \pi_i, \operatorname{Ad})}.
$$
We also consider the matrix coefficient
$$
\Phi_{\varphi_i, \varphi_i'}(g_i) := \mathcal{B}_{\pi_i}(\pi_i(g)\varphi_i,\varphi_i')
$$
where the $\mathcal{B}_{\pi_i}$ are pairings with respect to which the $\pi_i$ are unitary, $g_i\in G_i$, and $K_{i,v}$-finite vectors $\varphi_i,\varphi_i'\in\pi_i$.  When $\varphi_i=\varphi_i'$, we simply refer to $\Phi_{\varphi_i}$.  We consider the following integral:
$$
\mathcal{P}(\varphi_{n+2}, \varphi_{n+1}) := \int_{G_{n+1}} \Phi_{\varphi_{n+2}}(g){\Phi_{\varphi_{n+1}}(g)} dg.
$$
We will establish convergence of the integral assuming temperedness of the $\pi_i$, and compute the integrals away from $S$.

\subsection{Convergence of Integral}  Note that we make no assumption that $v\notin S$, but for now, we assume that $v$ does not split in $E/F$.

Let $V$ be a hermitian space over $E$ of dimension $n$, and set $G:=U(V)$.  Let $V_{\text{an}}$ the the anisotropic kernel of $V$.  Let $d$ denote the dimension of $V_{\text{an}}$.  We have
$$ 
V=X\oplus V_{\text{an}}\oplus Y
$$
where $X$ and $Y$ are totally isotropic subspaces.  We set $r=\dim_E X=\dim_E Y$.  By fixing a basis for $X$, we obtain a minimal parabolic subgroup $P\subset G$.  The Levi factor $M\subset P$ is isomorphic to $(E^\times)^r\times U(V_\text{an})$ with maximal torus $T$ such that $M\supset T\cong (E^\times)^r$.  The split component $A\subset T\subset M$ is isomorphic to $(F^\times)^r$.  We denote an element $x\in A$ as $x=(x_1,x_2,\dots, x_{r})$.  The simple roots of $(P,A)$ are given by
$$
\alpha_1(x) = x_1x_2^{-1}, \dots, \alpha_{r-1}(x) = x_{r-1}x_{r}^{-1}
$$
and
$$
\alpha_{r}(x) = \begin{cases} x_{r} & \mbox{if } n\mbox{ is odd or } U(V)\mbox{ is not quasi-split}\\ x_{r}^{2} & \mbox{otherwise.}\end{cases}
$$
We view the $\alpha_i$ as elements of $\operatorname{Hom}(T,\mathbb{G}_m)$.  Via the natural projection $M\to T$, we may also view the $\alpha_i$ as characters of $M$.

If we denote by $\delta$ the modulus character of $P$, then we have
$$
\delta(x) = \prod_{i=1}^{r} |x_i|_E^{n+1-2i}.
$$
(See Proposition $1.2$ in \cite{khoury}.)
Fix a special maximal compact subgroup $K\subset G$.  Then we have a Cartan decomposition
$$
G = KM^+K
$$
where
$$
M^+ := \{m\in M: |\alpha_i(m)|\leq 1\text{ for } 1\leq i\leq r\}
$$
and $K$ is in good position relative to $M$.\footnote{$K$ is such that the unique point in the building of $G$ fixed by $K$ lies on the apartment of $A$.}  We also define $T^+:=T\cap M^+$.

We fix an embedding $\eta: G\hookrightarrow GL_m$ for some $m$.  We define a height function $$\sigma(g):=\max_{1\leq i,j\leq m}\{\log |\eta(g)_{ij}|, \log |\eta(g^{-1})_{ij}|\}.$$  Let $\Xi(g)$ be Harish-Chandra's spherical function given by
$$
\Xi(g) := \int_K h(kg)dk
$$  where $h\in\operatorname{ind}_P^G 1$ is the function that is identically $1$ on $K$.  It is known that there exist positive constants $A, B$ such that $$A^{-1}\delta^{1/2}(m)\leq \Xi(m)\leq A\delta^{1/2}(m)(1+\sigma(m))^B$$ for all $m\in M^+$.  Also, recall that a function $f(g)$ on $G$ is said to satisfy the weak inequality if
$$
|f(g)|\leq A\cdot \Xi(g)(1+\sigma(g))^B
$$
for some positive constants $A, B$, and all $g\in G$.  It is known that a matrix coefficient of a tempered representation satisfies the weak inequality.  (See \cite{weak}, for example.)

Let $V_{n+2}$ and $V_{n+1}$ be hermitian spaces of dimension $n+2$ and $n+1$, respectively.  Assume further that we have an embedding $\iota:V_{n+1}\hookrightarrow V_{n+2}$ of hermitian spaces.  Let $G_i=U(V_i)$ be their associated unitary groups, and let $P_i, M_i, A_i, K_i$ denote the respective minimal parabolic subgroups, Levi component, maximal split tori, and special maximal compact subgroups of $G_i$.  We view $G_{n+1}$ as a subgroup of $G_{n+2}$ via the embedding $\iota$.  Note that we may assume that $T_{n+1}\subset T_{n+2}, T_{n+1}^+\subset T_{n+2}^+, M_{n+1}\subset M_{n+2}, M_{n+1}^+\subset M_{n+2}^+$ in this case.

The main goal of this section is to prove the following:

\begin{proposition}\label{conv}  The integral $\mathcal{P}(\varphi_{i+1}, \varphi_i)$ converges absolutely.
\end{proposition}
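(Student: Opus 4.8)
The plan is to estimate the integrand by a product of two Harish-Chandra spherical functions and then exploit that $\dim_E V_{n+2} - \dim_E V_{n+1} = 1$. We must show that
\[
\int_{G_{n+1}} |\Phi_{\varphi_{n+2}}(g)\,\Phi_{\varphi_{n+1}}(g)|\, dg < \infty .
\]
Since $\pi_{n+2}$ and $\pi_{n+1}$ are tempered, their matrix coefficients satisfy the weak inequality, so there are constants $A, B > 0$ with $|\Phi_{\varphi_{n+2}}(g)| \le A\,\Xi_{G_{n+2}}(g)(1+\sigma_{G_{n+2}}(g))^{B}$ for $g \in G_{n+2}$ and $|\Phi_{\varphi_{n+1}}(g)| \le A\,\Xi_{G_{n+1}}(g)(1+\sigma_{G_{n+1}}(g))^{B}$ for $g \in G_{n+1}$. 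As $G_{n+1}$ is a closed subgroup of $G_{n+2}$ the two height functions are comparable on $G_{n+1}$, so it suffices to prove that $\int_{G_{n+1}} \Xi_{G_{n+2}}(g)\,\Xi_{G_{n+1}}(g)(1+\sigma_{G_{n+1}}(g))^{B}\, dg$ is finite for every $B > 0$.

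Next I would pass to the Cartan decomposition $G_{n+1} = K_{n+1} M_{n+1}^+ K_{n+1}$. The function $\Xi$ is bi-$K$-invariant and $\sigma(k_1 m k_2) \ll 1 + \sigma(m)$, and the integration formula for this decomposition has the shape $\int_{G_{n+1}} F(g)\, dg \ll \int_{M_{n+1}^+} \big(\int_{K_{n+1}\times K_{n+1}} F(k_1 m k_2)\, dk_1\, dk_2\big)\, \delta_{P_{n+1}}(m)^{-1}\, dm$, the Jacobian being bounded above by a constant multiple of $\delta_{P_{n+1}}(m)^{-1}$ in both the non-archimedean and the archimedean settings. Combining this with the estimate $\Xi_{G_j}(m) \ll \delta_{P_j}^{1/2}(m)(1+\sigma(m))^{B}$ valid for $m \in M_j^+$ — applied with $j = n+1$ and, since $M_{n+1}^+ \subset M_{n+2}^+$, also with $j = n+2$ — the problem is reduced to the convergence of
\[
\int_{M_{n+1}^+} \delta_{P_{n+2}}^{1/2}(m)\,\delta_{P_{n+1}}^{1/2}(m)\,\delta_{P_{n+1}}^{-1}(m)\,(1+\sigma(m))^{B}\, dm
= \int_{M_{n+1}^+} \Big(\tfrac{\delta_{P_{n+2}}(m)}{\delta_{P_{n+1}}(m)}\Big)^{1/2} (1+\sigma(m))^{B}\, dm .
\]

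Now I would carry out the root-theoretic bookkeeping. Because a one-dimensional hermitian space is anisotropic, adjoining the orthogonal complement of $V_{n+1}$ in $V_{n+2}$ leaves the Witt index unchanged, so $V_{n+1}$ and $V_{n+2}$ have the same split rank $r$; this is precisely what underlies the identifications $A_{n+1} \cong A_{n+2}$ and $M_{n+1}^+ \subset M_{n+2}^+$ used above. Writing $x = (x_1,\dots,x_r)$ for the coordinates on the $(E^\times)^r$-part of $M$ and using $\delta_{P_j}(x) = \prod_{k=1}^r |x_k|_E^{\dim_E V_j + 1 - 2k}$, one finds $\delta_{P_{n+2}}(x)/\delta_{P_{n+1}}(x) = \prod_{k=1}^r |x_k|_E$. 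On $M_{n+1}^+$ the defining conditions $|\alpha_k(m)| \le 1$ force $|x_1|_E \le \cdots \le |x_r|_E \le 1$, so $\prod_k |x_k|_E^{1/2} \le 1$ and this character decays exponentially as $m$ leaves compacta in $M_{n+1}^+$; the remaining factor $U(V_{n+1,\mathrm{an}})$ of $M_{n+1}$ is compact and contributes only bounded amounts to the integrand and to the measure, while $\sigma(m)$ grows only linearly in the exponents $-\log|x_k|_E$. Hence the displayed integral is dominated by a sum over the finitely generated monoid $M_{n+1}^+/(M_{n+1}^+\cap K_{n+1})$ — or, in the archimedean case, an integral over the cone $A_{n+1}^+$ — of an exponentially small quantity against a polynomial, and therefore converges. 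The case where $v$ splits in $E/F$ is handled by the same argument with $U(V_j)$ replaced by the relevant general linear group.

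The main obstacle is the passage in the second paragraph: one must invoke the Cartan integration formula with the Jacobian correctly bounded by $\delta_{P_{n+1}}^{-1}$ up to absolute constants (Macdonald's volume formula in the $p$-adic case, Harish-Chandra's $\int_K\int_K\int_{A^+}$ formula in the archimedean case), and one must be careful that the larger group's spherical function and height, restricted to $G_{n+1}$, are controlled — concretely, that $\Xi_{G_{n+2}}$ may be estimated on $M_{n+1}^+$ by $\delta_{P_{n+2}}^{1/2}(1+\sigma)^B$ via the inclusion $M_{n+1}^+\subset M_{n+2}^+$. Granting that, the weak inequality in the first paragraph, the computation of the modulus characters, and the geometric-series estimate in the last paragraph are routine.
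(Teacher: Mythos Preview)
Your argument in the non-split case is essentially the paper's own proof: reduce via the Cartan decomposition and the weak inequality to the integral of $\delta_{n+2}^{1/2}\,\delta_{n+1}^{-1/2}\,(1+\sigma)^D$ over $M_{n+1}^+$, compute the ratio of modulus characters as $\prod_k |x_k|_E^{1/2}$, and finish with an elementary convergence estimate.

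The genuine gap is the split case $E = F\times F$. There $G_i \cong GL_i(F)$, so the split ranks of $G_{n+1}$ and $G_{n+2}$ are $n+1$ and $n+2$, not equal; the orthogonal complement of $V_{n+1}$ in $V_{n+2}$ is \emph{not} anisotropic (its unitary group is $F^\times$, not compact), and the inclusion $M_{n+1}^+ \subset M_{n+2}^+$ on which your whole reduction rests simply fails. Concretely, writing
\[
T_{n+1}^+ = \{\operatorname{diag}(x_1,\dots,x_{n+1},1) : |x_i| \le |x_{i+1}|\}\subset GL_{n+2}(F),
\]
any element with $|x_{n+1}| > 1$ is not dominant in $GL_{n+2}$, and the estimate $\Xi_{G_{n+2}}(m) \ll \delta_{n+2}^{1/2}(m)(1+\sigma(m))^B$ is only valid on the dominant chamber. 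So one cannot dismiss this case as ``the same argument.''

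The paper fixes this by noting that for each such $m$ there are permutation matrices $k_1,k_2\in K_{n+2}$ with $m' = k_1 m k_2 \in T_{n+2}^+$ (insert the $1$ into its correct slot among the $|x_i|$); since $\Xi_{G_{n+2}}$ is bi-$K_{n+2}$-invariant one may apply the dominant-chamber bound to $m'$. This forces one to split $T_{n+1}^+$ into $n+2$ regions according to where $1$ lies relative to the $|x_i|$; on each region $\delta_{n+2}^{1/2}(m')$ has a different explicit form, and each resulting integral is then shown to converge by the same elementary lemma you use at the end. The computation is routine once one sees the issue, but it is not covered by your sentence about the split case.
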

\begin{proof}
This proof is just an adaptation of the analogous proposition in \cite{rgp}.  First, we note the following result from calculus:
\begin{lemma}\label{calc} Let $D, r_1,\dots,r_i>0$ and $r_{i+1},\dots, r_n<0$.  Then the integral
\beqnan
&&\int_{|x_1|\leq |x_2|\leq\dots\leq |x_i|\leq 1\leq |x_{i+1}|\leq\dots\leq |x_n|} |x_1|^{r_1}\dots |x_n|^{r_n}\\
&&\times (1-\sum_{j=1}^i \log |x_j|+\sum_{k=i+1}^n\log |x_k|)^D d^\times x_1\dots d^\times x_n
\eeqnan
converges absolutely.
\end{lemma}
We note that by a theorem of Silberger (\cite{silb}, page 149), the convergence of the integral above is reduced to the convergence of
\beqn\label{silbint}
\int_{M_{n+1}^+}\mu(m)\int_{K_{n+1}\times K_{n+1}} \Phi_{\varphi_{n+2}}(k_1mk_2) \Phi_{\varphi_{n+1}}(k_1mk_2) dk_1dk_2dm
\eeqn
where
$$
\mu(m) := \operatorname{Vol}(K_{n+1}mK_{n+1})/\operatorname{Vol}(K_{n+1}).
$$
In fact, if either of these two integrals converge, they are equal.  Furthermore, we have a positive constant $A$ such that
\beqn\label{mubound}
|\mu(m)|\leq A\cdot\delta_{n+1}^{-1}(m)
\eeqn
for all $m\in M_{n+1}^+$ (see \cite{silb}).

Since $\Phi_{\varphi_{n+1}}$ and $\Phi_{\varphi_{n+2}}$ are matrix coefficients for tempered representations, they satisfy the so-called weak inequality, which means that there are positive constants $B,C$ such that for all $g_i\in G_i$,
\beqn\label{matbound}
|\Phi_{\varphi_i}(g_i)| \leq B\cdot|\Xi_i(g_i)|(1+\sigma(g_i))^C
\eeqn
It is known that that there are positive constants $B',C'$
\beqn\label{hcbound}
|\Xi_i(m)|\leq B'\delta_i^{1/2}(m)(1+\sigma(m))^{C'}
\eeqn
for all $m\in M_i^+$ (see \cite{silb}).

Combining lines \ref{mubound}, \ref{matbound}, and \ref{hcbound}, we see that the convergence of the integral in \ref{silbint} is reduced to the convergence of
$$
\int_{M_{n+1}^+} \delta_{n+1}^{-1/2}(m)\delta_{n+2}^{1/2}(m)(1+\sigma(m))^Ddm
$$
for some positive constant $D$.

When $E$ is a field, we have $M_{n+1}^+ = T_{n+1}^+\times U(V_{n+1,\text{an}})$ and $U(V_{n+1,\text{an}})$ is compact, so the convergence of the integral above is reduced to the convergence of
$$
\int_{T_{n+1}^+} \delta_{n+1}^{-1/2}(t)\delta_{n+2}^{1/2}(t)(1+\sigma(t))^Ddt.
$$
Finally, this is reduced to the convergence of $$\int_{|x_1|\leq |x_2|\leq\dots\leq |x_{r_{n+1}}|\leq 1} |x_1x_2\cdot\dots\cdot x_{r_{n+1}}|^{1/2}\left(1-\sum_{i=1}^{r_{n+1}}\log|x_i|\right)^{D}\ d^\times x_1d^\times x_2\dots d^\times x_{r_{n+1}}
$$
which follows from Lemma \ref{calc}.

Now we suppose that $E=F\times F$.  In this case, we have $G_i\cong GL_i(F)$.  Note that in this case, we have $T^+_i = M^+_i$; however, we no longer have $T_{n+1}^+\subset T_{n+2}^+$.  With the right choice of bases, we can view
$$
T_{n+1}^+ = \{\operatorname{diag}(x_1, x_2, \dots, x_{n+1}, 1):x_i\in F^\times, |x_i|\leq |x_{i+1}|\}
$$
and
$$
T_{n+2}^+ = \{\operatorname{diag}(x_1, x_2, \dots, x_{n+2}):x_i\in F^\times, |x_i|\leq |x_{i+1}|\}
$$
both as subgroups of $GL_{n+2}(F)$.

For the moment, set $m := \operatorname{diag}(x_1, x_2, \dots, x_{n+1}, 1)$.  We see that $m\in T_{n+2}^+$ if and only if $|x_{n+1}|\leq 1$.  If $m\notin T_{n+2}^+$, then we cannot directly apply the bound on $\Xi_{n+2}$ that we used previously.  To remedy this, let $i$ be such that $|x_i|\leq 1\leq |x_{i+1}|$, and set $m':=\operatorname{diag}(x_1,\dots,x_i,1,x_{i+1},\dots,x_{n+1})$.  Then we see that there are $k_1,k_2\in K_{n+2}$ such that $m=k_1m'k_2$, and therefore $\Xi_{n+2}(m) = \Xi_{n+2}(m')$.  Furthermore, we see that $m'\in T_{n+2}^+$, and therefore the bound we used previously applies.

So, we see that in this case we are reduced to checking the convergence of
\beqnan
&&\int_{|x_1|\leq\dots\leq |x_{n+1}|\leq 1}|x_1\dots x_{n+1}|^{1/2}\left(1-\sum_{j=1}^{n+1} \log |x_j|\right)^Dd^\times{x_1}\dots d^\times x_{n+1}\\
&&+\int_{|x_1|\leq\dots\leq |x_n|\leq 1\leq |x_{n+1}|} |x_1\dots x_n x_{n+1}^{-1}|^{1/2}\\
&&\times\left(1-\sum_{j=1}^{n} \log |x_j|+\log |x_{n+1}|\right)^D d^\times{x_1}\dots d^\times x_{n+1}\\
&&\vdots\\
&&+\int_{1\leq |x_1|\leq\dots\leq |x_{n+1}|}|x_1^{-1}\dots x_{n+1}^{-1}|^{1/2}\\
&&\left(1+\sum_{j=1}^{n+1} \log |x_j|\right)^Dd^\times{x_1}\dots d^\times x_{n+1}.
\eeqnan
The convergence of each of these integrals follows from Lemma \ref{calc}.
\end{proof}

\subsection{Calculation of integrals in the unramified case}
In what follows, we assume $v\notin S$.  The purpose of the remainder of this chapter is to explicitly compute $\zeta(\Xi,\xi)$ and $S_{\Xi^{-1}, \xi^{-1}}(1)$ for such $v$.

Away from places in $S$, we are either in the non-split, but quasi-split case, or the split case.  In the non-split case, $E/F$ is an unramified quadratic extension of $p$-adic fields.  In the split case, we have $E=F\oplus F$.  Let $V_{n+1}\subset V_{n+2}$ be (quasi-split) hermitian spaces over $E$, and let $G_{n+1}\subset G_{n+2}$ be the associated unitary groups.\footnote{The mildly strange choice of notation $n+1$ and $n+2$ will be explained later.}

The calculation of the integrals $\mathcal{P}'(f_{\pi_{n+1}}, f_{\pi_{n+2}})$ will involve splitting them into the product of two pieces, each of which will be computed independently.  But first, we give a description of the representations $\pi_i$; away from $S$, these have a concrete description.

We set $l_i:=\lfloor i/2\rfloor$.  Let $\xi_1,\dots , \xi_{l_{n+1}}$ and $\Xi_{1},\dots, \Xi_{l_{n+2}}$ be unramified characters of $E^\times$, and let $\Xi_0$ and $\xi_0$ be unramified characters of $E_1$, where
$$
E_1 := \{x\in E^\times: N_{E/F}(x) = 1\}
$$
and $N_{E/F}$ is the relative norm map.  Note that if $E$ is a field, then $E_1$ is compact, and $\xi_0$ and $\Xi_0$ are trivial (since they're unramified).  However, if $E$ is not a field, then $E_1\cong F^\times$, and $\Xi_0,\xi_0$ need not be trivial.  In the split case, for $i\geq 1$, we have $\Xi_i = (\mu_i, \nu_i)$ and $\xi_i = (\theta_i, \phi_i)$, where each $\mu_i,\nu_i,\theta_i,\phi_i$ are all unramified characters of $F^\times$.

If $n+1$ is odd, then we have $T_{n+1}\cong E_1\times (E^\times)^{l_{n+1}}$, and $T_{n+2}\cong (E^\times)^{l_{n+2}}$.  Then we define characters $\xi := (\xi_0, \xi_1,\dots, \xi_{l_{n+1}})$ and $\Xi:= (\Xi_1,\dots, \Xi_{l_{n+2}})$ of $T_{n+1}$ and $T_{n+2}$, respectively.  If $n+1$ is even, then $T_{n+1}\cong (E^\times)^{l_{n+1}}$ and $T_{n+2}\cong E_1\times (E^\times)^{l_{n+2}}$ and we have $\xi=(\xi_1,\dots, \xi_{l_{n+1}})$ and $\Xi=(\Xi_0, \Xi_1,\dots, \Xi_{l_{n+2}})$.

Let $B_i=T_iN_i$ be Borel subgroups, where the $N_i$ are unipotent radicals.  Then we can view $\xi$ and $\Xi$ as characters of $B_{n+1}$ and $B_{n+2}$ by extending them by $1$ to $N_i$.

Away from $S$, we may assume that
\beqnan
\pi_{n+1} &\cong& \operatorname{Ind}_{B_{n+1}}^{G_{n+1}} \xi\\
\pi_{n+2} &\cong& \operatorname{Ind}_{B_{n+2}}^{G_{n+2}} \Xi
\eeqnan

Note that in the split case, we have $G_{i}\cong GL_i(F)$, and we have
$$
{\xi} = (\theta_1,\dots,\theta_{l_{n+1}}, \mu_0, \phi_{l_{n+1}}^{-1},\dots, \phi_1^{-1})\text{ or }(\theta_1,\dots,\theta_{l_{n+1}}, \phi_{l_{n+1}}^{-1},\dots, \phi_1^{-1})
$$
and 
$$
{\Xi} = (\mu_1,\dots, \mu_{l_{n+2}}, \nu_{l_{n+2}}^{-1}, \dots, \nu_1^{-1})\text{ or }(\mu_1,\dots, \mu_{l_{n+2}}, \Xi_0, \nu_{l_{n+2}}^{-1}, \dots, \nu_1^{-1}),
$$
according to whether $n$ is odd or even, respectively.

We choose the $\mathcal{B}_{\pi_i}$ so that the $\Phi_{f_{\pi_i}}$ are the spherical matrix coefficients normalized such that $\Phi_{f_{\pi_i}}(k_i)=1$ for all $k_i\in K_i$.  Then we have the following explicit formulae:
$$
\Phi_{f_{\pi_i}}(g_i) = \int_{K_{i}} f_{\pi_i}(k_{i}g_{i})\ dk_{i}
$$
for all $g_i\in G_i$.

We consider the function
$$
F(g_{n+2}):= \int_{G_{n+1}} \Phi_{f_{\pi_{n+2}}}(g_{n+2}^{-1}g_{n+1})\Phi_{f_{\pi_{n+1}}}(g_{n+1})\ dg_{n+1}
$$
for $g_{n+2}\in G_{n+2}$.  We're interested in computing $F(1)$.  While it may seem a bit silly to invent this function if we're only interested in its value at the identity, the reason for this definition will become clear soon.

Let $G_{n+1}^\Delta$ denote the diagonal copy of $G_{n+1}$ in $G_{n+2}\times G_{n+1}$.  Then we note that ${G_{n+2}\times G_{n+1}/ G_{n+1}^\Delta}$ is a spherical variety.  This means that it has a unique open orbit under the action of $B_{n+2}\times B_{n+1}$ on the left.  Also, we note that
$$
B_{n+2}\times B_{n+1}\backslash G_{n+2}\times G_{n+1} / G_{n+1}^\Delta\cong B_{n+2}\backslash G_{n+2} / B_{n+1}.
$$
Recalling that we have Iwasawa decompositions
$$
G_i = B_iK_i,
$$
we let $\eta_{n+2}\in K_{n+2}$ be a representative for the open $B_{n+2}\times B_{n+1}$ orbit on $G_{n+2}$.  Let $Y_{\Xi,\xi}$ be the function on $G_{n+2}$ determined by the following conditions:
\begin{enumerate}
\item $Y_{\Xi,\xi}(b_{n+2}g_{n+2}b_{n+1}) = (\Xi^{-1}\delta_{n+2}^{1/2})(b_{n+2})(\xi\delta_{n+1}^{-1/2})(b_{n+1})Y_{\Xi,\xi}(g_{n+2})$ for all $b_i\in B_i$.
\item $Y_{\Xi,\xi}(\eta_{n+2})=1$
\item $Y_{\Xi,\xi}(g_{n+2}) = 0$ if $g_{n+2}\not\in B_{n+2}\eta_{n+2} B_{n+1}$.
\end{enumerate}

Here, $\delta_i$ denotes the modulus character for $B_i$.  We define the following two functions on $G_{n+2}$:
\beqnan
T_{\Xi,\xi}(g_{n+2}) &:=& \begin{cases} \int_{G_{n+1}} f_{\pi_{n+2}}(g_{n+2} g_{n+1})f_{\pi_{n+1}}(g_{n+1})\ d_{g_{n+1}} & \mbox{if } g_{n+2} \in B_{n+2}\eta_{n+2}B_{n+1}\\ 0 & \mbox{otherwise}\end{cases}\\
S_{\Xi,\xi}(g_{n+2}) &:=& \int_{K_{n+2}}\int_{K_{n+1}} Y_{\Xi,\xi}(k_{n+2} g_{n+2}^{-1} k_{n+1})\ dk_{n+1}d_{k_{n+2}}.
\eeqnan
We have $T_{\Xi,\xi}(g_{n+2}) = T_{\Xi,\xi}(\eta_{n+2})Y_{\Xi^{-1},\xi^{-1}}(g_{n+2})$ since $T_{\Xi,\xi}$ satisfies conditions $(1)$ and $(3)$ for $Y_{\Xi^{-1},\xi^{-1}}$.  Also, we note that $T_{\Xi,\xi}(\eta_{n+2})$ does not depend on the choice of representative $\eta_{n+2}$.  So we denote $\zeta(\Xi,\xi) := T_{\Xi,\xi}(\eta_{n+2})$.

Relating these to the integral that is the subject of this chapter, we have
\beqnan
\lefteqn{F(g_{n+2})}\\
&:=&\int_{G_{n+1}} \Phi_{f_{\pi_{n+2}}}(g_{n+2}^{-1}g_{n+1})\Phi_{f_{\pi_{n+1}}}(g_{n+1})\ dg_{n+1}\\ &=& \int_{G_{n+1}}\int_{K_{n+2}}\int_{K_{n+1}} f_{\pi_{n+2}}(k_{n+2}g_{n+2}^{-1}g_{n+1})f_{\pi_{n+1}}(k_{n+1}g_{n+1})\ dk_{n+1} dk_{n+2} dg_{n+1}\\
&=& \int_{G_{n+1}}\int_{K_{n+2}}\int_{K_{n+1}} f_{\pi_{n+2}}(k_{n+2}g_{n+2}^{-1}k_{n+1}g_{n+1})f_{\pi_{n+1}}(g_{n+1})\ dk_{n+1} dk_{n+2} dg_{n+1}\\
&=& \int_{K_{n+2}}\int_{K_{n+1}} T_{\Xi,\xi}(k_{n+2}g_{n+2}^{-1}k_{n+1})\ dk_{n+1}dk_{n+2}\\
&=& T_{\Xi,\xi}(\eta_{n+2}) \int_{K_{n+2}}\int_{K_{n+1}} Y_{\Xi^{-1},\xi^{-1}}(k_{n+2}g_{n+2}^{-1}k_{n+1})\ dk_{n+1}dk_{n+2}\\
&=& \zeta(\Xi,\xi) S_{\Xi^{-1},\xi^{-1}}(g_{n+2}).
\eeqnan
Regarding convergence, we note (as in \cite{rgp}), that $F(g_{n+2})$ is convergent for $\Xi$ and $\xi$ sufficiently close to the unitary axis.  (Indeed, Proposition \ref{conv} holds for such $\Xi,\xi$.)  So, we see that $T_{\Xi,\xi}(k_{n+2}g_{n+2}^{-1}k_{n+1})$ is convergent for almost all $k_{n+2},k_{n+1}$ such that $k_{n+2}g_{n+2}^{-1}k_{n+1}\in B_{n+2}\eta_{n+2} B_{n+1}$.  But since $T_{\Xi,\xi}(g)$ is convergent for some $g\in B_{n+2}\eta_{n+2} B_{n+1}$ if and only if it is convergent for all $g\in B_{n+2}\eta_{n+2} B_{n+1}$, we see that $T_{\Xi,\xi}(\eta_{n+2})$ is convergent.

\subsubsection{Calculation of $\zeta(\Xi,\xi)$}
In this section, we will actually have occasion to consider three different hermitian spaces simultaneously; let $V_n\subset V_{n+1}\subset V_{n+2}$ be hermitian spaces of dimensions $n, n+1$ and $n+2$ over $E$, and let $G_i$ be their respective unitary groups.  Until specified otherwise, we do not distinguish between the non-split and split cases.  That is, we do not view the split unitary groups as general linear groups over $F$, but still as isometry groups of hermitian spaces over $E$.

If $n=2m$, then we write
\beqnan
V_n &=& \langle e_1, e_2, \dots, e_m, f_1, f_2, \dots, f_m\rangle\\
V_{n+1} &=& V_n \oplus \langle e_{m+1} + f_{m+1}\rangle\\
V_{n+2} &=& V_{n+1} \oplus \langle e_{m+1} -  f_{m+1}\rangle
\eeqnan
where all of the $e_i, f_i$ are isotropic vectors.  Furthermore, if we let $h$ denote the hermitian form on $V_{n+2}$, we have $h(e_i, f_j) = \delta_{ij}$.  Then we have the Borel subgroups $B_i \subset G_i$ where $B_i := \operatorname{Stab}_{G_i} \mathcal{F}_i$ where the $\mathcal{F}_i$ are the following flags of isotropic spaces:
\beqnan
\mathcal{F}_n &:=& \{0\}\subsetneq \langle e_1\rangle \subsetneq \langle e_1, e_2\rangle \subsetneq \dots \subsetneq \langle e_1,\dots e_m\rangle\subsetneq V_n\\
\mathcal{F}_{n+1} &:=& \{0\}\subsetneq \langle e_1\rangle \subsetneq \langle e_1, e_2\rangle \subsetneq \dots \subsetneq \langle e_1,\dots e_m\rangle\subsetneq V_{n+1}\\
\mathcal{F}_{n+2} &:=& \{0\}\subsetneq \langle e_{m+1}\rangle \subsetneq \langle e_{m+1}, e_1\rangle \dots\subsetneq \langle e_{m+1}, e_1,\dots e_m\rangle\subsetneq V_{n+2}.
\eeqnan
If $n=2m+1$, then we have
\beqnan
V_n &=& \langle e_1, e_2, \dots, e_m, f_1, f_2, \dots, f_m, e_{m+1} + f_{m+1}\rangle\\
V_{n+1} &=& V_n\oplus \langle e_{m+1} - f_{m+1}\rangle\\
V_{n+2} &=& V_{n+1}\oplus \langle e_{m+2}+f_{m+2}\rangle.
\eeqnan
Again, all of the $e_i, f_i$ are isotropic, and we have $h(e_i, f_j)=\delta_{ij}$.  In this case, the flags are:
\beqnan
\mathcal{F}_n &:=& \{0\}\subsetneq\langle e_1\rangle \subsetneq \langle e_1, e_2\rangle \subsetneq \dots \subsetneq \langle e_1,\dots e_m\rangle\subsetneq V_n\\
\mathcal{F}_{n+1} &:=&  \{0\}\subsetneq\langle e_{m+1}\rangle \subsetneq \langle e_{m+1}, e_1\rangle \subsetneq \dots \subsetneq \langle e_{m+1},e_1\dots e_m\rangle\subsetneq V_{n+1}\\
\mathcal{F}_{n+2} &:=& \{0\}\subsetneq \langle v\rangle \subsetneq \langle v, e_1\rangle\subsetneq \dots \subsetneq \langle v, e_1, \dots, e_m\rangle\subsetneq V_{n+2}
\eeqnan
where $v=e_{m+2}+f_{m+2} + e_{n+1} - f_{n+1}.$

Let $K_n\subset K_{n+1}\subset K_{n+2}$ be hyperspecial maximal compact subgroups of the $G_i$, such that $G_i = B_iK_i$.  For $i=n,n+1$, we consider the action of $B_{i+1}\times B_i$ on $G_{i+1}$ by $(b_{i+1}, b_i)\cdot g_{i+1} := b_{i+1}g_{i+1}b_i^{-1}$.  We let $\eta_{i+1}\in K_{i+1}$ be a representative for the unique open dense orbit.  We now prove a proposition that relates the representatives of the open orbits of $B_{n+2}\times B_{n+1}$ on $G_{n+2}$ and $B_{n+1}\times B_n$ on $G_{n+1}$.
\begin{proposition}  $\eta_{n+1}^{-1}\in K_{n+1}\subset K_{n+2}$ is a representative for the open orbit of $B_{n+2}\times B_{n+1}$ acting on $G_{n+2}$.
\end{proposition}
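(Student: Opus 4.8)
The plan is to show that the single double coset $B_{n+2}\,\eta_{n+1}^{-1}\,B_{n+1}$ is open in $G_{n+2}$; since $G_{n+2}$ is irreducible and $B_{n+2}\times B_{n+1}$-orbits are locally closed, this is equivalent to the orbit being dense, hence to its having dimension $\dim G_{n+2}$. The starting point is the dimension count. Over $F$ one has $\dim G_i=i^2$ and $\dim T_i=i$, so $\dim B_i=\dim T_i+\dim N_i=\tfrac12(i^2+i)$ (the unipotent radical of $B_i$ and its opposite have equal dimension); therefore
$$
\dim B_{n+2}+\dim B_{n+1}=\tfrac12\big((n+2)(n+3)+(n+1)(n+2)\big)=(n+2)^2=\dim G_{n+2},
$$
and likewise $\dim B_{n+1}+\dim B_n=(n+1)^2=\dim G_{n+1}$. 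Thus for the pair $(G_{n+2},G_{n+1})$, and also one level down for $(G_{n+1},G_n)$, an orbit of the relevant product of Borel subgroups is open exactly when its stabilizer is finite; in particular the open-orbit representative $\eta_{n+1}$ for $B_{n+1}\times B_n$ on $G_{n+1}$ has finite stabilizer.

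Next I would compute the stabilizer of $\eta_{n+1}^{-1}$. For the action $(b'',b')\cdot g=b''g(b')^{-1}$ the stabilizer of $g\in G_{n+2}$ projects isomorphically, via the second coordinate, onto $B_{n+1}\cap g^{-1}B_{n+2}\,g$. With $g=\eta_{n+1}^{-1}$, and using that $\eta_{n+1}\in G_{n+1}$ so that conjugation by it preserves $G_{n+1}$, this equals $B_{n+1}\cap\eta_{n+1}\big(B_{n+2}\cap G_{n+1}\big)\eta_{n+1}^{-1}$, a group whose Lie algebra is $\mathfrak{b}_{n+1}\cap\operatorname{Ad}(\eta_{n+1})\big(\mathfrak{b}_{n+2}\cap\mathfrak{g}_{n+1}\big)$.

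The key point is the identification $B_{n+2}\cap G_{n+1}=B_n$, where $B_n$ is viewed in $G_{n+1}$ by extension by the identity on the orthogonal complement of $V_n$. I would prove this from the explicit flags, handling $n=2m$ and $n=2m+1$ separately. In both cases the first nonzero term of $\mathcal{F}_{n+2}$ is an isotropic line $\langle\tilde v\rangle$ with $\tilde v=u_0+w$, where $u_0$ spans the line $V_{n+1}^{\perp}$ inside $V_{n+2}$ (hence is fixed by all of $G_{n+1}$) and $w$ spans the line $V_n^{\perp}$ inside $V_{n+1}$. If $g\in G_{n+1}$ stabilizes $\mathcal{F}_{n+2}$ then $g\tilde v=u_0+gw=c\,\tilde v$ for some scalar $c$; comparing the $V_{n+1}$- and $V_{n+1}^{\perp}$-components forces $c=1$ and $gw=w$. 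So $g$ fixes both $u_0$ and $w$, hence fixes $V_n^{\perp}\subset V_{n+2}$ pointwise and preserves $V_n$; and intersecting the remaining terms of $\mathcal{F}_{n+2}$ with $V_n$ recovers exactly $\mathcal{F}_n$, so $g|_{V_n}\in B_n$. The opposite inclusion is immediate from the shape of the flags. Granting this, $\mathfrak{b}_{n+2}\cap\mathfrak{g}_{n+1}=\mathfrak{b}_n$, so the Lie algebra of the stabilizer of $\eta_{n+1}^{-1}$ is $\mathfrak{b}_{n+1}\cap\operatorname{Ad}(\eta_{n+1})\mathfrak{b}_n$; applying $\operatorname{Ad}(\eta_{n+1}^{-1})$ carries this isomorphically onto $\mathfrak{b}_n\cap\operatorname{Ad}(\eta_{n+1}^{-1})\mathfrak{b}_{n+1}$, which is the Lie algebra of the stabilizer of $\eta_{n+1}$ for $B_{n+1}\times B_n$ acting on $G_{n+1}$. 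By the first paragraph this vanishes, so the stabilizer of $\eta_{n+1}^{-1}$ is finite and its orbit is open, which is what we wanted.

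The only laborious part is the flag identity $B_{n+2}\cap G_{n+1}=B_n$: it needs the separate even and odd analysis and careful tracking of which of $e_{m+1}\pm f_{m+1}$ (and, when $n$ is odd, the vector spanning the anisotropic kernel of $V_{n+2}$) lie in $V_n$, in $V_{n+1}$, or in the relevant orthogonal complements. Everything else is formal manipulation of dimensions and adjoint actions. I would also stress that the inverse in $\eta_{n+1}^{-1}$ is essential rather than cosmetic: it is precisely what makes the stabilizer at level $n+2$ match that of $\eta_{n+1}$ itself at level $n+1$, not that of $\eta_{n+1}^{-1}$, which need not lie in the open orbit.
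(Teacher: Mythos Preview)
Your proposal is correct and follows essentially the same route as the paper: both arguments reduce to the key identity $B_{n+2}\cap G_{n+1}=B_n$ (proved via the explicit flags, treating $n$ even and $n$ odd separately), and then transport the stabilizer of $\eta_{n+1}^{-1}$ for the $(B_{n+2},B_{n+1})$-action to that of $\eta_{n+1}$ for the $(B_{n+1},B_n)$-action. The only difference is cosmetic: you work at the Lie-algebra level with an explicit dimension count and conclude the stabilizer is finite, whereas the paper works directly with groups and writes $B_n\cap\eta_{n+1}^{-1}B_{n+1}\eta_{n+1}=\{1\}$ without further comment. Your version is if anything more self-contained, since the paper is silently using that the generic stabilizer at level $(n+1,n)$ is actually trivial, not merely finite.
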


\begin{proof}
To show that $\eta_{n+1}^{-1}$ is a representative for the open $B_{n+2}\times B_{n+1}$ orbit in $G_{n+2}$, it suffices to check that $B_{n+2}\cap \eta_{n+1}^{-1} B_{n+1}\eta_{n+1}$ is trivial.

Suppose that $n=2m$.  First, we show that $B_{n+2}\cap G_{n+1}\subset B_n$.  To see this, take any $b\in B_{n+2}\cap G_{n+1}$.  Since $b\in G_{n+1}$, we know that $b$ fixes $e_{m+1}-f_{m+1}$.  But since $b\in B_{n+2}$, we know that
$$
b\cdot e_{m+1} = ae_{m+1}
$$
for some $a\in E^\times$ and
$$
b\cdot f_{m+1} = \overline{a}^{-1}f_{m+1} + \sum_{i=1}^{m+1} a_ie_i+\sum_{i=1}^m c_if_i
$$
 for $a_i, c_i\in E$.  This means that we have
$$
b\cdot(e_{m+1} - f_{m+1}) = ae_{m+1} - \overline{a}^{-1}f_{m+1} + \sum_{i=1}^{m+1} a_ie_i+\sum_{i=1}^m c_if_i.
$$
But since $b$ fixes $e_{m+1} - f_{m+1}$, this means that $a=1$ and $a_i=c_i=0$ for all $i$.  So we see that $b$ fixes both $e_{m+1},f_{m+1}$ and is therefore in $G_n$.  But since it preserves $\mathcal{F}_{n+2}$ and $e_{m+1}$, we see that it preserves $\mathcal{F}_n$, and is therefore in $B_n$.

Now we have that 
$$
B_{n+2} \cap \eta_{n+1}^{-1}B_{n+1}\eta_{n+1} = B_{n+2}\cap \eta_{n+1}^{-1} B_{n+1}\eta_{n+1}\cap G_{n+1} = B_n \cap \eta_{n+1}^{-1}B_{n+1}\eta_{n+1} = \{1\}.
$$

Now suppose that $n=2m+1$.  Again, we will show that $B_{n+2}\cap G_{n+1}\subset B_n$.  First, we show that $B_{n+2}\cap G_{n+1}\subset G_n$.  To see this, we need only show that $b\in B_{n+2}\cap G_{n+1}$ fixes $e_{m+1}-f_{m+1}$.  Well, since $b\in G_{n+1}$, we know that it fixes $e_{m+2}+f_{m+2}$.  Also, since $b\in B_{n+2}$, we know that it scales $v$.  So we have:
\beqnan
b\cdot v &=& b(e_{m+2}+f_{m+2}+e_{m+1}-f_{m+1})\\
&=& b(e_{m+2}+f_{m+2}) + b(e_{m+1} - f_{m+1})\\
&=& e_{m+2}+f_{m+2} + b(e_{m+1}-f_{m+1})\\
&=& av\ \text{ for some } a\in E^\times\\
&=& ae_{m+2}+af_{m+2} + ae_{m+1} - af_{m+1}.
\eeqnan
But since $b\in G_{n+1}$, we know that $b\cdot (e_{m+1} - f_{m+1})\in V_{n+1}$, and so $a=1$.  This gives that $b$ fixes $e_{m+1} - f_{m+1}$ and is therefore in $G_{n+1}$.  We also see that it fixes $v$.  This, together with the fact that it preserves $\mathcal{F}_{n+2}$ immediately gives that it preserves $\mathcal{F}_n$, and is therefore in $B_n$.  Then just as in the case where $n=2m$, we see that
$$
B_{n+2} \cap \eta_{n+1}^{-1}B_{n+1}\eta_{n+1} = B_{n+2}\cap \eta_{n+1}^{-1} B_{n+1}\eta_{n+1}\cap G_{n+1} = B_n \cap \eta_{n+1}^{-1}B_{n+1}\eta_{n+1} = \{1\}.
$$
\end{proof}

Now, we take $V$ to be an $n$-dimensional hermitian space with associated hermitian form $\langle\cdot,\cdot\rangle_V$.  Let $V^-$ be the same space as $V$, but with the hermitian form $-\langle\cdot,\cdot\rangle_V$.  Also, let $Q = \operatorname{Span}\{ e, f\}$ be a hyperbolic plane, where $e$ and $f$ are isotropic vectors and $\langle e,f\rangle_Q = 1$.  We also consider the hermitian space $W := V\oplus V^{-}\oplus Q$.

We will be considering the following groups defined over $F$:
\beqnan
G_n &:=& U(V)\cong U(V^-)\\
G_{n+1} &:=& U(V^-\oplus \langle e+f\rangle)\\
G_{n+2} &:=& U(V^-\oplus Q)\\
G &:=& U(W)
\eeqnan
where in the first line, we identity $V$ and $V^-$ as vector spaces (but not as hermitian spaces) via the identity map.

Note that we have inclusions $G_n\subset G_{n+1}\subset G_{n+2}$.  Let $B_n, B_{n+1}, B_{n+2}$ be Borel subgroups as in the beginning of this section.  Let $T_i, N_i$ be the corresponding tori and unipotent radicals, and let $K_i$ be hyperspecial maximal compact subgroups such that $G_i=B_iK_i$. Let $\tilde{\Xi}, \xi, \Xi$ be characters of $T_n,T_{n+1}, T_{n+2}$ respectively, where

\beqnan
\tilde{\Xi} &:=& (\Xi_1,\Xi_2,\dots, \Xi_{\lfloor n/2\rfloor})\\
\xi &:=& (\xi_1,\xi_2,\dots, \xi_{\lfloor (n+1)/2 \rfloor})\\
\Xi &:=& (\Xi_1,\dots, \Xi_{\lfloor (n+2)/2 \rfloor})
\eeqnan
and each $\Xi_i, \xi_i$ is an unramified character of $E^\times$.  Recall that $l:= \lfloor (n+2)/2\rfloor$.

As before, we extend these characters to $B_i$ by $1$ along $N_i$.  Denote by $\pi_i$ the corresponding unramified principal series representation, and let $f_{\pi_i}\in \pi_i$ be the corresponding spherical vector, normalized so that $f_{\pi_i}(k_i)=1$ for all $k_i\in K_i$.

Let $\iota:V\to V^-$ be the identity map.  Then we define the following subspace of $W$:
$$
V^\dagger := \operatorname{Span}\{v_i-\iota(v_i), e\}.
$$

Note that $V^\dagger$ is a maximal isotropic subspace of $W$.  Denote by $G\supset P := \operatorname{Stab}_G V^\dagger$ the corresponding maximal parabolic subgroup.  The Levi subgroup $M\subset P$ is isomorphic to $GL(V^\dagger)$.  Denote by $N\subset P$ the unipotent radical.  We consider the following induced representation of $G$:
$$
I(\Xi_l) := \operatorname{Ind}_{P}^G (\Xi_l\circ {\det}_{V^\dagger})
$$
where the induction is normalized.  Let $f_0\in I(\Xi_l)$ be the spherical vector, normalized so that $f_0(1)=1$.  We consider $f_0|_{G_n\times G_{n+2}}$, and denote this by $\tilde{f_0}$.

We define the following integral:
$$
\Lambda(f_0, f_{\pi_n})(g_{n+2}) := \int_{G_n} \tilde{f_0}(g_n, g_{n+2}) \pi_n(g_n)f_{\pi_n}\ dg_n.
$$
We remark that since $\Xi_l$ is unramified, we have $\Xi_l = |\cdot |_E^s$ for some $s\in\C$.  The integral $\Lambda$ converges for $\operatorname{Re}(s)>>0$, and we use analytic continuation to define $\Lambda$ elsewhere.

Note that

$$
\Lambda(f_0,f_{\pi_n})\in \operatorname{Ind}_{GL(V^\dagger\cap Q)\times G_n}^{G_{n+2}} \Xi_l\otimes \pi_n,
$$
and so by transitivity of induction, we may view $\Lambda(f_0,f_{\pi_n})$ as an element of $\pi_{n+2}$.

\begin{proposition}\label{prev} For any $k\in K_{n+2}$ and $n\geq 1$ we have
$$\Lambda(f_0, f_{\pi_n})(k) = \frac{L_E(1, BC(\pi_n)\otimes\Xi_l)}{\prod_{j=1}^n L_F(1+j, \Xi_l\chi_{E/F}^{n-j})}f_{\pi_n},$$
where $\chi_{E/F}$ is the quadratic character attached to the extension $E/F$ by class field theory.  Recall that in the case where $E$ is not a field, this is the trivial character.
\end{proposition}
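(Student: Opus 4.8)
This proposition is an unramified evaluation of a doubling-type (Rallis-inner-product) zeta integral, and the plan is to split the proof into three stages: fixing explicit models, reducing the vector-valued identity to a scalar one, and performing the zeta-integral computation.

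First I would realize $\pi_n\cong\operatorname{Ind}_{B_n}^{G_n}\tilde\Xi$ with $f_{\pi_n}$ the spherical vector normalized by $f_{\pi_n}(k)=1$ for $k\in K_n$, and record the local base change: $BC(\pi_n)$ is the unramified principal series $\operatorname{Ind}(\chi_1,\dots,\chi_n)$ of $GL_n(E)$, the $\chi_i$ being explicit unramified characters of $E^\times$ read off from $\tilde\Xi$ (by the recipe already implicit in Section~\ref{localchapt}, which differs in the split and inert cases). Then $L_E(1,BC(\pi_n)\otimes\Xi_l)=\prod_{i=1}^n L_E(1,\chi_i\Xi_l)$, so the right-hand side of the proposition is the ratio of a product of $n$ abelian $L$-factors over $E$ by $n$ abelian $L$-factors over $F$ — exactly the shape of a spherical doubling integral on $G=U(W)$ divided by the normalizing factor of the degenerate principal series $I(\Xi_l)$, whose Siegel Levi is $GL(V^\dagger)\cong GL_{n+1}$.

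Next I would reduce to a scalar. Since $f_0$ is $K_G$-spherical and $K_{n+2}\subset K_G$, the map $g_{n+2}\mapsto\tilde f_0(g_n,g_{n+2})$ is right $K_{n+2}$-invariant, so $\Lambda(f_0,f_{\pi_n})$ is a right $K_{n+2}$-invariant vector of the irreducible unramified representation $\pi_{n+2}$, hence a scalar multiple of its spherical vector. Transported through the identification $\pi_{n+2}\cong\operatorname{Ind}_{GL(V^\dagger\cap Q)\times G_n}^{G_{n+2}}(\Xi_l\otimes\pi_n)$ of the excerpt — under which the spherical vector of $\pi_{n+2}$ corresponds to the section whose value at every $k\in K_{n+2}$ is $f_{\pi_n}$ — this says precisely that $\Lambda(f_0,f_{\pi_n})(k)=c\cdot f_{\pi_n}$ with $c$ independent of $k\in K_{n+2}$. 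Evaluating the $\pi_n$-valued function at $1\in G_n$ and using $\bigl[\pi_n(g_n)f_{\pi_n}\bigr](1)=f_{\pi_n}(g_n)$ identifies the constant as
$$
c=\int_{G_n}\tilde f_0(g_n,1)\,f_{\pi_n}(g_n)\,dg_n ,
$$
where $\tilde f_0(g_n,1)=f_0(\iota_V(g_n))$ and $\iota_V\colon G_n=U(V)\hookrightarrow G$ is the embedding on the $V$-summand of $W$; all of this is first carried out for $\operatorname{Re}(s)\gg0$ (where $\Xi_l=|\cdot|_E^{s}$ and $\Lambda$ converges) and then extended by analytic continuation.

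Finally I would compute $c$ directly. Using $G_n=N_nT_nK_n$, so that $dg_n=\delta_{B_n}(t)^{-1}\,dn\,dt\,dk$ and $f_{\pi_n}(ntk)=\tilde\Xi(t)\delta_{B_n}^{1/2}(t)$, the $K_n$-integral is trivial and one is left with $\int_{T_n}\int_{N_n}f_0(\iota_V(nt))\,\tilde\Xi(t)\delta_{B_n}^{-1/2}(t)\,dn\,dt$. The one genuinely laborious ingredient is the Iwasawa decomposition of $\iota_V(nt)$ relative to the Siegel parabolic $P$: because $\iota_V(U(V))\cap P=\{1\}$, one must compute the $P$-component of $\iota_V(nt)$ in $G=PK_G$ and read off $(\Xi_l\circ\det_{V^\dagger})\,\delta_P^{1/2}$ of it, a matrix computation on $W=V\oplus V^-\oplus Q$ using the explicit $V^\dagger=\operatorname{Span}\{v_i-\iota(v_i),e\}$ and the flags of the previous subsections. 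Inserting this, the $N_n$-integral collapses and the $T_n$-integral becomes a rank-$n$ sum of geometric series whose evaluation — a Gindikin--Karpelevich/Casselman--Shalika-type manipulation, modeled on the analogous unramified computation in \cite{rgp} — produces exactly the numerator $\prod_i L_E(1,\chi_i\Xi_l)$ and denominator $\prod_{j=1}^n L_F(1+j,\Xi_l\chi_{E/F}^{\,n-j})$; one runs the split and inert cases separately, since $T_n$, $\delta_{B_n}$, the shape of $V^\dagger$, and the recipe for the $\chi_i$ all differ. The main obstacle is precisely this last step: getting the $P$-Iwasawa coordinates of $\iota_V(nt)$ right in both parities of $n$ and both cases of $E/F$, and then bookkeeping the multiple geometric series so that the $F$-factors assemble into $\prod_{j=1}^n L_F(1+j,\Xi_l\chi_{E/F}^{\,n-j})$ and not some nearby product; by comparison the reduction to the scalar $c$ and the analytic-continuation point are routine.
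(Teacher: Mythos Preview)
Your reduction to the scalar
\[
c=\int_{G_n}\tilde f_0(g_n,1)\,f_{\pi_n}(g_n)\,dg_n
\]
is exactly right, and matches the paper's first move. The divergence is in how $c$ is evaluated.

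You propose to compute $c$ from scratch: run the Iwasawa decomposition of $\iota_V(nt)$ inside the big group $G=U(W)$ relative to its Siegel parabolic $P$, and then sum the resulting geometric series by hand in both the inert and split cases. This would work, but it amounts to re-deriving the unramified local doubling zeta integral. (A small warning: the phrase ``the $N_n$-integral collapses'' undersells what happens there --- in the doubling computation the unipotent integral is the genuinely nontrivial step, not a formality.)

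The paper instead exploits a structural shortcut. Since one is evaluating at $g_{n+2}=1$, only the restriction $\widehat{f_0}:=f_0|_{G_n\times G_n}$ matters, where $G_n\times G_n=U(V)\times U(V^-)$ acts on $V\oplus V^-$ and fixes $Q$. One checks $P\cap(G_n\times G_n)=G_n^\Delta$ (the stabilizer of the anti-diagonal in $V\oplus V^-$), and the general restriction-of-induction formula gives
\[
\widehat{f_0}\in\operatorname{Ind}_{G_n^\Delta}^{G_n\times G_n}\Bigl(\tfrac{\delta_P}{\delta_{P\cap(G_n\times G_n)}}\Bigr)^{1/2}\bigl(\Xi_l\circ\det_{V^\dagger}\bigr),
\]
with $\delta_P=|\det_{V^\dagger}|^{n+1}$ and $\delta_{P\cap(G_n\times G_n)}=|\det_{V^\dagger}|^{n}$. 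In other words, the extra hyperbolic plane $Q$ contributes only a half-integer shift in the inducing parameter, and $\widehat{f_0}$ is precisely a spherical section in the standard doubling setup $U(V)\times U(V^-)\hookrightarrow U(V\oplus V^-)$. Thus $c$ is the unramified local doubling zeta integral for $G_n$, already computed as Proposition~3 of \cite{lr} (the orthogonal analogue being Theorem~1.1 of \cite{l:orthog}), and the paper simply quotes that value.

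So: your route is a direct recomputation inside $U(W)$, the paper's is a reduction to the known doubling integral for $U(V)$. The paper's approach avoids all case-by-case matrix work; yours would be self-contained but would essentially reproduce \cite{lr}. The key observation you are missing is that restricting the ambient group from $U(V\oplus V^-\oplus Q)$ down to $U(V)\times U(V^-)$ costs only a computable modulus-character shift and lands you in a situation where the answer is already in the literature.
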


\begin{proof}
First, we note that $\Lambda$ is constant as a function on $K_{n+2}$.  To see this, we simply note that $f_0$ is a spherical vector.

Now, we consider $\widehat{f_0} := f_0|_{G_n\times G_n}$.  Then we see that
$$
\widehat{f_0}\in \operatorname{Ind}_{P\cap (G_n\times G_n)}^{G_n\times G_n} \left(\frac{\delta_{P}}{\delta_{P\cap (G_n\times G_n)}}\right)^{1/2}(\Xi_l\circ \operatorname{det}_{V^\dagger})
$$

Now, we know that $$\delta_{P}(p) = |\operatorname{det}_{V^\dagger}(p)|^{n+1}$$ and $$\delta_{P\cap (G_n\times G_n)}(p) = |\operatorname{det}_{V^\dagger}(p)|^{n}$$ for $p\in P\cap (G_n\times G_n)$. So, by taking $G_n=G$ in Proposition 3 in \cite{lr},  we see that

$$
\Lambda(f_0,f_{\pi_n})(1) = \Lambda(\widehat{f_0}, f_{\pi_n})(1) = \frac{L_E(1, BC(\pi_n)\otimes\Xi_l)}{\prod_{j=1}^n L_F(1+j, \Xi_l\chi_{E/F}^{n-j})}f_{\pi_n}.
$$
(This is analogous to Theorem 1.1 on page 16 of \cite{l:orthog}.)
\end{proof}

We recall the action of $B_{i+1}\times B_i$ on $G_{i+1}$ by $(b_{i+1}, b_i)g_{i+1} := b_{i+1}g_{i+1}b_i^{-1}$.  As we mentioned before, there is a unique open and dense orbit under this action.  We let $\eta_{i+1}\in K_{i+1}$ be a representative of this orbit.

Using the previous proposition, we have the following inductive relationship between $\zeta(\Xi,\xi)$ and $\zeta(\xi,\tilde{\Xi})$:
\begin{proposition} For $n\geq 1$ we have
$$\zeta(\Xi,\xi) =  \frac{L_E(1/2, BC(\pi_{n+1})\otimes\Xi_{l})}{L_E(1, BC(\pi_n)\otimes\Xi_l)L_F(1, \chi_{E/F}^n\otimes\Xi_l)}\zeta(\xi,\tilde{\Xi}).$$
\end{proposition}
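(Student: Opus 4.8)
The plan is to use Proposition \ref{prev} to replace the spherical vector of $\pi_{n+2}$ by the half--doubling integral $\Lambda(f_0,f_{\pi_n})$, unfold, and recognize what remains as $\zeta(\xi,\tilde\Xi)$ times a product of abelian $L$--factors coming from a second, this time genuinely doubling, local integral. The key structural point is that $W=V\oplus V^-\oplus Q$ is \emph{simultaneously} the space used in Proposition \ref{prev} to build $\pi_{n+2}$ out of $\pi_n$ and the doubling space for $G_{n+1}=U(V^-\oplus\langle e+f\rangle)$, because $W=(V^-\oplus\langle e+f\rangle)\perp(V\oplus\langle e-f\rangle)$; so a single section $f_0\in I(\Xi_l)$ controls both reductions.

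First I would realize $\pi_{n+2}$ in the model $\operatorname{Ind}(\Xi_l\otimes\pi_n)$ from the parabolic of $G_{n+2}$ with Levi $\cong E^\times\times G_n$, obtained from the Borel model by transitivity of induction. By Proposition \ref{prev} the normalized spherical vector in this model is $c^{-1}\Lambda(f_0,f_{\pi_n})$ with $c:=L_E(1,BC(\pi_n)\otimes\Xi_l)\big/\prod_{j=1}^nL_F(1+j,\Xi_l\chi_{E/F}^{n-j})$; passing back to the Borel model (evaluating the $\pi_n$--component at $1$) this reads $f_{\pi_{n+2}}(g)=c^{-1}\int_{G_n}\tilde f_0(g_n,g)f_{\pi_n}(g_n)\,dg_n$. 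Substituting into $\zeta(\Xi,\xi)=T_{\Xi,\xi}(\eta_{n+2})=\int_{G_{n+1}}f_{\pi_{n+2}}(\eta_{n+2}g_{n+1})f_{\pi_{n+1}}(g_{n+1})\,dg_{n+1}$ and interchanging the two integrations — legitimate for $\Xi,\xi$ near the unitary axis by Proposition \ref{conv}, and in general by meromorphic continuation in the unramified parameters — yields
$$\zeta(\Xi,\xi)=c^{-1}\int_{G_n}\Big(\int_{G_{n+1}}\tilde f_0(g_n,\eta_{n+2}g_{n+1})\,f_{\pi_{n+1}}(g_{n+1})\,dg_{n+1}\Big)f_{\pi_n}(g_n)\,dg_n.$$

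Next I would analyze the inner integral $I(g_n):=\int_{G_{n+1}}\tilde f_0(g_n,\eta_{n+2}g_{n+1})f_{\pi_{n+1}}(g_{n+1})\,dg_{n+1}$. By the Proposition immediately preceding the present one we may take $\eta_{n+2}=\eta_{n+1}^{-1}\in K_{n+1}$; and under the orthogonal splitting $W=(V^-\oplus\langle e+f\rangle)\perp(V\oplus\langle e-f\rangle)$ the pair $(g_n,\eta_{n+1}^{-1}g_{n+1})$ becomes an element of $G_{n+1}\times G_{n+1}'$ with $G_n=U(V)\subset G_{n+1}'=U(V\oplus\langle e-f\rangle)$. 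Thus $I(g_n)$ is exactly the half--doubling integral for $\pi_{n+1}$ attached to $f_0$, with the second argument restricted to the subgroup $G_n\subset G_{n+1}'$. Arguing as in the proof of Proposition \ref{prev} — now intersecting the Siegel parabolic $P$ and its Levi $GL(V^\dagger)$ with $G_{n+1}\times G_n$, computing the relevant ratio of modulus characters, and applying Proposition 3 of \cite{lr} with spherical data — one gets
$$I(g_n)=\frac{L_E(1/2,BC(\pi_{n+1})\otimes\Xi_l)}{\prod_{j=1}^{n+1}L_F(j,\Xi_l\chi_{E/F}^{n+1-j})}\,f_{\pi_{n+1}}(\eta_{n+1}g_n),$$
so that the remaining $g_n$--integral is precisely $\int_{G_n}f_{\pi_{n+1}}(\eta_{n+1}g_n)f_{\pi_n}(g_n)\,dg_n=T_{\xi,\tilde\Xi}(\eta_{n+1})=\zeta(\xi,\tilde\Xi)$.

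The proof then finishes by reindexing: since $\prod_{j=1}^{n+1}L_F(j,\Xi_l\chi_{E/F}^{n+1-j})=L_F(1,\Xi_l\chi_{E/F}^{n})\prod_{j=1}^{n}L_F(1+j,\Xi_l\chi_{E/F}^{n-j})$, the product $\prod_{j=1}^nL_F(1+j,\Xi_l\chi_{E/F}^{n-j})$ from $c^{-1}$ cancels, $L_E(1,BC(\pi_n)\otimes\Xi_l)$ cancels the numerator of $c$, and what survives is exactly $L_E(1/2,BC(\pi_{n+1})\otimes\Xi_l)\big/\big(L_E(1,BC(\pi_n)\otimes\Xi_l)L_F(1,\chi_{E/F}^n\otimes\Xi_l)\big)$, as claimed. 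I expect the main obstacle to be the middle step: correctly identifying $I(g_n)$ — in particular verifying that $\eta_{n+1}^{-1}$ serves simultaneously as the open--orbit representative in the inner doubling integral and in $T_{\xi,\tilde\Xi}(\eta_{n+1})$ (so that no stray Iwasawa or volume factors appear), and matching the arguments of the abelian $L$--factors produced by $P\cap(G_{n+1}\times G_n)$ against those occurring in Proposition \ref{prev}. The convergence and analytic--continuation bookkeeping needed to justify the interchange of integrals is routine given Proposition \ref{conv} and the remarks following it.
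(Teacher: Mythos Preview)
Your approach is essentially the paper's, just organized in the opposite order: the paper computes the single double integral
\[
\int_{G_{n+1}}\tilde f_0(1,g_{n+1})\int_{G_n}f_{\pi_{n+1}}(\eta_{n+1}g_ng_{n+1})f_{\pi_n}(g_n)\,dg_n\,dg_{n+1}
\]
in two ways (one producing $\zeta(\xi,\tilde\Xi)$ via the doubling result of \cite{lr} for $\pi_{n+1}$, the other producing $\zeta(\Xi,\xi)$ via Proposition~\ref{prev}), whereas you start from $\zeta(\Xi,\xi)$, expand $f_{\pi_{n+2}}$ through Proposition~\ref{prev}, and then identify the inner integral via doubling for $\pi_{n+1}$. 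After the change of variables $g_{n+1}\mapsto\eta_{n+1}g_{n+1}$ and the diagonal invariance $\tilde f_0(g_n,g_{n+1})=\tilde f_0(1,g_n^{-1}g_{n+1})$, your double integral literally is the paper's.

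One point of exposition deserves tightening. Your justification of
\[
I(g_n)=\frac{L_E(1/2,BC(\pi_{n+1})\otimes\Xi_l)}{\prod_{j=1}^{n+1}L_F(j,\Xi_l\chi_{E/F}^{n+1-j})}\,f_{\pi_{n+1}}(\eta_{n+1}g_n)
\]
via ``intersecting $P$ with $G_{n+1}\times G_n$'' is not quite the argument that yields this formula. What actually gives it is the diagonal invariance of $f_0$ under $G_n^\Delta$ (coming from $\Xi_l$ unramified and $|\det_V|=1$ on $U(V)$), which reduces $I(g_n)$ to the evaluation at $\eta_{n+1}g_n$ of the genuine $G_{n+1}$-doubling integral $\int_{G_{n+1}}\tilde f_0(1,g_{n+1})\,\pi_{n+1}(g_{n+1})f_{\pi_{n+1}}\,dg_{n+1}$; that integral is the constant you wrote times $f_{\pi_{n+1}}$ by \cite{lr}. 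The paper reaches the same constant by a matrix-coefficient trick: it inserts an integration over $K_{n+1}$, observes that the resulting linear form on $\pi_{n+1}$ produces a bi-$K_{n+1}$-invariant matrix coefficient, identifies the proportionality constant as $\zeta(\xi,\tilde\Xi)$, and only then invokes \cite{lr}. Either route is fine; your geometric remark that $W$ is simultaneously the doubling space for $G_{n+1}$ and the space used in Proposition~\ref{prev} is exactly the reason both work.
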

\begin{proof}  First, we note the following:
\beqnan
\lefteqn{\int_{G_{n+1}} \tilde{f_0}(1, g_{n+1}) \int_{G_n} f_{\pi_{n+1}}(\eta_{n+1}g_ng_{n+1})f_{\pi_n}(g_n)dg_ndg_{n+1}}\\
&=&\frac{L_E(1/2, BC(\pi_{n+1})\otimes\Xi_l)}{\prod_{j=1}^{n+1}L_F(j, \Xi_l\chi_{E/F}^{n+1-j})}\zeta(\xi,\tilde{\Xi})
\eeqnan
which follows from Proposition $3$ in \cite{lr}.\footnote{Note that in \cite{lr} the authors actually consider an integral over $G_{n+1}^\Delta\backslash (G_{n+1}\times G_{n+1})$, where $G_{n+1}^\Delta$ is the diagonally embedded copy of $G_{n+1}$.  By identifying this with $G_{n+1}$, we are led to consider the integral above instead.}  To see this, we consider the pairing $\mathcal{T}:\pi_{n+1}\otimes \pi_{n}\to\C$ given by
$$
\mathcal{T}(f_1, f_2) :=  \int_{G_n} f_1(\eta_{n+1}g_n)f_2(g_n)dg_n.
$$
Clearly, we have $\mathcal{T}(f_{\pi_{n+1}}, f_{\pi_n})=\zeta(\xi,\tilde{\Xi})$.  Now, note that $g_{n+1}\mapsto \tilde{f}_0(1,g_{n+1})$ is bi-invariant under $K_{n+1}$.  Choosing $dk$ so that $\int_{K_{n+1}}dk = 1$, we have
\beqnan
\lefteqn{\int_{G_{n+1}} \tilde{f}_0(1, g_{n+1}) \mathcal{T}(g_{n+1}f_{\pi_{n+1}}, f_{\pi_n})dg_{n+1}}\\ &=& \int_{G_{n+1}}\int_{K_{n+1}} \tilde{f}_0(1, kg_{n+1}) \mathcal{T}(g_{n+1} f_{\pi_{n+1}}, f_{\pi_n})dk dg_{n+1}\\
&=& \int_{G_{n+1}} \tilde{f}_0(1, g_{n+1})\int_{K_{n+1}}\mathcal{T}(k^{-1}g_{n+1} f_{\pi_{n+1}}, f_{\pi_n})dk dg_{n+1}.
\eeqnan
The inner integral gives a smooth linear form on $\pi_{n+1}$; more specifically,
$$
f\mapsto \int_{K_{n+1}} \mathcal{T}(k^{-1} f, f_{\pi_n}) dk
$$
gives a map $L\in \pi_{n+1}^\vee$.  In fact, $L(g_{n+1}f_{\pi_{n+1}})$ gives a matrix coefficient on $\pi_{n+1}$ which is bi-invariant under $K_{n+1}$.  So, we see that there is a constant $A$ such that
$$
L(g_{n+1}f_{\pi_{n+1}}) = A\cdot\mathcal{B}_{\pi_{n+1}}(g_{n+1}f_{\pi_{n+1}}, f_{\pi_{n+1}}).
$$
To compute $A$, we simply take $g_{n+1}=1$, and we obtain $\zeta(\xi,\tilde{\Xi})=A$.  Now we use the result in \cite{lr} to obtain the claim at the beginning of the proof.

Now, using Proposition \ref{prev}, we observe the following:
\beqnan
\lefteqn{\int_{G_{n+1}} \tilde{f}_0(1,g_{n+1})\int_{G_n} f_{\pi_{n+1}}(\eta_{n+1} g_n g_{n+1}) f_{\pi_n}(g_n) dg_ndg_{n+1}} \\
&=& \int_{G_{n+1}} \int_{G_n} f_{\pi_{n+1}}(\eta_{n+1} g_n g_{n+1}) \tilde{f}_0(g_n,g_ng_{n+1}) f_{\pi_n}(g_n) dg_ndg_{n+1}\\
&=& \int_{G_{n+1}}\int_{G_n} f_{\pi_{n+1}}(\eta_{n+1}g_{n+1})\tilde{f}_0(g_n, g_{n+1}) f_{\pi_n}(g_n) dg_n dg_{n+1}\\
&=& \int_{G_{n+1}} f_{\pi_{n+1}}(\eta_{n+1}g_{n+1}) \int_{G_n} \tilde{f}_0(g_n, g_{n+1}) f_{\pi_n}(g_n)dg_n dg_{n+1}\\
&=& \int_{G_{n+1}} f_{\pi_{n+1}}(\eta_{n+1}g_{n+1})\Lambda(f_0, f_{\pi_n})(g_{n+1})(1)dg_{n+1}\\
&=& \frac{L_E(1, BC(\pi_n)\otimes\Xi_l)}{\prod_{j=1}^n L_F(1+j, \Xi_l\chi_{E/F}^{n-j})}\int_{G_{n+1}}f_{\pi_{n+1}}(\eta_{n+1}g_{n+1}) f_{\pi_{n+2}}(g_{n+1}) dg_{n+1}\\
&=& \frac{L_E(1, BC(\pi_n)\otimes\Xi_l)}{\prod_{j=1}^n L_F(1+j, \Xi_l\chi_{E/F}^{n-j})}\int_{G_{n+1}}f_{\pi_{n+2}}(\eta_{n+1}^{-1}g_{n+1}) f_{\pi_{n+1}}(g_{n+1}) dg_{n+1}\\
&=& \frac{L_E(1, BC(\pi_n)\otimes\Xi_l)}{\prod_{j=1}^n L_F(1+j, \Xi_l\chi_{E/F}^{n-j})} \zeta(\Xi,\xi).
\eeqnan

Note that we've used the fact that $\eta_{n+1}^{-1}$ is a representative for the open $B_{n+2}\times B_{n+1}$-orbit in $G_{n+2}$.  Also, we remark that the calculation above is similar to that carried out for orthogonal groups in \cite{l:orthog}.

Combining this with the first identity mentioned completes the proof.
\end{proof}

By induction on $n$, this gives us the following in the non-split (but quasi-split) case:

\begin{corollary}
Suppose $E$ is a field.  If $n$ is even, then
\beqnan
\zeta(\Xi,\xi) &=& \prod_{1\leq i<j\leq n/2+1} L_E(1/2, \xi_i\Xi_j)L_E(1/2,\xi_i^{-1}\Xi_j)L_E(1,\Xi_i\Xi_j)^{-1}L_E(1,\Xi_i^{-1}\Xi_j)^{-1}\\
&& \times \prod_{1\leq i\leq j \leq n/2} L_E(1/2, \Xi_i\xi_j)L_E(1/2,\Xi_i^{-1}\xi_j)\\
&& \times \prod_{1\leq i<j\leq n/2} L_E(1,\xi_i\xi_j)^{-1} L_E(1,\xi_i^{-1}\xi_j)^{-1}\\
&&\times \prod_{i=1}^{n/2} L_E(1/2, \chi_{E/F} \xi_i)^{-1} L_E(1,\xi_i)^{-1}.
\eeqnan
If $n$ is odd, then
\beqnan
\zeta(\Xi,\xi) &=& \prod_{1\leq i\leq j\leq (n+1)/2} L_E(1/2, \xi_i\Xi_j)L_E(1/2, \xi_i^{-1}\Xi_j)\\
&& \times \prod_{1\leq i<j\leq (n+1)/2} L_E(1, \Xi_i\Xi_j)^{-1}L_E(1, \Xi_i^{-1}\Xi_j)^{-1} L_E(1/2, \Xi_i\xi_j)L_E(1/2, \Xi_i^{-1}\xi_j)\\
&& \times \prod_{1\leq i<j\leq (n+1)/2} L_E(1,\xi_i\xi_j)^{-1}L_E(1,\xi_i^{-1}\xi_j)^{-1}\\
&& \times\prod_{i=1}^{(n+1)/2} L_E(1/2,\chi_{E/F}\Xi_i)^{-1}L_E(1,\Xi_i)^{-1}.
\eeqnan
\end{corollary}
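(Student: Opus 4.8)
The plan is to prove the Corollary by induction on $n$, the engine being the recursive identity of the preceding Proposition,
$$\zeta(\Xi,\xi) = \frac{L_E(1/2, BC(\pi_{n+1})\otimes\Xi_l)}{L_E(1, BC(\pi_n)\otimes\Xi_l)\,L_F(1, \chi_{E/F}^n\otimes\Xi_l)}\,\zeta(\xi,\tilde\Xi),$$
which relates the $\zeta$-integral attached to the pair $(G_{n+2},G_{n+1})$ to the one attached to $(G_{n+1},G_n)$. Observe that in passing from $\zeta(\Xi,\xi)$ to $\zeta(\xi,\tilde\Xi)$ the roles of the "big" and "small" torus character are interchanged and the parity of the dimension flips; thus a single application of the inductive hypothesis \emph{in the opposite parity} yields the closed formula for $\zeta(\xi,\tilde\Xi)$, and the whole Corollary splits into two interlocking inductions, one reading off the even case from the odd case and vice versa.

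For the base of the induction I would take the smallest pair: when $G_n$ is trivial (so that, with $E$ a field, the relevant $\zeta$ is attached to $(G_2,G_1)$ with $G_1=E^1$ compact), $\pi_1$ is the trivial representation and $f_{\pi_1}\equiv 1$; since $\eta_2\in K_2$ and $G_1\subset K_2$, one gets $\zeta=\int_{G_1}f_{\pi_2}(\eta_2 g_1)\,dg_1=\operatorname{Vol}(G_1)=1$, which matches the empty product on the right-hand side of the Corollary. The companion base case $n=1$ is then the single application of the recursion to this, and should be checked directly.

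The inductive step is then essentially bookkeeping. First I would record, for an unramified principal series $\pi_k=\operatorname{Ind}_{B_k}^{G_k}(\chi_1,\dots,\chi_{l_k})$ of $U(V_k)$, the explicit Satake parameter of $BC(\pi_k)$ as a representation of $GL_k(E)$ — namely $\chi_1,\dots,\chi_{l_k},\chi_{l_k}^{-1},\dots,\chi_1^{-1}$ together with the appropriate middle character when $k$ is odd — so that $L_E(s,BC(\pi_k)\otimes\Xi_l)$, and via Shapiro's lemma $L_F(s,\chi_{E/F}^n\otimes\Xi_l)$, decompose into products of rank-one Hecke $L$-functions in the $\xi_i$ and $\Xi_i$. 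Substituting these factorizations into the recursion and multiplying by the inductive formula for $\zeta(\xi,\tilde\Xi)$, the new factors coming from $\Xi_l=\Xi_{l_{n+2}}$ — the $L_E(1/2,\xi_i\Xi_l)^{\pm1}$ from the numerator, the $L_E(1,\Xi_i\Xi_l)^{-1}$ from the denominator, and the "diagonal" contributions from the middle character together with $L_F(1,\chi_{E/F}^n\otimes\Xi_l)$ — extend the upper limits of the index ranges in the inductive product by one and assemble into exactly the four double products claimed. The cases $n$ even and $n$ odd must be run in parallel since the middle character of the base change and the power of $\chi_{E/F}$ depend on the parities of $n,n+1,n+2$.

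I expect the main obstacle to be precisely this last step: tracking which rank-one factor lands in which of the four products, getting the strict-versus-non-strict inequalities ($i<j$ versus $i\le j$) and the upper limits ($n/2$ versus $n/2+1$ versus $(n+1)/2$) right as the parity alternates down the induction, and — most delicately — pinning down the exact middle character in the base change of odd-dimensional unitary groups, together with the correct interpretation of $\chi_{E/F}^n\otimes\Xi_l$ and the bookkeeping forced by the relation $q_E=q_F^2$ (so that $L_F$-factors at $s=1$ masquerade as $L_E$-factors at $s=1/2$), so that the diagonal terms $L_E(1/2,\chi_{E/F}\xi_i)^{-1}L_E(1,\xi_i)^{-1}$ (respectively with $\Xi_i$) come out with the correct normalization and nothing spurious survives.
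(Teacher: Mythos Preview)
Your proposal is correct and follows essentially the same approach as the paper: induction on $n$ driven by the recursion of the preceding Proposition, with the base case being the computation $\zeta((\Xi_1),(\xi_0))=1$ for the pair $(G_2,G_1)$ (where $G_1=E^1$ is compact, so the integral over $K_1$ of the spherical vector evaluated inside $K_2$ is just $\operatorname{Vol}(K_1)=1$). The paper's proof is in fact terser than yours---it simply asserts that ``the inductive steps follow from the previous proposition'' and then verifies the base case---so your discussion of the parity-alternating bookkeeping and the Satake-parameter factorizations is more explicit than what the paper actually writes out, but the strategy is identical.
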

\begin{proof}
We check the base cases.  The inductive steps follow from the previous proposition.

The base case is computing $\zeta((\Xi_1), (\xi_0))$, where $\xi_0$ is the trivial character.  The two groups involved in this calculation are $G_2$ and $G_1$ with principal series representations $\pi_2$ and $\pi_1$ respectively.  Let $f_{(\Xi_1)}$ be the normalized spherical vector in $\pi_2$, and let $f_{(\xi_0)}$ be the normalized spherical vector in $\pi_1$.   Now, since $G_1=K_1$ is compact, we see that $f_{(\xi_0)}$ is the constant function equal to $1$.  Now, let $\eta_2\in K_2$ be a representative for the open $B_2\times B_1$ orbit in $G_2$.  Then we have that
\beqnan
\zeta((\Xi_1), (\xi_0)) &=& \int_{G_1} f_{(\Xi_1)}(\eta_2g_1) f_{(\xi_0)}(g_1)\ dg_1\\
&=& \int_{K_1} f_{(\Xi_1)}(\eta_2g_1)\ dg_1\\
&=& \int_{K_1}\ dg_1\text{ (since }\eta_2g_1\in K_2)\\
&=& 1.
\eeqnan
\end{proof}
We state the result in the split case as a separate corollary.
\begin{corollary}
Suppose that $E=F\times F$.  If $n$ is even, then
\beqnan
\zeta(\Xi,\xi) &=& \prod_{1\leq i<j\leq l_{n+2}} L_F(1/2, \theta_i\mu_j)_F(1/2, \phi_i^{-1}\mu_j)L_F(1/2, \theta_i^{-1}\nu_j)L_F(1/2, \phi_i\nu_j)\\
&&\times\prod_{1\leq i\leq j\leq l_{n+1}} L_F(1/2,\mu_i\theta_j)L_F(1/2, \nu_i^{-1}\theta_j)L_F(1/2,\mu_i^{-1}\phi_j)L_F(1/2, \nu_i\phi_j)\\
&&\times \prod_{i=1}^{l_{n+2}}L_F(1/2, \xi_0\mu_i)L_F(1/2, \xi_0^{-1}\nu_i)\\
&&\times \prod_{1\leq i<j\leq l_{n+2}} L_F(1, \mu_i^{-1}\mu_j)^{-1}L_F(1, \nu_i\mu_j)^{-1}L_F(1, \mu_i\nu_j)^{-1}L_F(1,\nu_i^{-1}\nu_j)^{-1}\\
&&\times\prod_{1\leq i<j\leq l_{n+1}} L_F(1, \theta_i^{-1}\theta_j)^{-1}L_F(1, \phi_i\theta_j)^{-1}L_F(1, \theta_i\phi_j)^{-1}L_F(1, \phi_i^{-1}\phi_j)^{-1}\\
&&\times\prod_{i=1}^{l_{n+2}}L_F(1, \mu_i\nu_i)^{-1}\prod_{i=1}^{l_{n+1}} L_F(1, \xi_0^{-1}\theta_i)^{-1}L_F(1, \xi_0\phi_i)^{-1}L_F(1, \theta_i\phi_i)^{-1}.
\eeqnan
If $n$ is odd, then
\beqnan
\zeta(\Xi,\xi) &=& \prod_{1\leq i\leq j\leq l_{n+2}} L_F(1/2, \theta_i\mu_j)L_F(1/2, \phi_i^{-1}\mu_j)L_F(1/2, \theta_i^{-1}\nu_j)L_F(1/2, \phi_i\nu_j)\\
&& \times\prod_{1\leq i<j\leq l_{n+1}} L_F(1/2, \mu_i\theta_j)L_F(1/2, \nu_i^{-1}\theta_j)L_F(1/2, \mu_i^{-1}\phi_j)L_F(1/2, \nu_i\theta_j)\\
&& \times\prod_{i=1}^{l_{n+1}} L_F(1/2, \Xi_0\theta_i)L_F(1/2, \Xi_0^{-1}\phi_i)\\
&&\times \prod_{1\leq i<j\leq l_{n+2}} L_F(1, \mu_i^{-1}\mu_j)^{-1}L_F(1, \nu_i\mu_j)^{-1}L_F(1, \mu_i\nu_j)^{-1}L_F(1,\nu_i^{-1}\nu_j)^{-1}\\
&&\times\prod_{1\leq i<j\leq l_{n+1}} L_F(1, \theta_i^{-1}\theta_j)^{-1}L_F(1, \phi_i\theta_j)^{-1}L_F(1, \theta_i\phi_j)^{-1}L_F(1, \phi_i^{-1}\phi_j)^{-1}\\
&&\times\prod_{i=1}^{l_{n+2}}L_F(1, \mu_i\nu_i)^{-1}L_F(1, \Xi_0^{-1}\mu_i)^{-1}L_F(1, \Xi_0\nu_i)^{-1}\prod_{i=1}^{l_{n+1}}L_F(1, \theta_i\phi_i)^{-1}.
\eeqnan
\end{corollary}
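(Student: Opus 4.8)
\emph{Proof strategy.}
The plan is to follow the non-split corollary verbatim in structure: iterate the inductive relation of the preceding proposition down to a base case for the pair $(G_2,G_1)$, bookkeeping the $L$-factors that accumulate. The only ingredient genuinely new to the split setting is the translation of that inductive relation into the variables $\theta_i,\phi_i,\mu_i,\nu_i,\xi_0,\Xi_0$; once this is in hand the descent is purely formal. Before starting I would record that the two structural inputs used throughout --- that $\eta_{n+1}^{-1}$ represents the open $B_{n+2}\times B_{n+1}$-orbit on $G_{n+2}$, and that $\Lambda(f_0,f_{\pi_n})$ evaluates as in Proposition \ref{prev} --- hold verbatim in the split case, since a hermitian space over $F\times F$ is automatically split and neither proof uses anisotropy.

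For the translation, recall that over the split $E=F\times F$ the character $\chi_{E/F}$ is trivial, so the factor $L_F(1,\chi_{E/F}^n\otimes\Xi_l)$ in the inductive relation is the Asai-type $L$-factor of $\Xi_l$, which for $E=F\times F$ is the Hecke $L$-factor $L_F(1,\mu_{l'}\nu_{l'})$, where $\Xi_l=(\mu_{l'},\nu_{l'})$ and $l'=\lfloor(n+2)/2\rfloor$ at the top of the descent; at successive levels the top character used for twisting alternates between a $\Xi$-type pair and a $\xi$-type pair, just as the larger of the two groups alternates parity. Next, $BC$ of a split unitary representation $\pi_j\cong\operatorname{Ind}_{B_j}^{GL_j(F)}(\cdots)$ is the conjugate-self-dual representation $(\pi_j,\pi_j^\vee)$ of $GL_j(E)=GL_j(F)\times GL_j(F)$, and an $L$-factor over $E=F\times F$ of such a representation is the product of the standard $GL_j(F)$-factors of its two components. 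Twisting $BC(\pi_j)$ by $\Xi_l$ and expanding the principal series therefore expresses $L_E(s,BC(\pi_j)\otimes\Xi_l)$ as an explicit product of $GL_1\times GL_1$ Hecke $L$-factors, pairing the parameters of $\pi_j$ against $\mu_{l'}$ and those of $\pi_j^\vee$ against $\nu_{l'}$ (the precise matching of signs being dictated by the normalization of $BC$), together with two further factors involving $\xi_0$ or $\Xi_0$ when that character occurs. Substituting these into the inductive relation and iterating, collecting the numerator factors at $1/2$ and the denominator factors at $1$, yields the two displayed formulae.

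It remains to anchor the induction at the base case $\zeta((\mu_1,\nu_1),(\xi_0))$, the value for $(G_2,G_1)=(GL_2(F),GL_1(F))$. In contrast with the non-split case, where $G_1$ was compact and $\zeta=1$, here $G_1=F^\times$ is noncompact and $\zeta$ is a genuine local $GL_2(F)\times GL_1(F)$ Rankin--Selberg zeta integral over the open orbit with spherical input; by the $GL_2\times GL_1$ instance of Proposition 3 in \cite{lr} (or a one-line unramified Mellin computation) it equals $L_F(1/2,\xi_0\mu_1)L_F(1/2,\xi_0^{-1}\nu_1)L_F(1,\mu_1\nu_1)^{-1}$, which is precisely the $n=0$ specialization of the ``$n$ even'' formula.

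I expect the main obstacle to be the combinatorial bookkeeping of the descent: keeping straight, at each level, which side of $BC(\pi_j)$ is twisted against $\mu_{l'}$ rather than $\nu_{l'}$, and assembling the accumulated $L$-factors with the correct index ranges and multiplicities --- in particular the denominators, both the ``adjoint''-type $L(1,\cdot)$'s and the Asai-type $L(1,\mu_i\nu_i)$'s. A subsidiary but genuinely load-bearing point is to pin down $BC$ for a one-dimensional split unitary group precisely enough that the $\xi_0$- and $\Xi_0$-contributions land in the numerator at $1/2$ and in the denominator at $1$ exactly as displayed; an inadvertent inversion there would propagate through the whole computation.
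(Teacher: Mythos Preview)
Your overall strategy matches the paper's: the corollary is proved by iterating the inductive proposition down to the $(G_2,G_1)$ base case, and the split case requires no new structural input beyond the translation of the $L$-factors into the $\mu_i,\nu_i,\theta_i,\phi_i,\xi_0,\Xi_0$ variables. However, the emphasis is reversed. The paper says nothing about the inductive bookkeeping (it simply writes ``Again, we need only check the base case'') and instead carries out the base computation by hand: it writes down the explicit embedding $F^\times\ni a\mapsto\begin{pmatrix}a+1/2&a-1/2\\a-1/2&a+1/2\end{pmatrix}\in GL_2(F)$, observes that $B_2\cap B_1=\{1\}$ so $\eta_2$ may be taken trivial, computes the Iwasawa decomposition in the three ranges $a\in\mathcal{O}_F^\times$, $a\in\mathcal{O}_F\setminus\mathcal{O}_F^\times$, $a\notin\mathcal{O}_F$, and sums the resulting geometric series to obtain $L_F(1/2,\theta_1\Xi_0)L_F(1/2,\phi_1\Xi_0^{-1})L_F(1,\theta_1\phi_1)^{-1}$.

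Your handling of the base case is the weaker part of the proposal. Invoking ``the $GL_2\times GL_1$ instance of Proposition~3 in \cite{lr}'' is not quite right: that proposition computes the $\Lambda$-integral appearing in the \emph{inductive} step, not the open-orbit zeta integral $\zeta(\Xi,\xi)$ itself, and the embedding of $G_1$ in $G_2$ here is not the standard Levi embedding but the one dictated by the hermitian-space setup, so a direct appeal to off-the-shelf Rankin--Selberg is not automatic. Your parenthetical ``one-line unramified Mellin computation'' is closer to what is actually needed, but as the paper's argument shows, it is a few lines rather than one: the nontrivial content is identifying the correct Iwasawa decomposition for the specific embedding. This is not a gap so much as an underestimate of the work involved; once you sit down to do it, you will recover the paper's computation.
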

\begin{proof}
Again, we need only check the base case.  This time, we're computing $\zeta((\mu_1, \nu_1^{-1}), (\xi_0))$ or $\zeta((\theta_1, \phi_1^{-1}), (\Xi_0))$, depending on whether $n$ is even or odd, respectively.

We compute $\zeta((\theta_1, \phi_1^{-1}), (\Xi_0))$.  We have
$$
\zeta((\theta_1, \phi_1^{-1}), (\Xi_0)) = \int_{G_1} f_{(\theta_1, \phi_1^{-1})}(a) f_{(\Xi_0)}(a)\ da.
$$

Now, we now that $G_1\cong F^\times$.  Also, $G_1$ embeds in $G_2$ in the following way:
$$
F^\times\owns a\mapsto \left(\begin{matrix} a+1/2 & a-1/2\\ a-1/2 & a+1/2\end{matrix}\right)\in G_2.
$$

(Note that since $B_2\cap B_1=\{1\}$, we take $\eta_2$ to be trivial.)

Now, if $a\in \mathcal{O}_F^\times$, then we have that 
$$
\left(\begin{matrix} a+1/2 & a-1/2\\ a-1/2 & a+1/2\end{matrix}\right)\in K_2\supset K_1
$$
and therefore $f_{(\theta_1, \phi_1^{-1})}(a)=f_{(\Xi_0)}(a)=1$ in this case.

If $a\in \mathcal{O}_F-\mathcal{O}_F^\times$, then we have the following Iwasawa decomposition:
$$
\left(\begin{matrix} a+1/2 & a-1/2\\ a-1/2 & a+1/2\end{matrix}\right) = \left(\begin{matrix} -a&-1\\&1\end{matrix}\right) \left(\begin{matrix} -2 & -2\\ a-1/2&a+1/2\end{matrix}\right).
$$
If $a\notin\mathcal{O}_F$, then we have the following Iwasawa decomposition:
$$
\left(\begin{matrix} a+1/2 & a-1/2\\ a-1/2 & a+1/2\end{matrix}\right) = \left(\begin{matrix} 1/2 & a\\ & a\end{matrix}\right) \left(\begin{matrix} 2&-2\\ 1-\frac{1}{2a}& 1+\frac{1}{2a}\end{matrix}\right).
$$
Using this, we can realize the integral as a geometric series:
\beqnan
 \int_{G_1} f_{(\theta_1, \phi_1^{-1})}(a) f_{(\Xi_0)}(a)\ da &=& 1+\sum_{n=1}^\infty \left(\frac{\theta_1\Xi_0}{q_F^{1/2}}\right)^n + \left(\frac{\phi_1}{\Xi_0q_F^{1/2}}\right)^n\\
 &=& \frac{1-q_F^{-1}\theta_1\phi_1}{(1-q_F^{-1/2}\phi_1\Xi_0^{-1})(1-q_F^{-1/2}\theta_1\Xi_0)}\\
 &=& L_F(1/2, \phi_1\Xi_0^{-1})L_F(1/2, \theta_1\Xi_0)L_F(1, \theta_1\phi_1)^{-1}.
\eeqnan
\end{proof}

\subsubsection{Calculation of $S_{\Xi^{-1},\xi^{-1}}(1)$}
The calculation of $S_{\Xi^{-1}, \xi^{-1}}(1)$ follows from Michael Khoury's Ph.D. dissertation if the unitary groups are non-split (but still quasi-split).  If the groups are split, then the calculation follows from unpublished work of Kato, Murase, and Sugano.
\paragraph{Quasi-Split Case.}
Recall that in this case, $E$ is the quadratic unramified extension of $F$.  Let $\varpi$ be a uniformizer for $F$, which -- since $E$ is unramified over $F$ -- is also a uniformizer for $E$.  Also, we remind the reader that $l_n := \lfloor n/2\rfloor$.  Let $\xi$ and $\Xi$ be as before.  Denote by $A_i\subset T_i$ the maximal split tori.

We begin the calculation by defining some convenient members of \\ $\Z[q_E^{\pm 1/2}, \Xi_1(\varpi)^{\pm 1},\Xi_2(\varpi)^{\pm 1},\dots, \xi_1(\varpi)^{\pm 1},\xi_2(\varpi)^{\pm 1},\dots].$  (Recall that $q_E$ is the cardinality of the residue field of $E$.)  If $n+1$ is even (hereafter known as Case A), then $A_{n+1}\cong A_{n+2}\cong (F^\times)^{l_{n+1}}$, and we set:
\beqnan
b(\Xi, \xi)^{-1} &:=& \prod_{i=1}^{l_{n+1}} L_E(1/2, \xi_i)\prod_{1\leq i\leq j\leq l_{n+1}} L_E(1/2, \Xi_i\xi_j)L_E(1/2, \Xi_i\xi_j^{-1})\\&&\times\prod_{1\leq j<i\leq l_{n+1}} L_E(1/2, \Xi_i\xi_j)L_E(1/2, \Xi_i^{-1}\xi_j)\\
d_1(\Xi)^{-1} &:=& \prod_{i=1}^{l_{n+1}} L_E(0, \Xi_i^2)\prod_{1\leq i < j\leq l_{n+1}} L_E(0, \Xi_i\Xi_j)L_E(0, \Xi_i\Xi_j^{-1})\\
d_0(\xi)^{-1} &:=& \prod_{i=1}^{l_{n+1}} L_E(0, \xi_i)\prod_{1\leq i<j\leq l_{n+1}} L_E(0, \xi_i\xi_j)L_E(0, \xi_i\xi_j^{-1}).
\eeqnan
We remark that while we've given formulae for $b^{-1}, d_1^{-1}, d_0^{-1}$ above, it's actually $b,d_1,d_0$ that are in the ring $\Z[q_E^{\pm 1/2}, \Xi_1(\varpi)^{\pm 1},\Xi_2(\varpi)^{\pm 1},\dots, \xi_1(\varpi)^{\pm 1},\xi_2(\varpi)^{\pm 1},\dots].$

If $n+1$ is odd (hereafter known as Case B), then we have $A_{n+1}\cong (F^\times)^{l_{n+1}}$ and $A_{n+2}\cong (F^\times)^{l_{n+2}}$.  We set:
\beqnan
b(\Xi, \xi)^{-1} &:=& \prod_{i=1}^{l_{n+2}} L_E(1/2, \Xi_i)\prod_{1\leq i\leq j\leq l_{n+1}} L_E(1/2, \Xi_i\xi_j)L_E(1/2, \Xi_i\xi_j^{-1})\\&&\times\prod_{1\leq j<i\leq l_{n+2}} L_E(1/2, \Xi_i\xi_j)L_E(1/2, \Xi_i^{-1}\xi_j)\\
d_1(\Xi)^{-1} &:=& \prod_{i=1}^{l_{n+2}} L_E(0, \Xi_i)\prod_{1\leq i < j\leq l_{n+2}} L_E(0, \Xi_i\Xi_j)L_E(0, \Xi_i\Xi_j^{-1})\\
d_0(\xi)^{-1} &:=& \prod_{i=1}^{l_{n+1}} L_E(0, \xi_i^2)\prod_{1\leq i<j\leq l_{n+1}} L_E(0, \xi_i\xi_j)L_E(0, \xi_i\xi_j^{-1}).
\eeqnan
Recall that the local $L_E$ factors are defined as $L_E(s, \chi):= \big(1-q_E^{-s}\chi(\varpi)\big)^{-1}$.

We also define $$c(\Xi, \xi) := \frac{b(\Xi, \xi)}{d_1(\Xi)d_0(\xi)}$$ as an element of $\Q(q_E^{1/2}, \Xi_1(\varpi), \dots, \xi_1(\varpi),\dots)$. Let $W_{n+2}$ and $W_{n+1}$ be the Weyl groups $W(G_{n+2}, A_{n+2})$ and $W(G_{n+1}, A_{n+1})$.  If $n+1$ is even, both of these groups are isomorphic to $(\Z/2\Z)^{l_{n+1}}\rtimes S_{l_{n+1}}$.  If $n+1$ is odd, then $W_{n+2}\cong (\Z/2\Z)^{l_{n+2}}\rtimes S_{l_{n+2}}\not\cong W_{n+1}\cong (\Z/2\Z)^{l_{n+1}}\rtimes S_{l_{n+1}}$.  These groups act on elements of $A_i$ (and therefore unramified characters of $A_i$) by permutation (the $S_l$ factor) and inversion (the $(\Z/2\Z)^l$) factor.

Finally, define $$A_{\Xi, \xi} := \sum_{w'\in W_{n+2}, w\in W_{n+1}} c(w'\Xi, w\xi)$$ as an element of $\Q(q_E^{1/2}, \Xi_1(\varpi), \dots, \xi_1(\varpi),\dots)$.

Theorem 11.4 in \cite{khoury} says the following:
\begin{theorem} Let $w_\ell\in W_{n+1}$ and $w_\ell'\in W_{n+2}$ be the long elements, and let $\mathcal{B}_i\subset K_i$ be Iwahori subgroups.  (Recall that the $K_i$ were fixed in defining $S_{\Xi^{-1}, \xi^{-1}}$.)  Then
$$
S_{\Xi^{-1},\xi^{-1}}(1) = \zeta(\Xi^{-1},\xi^{-1})q_E^{l(w_\ell)+l(w_\ell')}\operatorname{Vol}(\mathcal{B}_{n+1})\operatorname{Vol}(\mathcal{B}_{n+2})A_{\Xi^{-1},\xi^{-1}}.  
$$
Here, the volumes are computed with respect to the Haar measures which give the $K_i$ volume $1$.
\end{theorem}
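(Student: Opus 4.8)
The plan is to evaluate $S_{\Xi^{-1},\xi^{-1}}(1)$ by a direct unramified computation of Gindikin--Karpelevich/Casselman--Shalika type, paralleling the orthogonal-group calculations of \cite{lr} and \cite{l:orthog}. The structural inputs are that $Y_{\Xi^{-1},\xi^{-1}}$ is supported on the single open cell $B_{n+2}\eta_{n+2}B_{n+1}$, is left $(\Xi\delta_{n+2}^{1/2})$-equivariant under $B_{n+2}$ and right $(\xi^{-1}\delta_{n+1}^{-1/2})$-equivariant under $B_{n+1}$, and is normalized by $Y_{\Xi^{-1},\xi^{-1}}(\eta_{n+2})=1$. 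Since $S_{\Xi^{-1},\xi^{-1}}(1)=\int_{K_{n+2}}\int_{K_{n+1}}Y_{\Xi^{-1},\xi^{-1}}(k_{n+2}k_{n+1})\,dk_{n+1}\,dk_{n+2}$, the first step is to insert the Iwahori--Bruhat decompositions $K_i=\coprod_{w\in W_i}\mathcal{B}_i w\mathcal{B}_i$, with $\operatorname{Vol}(\mathcal{B}_i w\mathcal{B}_i)=q_E^{l(w)}\operatorname{Vol}(\mathcal{B}_i)$ and $\operatorname{Vol}(\mathcal{B}_i)^{-1}=\sum_{w\in W_i}q_E^{l(w)}$, turning the double integral into a finite double sum indexed by $(w',w)\in W_{n+2}\times W_{n+1}$.

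For a fixed pair $(w',w)$, I would write the Iwahori double coset elements as $b_2\,w'\,b_2'$ and $b_1\,w\,b_1'$, use the equivariance of $Y_{\Xi^{-1},\xi^{-1}}$ to absorb the outer Borel factors, and reduce to an integral over those positive-root unipotent subgroups of $G_{n+2}$ picked out by the relative position of $w'\eta_{n+2}w$ inside $B_{n+2}\eta_{n+2}B_{n+1}$. Evaluating that integral root by root --- each short root of the unitary group contributing an $L_E$-factor and each long (norm-one) root an $L_F$-factor with a $\chi_{E/F}$-twist --- yields precisely the $c$-function $c((w')^{-1}\Xi^{-1},w^{-1}\xi^{-1})=b/(d_1d_0)$, times the coset-volume prefactor. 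The distinguished term, where the Iwasawa projection of $w'\eta_{n+2}w$ returns to the open orbit representative, reproduces --- via the same unfolding that was used to compute $\zeta$ in the Corollaries above --- the value $T_{\Xi^{-1},\xi^{-1}}(\eta_{n+2})=\zeta(\Xi^{-1},\xi^{-1})$; equivariance then forces every other term to equal this leading value times a ratio of $c$-functions, so $\zeta(\Xi^{-1},\xi^{-1})$ factors out of the entire sum.

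It remains to reassemble. Re-indexing (using that $w'\mapsto w'\Xi^{-1}$ and $w\mapsto w\xi^{-1}$ permute the arguments of $c$), the residual sum is exactly $A_{\Xi^{-1},\xi^{-1}}=\sum_{w'\in W_{n+2},\,w\in W_{n+1}}c(w'\Xi^{-1},w\xi^{-1})$, and the accumulated powers of $q_E$ together with the Iwahori volumes collect, after reindexing by the long elements $w_\ell$ and $w_\ell'$, into the single elementary factor $q_E^{l(w_\ell)+l(w_\ell')}\operatorname{Vol}(\mathcal{B}_{n+1})\operatorname{Vol}(\mathcal{B}_{n+2})$; this is the asserted identity. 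As an alternative to the Weyl-sum organization, one could instead run the whole computation as a recursion in $n$ built from Proposition 3 of \cite{lr}, exactly as the excerpt does for $\zeta$, but I expect the direct approach to be cleaner to state.

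The step I expect to be the main obstacle is the Gindikin--Karpelevich bookkeeping in the middle: for each Weyl pair one must identify precisely which positive root subgroups survive the unfolding, compute the product over those roots, and match it against the explicit definitions of $b(\Xi,\xi)$, $d_1(\Xi)$, $d_0(\xi)$ --- all while keeping Case A ($n+1$ even, $W_{n+2}\cong W_{n+1}$) and Case B ($n+1$ odd, $W_{n+2}\not\cong W_{n+1}$) apart and handling the short-versus-long root dichotomy of $U(V_{n+2})$ correctly, which is the source of the asymmetry between the $L_E$- and $L_F$-factors. One must also establish absolute convergence for $\Xi,\xi$ in a neighbourhood of the unitary axis to license the termwise manipulations, extend by analytic continuation afterwards, and verify that $\eta_{n+2}$ may be chosen in $K_{n+2}$ so that all of the $K_i$-integrations are genuinely over compact sets. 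These are exactly the technicalities carried out in Khoury's dissertation \cite{khoury}, whose argument the above sketch follows in outline.
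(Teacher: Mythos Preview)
The paper does not prove this statement at all: it simply quotes it as Theorem~11.4 of Khoury's dissertation \cite{khoury} and moves on. So there is no ``paper's own proof'' to compare your sketch against. You correctly identify this at the end of your proposal, noting that the technicalities are carried out in \cite{khoury} and that your outline follows that argument.

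Your sketch is a plausible high-level description of a Casselman--Shalika/Gindikin--Karpelevich style computation, and the ingredients you list (Iwahori--Bruhat decomposition of $K_i$, equivariance of $Y_{\Xi^{-1},\xi^{-1}}$, root-by-root evaluation producing the $c$-function, reassembly into $A_{\Xi^{-1},\xi^{-1}}$) are the right ones for this kind of identity. But since the paper offers no proof, I cannot confirm that your outline matches Khoury's actual argument in its details---in particular the precise mechanism by which $\zeta(\Xi^{-1},\xi^{-1})$ factors out and the exact bookkeeping that produces $q_E^{l(w_\ell)+l(w_\ell')}\operatorname{Vol}(\mathcal{B}_{n+1})\operatorname{Vol}(\mathcal{B}_{n+2})$ would need to be checked against \cite{khoury} directly. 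For the purposes of this paper, a citation is all that is required.
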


Having introduced them already, we remind the reader that $\mathcal{B}_i = N_{i,(1)}^- T_{i,(0)}N_{i,(0)}$, where $N_{i,(0)} = N_i\cap K_i$, $T_{i,(0)} = T_i\cap K_i$, $N_i^-$ is the unipotent radical of the parabolic opposite that of $N_i$, and $N_{i,(1)}^-$ is the subgroup of $N_{i}^-\cap K_i$ whose elements' off-diagonal entries lie in the ideal generated by $\varpi$.  The volumes are
\beqnan
\operatorname{Vol}(\mathcal{B}_i) = \frac{\prod_{j=1}^i (q_F-(-1)^j)}{\prod_{j=1}^i(q_F^j-(-1)^j)}.
\eeqnan

\begin{lemma} $A_{\Xi, \xi}$ is independent of $\Xi$ and $\xi$.
\end{lemma}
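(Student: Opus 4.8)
The plan is to prove $A_{\Xi,\xi}$ is constant by the familiar two-step route for such Weyl-group sums: first that $A_{\Xi,\xi}$ is holomorphic, hence a Laurent polynomial in $q_E^{\pm1/2}$ and the $\Xi_i(\varpi)^{\pm1}$, $\xi_j(\varpi)^{\pm1}$, and then that this Laurent polynomial has degree $0$ in every character variable. Invariance of $A_{\Xi,\xi}$ under $W_{n+2}\times W_{n+1}$ is immediate on reindexing the defining sum, so throughout I treat $A_{\Xi,\xi}$ as a $W_{n+2}\times W_{n+1}$-invariant rational function of the $\Xi_i(\varpi)$ and $\xi_j(\varpi)$. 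I will carry out Case A; Case B is identical, since the Weyl groups involved are again of the form $(\Z/2\Z)^{\bullet}\rtimes S_{\bullet}$.

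For holomorphy, note that since $L_E(s,\chi)=(1-q_E^{-s}\chi(\varpi))^{-1}$ the quantities $b$, $d_1$, $d_0$ are Laurent polynomials, so $c=b/(d_1 d_0)$ is a rational function whose polar locus is contained in the zero set of $d_1 d_0$. Reading off the definitions, that zero set is a union of hyperplanes of the shape $\Xi_i(\varpi)^2=1$, $\Xi_i(\varpi)\Xi_j(\varpi)^{\pm1}=1$, $\xi_i(\varpi)=1$, $\xi_i(\varpi)\xi_j(\varpi)^{\pm1}=1$; each of these is a connected component of a reflecting hyperplane of $W_{n+2}\times W_{n+1}$, and at a generic point of such a hyperplane $c$ has at most a simple pole. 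The Weyl group permutes these hyperplanes, so the same holds for every translate $c(w'\Xi,w\xi)$. Now fix a polar hyperplane $H$ and a reflection $s\in W_{n+2}\times W_{n+1}$ fixing $H$ pointwise, and group the defining sum into left cosets of $\langle s\rangle$. In a coset $\{u,su\}$, both $uc$ and $suc$ have at most a simple pole along $H$; since $s$ fixes $H$ pointwise but reverses a transverse coordinate, the residues of $uc$ and $suc$ along $H$ cancel, so $uc+suc$ is regular along $H$. Summing over all cosets, $A_{\Xi,\xi}$ is regular along $H$; as $H$ was an arbitrary polar hyperplane, $A_{\Xi,\xi}$ is holomorphic on the whole torus, hence a Laurent polynomial.

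To finish, I show this Laurent polynomial has degree $0$ in each variable. Since $A_{\Xi,\xi}$ is invariant under the individual sign changes $\Xi_i(\varpi)\mapsto\Xi_i(\varpi)^{-1}$ and $\xi_j(\varpi)\mapsto\xi_j(\varpi)^{-1}$ in $W_{n+2}$ and $W_{n+1}$, it suffices to show $A_{\Xi,\xi}$ stays bounded along a single generic one-parameter subgroup $\gamma(t)$ of the torus with all coordinates tending to $\infty$: invariance then yields boundedness along every Weyl translate of $\gamma$, and a convexity argument on the degree as a function of the direction then forces boundedness in every coordinate direction, whence $A_{\Xi,\xi}$ is constant. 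For the boundedness one writes $c=b/(d_1 d_0)$ as a ratio of products of factors $1-q_E^{-s}e^{\alpha}$, so the order of growth of a summand $c(w'\Xi,w\xi)$ along $\gamma$ is $\sum_{\alpha}\max(0,\langle\alpha,\cdot\rangle)$ taken over the characters $\alpha$ ("roots") occurring in $b$, minus the analogous sum over those occurring in $d_1 d_0$. The structural input is that the characters occurring in $d_1$ and in $d_0$ form, respectively, a system of positive roots for $W_{n+2}$ and for $W_{n+1}$; comparing these against the characters of $b$ and summing over $W_{n+2}\times W_{n+1}$, the positive-degree parts of the summands cancel, leaving $A_{\Xi,\xi}$ bounded along $\gamma$.

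I expect this last step, the degree bookkeeping and the cancellation of the leading terms, rather than the (standard) residue argument, to be the main obstacle: it is where one genuinely uses the precise shape of $b$, $d_1$, $d_0$, and it is the analogue of the corresponding point in the unpublished work of Kato, Murase, and Sugano and in Khoury's dissertation on which this section is modeled. Once $A_{\Xi,\xi}$ is known to be constant, its value can be computed afterward by evaluating at a convenient choice of $\Xi$ and $\xi$.
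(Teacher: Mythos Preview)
Your holomorphy step is correct and is in fact cleaner than what the paper does: pairing each term with its $s$-translate and observing that an $s$-symmetrized function with at most a simple pole along the mirror $H$ is regular there is exactly the right argument. Your reduction ``bounded along one generic direction $\Rightarrow$ constant'' is also fine, and simpler than you make it sound: since $-1\in W_{n+2}\times W_{n+1}$, the Newton polytope of the holomorphic $A_{\Xi,\xi}$ is centrally symmetric, so boundedness along a single generic $v$ already forces the polytope to lie in $v^\perp=\{0\}$.

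The gap is the boundedness itself. You assert that ``the positive-degree parts of the summands cancel, leaving $A_{\Xi,\xi}$ bounded along $\gamma$,'' but nothing you have written forces this: each summand $c(w'\Xi,w\xi)$ has the \emph{same} growth exponent along a generic $v$ (the sets of numerator/denominator characters are just permuted by $w',w$, and your formula $\sum_\alpha\max(0,\langle\alpha,v\rangle)$ is $W$-invariant in $v$), so summing them does not obviously lower the degree. Establishing the cancellation is exactly the content you are missing, and it requires using the precise list of characters in $b$ versus those in $d_1d_0$, not just structural facts about root systems.

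The paper supplies this missing computation in a different packaging. It sets $\mathcal D_\Xi=\Xi^{-\rho_{n+2}}d_1(\Xi)$, $\mathcal D_\xi=\xi^{-\rho_{n+1}}d_0(\xi)$ (the Weyl denominators), $B_{\Xi,\xi}=\Xi^{-\rho_{n+2}}\xi^{-\rho_{n+1}}b(\Xi,\xi)$, and rewrites
\[
A_{\Xi,\xi}=(\mathcal D_\Xi\mathcal D_\xi)^{-1}\sum_{w',w}\operatorname{sgn}(w')\operatorname{sgn}(w)\,B_{w'\Xi,w\xi}.
\]
The whole proof then reduces to a monomial-by-monomial bound on $B_{\Xi,\xi}$: writing $B=\sum c_{\lambda,\mu}\Xi^\lambda\xi^\mu$, one checks from the explicit product for $B$ that $|\lambda_i|\le l_{n+1}$, that each $\mu_j$ is half-integral with $|\mu_j|\le l_{n+1}-\tfrac12$, and that the borderline cases are incompatible with regularity. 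Hence every regular monomial lies in the $W_{n+2}\times W_{n+1}$-orbit of $\Xi^{-\rho_{n+2}}\xi^{-\rho_{n+1}}$; singular monomials cancel in the alternating sum; so the numerator equals $c\cdot\mathcal D_\Xi\mathcal D_\xi$ for some scalar $c$, and $A_{\Xi,\xi}=c$. This exponent bookkeeping is precisely the ``degree bookkeeping'' you flagged as the main obstacle, and it is where the specific shape of $b$ (which characters $\Xi_i\xi_j^{\pm1}$ occur for which ranges of $i,j$) actually gets used. Your outline becomes a proof once you either carry out this monomial bound or give an honest argument for the leading-term cancellation; as written, that step is only asserted.
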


\begin{proof}
Adapting the proof from \cite{rgp}, we first consider Case A.  We define the following Weyl vectors:
\beqnan
\rho_{n+2} &:=& (l_{n+1}, l_{n+1}-1, \dots, 1)\\
\rho_{n+1} &:=& (l_{n+1}-1/2, l_{n+1}-3/2,\dots, 1/2).
\eeqnan
Note that $\rho_{n+1}$ is half the sum of the positive roots of type $B_{l_{n+1}}$, while $\rho_{n+2}$ is half the sum of the positive roots of type $C_{l_{n+1}}$.  In what follows, we use the notation $\Xi^\rho = \prod\Xi_i^{\rho_i}.$

We define more members of $\Z[q_E^{\pm 1/2}, \Xi_1(\varpi),\dots,\xi_1(\varpi),\dots]$.  We set:
\beqnan
\mathcal{D}_\Xi &:=& \Xi^{-\rho_{n+2}}d_1(\Xi) = \sum_{w'\in W_{n+2}}\operatorname{sgn}(w')\cdot (w'\Xi)^{-\rho_{n+2}}\\
\mathcal{D}_\xi &:=& \xi^{-\rho_{n+1}}d_0(\xi) = \sum_{w\in W_{n+1}}\operatorname{sgn}(w)\cdot (w\xi)^{-\rho_{n+1}}.
\eeqnan
Then we see that $\mathcal{D}_{w'\Xi} = \operatorname{sgn}(w')\mathcal{D}_\Xi$ and $\mathcal{D}_{w\xi} = \operatorname{sgn}(w)\mathcal{D}_\xi.$  We introduce one more member of $\Z[q_E^{\pm 1/2}, \Xi_1(\varpi),\dots,\xi_1(\varpi),\dots]$.  Setting: $$B_{\Xi,\xi} := \Xi^{-\rho_{n+2}}\xi^{-\rho_{n+1}}b(\Xi,\xi)$$ we see that
\beqn\label{Asum}
A_{\Xi,\xi} = (\mathcal{D}_\Xi\mathcal{D}_\xi)^{-1}\sum_{w'\in W_{n+2}, w\in W_{n+1}} \operatorname{sgn}(w)\operatorname{sgn}(w') B(w'\Xi,w\xi).
\eeqn

Now write
$$
B_{\Xi,\xi}=\sum_{\lambda\in \Z^{l_{n+1}},\mu\in (\frac{1}{2}\Z)^{l_{n+1}}} c_{\lambda,\mu}\Xi^\lambda\xi^\mu
$$
for some coefficients $c_{\lambda,\mu}\in \Z[q_E^{\pm1/2}]$ (almost all of which are $0$, of course).  We say a monomial is \emph{regular} if its stabilizer under the action of the Weyl group is trivial; otherwise we call a monomial \emph{singular}.  We show that all regular monomials in $B_{\Xi,\xi}$ are in the orbit of $\Xi^{\rho_{n+2}}\xi^{\rho_{n+1}}.$

Note that it is sufficient to show that that we have $|\lambda_i|\leq l_{n+1}$ and $|\mu_i|\leq l_{n+1}-1/2$ and that none of the $\mu_i$ are integral.  (All such monomials are either singular or in the orbit of $\Xi^{\rho_{n+2}}\xi^{\rho_{n+1}}.$)

Note that
\beqnan
B_{\Xi,\xi} &=& \prod_{1\leq j\leq l_{n+1}} (\xi_j^{-1/2}-q_E^{-1/2}\xi_j^{1/2})\prod_{1\leq i,j\leq l_{n+1}} (1-q_E^{-1/2}\Xi_i\xi_j)\\ &&\times\prod_{1\leq i\leq j\leq l_{n+1}} (\Xi_i^{-1} - q_E^{-1/2}\xi_j^{-1})\prod_{1\leq j<i\leq l_{n+1}} (\xi_j^{-1}-q_E^{-1/2}\Xi_i^{-1}).
\eeqnan
It is clear from this that all $\mu_i$ are half-integral but not integral.

We check that $|\lambda_i|\leq l_{n+1}$.  Choose $i_0\in\{1,2,\dots, l_{n+1}\}.$  The positive contribution of $\Xi_{i_0}$ comes from
$$
\prod_{1\leq j\leq l_{n+1}}(1-q_E^{-1/2}\Xi_{i_0}\xi_j)
$$
and the negative contribution comes from
$$
\prod_{i_0\leq j\leq l_{n+1}}(\Xi_{i_0}^{-1}-q_E^{-1/2}\xi_j^{-1})\prod_{1\leq j< i_0}(\xi_j^{-1}-q_E^{-1/2}\Xi_{i_0}^{-1}).
$$
From this we see that $|\lambda_{i_0}|\leq l_{n+1}.$

Now we check that $|\mu_j|\leq l_{n+1}-1/2.$  Pick some $j_0\in\{1,2,\dots, l_{n+1}\}$.  The positive contribution of $\mu_{j_0}$ comes from
$$
(\xi_{j_0}^{-1/2}-q_E^{-1/2}\xi_{j_0}^{1/2})\prod_{1\leq i\leq l_{n+1}}(1-q_E^{-1/2}\Xi_i\xi_{j_0}).
$$
From this we can see that $|\mu_{j_0}|\leq l_{n+1}+1/2.$  Since we know that $\mu_{j_0}$ is not integral, we now must show that $|\mu_{j_0}|\neq l_{n+1}+1/2$.

Suppose there is some regular monomial $c_{\lambda,\mu}\Xi^{\lambda}\xi^{\mu}$ such that $|\mu_{j_0}| = l_{n+1}+1/2.$  We will show that $|\lambda_{i}|<l_{n+1}$ for all $i$.  This will contradict the fact that the monomial is regular.

A monomial with $\mu_{j_0}=l_{n+1}+1/2$ appears in the the following product 
\beqnan
c_{j_0}\xi_{j_0}^{l_{n+1}+1/2}\prod_{j\neq j_0} (\xi_i^{-1/2}-q_E^{-1/2}\xi_i^{1/2})\prod_{\stackrel{j\neq j_0}{1\leq i\leq l_{n+1}}}(1-q_E^{-1/2}\Xi_i\xi_j)\\
\times\prod_{\stackrel{j\neq j_0}{1\leq i\leq j\leq l_{n+1}}}(\Xi_i^{-1}-q_E^{-1/2}\xi_j^{-1})\prod_{\stackrel{j\neq j_0}{1\leq j<i\leq l_{n+1}}}(\xi_j^{-1}-q_E^{-1/2}\Xi_i^{-1})
\eeqnan
where $c_{j_0}\in \Z[q_E^{\pm 1/2}].$

We claim that for any $i_0$, we cannot have $|\lambda_{i_0}|=l_{n+1}$. If $i_0\leq j_0$, we get $l_{n+1}-i_0$ copies of $\Xi_{i_0}^{-1}$ from the third product, and $i_0-1$ copies of $\Xi_{i_0}^{-1}$ from the fourth product, and no more such factors anywhere else.  Furthermore, we only get at most $l_{n+1}-1$ copies of $\Xi_{i_0}$ from the second product, and no more such factors anywhere else.

A similar argument works to show that we cannot have $\mu_{j_0}=-l_{n+1}-1/2.$  So, all regular monomials in $B_{\Xi,\xi}$ are in the orbit of $\Xi^{\rho_{n+2}}\xi^{\rho_{n+1}}.$

Now, all singular monomials are stabilized by a collection of pairs of Weyl-group elements of opposite sign, and will therefore vanish from (\ref{Asum}).  Furthermore, since $A_{\Xi,\xi}$ is clearly Weyl-invariant, all regular monomials will appear with the same constant coefficient $c$, which is independent of both $\Xi$ and $\xi.$  So we have
$$
A_{\Xi,\xi} =c\cdot (\mathcal{D}_\Xi\mathcal{D}_\xi)^{-1}\sum_{w'\in W_{n+2}, w\in W_{n+1}} \operatorname{sgn}(w)\operatorname{sgn}(w')(w'\Xi)^{-\rho_{n+1}}(w\xi)^{-\rho_{n}} = c.
$$

The proof in Case B proceeds the same as in Case A.  One needs to take
$$
\rho_{n+2} := (l_{n+2}-1/2,l_{n+2}-3/2,\dots , 1/2)
$$
and 
$$
\rho_{n+1}:=(l_{n+2}-1,l_{l+2}-2,\dots,1)
$$
in this case.
\end{proof}

Now we compute $A_{\Xi,\xi}$.

\begin{proposition}
We have $$A_{\Xi,\xi} = \left(L(1,\chi)\zeta(2)L(3,\chi)\dots L(n,\chi)\zeta(n+1)\right)^{-1}$$ if $n+1$ is even and $$A_{\Xi,\xi} = \left(L(1,\chi)\zeta(2)L(3,\chi)\dots L(n+1,\chi)\right)^{-1}$$ if $n+1$ is odd.  Here, $\chi$ is the quadratic character associated to the extension $E/F$ and all $L$ and $\zeta$ factors are with respect to $q_F$.
\end{proposition}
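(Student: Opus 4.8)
The plan is to combine the preceding Lemma with one explicit evaluation of the constant. By that Lemma, $A_{\Xi,\xi}$ is a Laurent polynomial in $q_E^{1/2}$ which does not depend on the unramified characters $\Xi$ and $\xi$; hence it suffices to determine its value, and any convenient specialization (or degeneration) of $\Xi,\xi$ will do. I will phrase the argument in Case A ($n+1$ even); Case B is identical after replacing $\rho_{n+1},\rho_{n+2}$ by the Weyl vectors recorded at the end of the Lemma's proof.

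First I would let the characters degenerate. Write $x_i:=\Xi_i(\varpi)$ and $y_j:=\xi_j(\varpi)$, and let them tend to $0$ along a hierarchy $x_1\ll y_1\ll x_2\ll y_2\ll\cdots$ with all scales rationally independent. Along such a path no monomial argument occurring in $b$, $d_1$, $d_0$, or in any of their Weyl translates, approaches $1$, so each factor of each $c(w'\Xi,w\xi)$ tends to $1$ or to a large monomial; I would then check that every summand $c(w'\Xi,w\xi)$ itself tends to $0$ or to a monomial $\pm q_E^{-m(w',w)/2}$, with $m(w',w)$ a nonnegative integer read off from how $w'$ and $w$ reorder and invert the hierarchy. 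Since $\sum_{w'\in W_{n+2},\,w\in W_{n+1}}c(w'\Xi,w\xi)$ is constant and equals $A_{\Xi,\xi}$, passing to the limit identifies $A_{\Xi,\xi}$ with the (finite) sum of these monomials. As a sanity check, the identity term tends to $1$, and in the rank-one case $\Xi=(\Xi_1)$, $\xi=(\xi_1)$ the four terms tend to $1$, $q_E^{-1/2}$, $-q_E^{-1}$, $-q_E^{-3/2}$, whose sum is $(1+q_E^{-1/2})(1-q_E^{-1})$; after $q_E=q_F^2$ this is exactly $L_F(1,\chi)^{-1}\zeta(2)^{-1}$, matching the stated answer for $n+1=2$.

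Second I would translate into $L$-factors. In the quasi-split case $E/F$ is the unramified quadratic extension, so $q_E=q_F^2$ and $\chi(\varpi)=-1$; writing $\zeta(k)=(1-q_F^{-k})^{-1}$ and $L_F(k,\chi)=(1+q_F^{-k})^{-1}$, the Laurent polynomial produced in Step 1 should factor as $\big(L(1,\chi)\zeta(2)L(3,\chi)\cdots L(n,\chi)\zeta(n+1)\big)^{-1}$ when $n+1$ is even and as $\big(L(1,\chi)\zeta(2)\cdots L(n+1,\chi)\big)^{-1}$ when $n+1$ is odd. An equivalent route, staying closer to the Lemma's proof, is to extract directly the coefficient $c$ of the dominant monomial $\Xi^{\rho_{n+2}}\xi^{\rho_{n+1}}$ in the product expansion of $B_{\Xi,\xi}$: the power $\Xi_1^{l_{n+1}}$ is extremal there, which forces the choice of term in every factor in which $\Xi_1$ occurs, peels off an explicit power of $q_E^{-1/2}$, and leaves a product of the same shape in lower rank; iterating and summing gives the same Laurent polynomial.

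The main difficulty is entirely combinatorial: keeping track of the $2^{l_{n+1}}l_{n+1}!\cdot 2^{l_{n+2}}l_{n+2}!$ terms of the double Weyl sum (equivalently, of the ways to build the dominant monomial), deciding which converge to nonzero monomials and with which sign and $q_E$-power, and verifying that the total telescopes to the stated product of $L$- and $\zeta$-factors. This is the exact analogue of the unramified computation carried out for orthogonal groups in \cite{rgp}, and I would follow that argument, being careful about the asymmetry coming from $\dim V_{n+2}-\dim V_{n+1}=1$ — which is precisely why the indexing by $n+1,n+2$ (rather than $n,n+1$) is the natural one in this section.
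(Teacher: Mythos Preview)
Your degeneration idea is sound in principle --- the rank-one check you did is correct, and each $c(w'\Xi,w\xi)$ does have a well-defined monomial limit along the hierarchy you describe --- but you are making the argument much harder than it needs to be, and you explicitly leave the core combinatorics undone.

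The paper exploits the constancy of $A_{\Xi,\xi}$ in a sharper way: instead of a limit, it evaluates at the \emph{exact} point
\[
\widehat{\Xi}=(q_E^{-l_{n+2}},q_E^{-(l_{n+2}-1)},\dots,q_E^{-1}),\qquad
\widehat{\xi}=(q_E^{-l_{n+2}+1/2},\dots,q_E^{-1/2})
\]
in Case~A (and the analogous half-integer/integer swap in Case~B). At this point one checks directly from the four vanishing conditions for $b$ that $b(w'\widehat{\Xi},w\widehat{\xi})=0$ for every $(w',w)\neq(1,1)$: it is a short pigeonhole on the signs $\varepsilon_i,\varepsilon_i'$ and the permutations $\sigma,\tau$, forcing first all signs to be $+1$ and then $\sigma=\tau=\mathrm{id}$. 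Thus the entire double Weyl sum collapses to the \emph{single} term
\[
A_{\widehat{\Xi},\widehat{\xi}}=c(\widehat{\Xi},\widehat{\xi})=\frac{b(\widehat{\Xi},\widehat{\xi})}{d_1(\widehat{\Xi})\,d_0(\widehat{\xi})},
\]
and a straightforward induction on $l$ yields $A_{l+1}=A_l\cdot\bigl(L(2l+1,\chi)\zeta(2l+2)\bigr)^{-1}$.

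Compared with your plan, this buys everything: no limiting process, no tracking of $|W_{n+2}|\cdot|W_{n+1}|$ monomials, no cancellation to organize. Your route would require you to prove that the limits of the individual $c(w'\Xi,w\xi)$ sum telescopically to the claimed product --- essentially a Cauchy/Weyl-denominator identity over the double Weyl group --- which is doable but is exactly the ``main difficulty'' you flag and do not resolve. The paper's specialization sidesteps it entirely.
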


Note that this is simply saying that $A_{\Xi, \xi}^{-1} = L\big(0, M_{n+1}^\vee(1)\big)$, where $M_{n+1}^\vee(1)$ is the twisted dual of the motive $M_{n+1}$ associated to $G_{n+1}$ by Gross \cite{motive}.

\begin{proof}
We prove this only in Case A.  The proof proceeds similarly in Case B.

We set 
\beqnan
\widehat{\Xi} &=& (q_E^{-l_{n+2}}, q_E^{-(l_{n+2}-1)},\dots, q_E^{-1})\\
\widehat{\xi} &=& (q_E^{-l_{n+2}+1/2}, q_E^{-l_{n+2}+3/2},\dots,q_E^{-1/2})
\eeqnan
and compute $A_{\widehat{\Xi},\widehat{\xi}}.$

Note that $b(w'\widehat{\Xi},w\widehat{\xi}) = 0$ if and only if at least one of the following is true:
\beqnan
(w'\widehat{\Xi})_i(w\widehat{\xi})_j &=& q_E^{1/2}\text{ for some }i,j\\
(w'\widehat{\Xi})_i(w\widehat{\xi})_j^{-1} &=& q_E^{1/2}\text{ for some }i\leq j\\
(w'\widehat{\Xi})_i^{-1}(w\widehat{\xi})_j &=& q_E^{1/2}\text{ for some } i>j\\
(w\widehat{\xi})_j &=& q_E^{1/2}\text{ for some }j.
\eeqnan
We show that $b(w'\widehat{\Xi}, w\widehat{\xi})\neq 0\implies w',w=1.$  To see this, note that there are elements $\sigma,\tau$ of the symmetric group $S_{l_{n+2}}$ and $\varepsilon_i,\varepsilon_i'\in\{\pm 1\}$ such that 
\beqnan
w'\widehat{\Xi} &=& \left(\widehat{\Xi}_{\sigma(1)}^{\varepsilon_1'}, \widehat{\Xi}_{\sigma(2)}^{\varepsilon_2'},\dots, \widehat{\Xi}_{\sigma(l_{n+2})}^{\varepsilon_{l_{n+2}}'}\right)\\
w\widehat{\xi} &=& \left( \widehat{\xi}_{\tau(1)}^{\varepsilon_1},\widehat{\xi}_{\tau(2)}^{\varepsilon_2},\dots,\widehat{\xi}_{\tau(l_{n+2})}^{\varepsilon_{l_{n+2}}}\right).
\eeqnan
(Note that $l_{n+2}=l_{n+1}=\frac{n+1}{2}$ in this case.)  Set $r_a := \sigma^{-1}(l_{n+2} + 1 - a)$ and $s_b := \tau^{-1}(l_{n+2}+1-b).$  To avoid the fourth condition above, we see that we must have $\varepsilon_{s_1} = 1$.  But then, to avoid the first condition, we must have $\varepsilon_{r_1}'=1$ as well.  Continuing to avoid the first condition, we see that $\varepsilon_i=\varepsilon_j'=1$ for all $1\leq i,j,\leq l_{n+2}$.  We are left to show that both $\sigma$ and $\tau$ are trivial.  This is equivalent to showing that $r_i\leq s_i$ for all $i$, and that $s_{i+1}<r_i$.  Suppose that $s_i < r_i$ for some $i$.  Then we'd have $(w'\widehat{\Xi})_{r_i}^{-1}(w\widehat{\xi})_{s_i} = q_E^{1/2}$, which is the third condition above.  Similarly, if $s_{i+1}\geq r_i$, then we'd have $(w'\widehat{\Xi})_{r_i}(w\widehat{\xi})_{s_{i+1}}^{-1} = q_E^{1/2}$, which is the second condition above.  So we have
$$
s_1\geq r_1 > s_2\geq r_2>\dots > s_{l_{n+2}}\geq r_{l_{n+2}}
$$
which means that both $w$ and $w'$ are trivial.
This means that
\beqn
A_{\widehat{\Xi}, \widehat{\xi}} = \frac{b(\widehat{\Xi}, \widehat{\xi})}{d_1(\widehat{\Xi})d_0(\widehat{\xi})}.\label{shortA}
\eeqn

We denote $A_l:=A_{\widehat{\Xi^{(l)}}, \widehat{\xi^{(l)}}}$ where $\widehat{\Xi}^{(l)} := (q_E^{-l}, q_E^{-(l-1)}, \dots, q_E^{-1})$ and $\widehat{\xi}^{(l)} := (q_E^{-l+1/2},\dots, q_E^{-1/2}).$  By (\ref{shortA}), and a straightforward calculation, we see that
$$
A_{l+1} = A_l\left(L(2l+1,\chi)\zeta(2l+2)\right)^{-1}.
$$
By induction on $l$, the proof is complete.

The proof in Case B proceeds in the same way by setting $$\widehat{\Xi} := (q_E^{-(l_{n+2}-1/2)}, q_E^{-(l_{n+2}-3/2)},\dots, q_E^{-1/2})$$ and $$\widehat{\xi} := (q_E^{-(l_{n+2}-1)},q_E^{-(l_{n+2}-2)},\dots,q_E^{-1}).$$
\end{proof}

\paragraph{The Split Case.}
Recall that in the split case, the unitary groups are just general linear groups.  So, we consider the groups $G_{n+2}$ and $G_{n+1}$ where $G_i := GL_i(F)$.\footnote{At this point, the reason for sticking with the choice of $n+1$ and $n+2$ is only for consistency with the non-split case.}  Let $B_i, T_i$ and $N_i$ denote the standard Borel subgroups of upper triangular matrices, tori of diagonal matrices, and subgroups of upper triangular unipotent matrices (unipotent radicals).  Now, let $$\xi = (\xi_1,\dots, \xi_{n+1})$$ and $$\Xi = (\Xi_1,\dots, \Xi_{n+2})$$ where each $\xi_i,\Xi_i$ are unramified characters of $F^\times$.  We see that $\xi$ and $\Xi$ can be viewed as characters of $T_{n-1}$ and $T_n$ in the obvious manner; we extend them to characters of $B_{n+1}$ and $B_{n+2}$ by triviality on the $N_i$.  Then we define
$$
I(\xi) := \operatorname{Ind}_{B_{n+1}}^{G_{n+1}}\xi
$$
and
$$
I(\Xi) := \operatorname{Ind}_{B_{n+2}}^{G_{n+2}} \Xi
$$
where the induction is normalized.  Let $\eta$ be a representative for the unique open dense orbit of $B_{n+2}\times B_{n+1}$ on $G_{n+2}$.  Now, we recall the function $S_{\Xi,\xi}$ on $G_{n+2}$.  We have
$$
S_{\Xi,\xi}(g) := \int_{K_{n+1}\times K_{n+2}} Y_{\Xi,\xi}(k_{n+2}g^{-1}k_{n+1})\ dk_{n+1}dk_{n+2}
$$
where $Y_{\Xi,\xi}$ is the function defined on $G_{n+2}$ by
\begin{enumerate}
\item $Y_{\Xi,\xi}(b_{n+2}gb_{n+1}) = (\Xi^{-1}\delta_{n+2}^{1/2})(b_{n+2})(\xi\delta_{n+1}^{-1/2})(b_{n+1})Y_{\Xi,\xi}(g)$ for all $b_{n+2}\in B_{n+2}$ and $b_{n+1}\in B_{n+1}$.
\item $Y_{\Xi,\xi}(\eta) = 1$
\item $Y_{\Xi,\xi}(g) = 0$ for $g\not\in B_{n+2}\eta B_{n+1}$.
\end{enumerate}

As mentioned earlier, we're interested in computing $S_{\Xi,\xi}$ at the identity.  We have the following result of \cite{kmsgln}:
\begin{theorem}
$$
S_{\Xi,\xi}(1) = q_F^{l(w_\ell)+l(w_\ell')}\frac{\prod_{1\leq i < j\leq n+2}L_F(1/2, \xi_i\Xi_{n-j+3})\prod_{1\leq j\leq i<n+2}L_F(1/2,\xi_i^{-1}\Xi_{n-j+3}^{-1})}{\prod_{i=1}^{n+1}\zeta_F(i)\prod_{1\leq i<j\leq n+1}L_F(1,\xi_i\xi_j^{-1})\prod_{1\leq i<j\leq n+2}L_F(1,\Xi_i\Xi_j^{-1})}.
$$
\end{theorem}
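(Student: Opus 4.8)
The plan is to treat $S_{\Xi,\xi}(1)$ as the value at $g=1$ of the unramified bi-spherical function attached to the spherical pair $\bigl(GL_{n+2}(F)\times GL_{n+1}(F),\ GL_{n+1}(F)^{\Delta}\bigr)$ --- a relative (``Whittaker--Shintani'' type) function in the sense of Kato--Murase--Sugano --- and to compute it by a Gindikin--Karpelevich/Casselman--Shalika reduction. First I would record the finiteness input: the pair is multiplicity free, so for generic unramified $(\Xi,\xi)$ the space of bi-$K$-invariant functions on $G_{n+2}$ that are Hecke-eigen with the prescribed eigenvalues on the left (for $G_{n+2}$) and on the right (for $G_{n+1}$) is at most one-dimensional; since $g\mapsto\int_{K_{n+2}\times K_{n+1}}Y_{\Xi,\xi}(k_{n+2}g^{-1}k_{n+1})\,dk$ manifestly lies in that line, it is determined by a single value. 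Then, using the transformation law (1)--(3) defining $Y_{\Xi,\xi}$ together with an Iwasawa decomposition of each of $K_{n+2}$ and $K_{n+1}$ relative to the opposite Borels, the integral for $S_{\Xi,\xi}(1)$ collapses onto an integral over $\bigl(\overline{N}_{n+2}\cap K_{n+2}\bigr)\times\bigl(\overline{N}_{n+1}\cap K_{n+1}\bigr)$, where $\overline{N}_i$ is the unipotent radical opposite $N_i$; this is the exact analogue of the classical Gindikin--Karpelevich unfolding, and the only subtlety is keeping track of when $k_{n+2}g^{-1}k_{n+1}$ lands in the big cell $B_{n+2}\eta B_{n+1}$.

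Second, I would set up the functional equations. Applying the rank-one intertwining operators on the unramified principal series $I(\Xi)$ and $I(\xi)$, one simple reflection at a time, yields relations $S_{w'\Xi,\,w\xi}(1)=\gamma_{w'}(\Xi)\,\gamma_w(\xi)\,S_{\Xi,\xi}(1)$, where each $\gamma$ is the usual product of ratios of local $L_F$-factors for the simple roots; iterating over reduced words gives full $W_{n+2}\times W_{n+1}$-equivariance. Multiplying $S_{\Xi,\xi}(1)$ by the denominator $\prod_{i=1}^{n+1}\zeta_F(i)\cdot\prod_{1\le i<j\le n+1}L_F(1,\xi_i\xi_j^{-1})\cdot\prod_{1\le i<j\le n+2}L_F(1,\Xi_i\Xi_j^{-1})$ produces a function symmetric in the $\Xi_i$ and in the $\xi_j$; combining the support of $Y_{\Xi,\xi}$ with a regularity argument entirely parallel to the one used above for $A_{\Xi,\xi}$ shows it is a Laurent polynomial of bounded degree in $\Xi_i(\varpi)^{\pm1}$ and $\xi_j(\varpi)^{\pm1}$. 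Such a symmetric Laurent polynomial is pinned down by a single leading monomial, which I would extract by evaluating the $\overline{N}\cap K$ integral on the contribution of the dominant chamber and reading off the top term; the scalar $q_F^{l(w_\ell)+l(w_\ell')}$ is precisely the bookkeeping of the measures of the Iwahori-type subgroups involved, exactly as the factor $q_E^{l(w_\ell)+l(w_\ell')}\operatorname{Vol}(\mathcal{B}_{n+1})\operatorname{Vol}(\mathcal{B}_{n+2})$ appeared in the quasi-split computation above.

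It then remains to identify the resulting symmetric Laurent polynomial with the product $\prod_{1\le i<j\le n+2}L_F(1/2,\xi_i\Xi_{n-j+3})\cdot\prod_{1\le j\le i<n+2}L_F(1/2,\xi_i^{-1}\Xi_{n-j+3}^{-1})$: concretely, one checks that this product is invariant under $W_{n+2}\times W_{n+1}$, that after clearing the same denominator it is a Laurent polynomial of the same bounded degree, and that it has the same leading monomial, whence equality. This is a Weyl-character-formula/Macdonald-type computation, and the nontrivial point --- the one I expect to be the main obstacle --- is getting the \emph{asymmetric} pairing right, namely the index reversal $j\mapsto n-j+3$ matching the $\xi$'s with the $\Xi$'s together with the precise placement of the inverses. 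That asymmetry is forced by the explicit representative $\eta$ of the open $B_{n+2}\times B_{n+1}$ orbit on $GL_{n+2}$ and by how $\eta$ conjugates one Borel relative to the other, so one cannot avoid fixing $\eta$ concretely (a suitable Weyl element times a unipotent) and propagating that choice through the unfolding.

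Finally, a secondary technical point: the ``eigenfunction, hence one-dimensional'' characterization must be justified uniformly in $(\Xi,\xi)$. For generic parameters the multiplicity-one statement for $(GL_{n+2},GL_{n+1})$ suffices, and the reducible or non-convergent cases (e.g.\ along $\Xi_l=|\cdot|^s$) are handled by first proving the identity for generic $(\Xi,\xi)$ in the range where Proposition~\ref{conv} applies and then continuing analytically, exactly as was done for $\Lambda$ in the discussion preceding Proposition~\ref{prev}.
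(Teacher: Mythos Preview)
The paper does not actually prove this theorem: it is quoted as a result of Kato--Murase--Sugano (the reference \cite{kmsgln}, described earlier as ``unpublished work''), with no argument given. So there is no proof in the paper to compare yours against. Your outline is in the spirit of the Kato--Murase--Sugano framework for Whittaker--Shintani functions on spherical pairs, and the overall shape (multiplicity one for the line of bi-spherical eigenfunctions, functional equations from rank-one intertwining operators, a Gindikin--Karpelevich-type unfolding, and determination by a leading term) is the right template.

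That said, one step in your sketch is not correct as written and would fail. You assert that after multiplying $S_{\Xi,\xi}(1)$ by the denominator you obtain something \emph{symmetric} in the $\Xi_i$ and in the $\xi_j$, and then separately that the numerator product $\prod_{1\le i<j\le n+2}L_F(1/2,\xi_i\Xi_{n-j+3})\cdot\prod_{1\le j\le i<n+2}L_F(1/2,\xi_i^{-1}\Xi_{n-j+3}^{-1})$ is $W_{n+2}\times W_{n+1}$-invariant. The numerator is manifestly \emph{not} Weyl-invariant: it is a chamber-dependent product, depending on the ordering $i<j$ versus $j\le i$ and on the index reversal $j\mapsto n-j+3$. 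What is true is that the full ratio satisfies the correct functional equations under the Weyl groups (with the Gindikin--Karpelevich $\gamma$-factors), which is a different statement. Compare with how the paper handles the quasi-split case: there one does \emph{not} claim a closed-form product is symmetric, but rather writes $S$ in the Khoury form $\zeta(\Xi^{-1},\xi^{-1})\,q_E^{l(w_\ell)+l(w_\ell')}\operatorname{Vol}(\mathcal B_{n+1})\operatorname{Vol}(\mathcal B_{n+2})\,A_{\Xi^{-1},\xi^{-1}}$ with $A$ a \emph{sum over $W_{n+2}\times W_{n+1}$} of chamber terms $c(w'\Xi,w\xi)$, and then proves that this Weyl-symmetrized sum is constant by a regular-monomial argument. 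If you want to mimic that route for $GL_{n+1}\subset GL_{n+2}$, the correct object to prove constant is the analogous symmetrized sum, not the single-chamber numerator; alternatively, you can match functional equations on both sides and pin down the constant at a specialization, but then you should drop the invariance claim and instead verify that both sides transform by the \emph{same} cocycle under each simple reflection.
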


\subsubsection{Concluding the unramified calculations}
Now that we've computed both $S_{\Xi,\xi}(1)$ and $\zeta(\Xi,\xi)$, we have actually computed the local integrals in the unramified case.  In this section, we show that they are essentially a product of local $L$-factors.

If we let $\pi_n$ and $\pi_{n+1}$ be unramified principal series representations of $G_n$ and $G_{n+1}$, respectively.  Let $\xi=(\xi_1, \xi_2,\dots, \xi_{\lfloor n/2\rfloor})$ and $\Xi=(\Xi_1,\dots, \Xi_{\lfloor(n+1)/2\rfloor})$ be the relevant characters of $B_n$ and $B_{n+1}$.  We first consider the standard local $L$-factor.  In the quasi-split case, for $n=2l$, we have
\beqnan
\lefteqn{L_E(s, BC(\pi_n)\otimes BC(\pi_{n+1}), \operatorname{st})}\\
 &=& \prod_{1\leq i < j\leq l} L_E(s, \xi_i\Xi_j)L_E(s, \xi_i^{-1}\Xi_j)L_E(s, \xi_i\Xi_j^{-1})L_E(s, \xi_i^{-1}\Xi_j^{-1}) \\
&&\times \prod_{1\leq j \leq i\leq l} L_E(s, \xi_i\Xi_j)L_E(s, \xi_i^{-1}\Xi_j)L_E(s, \xi_i\Xi_j^{-1})L_E(s, \xi_i^{-1}\Xi_j^{-1})\\
&&\times \prod_{i=1}^l L_E(s, \xi_i) L_E(s, \xi_i^{-1}).
\eeqnan
If $n=2l-1$, we have
\beqnan
\lefteqn{L_E(s, BC(\pi_n)\otimes BC(\pi_{n+1}), \operatorname{st})}\\
 &=& \prod_{1\leq i < j\leq l} L_E(s, \xi_i\Xi_j)L_E(s, \xi_i^{-1}\Xi_j)L_E(s, \xi_i\Xi_j^{-1})L_E(s, \xi_i^{-1}\Xi_j^{-1}) \\
&&\times \prod_{1\leq j \leq i\leq l-1} L_E(s, \xi_i\Xi_j)L_E(s, \xi_i^{-1}\Xi_j)L_E(s, \xi_i\Xi_j^{-1})L_E(s, \xi_i^{-1}\Xi_j^{-1})\\
&&\times \prod_{i=1}^l L_E(s, \Xi_i) L_E(s, \Xi_i^{-1}).
\eeqnan

Now we consider the adjoint local $L$-factors.  In the quasi-split case, for $n=2l$, we have
\beqnan
L_F(s, \pi_n, \operatorname{Ad}) &=& \zeta_F(s)^lL_F(s, \chi_{E/F})^l\\
&&\times\prod_{1\leq i<j\leq l} L_F(2s, \xi_i\xi_j) L_F(2s, \xi_i^{-1}\xi_j)L_F(2s, \xi_i^{-1}\xi_j^{-1})L_F(2s, \xi_i\xi_j^{-1})\\
&& \times \prod_{i=1}^l L_F(s, \xi_i)L_F(s, \xi_i^{-1})
\eeqnan
and
\beqnan
\lefteqn{L_F(s, \pi_{n+1}, \operatorname{Ad})}\\
 &=& \zeta_F(s)^{l} L_F(s, \chi_{E/F})^{l+1}\\
&&\times \prod_{1\leq i<j\leq l} L_F(2s, \Xi_i\Xi_j) L_F(2s, \Xi_i^{-1}\Xi_j)L_F(2s, \Xi_i^{-1}\Xi_j^{-1})L_F(2s, \Xi_i\Xi_j^{-1})\\
&& \times \prod_{i=1}^l L_F(s, \chi_{E/F}\Xi_i)L_F(s, \chi_{E/F}\Xi_i^{-1})L_F(2s, \Xi_i)L_F(2s,\Xi_i^{-1}).
\eeqnan
In the quasi-split case, for $n=2l-1$, then we have
\beqnan
\lefteqn{L_F(s, \pi_n, \operatorname{Ad})}\\
 &=& \zeta_F(s)^{l-1}L_F(s, \chi_{E/F})^{l}\\
&&\times\prod_{1\leq i<j\leq l-1} L_F(2s, \xi_i\xi_j) L_F(2s, \xi_i^{-1}\xi_j)L_F(2s, \xi_i^{-1}\xi_j^{-1})L_F(2s, \xi_i\xi_j^{-1})\\
&& \times \prod_{i=1}^{l-1} L_F(s, \chi_{E/F}\xi_i)L_F(s, \chi_{E/F}\xi_i^{-1})L_F(2s, \xi_i)L_F(2s, \xi_i^{-1})
\eeqnan
and
\beqnan
\lefteqn{L_F(s, \pi_{n+1}, \operatorname{Ad})}\\
 &=& \zeta_F(s)^{l} L_F(s, \chi_{E/F})^{l}\\
&&\times \prod_{1\leq i<j\leq l} L_F(2s, \Xi_i\Xi_j) L_F(2s, \Xi_i^{-1}\Xi_j)L_F(2s, \Xi_i^{-1}\Xi_j^{-1})L_F(2s, \Xi_i\Xi_j^{-1})\\
&& \times \prod_{i=1}^l L_F(s, \Xi_i)L_F(s,\Xi_i^{-1}).
\eeqnan

Now we discuss the split case.  Recall that at a split place, we have $E=F\oplus F$, and $V=V_1\oplus V_2$, where each $V_i$ is an $F$-vector space.  Also, we have $U(V)\cong GL(V_1)$ via the map $(g_1, g_2)\mapsto g_1$.  So, the representations $\pi_n$ and $\pi_{n+1}$ are unramified spherical series representations of $GL_n(F)$ and $GL_{n+1}(F)$, respectively.  If $\Xi = (\Xi_1, \dots , \Xi_{n+1})$ and $\xi=(\xi_1, \dots, \xi_n)$, where each of the $\Xi_i$ and $\xi_i$ are unramified characters of $F^\times$, then we have
\beqnan
L_E(s, BC(\pi_n)\otimes BC(\pi_{n+1}), \operatorname{st}) &=& L_F(s, \pi_n\otimes\pi_{n+1}, \operatorname{st})L_F(s, \pi_n^\vee\otimes\pi_{n+1}^\vee, \operatorname{st})\\
&=& \prod_{\stackrel{1\leq i\leq n}{1\leq j\leq n+1}} L_F(s, \xi_i\Xi_j)L_F(s, \xi_i^{-1}\Xi_j^{-1}).
\eeqnan

The adjoint $L$-factors are as follows:
$$
L_F(s, \pi_n, \operatorname{Ad}) = \zeta_F(s)^n\prod_{1\leq i\neq j\leq n} L_F(s, \xi_i\xi_j^{-1})
$$
and
$$
L_F(s, \pi_{n+1}, \operatorname{Ad}) = \zeta_F(s)^{n+1}\prod_{1\leq i\neq j\leq n+1} L_F(s, \Xi_i\Xi_j^{-1}).
$$

So using the results from the previous sections, we have the following:
\begin{theorem}\label{localcalc}  For $v\not\in S$
\beqnan
\mathcal{P}'(f_{\pi_{n+2}}, f_{\pi_{n+1}})&=&\zeta(\Xi,\xi)S_{\Xi^{-1},\xi^{-1}}(1)\\ 
&=& L\big(M_{n+2}^\vee(1), 0\big)\frac{L_E(1/2, BC(\pi_{n+2})\boxtimes BC(\pi_{n+1}))}{L_F(1, \pi_{n+2}, \operatorname{Ad})L_F(1, \pi_{n+1}, \operatorname{Ad})}\\
&=& \Delta_{G_{n+2}}L_{\pi_{n+2}, \pi_{n+1}}(1/2).
\eeqnan
\end{theorem}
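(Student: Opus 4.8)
The plan is to treat the three displayed equalities in turn, since they are of quite different character.

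The first equality needs no new argument: unwinding the definitions one has $\mathcal{P}'(f_{\pi_{n+2}}, f_{\pi_{n+1}}) = F(1)$, and the chain of identities already computed for $F(g_{n+2})$ gives $F(1) = \zeta(\Xi,\xi)\,S_{\Xi^{-1},\xi^{-1}}(1)$; one simply invokes that computation at $g_{n+2}=1$.

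The second equality is the heart of the matter: after substitution it becomes a long but elementary identity between products of local $L$-factors. I would split into the non-split (quasi-split) and the split cases. In the non-split case I substitute (a) Khoury's Theorem 11.4, which writes $S_{\Xi^{-1},\xi^{-1}}(1)$ as $\zeta(\Xi^{-1},\xi^{-1})\,q_E^{l(w_\ell)+l(w_\ell')}\operatorname{Vol}(\mathcal{B}_{n+1})\operatorname{Vol}(\mathcal{B}_{n+2})\,A_{\Xi^{-1},\xi^{-1}}$; (b) the Proposition identifying $A_{\Xi^{-1},\xi^{-1}} = A_{\Xi,\xi} = L\big(0,M_{n+1}^\vee(1)\big)^{-1}$; (c) the two corollaries expressing $\zeta(\Xi,\xi)$ — and its counterpart with the characters inverted — as explicit products of $L_E$-factors; and (d) the volume formula, rewritten using $\prod_{j=1}^i(q_F^j-(-1)^j) = q_F^{i(i+1)/2}\,L\big(0,M_i^\vee(1)\big)^{-1}$. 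After this, what remains is a formal identity that I check against the explicit expansions of $L_E(1/2, BC(\pi_{n+2})\otimes BC(\pi_{n+1}),\operatorname{st})$ and of $L_F(1,\pi_{n+2},\operatorname{Ad})$ and $L_F(1,\pi_{n+1},\operatorname{Ad})$ recorded at the end of the section. Three observations organize the matching. First, the $\zeta(\Xi^{-1},\xi^{-1})$ sitting inside $S$ pairs with $\zeta(\Xi,\xi)$ so that every half-integral factor $L_E(1/2,\xi_i^{\pm1}\Xi_j^{\pm1})$ appears with all four sign choices, reassembling the standard $L$-factor, while the integral-argument factors reassemble the two adjoint $L$-functions in the denominator. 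Second, $A_{\Xi,\xi}^{-1}=L(0,M_{n+1}^\vee(1))$ is cancelled against $\operatorname{Vol}(\mathcal{B}_{n+1})$, and $\operatorname{Vol}(\mathcal{B}_{n+2})$ supplies exactly $\Delta_{G_{n+2}}=L(0,M_{n+2}^\vee(1))$. Third, the leftover powers of $q_F$ — coming from $q_E^{l(w_\ell)+l(w_\ell')}$ (with $q_E=q_F^2$), the leading powers of the two volume formulas, and the $q$-free products $\prod_j(q_F-(-1)^j)$ — have to cancel completely, since the right-hand side carries no spurious constant; this is the one place demanding genuine care. In the split case I run the same argument, substituting the Kato--Murase--Sugano formula for $S_{\Xi,\xi}(1)$ in place of Khoury's and the split expansions of $\zeta(\Xi,\xi)$, of $L_E(s,BC(\pi_n)\otimes BC(\pi_{n+1}),\operatorname{st}) = L_F(s,\pi_n\otimes\pi_{n+1})\,L_F(s,\pi_n^\vee\otimes\pi_{n+1}^\vee)$, and of the $GL$-adjoint $L$-factors.

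The third equality is immediate from the definitions $\Delta_{G_{n+2}} := L\big(M_{n+2}^\vee(1),0\big)$ and $L_{\pi_{n+2},\pi_{n+1}}(s) := L_E(s, BC(\pi_{n+2})\boxtimes BC(\pi_{n+1}))\big/\big(L_F(s+1/2,\pi_{n+2},\operatorname{Ad})\,L_F(s+1/2,\pi_{n+1},\operatorname{Ad})\big)$, taken at $s=1/2$. Thus the only real obstacle is the second step: no single piece is difficult, but the computation is sizeable, and the work lies in arranging the two sides so that all the local factors visibly cancel — in particular getting the powers of $q_F$ to match and correctly pinning down which factors $L_F(j,\chi_{E/F}^j)$ assemble the motive of $G_{n+2}$.
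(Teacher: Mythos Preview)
Your proposal is correct and follows exactly the approach the paper intends: the paper gives no detailed proof of this theorem, stating only that it follows ``using the results from the previous sections,'' and you have correctly identified which results to combine (the $F(1)=\zeta(\Xi,\xi)S_{\Xi^{-1},\xi^{-1}}(1)$ computation, Khoury's Theorem~11.4 and the Kato--Murase--Sugano formula, the computation of $A_{\Xi,\xi}$, the $\zeta(\Xi,\xi)$ corollaries, the volume formula, and the explicit $L$-factor tables) and how they fit together. Your organizational remarks---pairing $\zeta(\Xi,\xi)$ with $\zeta(\Xi^{-1},\xi^{-1})$ to complete all four sign choices, cancelling $A_{\Xi,\xi}^{-1}$ against the $L(0,M_{n+1}^\vee(1))$ coming from $\operatorname{Vol}(\mathcal{B}_{n+1})$, and isolating the power-of-$q_F$ bookkeeping---are the right way to structure the verification.
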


\section{The Refined Gross-Prasad Conjecture for $U(1)\times U(2)$}\label{globalchapt}
In this chapter, we give a proof of Conjecture \ref{theconjecture} for $U(1)\times U(2)$.  As mentioned before, in this case the conjecture follows almost immediately from a theorem of Waldspurger.  However, this theorem does not deal directly with representations of unitary groups, but instead of a quaternion algebra.

To bridge this gap, we use a result that says that any irreducible, cuspidal, automorphic representation $\pi$ of $U(2)$ can be lifted (in the appropriate sense of `lift') to an irreducible, cuspidal, automorphic representation $\tilde{\pi}$ of $GU(2)$.  Then, by using a an isomorphism of algebraic groups that relates $GU(2)$ to a quaternion algebra $B$, we have that $\tilde{\pi}\cong \Sigma\boxtimes \eta$, where $\Sigma$ is a representation of $B^\times$, and $\eta$ is a Hecke character of $\A_E^\times$.  After getting our hands on a representation of a quaternion algebra, we let Waldspurger's theorem finish the job.

\subsection{The groups $U(1)\subset U(2)\subset GU(2)$}
Let $F$ be a number field, and let $E$ be a quadratic extension of $F$.  Let $B$ be a quaternion algebra defined over $F$, with a fixed embedding $E\hookrightarrow B$ of $F$-algebras.  We view $B$ as a $2$-dimensional vector space over $E$ via left multiplication.  Let $^-:B\to B$ be the standard involution.  Let $\langle\tau\rangle=\operatorname{Gal}(E/F).$  We note that $\overline{e}=\tau(e)$ for all $e\in E$.  With this involution, we define trace and norm maps in the usual way:
\beqnan
N_B(x) &:=& x\overline{x}\in F\\
Tr_B(x) &:=& x+\overline{x}\in F
\eeqnan
for all $x\in B$.  We have $b\in B$ of trace $0$ which normalizes $E$, and whose conjugation action on $E$ is $\tau$.  Any other member of $B$ with these properties is of the form $\lambda b$, with $\lambda\in E$.  This gives us
$$
B = E\oplus E\cdot b.
$$

Now, we can define a non-degenerate hermitian form on B as follows:
$$
\langle x, y\rangle_B := \text{projection of } x\overline{y}\text{ onto the } E\text{ factor via the decomposition above}.
$$

Since we have a hermitian form on $B$, we can consider the unitary group $U(B)$ and the similitude group $GU(B)$.  Furthermore, we have
$$
GU(B)\cong (B^\times\times E^\times)/(\Delta F^\times)
$$
where $\Delta F^\times$ denotes the diagonally embedded copy of $F^\times$ in $B^\times\times E^\times$.  The action of $B^\times\times E^\times$ on $B$ is given by
$$
(b,e)(x) := exb^{-1}
$$
The similitude character is given by
$$
(b,e)\mapsto N_{E/F}(e)N_B(b)^{-1}.
$$
So, we see that
$$
U(B) = \{\widehat{(b,e)}: N_B(b) = N_{E/F}(e)\}.
$$
Here, $\widehat{(b,e)}$ denotes the equivalence class of $(b,e)$ modulo the diagonally embedded $F^\times$.  We also see the center is given by
\beqn\label{zenter}
Z_{U(B)} = \{\widehat{(f,e)}: N_B(f) = N_{E/F}(e), f\in F^\times\} = \{\widehat{(1,e)}: N_{E/F}(e) = 1\}.
\eeqn

We also consider the line $L_B := E\cdot b\subset B$, and we view the associated unitary group $U(L_B)$ as $E_1$ in $GU(B)$.

Now, for any pair of unitary groups $U(1)\subset U(2)$ defined over $F$, there is a quaternion algebra $B$ over $F$ and embedding $E\hookrightarrow B$ such that $U(1)\cong U(L_B)$ and $U(2)\cong U(B)$.  For ease of notation, we will refer to the unitary groups as $G_1$ and $G_2$, and the unitary similitude group as $\widetilde{G_2}$.

We view $G_1, G_2$ and $\widetilde{G_2}$ as algebraic groups over $F$.  Let $(\pi_1, V_{\pi_1})$ and $(\pi_2, V_{\pi_2})$ be irreducible, cuspidal, tempered automorphic representations of $G_1(\A_F)$ and $G_2(\A_F)$, respectively.  

\subsection{Extending Cusp Forms}
By Theorem 4.13 in \cite{hiraga}, we have the following result about extending cusp forms from $G_2(\A_F)$ to $\widetilde{G_2}(\A_F)$.

\begin{theorem}\label{lift} There is an irreducible, cuspidal, automorphic representation $(\widetilde{\pi_2}, V_{\widetilde{\pi_2}})$ of $\widetilde{G_2}(\A_F)$ such that $V_{\widetilde{\pi_2}}|_{G_2(\A_F)}\supset V_{\pi_2}$.
\end{theorem}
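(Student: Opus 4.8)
The plan is to produce $\widetilde{\pi_2}$ by extending the cusp forms in $V_{\pi_2}$ from $G_2(\A_F)$ to $\widetilde{G_2}(\A_F)$. The structural input is that $\dim\widetilde{G_2}=\dim G_2+1$, that the similitude character $\nu\colon\widetilde{G_2}\to\mathbb{G}_m$ is trivial on $G_2$, and that the centre $Z:=Z_{\widetilde{G_2}}\cong\operatorname{Res}_{E/F}\mathbb{G}_m$ already surjects onto $\widetilde{G_2}/G_2$ under $\nu$; hence $\widetilde{G_2}=G_2\cdot Z$ as connected $F$-groups, with $Z\cap G_2=Z_{G_2}\cong U(1)$, the group appearing in line \ref{zenter}. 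Consequently $\widetilde{G_2}(\A_F)$ exceeds the subgroup $H:=\widetilde{G_2}(F)\,G_2(\A_F)\,Z(\A_F)$ only by a \emph{finite} abelian group: place by place this defect is a quotient of $F_v^{\times}/N_{E_v/F_v}E_v^{\times}$, and globally, by class field theory, a quotient of $\A_F^{\times}/F^{\times}N_{E/F}(\A_E^{\times})\cong\operatorname{Gal}(E/F)$.

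First I would extend the central character. Write $\omega$ for the central character of $\pi_2$, viewed as a character of $Z_{G_2}(F)\backslash Z_{G_2}(\A_F)$; its image sits as a closed subgroup of the locally compact abelian group $Z(F)\backslash Z(\A_F)$, so by Pontryagin duality I may choose a character $\widetilde\omega$ of $Z(F)\backslash Z(\A_F)$ restricting to $\omega$. Then, for $\varphi\in V_{\pi_2}$, I would define $\widetilde\varphi$ on $H$ by $\widetilde\varphi(\gamma g z):=\widetilde\omega(z)\,\varphi(g)$ for $\gamma\in\widetilde{G_2}(F),\ g\in G_2(\A_F),\ z\in Z(\A_F)$; the compatibility of $\omega$ with $\widetilde\omega$ on $Z_{G_2}$ together with the automorphy of $\varphi$ makes $\widetilde\varphi$ well defined, and $\varphi\mapsto\widetilde\varphi$ is an isomorphism of $G_2(\A_F)$-modules onto its image. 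If $H=\widetilde{G_2}(\A_F)$ I would simply let $\widetilde{\pi_2}$ be this image, which by construction restricts to the actual space $V_{\pi_2}$ on $G_2(\A_F)$ (set $z=1$). In the general case I would instead induce from $H$ to $\widetilde{G_2}(\A_F)$ --- realised inside the automorphic forms on $\widetilde{G_2}$ by the usual finite-index construction --- and take for $\widetilde{\pi_2}$ an irreducible subrepresentation into whose space the space $V_{\pi_2}$ still embeds.

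It remains to verify that this $\widetilde{\pi_2}$ is cuspidal automorphic. Automorphy is automatic, since all functions produced are left $\widetilde{G_2}(F)$-invariant. For cuspidality I would use that every $F$-parabolic $\widetilde P\subset\widetilde{G_2}$ is of the form $\widetilde P=P\cdot Z$ with $P:=\widetilde P\cap G_2$ an $F$-parabolic of $G_2$ and, because $Z$ is central, with the \emph{same} unipotent radical $N$; hence the constant term of $\widetilde\varphi$ along $\widetilde P$, restricted to $G_2(\A_F)$, equals the constant term of $\varphi$ along $P$, which vanishes, and $Z(\A_F)$-equivariance together with left $\widetilde{G_2}(F)$-invariance then forces this constant term to vanish on all of $\widetilde{G_2}(\A_F)$.

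The hard part will be the bookkeeping forced by the finite gap between $\widetilde{G_2}(\A_F)$ and $H$: one must track the resulting Clifford-theoretic quotients, choose $\widetilde\omega$ coherently with the $F$-structure, confirm that the induction needed to repair the global defect still lands in the \emph{cuspidal} spectrum rather than the residual or continuous spectrum, and --- most delicately --- guarantee that the literal space $V_{\pi_2}$ (not merely an abstract copy of $\pi_2$) lies inside the space of a single irreducible constituent $\widetilde{\pi_2}$. The most economical way to handle all of this, which I would follow, is to work throughout in the quaternionic model $\widetilde{G_2}=GU(B)$, $G_2=U(B)$, where the finite ambiguities become transparent in terms of $B$ and $E$; carrying this out is exactly Theorem 4.13 of \cite{hiraga}.
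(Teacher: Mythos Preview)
The paper does not supply its own proof of this theorem: it simply records the statement and cites Theorem~4.13 of \cite{hiraga}. Your proposal ends at exactly the same place, and the sketch you give beforehand --- extend the central character from $Z_{G_2}$ to $Z_{\widetilde{G_2}}$, push functions from $G_2(\A_F)$ to $\widetilde{G_2}(F)G_2(\A_F)Z(\A_F)$, and then handle the remaining finite abelian defect by induction, checking cuspidality via the identification of unipotent radicals --- is a faithful outline of the Hiraga--Saito argument. So your approach agrees with the paper's (trivial) treatment, and there is no gap.
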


\begin{caution} Note that the restriction above is that of functions, and not restriction of the representation.
\end{caution}

Let $(\widetilde{\pi_2}, V_{\widetilde{\pi_2}})$ be a representation of $\widetilde{G_2}(\A_F)$ given by the theorem above.  Then we have
$$
\widetilde{\pi_2}\cong \Sigma\boxtimes\eta
$$
where $\Sigma$ is a cuspidal irreducible automorphic representation of $B^\times(\A_F)$, and $\eta$ is a Hecke character of $\A_E^\times$, and $\omega_\Sigma\eta|_{\A_F^\times}\equiv 1$, where $\omega_\Sigma$ is the central character of $\Sigma.$  As usual, fix isomorphisms $\Sigma\cong\otimes_v \Sigma_v$ and $\eta\cong\otimes_v\eta_v$.

We observe that we have
$$
\omega_{\pi_2} = \eta|_{\A_{E,1}^\times}.
$$

We denote by $\Sigma'$ the representation of $\operatorname{GL}_2(\A_F)$ associated to $\Sigma$ by the Jacquet-Langlands correspondence.

Now, for $f\in V_{\pi_2}$, we denote by $\widetilde{f}\in V_{\widetilde{\pi_2}}$ a cusp form such that $\widetilde{f}|_{G_2(\A_F)} = f.$  We write
$$
\tilde{f} = \tilde{f}_{\Sigma}\otimes\eta.
$$
As a consequence of this decomposition, we note that for any $f_1,f_2\in V_{\pi_2}$, the corresponding $\tilde{f}_1,\tilde{f}_2\in V_{\widetilde{\pi_2}}$ satisfy $\tilde{f}_1(z)\overline{\tilde{f}_2(z)}=1$ for all $z\in Z_{GU(B)}$, so that $\tilde{f}_1\overline{\tilde{f}_2}$ is a function on $Z_{GU(B)}\backslash GU(B)$.  Furthermore, since $\tilde{f}_{1,\Sigma}(z) = \tilde{f}_{2,\Sigma}(z)$ for all $z$ in the center $Z_{B^\times}(\A_F)$, we see that $\tilde{f}_{1,\Sigma}\overline{\tilde{f}_{2,\Sigma}}$ is a function on $\mathbb{P}B^\times(\A_F) := (Z_{B^\times}\backslash B^\times)(\A_F)$.

\subsection{Waldspurger's Theorem}
We give a brief discussion of Waldspurger's theorem to be used in the proof of Conjecture \ref{theconjecture} for $n=1$.  We take $B,\Sigma$, and $\Sigma'$ as defined above.  Let $T$ be the torus given by the embedding $E\hookrightarrow B$, so that $T(F)$ is identified with $E^\times\subset B^\times(F)$. Let $\chi$ be a Hecke character of $T(\A_F)$, and $f\in\Sigma$.  Let $Z_B$ denote the center of $B^\times$.  The period that Waldspurger considers is
$$
\widetilde{\mathcal{P}}(f,\chi):= \left|\int_{Z_B(\A_F)T(F)\backslash T(\A_F)} f(t)\chi^{-1}(t)dt\right|^2
$$
where $dt$ is the Tamagawa measure, which gives $\operatorname{Vol}(Z_B(\A_F)T(F)\backslash T(\A_F))=2.$  Let $\mathcal{B}_{\Sigma_v}$ and $\mathcal{B}_{\chi_v}$ be local pairings.  We also choose local measures $dt_v$ so that $dt = \prod_v dt_v$, as usual.  Then set
\beqnan
\lefteqn{\alpha_v(f_v, \chi_v) :=}&&\\
&& \left(\frac{\zeta_{F_v}(2)L_{F_v}(1/2, \Sigma_v'\otimes\chi_v^{-1})}{L_{F_v}(1,\Sigma_v', \operatorname{Ad})L_{F_v}(1, \chi_{E_v/F_v})}\right)^{-1}\int_{T_v} \mathcal{B}_{\Sigma_v}(\Sigma_v(t_v)f_v, f_v)\mathcal{B}_{\chi_v}(\chi_v^{-1}(t_v)\chi_v, \chi_v)dt_v.
\eeqnan

We let $\mathcal{B}_{\Sigma}$ be the Petersson inner product on $\Sigma$, where the integral is taken over $[\mathbb{P}B^\times]$. That is
$$
\mathcal{B}_\Sigma(f_1, f_2) := \int_{[\mathbb{P}B^\times]} f_1(b)\overline{f_2(b)} db
$$
where $db$ is the Tamagawa measure.

Then Waldspurger's theorem (see \cite{wald}, page 222) is the following:
\begin{theorem} Suppose that $\Sigma$ has trivial central character.  Then
$$
\frac{\widetilde{\mathcal{P}}(f,\chi)}{\mathcal{B}_\Sigma(f,f)} = \frac{\zeta_F(2) L_E(1/2, BC(\Sigma')\otimes\chi^{-1})}{2 L_F(1, \Sigma', \operatorname{Ad}) L_F(1,\chi_{E/F})}\prod_v \frac{\alpha_v(f_v, \chi_v)}{\mathcal{B}_{\Sigma_v}(f_v, f_v)\mathcal{B}_{\chi_v}(\chi_v, \chi_v)}.
$$
\end{theorem}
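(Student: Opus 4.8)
The plan is to recognize this statement as a reformulation of Waldspurger's central-value formula in \cite{wald} and to carry out the dictionary between his normalizations and the ones fixed above; the substance — an exact Euler product relating the square of the toric period on $B^\times$ to the central value $L_E(1/2, BC(\Sigma')\otimes\chi^{-1})$ — is precisely what Waldspurger proves. His argument realizes $f\in\Sigma$ as a theta lift from the metaplectic group $\widetilde{SL}_2$, unfolds the toric integral against a Rankin--Selberg integral for $\Sigma'$, and identifies the resulting Euler product with the displayed quotient of $L$-values; alternatively one may invoke Jacquet's relative trace formula comparing $B^\times$ and $GL_2$. For the present purpose either may be used as a black box, and the real work is to match constants and measures.

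First I would fix the measure conventions on both sides. The period $\widetilde{\mathcal{P}}(f,\chi)$ uses the Tamagawa measure on $T(\A_F)$, for which $\operatorname{Vol}(Z_B(\A_F)T(F)\backslash T(\A_F))=2$; this $2$ is the Tamagawa number of the one-dimensional anisotropic torus $T/Z_B\cong R^1_{E/F}\mathbb{G}_m$, and it appears when one passes from the squared global period to a single integral of the global matrix coefficient against $\chi^{-1}$ — hence the $2$ in the denominator. Likewise $\mathcal{B}_\Sigma$ is the Petersson pairing over $[\mathbb{P}B^\times]$ for the Tamagawa measure, and the factor $\zeta_F(2)$ records the discrepancy between that Tamagawa measure on $\mathbb{P}B^\times\cong PGL_2$ and the product of the local Haar measures implicit in the $\mathcal{B}_{\Sigma_v}$ (equivalently, it is the completed $\zeta_F(2)$ that cancels the $\zeta_{F_v}(2)^{-1}$ built into each $\alpha_v$). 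Decomposing $f=\otimes_v f_v$, $\chi=\otimes_v\chi_v$ and factoring both sides into local pieces then reduces the identity to the definition of $\alpha_v(f_v,\chi_v)$, together with the fact that the Euler product $\prod_v\alpha_v(f_v,\chi_v)/(\mathcal{B}_{\Sigma_v}(f_v,f_v)\mathcal{B}_{\chi_v}(\chi_v,\chi_v))$ converges because at almost every $v$ the factor equals $1$ by an unramified Macdonald-type computation, in the style of the unramified calculations of Section \ref{localchapt} and \cite{rgp}.

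Next I would identify the $L$-functions. By compatibility of the local Langlands correspondence with base change, the central value that Waldspurger attaches to $\Sigma'$ and $\chi$ is exactly $L_E(1/2, BC(\Sigma')\otimes\chi^{-1})$, the inverse on $\chi$ matching our convention in $\widetilde{\mathcal{P}}$. The factor $L_F(1,\Sigma',\operatorname{Ad})$ in the denominator is the edge value of the adjoint $L$-function, entering through the Rankin--Selberg/Petersson normalization, and $L_F(1,\chi_{E/F})$ is the quadratic-extension factor coming from the relevant residue (the pole of an Eisenstein series, or the split orbital integral in the trace-formula approach). Since $\Sigma$, hence $\Sigma'$, is tempered, both edge values are finite and nonzero, so the quotient on the right makes sense.

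The hard part will be the constant- and measure-bookkeeping: checking that after matching Waldspurger's normalizations of the toric integral, the Petersson pairing, and the local measures to the ones fixed here, the leftover global constant is exactly $\zeta_F(2)/2$ and that the normalizing quotient inside each $\alpha_v$ is precisely the one making the unramified $\alpha_v$ equal $1$. The hypothesis that $\Sigma$ has trivial central character is used so that $\tilde f\,\overline{\tilde f}$ and $\tilde f_\Sigma\,\overline{\tilde f_\Sigma}$ descend to functions on $Z_{GU(B)}\backslash GU(B)$ and on $\mathbb{P}B^\times$ respectively, as recorded just above the statement, so that the period and the Petersson pairing on the left are literally the objects appearing in Waldspurger's formula.
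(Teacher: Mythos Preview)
Your proposal is essentially correct and matches the paper's approach: the paper does not prove this statement but cites it as Waldspurger's result from \cite{wald}, with a remark explaining that the apparent differences from his formulation are accounted for by measure normalizations (he takes $\operatorname{Vol}(Z_B(\A_F)T(F)\backslash T(\A_F))=2L_F(1,\chi_{E/F})$ rather than $2$) and by the harmless insertion of the pairings $\mathcal{B}_{\chi_v}$ on both sides. Your outline of the dictionary---the Tamagawa number $2$ of $T/Z_B$, the role of the adjoint and quadratic $L$-values, and the unramified computation making almost all $\alpha_v$ equal to $1$---is a faithful expansion of precisely that remark.
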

\begin{remark} The reader will notice that the formulation of Waldspurger's theorem we give looks slightly different than that given by Waldspurger himself.  In \cite{wald}, he chooses the global Haar measure\footnote{Waldspurger also refers to this as the Tamagawa measure.  This collision of terminology is unfortunate, and we hope the reader suffers minimal confusion as a result.} such that $\operatorname{Vol}(Z_B(\A_F)T(F)\backslash T(\A_F))=2L_F(1,\chi_{E/F}),$ and he chooses local measures compatibly with respect to this.  With our choice of measures, the formulation above is equivalent.  He also does not include the local pairing $\mathcal{B}_{\chi_v}$ anywhere in the result.  Our inclusion of $\mathcal{B}_{\chi_v}$ -- both in the definition of the $\alpha_v$ and in the denominator of the product on the RHS of the theorem -- does not change anything.
\end{remark}

\subsection{Proof of Conjecture \ref{theconjecture} for $U(1)\times U(2)$}
Waldspurger's theorem does the bulk of the work in proving Conjecture \ref{theconjecture} for $n=1$.  As usual, we let $\pi_i$ denote an irreducible, tempered, cuspidal, automorphic representation of $G_i(\A_F)$.

Let $\mathcal{B}_\Sigma$ be as above, and $\mathcal{B}_{\pi_i}, \mathcal{B}_{\widetilde{\pi_2}}$ are defined as follows, where all global measures are the appropriate Tamagawa measure:
\beqnan
\mathcal{B}_{\pi_i}(f_1,f_2) &:=& \int _{[G_i]} f_1(g)\overline{f_2(g)} dg\\
\mathcal{B}_{\widetilde{\pi_2}}(\tilde{f}_1, \tilde{f}_2) &:=& \int_{Z_{\widetilde{G_2}}(\A_F)\widetilde{G_2}(F)\backslash \widetilde{G_2}(\A_F)} \tilde{f}_1(g)\overline{\tilde{f}_2(g)}dg.
\eeqnan
Note that for $\tilde{f}_1,\tilde{f}_2\in \widetilde{\pi_2}$, we have $\mathcal{B}_{\widetilde{\pi_2}}(\tilde{f}_1, \tilde{f}_2) = \mathcal{B}_\Sigma(\tilde{f}_{1,\Sigma},\tilde{f}_{2,\Sigma})$.

We choose local pairings $\mathcal{B}_{\pi_{i,v}}$ compatibly with the associated global pairings, so that $\prod_v \mathcal{B}_{\pi_{i,v}} = \mathcal{B}_{\pi_i}$.  However, we choose the local pairings $\mathcal{B}_{\Sigma_v}$ so that for $f_i\in \pi_2$ -- which we extend to $\tilde{f}_i\in \widetilde{\pi_2}$ by Theorem \ref{lift} -- we have $ \mathcal{B}_{\Sigma_v}(\tilde{f}_{1,\Sigma_v}, \tilde{f}_{2,\Sigma_v}) = \mathcal{B}_{\pi_{2,v}}(f_{1,v}, f_{2,v})$, where $\tilde{f}_{i,v} = \tilde{f}_{i,\Sigma_v}\otimes\eta.$  

We now have enough in place to prove Conjecture \ref{theconjecture} for $n=1$.  Recall that Waldspurger's theorem assumes that the central character of $\Sigma$ is trivial.  However, in \cite{grosszag}, the authors remove this assumption.

\begin{theorem}  Let $f\in\pi_2$ and $\tilde{f}=\tilde{f}_\Sigma\otimes\eta\in\widetilde{\pi_2}$ such that $\tilde{f}|_{G_2} = f$.  Let $\theta\in \pi_1$ be a unitary character of $G_1(\A_F)$, which we are viewing as the norm-one elements of $\A_E^\times$.  Then
$$
\mathcal{P}( f|_{G_1}, \theta) = \frac{\Delta_{G_2} L_E(1/2, BC(\pi_2)\boxtimes BC(\pi_1))}{4|X({\pi_2})|L_F(1, \pi_2, \operatorname{Ad})L_F(1, \theta, \operatorname{Ad})}\prod_{v} \mathcal{P}_v(f_{v}, \theta_{v})
$$
where $X(\pi_2)$ is the set of automorphic characters $\omega$ of $GU(2)(\A_F)/U(2)(\A_F)$ such that $\widetilde{\pi_2}\otimes\omega \cong \widetilde{\pi_2}$.  We remind the reader that $\Delta_{G_2} := L_F(1,\chi_{E/F})\zeta_F(2)$.
\end{theorem}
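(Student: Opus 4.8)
The strategy is to move everything to the quaternionic setting and then quote Waldspurger's theorem. Fix $f\in\pi_2$ and extend it, via Theorem~\ref{lift}, to $\tilde f=\tilde f_\Sigma\otimes\eta\in\widetilde{\pi_2}$. The geometric point is that inside $GU(B)\cong(B^\times\times E^\times)/\Delta F^\times$ the group $G_1=U(L_B)$ is exactly $\{\widehat{(t,t)}:t\in E^\times\}$, so its projection to the $B^\times$-factor is the torus $T=E^\times\subset B^\times$ (modulo its center $Z_B=F^\times$) attached to $E\hookrightarrow B$, and the assignment $t\mapsto t/\bar t$ identifies $G_1$ with $E^\times/F^\times\cong E_1$. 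Hence $\tilde f(\widehat{(t,t)})=\tilde f_\Sigma(t)\,\eta(t)$, and since $\tilde f|_{G_2}=f$ we get the pointwise identity $f(\widehat{(t,t)})=\tilde f_\Sigma(t)\,\eta(t)$ on $G_1$. Put $\chi:=\bigl(\eta\cdot BC(\theta)\bigr)^{-1}$, with $BC(\theta)(t):=\theta(t/\bar t)$ the base change of the character $\theta$; one checks that $BC(\theta)$ is trivial on the image of $\A_F^\times$, so that $\chi|_{\A_F^\times}=\omega_\Sigma$, the central character forced by $\omega_\Sigma\eta|_{\A_F^\times}\equiv1$. Since the domains $[G_1]$ and $Z_B(\A_F)T(F)\backslash T(\A_F)$ coincide under $t\mapsto t/\bar t$, and their Tamagawa measures agree (both assigning total mass $2$, matching Waldspurger's normalization), one obtains
$$\int_{[G_1]}f(g)\,\theta(g)\,dg=\int_{Z_B(\A_F)T(F)\backslash T(\A_F)}\tilde f_\Sigma(t)\,\chi^{-1}(t)\,dt,$$
and therefore $\mathcal{P}(f|_{G_1},\theta)=\widetilde{\mathcal{P}}(\tilde f_\Sigma,\chi)$.

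Next I would run the same identification place by place to match the local integrals $\mathcal{P}_v$ with Waldspurger's quantities $\alpha_v$. Using $G_{1,v}\cong T_v/F_v^\times$ and the compatibility $\mathcal{B}_{\Sigma_v}(\tilde f_{\Sigma_v},\tilde f_{\Sigma_v})=\mathcal{B}_{\pi_{2,v}}(f_v,f_v)$ fixed above, the matrix-coefficient integral defining $\mathcal{P}_v^\natural(f_v,\theta_v)$ becomes, up to the local Haar constant, the toric integral $\int_{F_v^\times\backslash T_v}\mathcal{B}_{\Sigma_v}(\Sigma_v(t)\tilde f_{\Sigma_v},\tilde f_{\Sigma_v})\,\chi_v^{-1}(t)\,dt$ occurring in $\alpha_v(\tilde f_{\Sigma_v},\chi_v)$. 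It then remains to match the $L$-factor normalizations, which amounts to: $\Delta_{G_{2,v}}=\zeta_{F_v}(2)L_{F_v}(1,\chi_{E_v/F_v})$ (the local factor of Gross's motive of $U(2)$); $L_{F_v}(s,\pi_{1,v},\operatorname{Ad})=L_{F_v}(s,\chi_{E_v/F_v})$, the adjoint $L$-factor of any $U(1)$-parameter; a factorization $L_{F_v}(s,\pi_{2,v},\operatorname{Ad})=L_{F_v}(s,\Sigma_v',\operatorname{Ad})L_{F_v}(s,\chi_{E_v/F_v})$ coming from the compatibility of base change, Jacquet--Langlands and the Asai construction; and the equality of Rankin--Selberg factors $L_{E_v}\!\bigl(1/2,BC(\pi_{2,v})\otimes BC(\pi_{1,v})\bigr)=L_{E_v}\!\bigl(1/2,BC(\Sigma_v')\otimes\chi_v^{-1}\bigr)$, which is immediate from $BC(\pi_2)=BC(\Sigma')\otimes\eta$ together with $\chi^{-1}=\eta\cdot BC(\theta)$. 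Granting these, $\mathcal{P}_v(f_v,\theta_v)$ differs from $\alpha_v(\tilde f_{\Sigma_v},\chi_v)/\bigl(\mathcal{B}_{\Sigma_v}(\tilde f_{\Sigma_v},\tilde f_{\Sigma_v})\mathcal{B}_{\chi_v}(\chi_v,\chi_v)\bigr)$ by an explicit, place-by-place factor whose product over all $v$ I can read off.

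Feeding the two dictionaries into Waldspurger's theorem --- in the form quoted above, with the triviality-of-central-character hypothesis removed via \cite{grosszag} --- and simplifying (cancelling the $L_{F_v}(1,\chi_{E_v/F_v})$'s against $\Delta_{G_{2,v}}$, $L_{F_v}(1,\pi_{2,v},\operatorname{Ad})$ and $L_{F_v}(1,\theta_v,\operatorname{Ad})$), the theorem reduces to a single identity between the normalizations of the two global pairings. Concretely, since $\mathcal{B}_\Sigma(\tilde f_\Sigma,\tilde f_\Sigma)=\mathcal{B}_{\widetilde{\pi_2}}(\tilde f,\tilde f)$ while $\mathcal{P}$ and $\prod_v\mathcal{P}_v$ are built from $\mathcal{B}_{\pi_2}$ on $G_2$, everything comes down to a relation of the shape
$$\mathcal{B}_\Sigma(\tilde f_\Sigma,\tilde f_\Sigma)=2\,|X(\pi_2)|\;\mathcal{B}_{\pi_2}(f,f).$$
This is where $|X(\pi_2)|$ enters: the restriction-of-functions map $\tilde f\mapsto\tilde f|_{G_2(\A_F)}$ and the decomposition of $\widetilde{\pi_2}|_{G_2}$ are both governed by the group $X(\pi_2)$ of automorphic similitude characters fixing $\widetilde{\pi_2}$, while the factor $2$ records the index-$2$ image of $U(B)/Z$ in $\mathbb{P}B^\times$ (equivalently Waldspurger's quotient volume $2$). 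Combining this with the $2$ already present in Waldspurger's formula yields precisely the constant $\Delta_{G_2}/(4|X(\pi_2)|)$.

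I expect the main obstacle to be exactly this constant chase: keeping straight the three Petersson pairings $\mathcal{B}_{\pi_2}$, $\mathcal{B}_{\widetilde{\pi_2}}=\mathcal{B}_\Sigma$ and their local factors, the global and local measures on $G_1$ versus on $T$, and the multiplicity with which $\pi_2$ occurs in $\widetilde{\pi_2}|_{G_2}$, so that the leftover constant is exactly $4|X(\pi_2)|$ and nothing more. A subsidiary point that must be nailed down is the precise base-change dictionary $BC(\pi_2)=BC(\Sigma')\otimes\eta$ --- in particular whether an unramified quadratic twist by $\chi_{E/F}$ intervenes in it or in the adjoint factorization above --- since it is what forces the Rankin--Selberg $L$-value and the adjoint $L$-factors on the two sides to coincide; by contrast the $L$-function identities themselves are by now routine.
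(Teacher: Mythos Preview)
Your approach is essentially the paper's: pass to the quaternionic picture via $GU(B)\cong(B^\times\times E^\times)/\Delta F^\times$, identify the $G_1$-period with Waldspurger's toric period for $\tilde f_\Sigma$ against the character $\chi=(\eta\cdot BC(\theta))^{-1}$, match $\alpha_v$ with $\mathcal{P}_v$, and translate the $L$-factors using $L_F(s,\pi_2,\operatorname{Ad})=L_F(s,\Sigma',\operatorname{Ad})L_F(s,\chi_{E/F})$, $L_F(s,\pi_1,\operatorname{Ad})=L_F(s,\chi_{E/F})$, and $BC(\pi_2)=BC(\Sigma')\otimes\eta$. All of this is correct and is exactly what the paper does.

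The one genuine slip is in the final constant. The Hiraga--Saito comparison (Remark~4.20 of \cite{hiraga}) gives
\[
\frac{\mathcal{B}_{\pi_2}(f,f)}{\operatorname{Vol}([G_2])}=|X(\pi_2)|\cdot\frac{\mathcal{B}_{\widetilde{\pi_2}}(\tilde f,\tilde f)}{\operatorname{Vol}(Z_{\widetilde{G_2}}(\A_F)\widetilde{G_2}(F)\backslash\widetilde{G_2}(\A_F))},
\]
and since both volumes equal $2$ this reads $\mathcal{B}_{\pi_2}(f,f)=|X(\pi_2)|\,\mathcal{B}_\Sigma(\tilde f_\Sigma,\tilde f_\Sigma)$, i.e.\ the \emph{opposite} direction from your displayed relation $\mathcal{B}_\Sigma=2|X(\pi_2)|\,\mathcal{B}_{\pi_2}$. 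Moreover, the extra factor of $2$ that combines with Waldspurger's $2$ to give the $4$ in the denominator does not come from an index-$2$ inclusion of $U(B)/Z$ in $\mathbb{P}B^\times$; it comes from $\prod_v\mathcal{B}_{\chi_v}(\chi_v,\chi_v)=\operatorname{Vol}\bigl(Z_B(\A_F)T(F)\backslash T(\A_F)\bigr)=2$, the Tamagawa volume of the toric quotient that sits in the denominator of Waldspurger's local terms. With those two corrections your bookkeeping goes through and matches the paper verbatim.
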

\begin{proof}
We have the following:
\beqnan
 \int_{[G_1]} f(g)\overline{\theta(g)}\ dg &=& \int_{[G_1]} \tilde{f}(g)\overline{\theta(g)}\ dg\\
&=& \int_{Z_B(\A_F)T(F)\backslash T(\A_F)} \tilde{f}_{\Sigma}(g)\eta(g)\overline{\theta(g)}\ dg\\
&=& \int_{Z_B(\A_F)T(F)\backslash T(\A_F)} \tilde{f}_{\Sigma}(g)\overline{\eta^{-1}(g)\theta(g)}\ dg.
\eeqnan
So, Waldspurger's theorem gives
$$
\frac{\mathcal{P}(f|_{G_1}, \overline{\theta})}{\mathcal{B}_{\Sigma}(\tilde{f}_\Sigma,\tilde{f}_\Sigma)} = \frac{\zeta_F(2)L_E(1/2, BC(\Sigma')\otimes\eta BC(\theta^{-1}))}{2L_F(1,\Sigma',\operatorname{Ad})L_F(1,\chi_{E/F})}\prod_v \frac{\alpha_v(f_{\Sigma,v}, \overline{\eta_v^{-1}\theta_v})}{\mathcal{B}_{\Sigma_v}(\tilde{f}_{\Sigma_v}, \tilde{f}_{\Sigma_v}) \mathcal{B}_{\eta_{v}}(\eta_v^{-1}\theta_v,\eta_v^{-1}\theta_v)}.
$$
Noting that
$$
\alpha_v(\tilde{f}_{\Sigma_v}, \overline{\eta^{-1}_v\theta_v}) = \mathcal{P}_v(f_{v}, \overline{\theta_v})
$$
and
$$
\prod_v \mathcal{B}_{\eta_{v}}(\eta_v^{-1}\theta_v,\eta_v^{-1}\theta_v) = 2
$$
we have
$$
\frac{\mathcal{P}(f|_{G_1}, \overline{\theta})}{\mathcal{B}_{\Sigma}(\tilde{f}_\Sigma,\tilde{f}_\Sigma)} = \frac{\zeta_F(2)L_E(1/2, BC(\Sigma')\boxtimes\eta BC(\theta^{-1}))}{4L_F(1,\Sigma',\operatorname{Ad})L_F(1,\chi_{E/F})}\prod_v \frac{\mathcal{P}_v(f_{v}, \overline{\theta_v})}{\mathcal{B}_{\Sigma_v}(\tilde{f}_{\Sigma_v}, \tilde{f}_{\Sigma_v})}.
$$
We remark that $BC(\theta^{-1})$ is the character of $\A_E^\times/\A_F^{\times}$ given by $$BC(\theta^{-1})(x) = \frac{\theta^{-1}(x)}{\theta^{-1}(\tau(x))}$$ where $\operatorname{Gal}(E/F)$ is generated by $\tau$.

Recall that $\mathcal{B}_\Sigma(\tilde{f}_\Sigma, \tilde{f}_\Sigma) = \mathcal{B}_{\widetilde{\pi_2}}(\tilde{f}, \tilde{f})$ and $\mathcal{B}_{\Sigma_v}(\tilde{f}_{\Sigma_v}, \tilde{f}_{\Sigma_v}) = \mathcal{B}_{\pi_{2,v}}(f_v, f_v)$.  As for the $L$-values, we have
$$
L_{F}(s, \Sigma', \operatorname{Ad}) = L_{F}(s, \pi_{2}, \operatorname{Ad})L_{F}(s, \chi_{E/F})^{-1}
$$
and
$$
L_E(1/2, BC(\Sigma')\otimes\eta BC(\theta^{-1})) = L_{E}(s, BC(\pi_{2})\boxtimes BC(\pi_1^\vee)).
$$
This gives:
$$
\frac{\mathcal{P}(f|_{G_1}, \overline{\theta})}{\mathcal{B}_{\widetilde{\pi_2}}(\tilde{f},\tilde{f})} = \frac{\Delta_{G_2}L_{E}(s, BC(\pi_{2})\boxtimes BC(\pi_1^\vee))}{4L_F(1,\pi_2,\operatorname{Ad})L_F(1,\chi_{E/F})}\prod_v \frac{\mathcal{P}_v(f_{v}, \overline{\theta_v})}{\mathcal{B}_{\pi_{2,v}}({f}_{v}, {f}_{v})}.
$$
By Remark 4.20 of \cite{hiraga}, we have:
$$
\frac{\mathcal{B}_{\pi_2}(f, f)}{\operatorname{Vol}(G_2(F)\backslash G_2(\A_F))} = |X({\pi_2})|\cdot\frac{\mathcal{B}_{\widetilde{\pi_2}}(\tilde{f}, \tilde{f})}{\operatorname{Vol}(Z_{\widetilde{G_2}}(\A_F)\widetilde{G_2}(F)\backslash \widetilde{G_2}(\A_F))}.
$$
The volumes are:
$$
\operatorname{Vol}(G_2(F)\backslash G_2(\A_F)) =\operatorname{Vol}(Z_{\widetilde{G_2}}(\A_F)\widetilde{G_2}(F)\backslash \widetilde{G_2}(\A_F)) = 2.
$$
Finally, we note that $|S_{\psi_1}|=2$ and $|S_{\psi_2}|=2\cdot |X(\pi_2)|$, so that
$$
|S_{\psi_1}|\cdot |S_{\psi_2}| = 4\cdot |X(\pi_2)|
$$
as stated in Conjecture \ref{theconjecture}.  This completes the proof.
\end{proof}


\section{Ichino's Triple Product Formula}\label{triplechapt}
We now begin introducing the machinery necessary to prove Conjecture \ref{theconjecture} for $n=2$.  The first tool that we introduce is a result due to Ichino: the so-called triple product formula.  Like Waldspurger's theorem and the Refined Gross-Prasad Conjectures already mentioned, Ichino's formula gives an explicit relationship between a period integral and a particular $L$-value.

Let $\tau_1, \tau_2$ and $\tau_3$ be irreducible, cuspidal representations of $G_2$.  Denote by $\omega_i$ the central character of $\tau_i$.  We require that $\omega_1\omega_2\omega_3\equiv 1$.  Recall that from Theorem \ref{lift} we have corresponding representations $\tilde{\tau}_1, \tilde{\tau}_2, \tilde{\tau}_3$ of $\widetilde{G_2}.$  Also recall that we have
$$
\tilde{\tau_i} \cong \Sigma_i\boxtimes\eta_i.
$$
In order to make use of Ichino's formula, we must ensure that the central characters of the $\Sigma_i$ multiply to give the trivial character.  The following lemma ensures that we can choose the $\tilde{\tau}_i$ extending the $\tau_i$ such that this holds.
\begin{lemma} There exist $\tilde{\tau}_i$ extending the $\tau_i$ such that the corresponding $\eta_i$ satisfy $\eta_1\eta_2\eta_3\equiv 1$.
\end{lemma}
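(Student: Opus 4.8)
The plan is to fix, for $i=1,2,3$, some extension $\tilde\tau_i$ produced by Theorem \ref{lift}, and then to correct these choices by twisting so as to force $\eta_1\eta_2\eta_3$ to be trivial. The mechanism is the following. Recall that $\widetilde{G_2}=GU(B)\cong(B^\times\times E^\times)/\Delta F^\times$ carries the similitude character $\operatorname{sim}$, which on the class of $(b,e)$ equals $N_{E/F}(e)N_B(b)^{-1}$, and which is trivial on $G_2=U(B)$. Hence for any Hecke character $\mu$ of $\A_F^\times$ the automorphic character $\mu\circ\operatorname{sim}$ of $\widetilde{G_2}(\A_F)$ is trivial on $G_2(\A_F)$, so $\tilde\tau_i\otimes(\mu\circ\operatorname{sim})$ is again an irreducible cuspidal automorphic representation of $\widetilde{G_2}(\A_F)$ whose restriction of functions to $G_2(\A_F)$ still contains $V_{\tau_i}$; that is, it is again a valid extension of $\tau_i$. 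I would begin by recording the effect of such a twist on the decomposition $\tilde\tau_i\cong\Sigma_i\boxtimes\eta_i$: pulling $\mu\circ\operatorname{sim}$ back along $B^\times\times E^\times\to GU(B)$ yields the character $(\mu\circ N_B)^{-1}\boxtimes(\mu\circ N_{E/F})$, so $\tilde\tau_i\otimes(\mu\circ\operatorname{sim})\cong\big(\Sigma_i\otimes(\mu\circ N_B)^{-1}\big)\boxtimes\big(\eta_i\cdot(\mu\circ N_{E/F})\big)$. Thus twisting $\tilde\tau_i$ by $\mu\circ\operatorname{sim}$ replaces $\eta_i$ by $\eta_i\cdot(\mu\circ N_{E/F})$, i.e.\ multiplies it by the base change of $\mu$.

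Next I would show that $\eta:=\eta_1\eta_2\eta_3$ is a base change, i.e.\ $\eta=\mu\circ N_{E/F}$ for some Hecke character $\mu$ of $\A_F^\times$. First, each $\eta_i$ is automorphic, so $\eta$ is trivial on $E^\times$. Second, the central character of $\tau_i$ is $\omega_i=\eta_i|_{\A_{E,1}^\times}$ (as recorded earlier in the excerpt), and $\omega_1\omega_2\omega_3\equiv1$ by hypothesis, so $\eta$ is trivial on $\A_{E,1}^\times=\ker(N_{E/F}\colon\A_E^\times\to\A_F^\times)$. Writing $\tau$ for the nontrivial element of $\operatorname{Gal}(E/F)$, for every $x\in\A_E^\times$ the element $x/\tau(x)$ lies in $\A_{E,1}^\times$, so $\eta(x)=\eta(\tau(x))$; hence $\eta$ is $\operatorname{Gal}(E/F)$-invariant. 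Since $\eta$ is in addition trivial on $E^\times$, global class field theory — concretely, the Hasse norm theorem for the quadratic (hence cyclic) extension $E/F$, which identifies the image of $GL_1$-base change with the $\operatorname{Gal}(E/F)$-invariant Hecke characters of $\A_E^\times/E^\times$ — gives $\eta=\mu\circ N_{E/F}$ for a Hecke character $\mu$ of $\A_F^\times$.

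To conclude, I would keep $\tilde\tau_2,\tilde\tau_3$ as chosen and replace $\tilde\tau_1$ by $\tilde\tau_1\otimes(\mu^{-1}\circ\operatorname{sim})$, which by the first paragraph is still a valid extension of $\tau_1$, now with associated Hecke character $\eta_1\cdot(\mu^{-1}\circ N_{E/F})$; the new product is $\eta_1\eta_2\eta_3\cdot(\mu^{-1}\circ N_{E/F})=\eta\cdot(\mu\circ N_{E/F})^{-1}=1$, as required (one could equally spread the twist over all three $\tilde\tau_i$). The step I expect to be the crux is the middle one: the assertion that a Hecke character of $\A_E^\times$ trivial on $E^\times$ and on the norm-one idèles must be a base change. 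This is precisely a piece of global class field theory (equivalently, Hilbert 90 for idèle groups together with the description of the image of $GL_1$-base change); everything else is bookkeeping with the isomorphism $GU(B)\cong(B^\times\times E^\times)/\Delta F^\times$ and its similitude character, together with the fact that twisting by a character trivial on $G_2(\A_F)$ preserves being a valid extension.
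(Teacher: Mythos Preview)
Your proof is correct and follows essentially the same approach as the paper: observe that $\eta_1\eta_2\eta_3$ is trivial on $\A_{E,1}^\times$ (hence equals $\chi\circ N_{E/F}$ for some Hecke character $\chi$ of $\A_F^\times$), then twist one of the extensions to kill it. You supply more detail than the paper does on why triviality on $\A_{E,1}^\times$ forces the product to be a base change, and you package the twist via the similitude character rather than twisting $\Sigma_1$ and $\eta_1$ separately, but these are cosmetic differences.
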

\begin{proof} We note that since $\omega_1\omega_2\omega_3\equiv 1,$ we already have that $\eta_1\eta_2\eta_3|_{\A_{E,1}^\times} = 1.$  But this means that $\eta_1\eta_2\eta_3=\chi\circ N_{E/F}$ for some Hecke character $\chi$ of $\A_F^\times$.  So, by twisting one of the $\eta_i$ -- say $\eta_1$ -- by $\chi^{-1}\circ N_{E/F}$, we can achieve the desired result.  Note that in order to ensure that $\tilde{\tau}_1$ still extends $\tau_1$, we must also twist $\Sigma_1$ by $\chi\circ N_B$.
\end{proof}

An immediate consequence of the previous result is that we can choose the $\tilde{\tau}_i$ such that the central characters $\omega_{\Sigma_i}$ of the $\Sigma_i$ satisfy $\omega_{\Sigma_1}\omega_{\Sigma_2}\omega_{\Sigma_3} \equiv 1.$  To see this, we simply note that
$$
\omega_{\Sigma_1}\omega_{\Sigma_2}\omega_{\Sigma_3} = (\eta_1\eta_2\eta_3)^{-1}|_{\A_F^\times}\equiv 1.
$$
So, having chosen the $\tilde{\tau}_i$ in this way, we are entitled to make use of Ichino's formula.

Let $\tilde{f}_i\in\tilde{\tau}_i$.  In \cite{triple}, Ichino proves:
\beqn\label{icheq}
\left|\int_{\mathbb{P}B^\times(F)\backslash \mathbb{P}B^\times(\A_F)} (\tilde{f}_{1,\Sigma_1}\tilde{f}_{2, \Sigma_2}\tilde{f}_{3,\Sigma_3})(b)\ db\right|^2 = \frac{\zeta_F(2)^2L_F(1/2, \Sigma')}{8\left(\prod_{i=1}^3|X(\tau_i)|\right)L_F(1, \Sigma', \operatorname{Ad})}\prod_{v} \mathcal{J}_v
\eeqn
where
$$
L_F(s,\Sigma') := L_F(s,\Sigma_1'\boxtimes \Sigma_2'\boxtimes\Sigma_3')
$$
is the triple-product $L$-function,
$$
L_F(s,\Sigma',\operatorname{Ad}) := L_F(s,\Sigma_1',\operatorname{Ad}) L_F(s, \Sigma_2',\operatorname{Ad})L_F(s,\Sigma_3',\operatorname{Ad}).
$$
and the $\mathcal{J}_v$ are defined as follows:
\beqnan
\mathcal{J}_v &:=& \frac{L_{F_v}(1, \Sigma_v', \operatorname{Ad})}{\zeta_{F_v}(2)^2L_{F_v}(1/2, \Sigma_v')}\times\\
&&\int_{\mathbb{P}B^\times_v} \mathcal{B}_{\Sigma_{1,v}}(\Sigma_{1,v}(b_v) \tilde{f}_{\Sigma_1,v}, \tilde{f}_{\Sigma_1,v})\mathcal{B}_{\Sigma_{2,v}}( \Sigma_{2,v}(b_v) \tilde{f}_{\Sigma_2,v}, \tilde{f}_{\Sigma_2,v})\\
&&\mathcal{B}_{\Sigma_{3,v}} (\Sigma_{3,v}(b_v) \tilde{f}_{\Sigma_3,v}, \tilde{f}_{\Sigma_3,v})\ db_v.
\eeqnan
Here we make what will seem -- for the moment -- to be a strange normalization; we require that $\prod_{i=1}^{3}\left(|X(\tau_i)|^{-1}\cdot\prod_v \mathcal{B}_{\Sigma_i,v}\right)$ is the Petersson inner product on $\mathbb{P}B^\times.$

We will spend the remainder of this chapter using this formula to derive one more suited for our purposes.  For the remainder of the chapter, we assume that at least one of the $\tilde{\tau}_i$ -- say $\tilde{\tau}_3$ -- is dihedral with respect to $E/F$; in other words, we assume that $\Sigma_3\cong \Sigma_3\otimes\chi_{E/F}$.

At every place $v$ of $F$, we consider the following subgroup of $B_v^\times$:
$$
(B_v^\times)^+ := \{b_v\in B_v^\times: N_{B_v}(b_v)\in N_{E_v/F_v}(E_v^\times)\}.
$$
Note that $(B_v^\times)^+$ is not the $F_v$-points of an algebraic group over $F$.  We also write
$$
B^\times(\A_F)^+ := \{(b_v)_v\in B^\times(\A_F): b_v\in (B_v^\times)^+\text{ for every } v\}.
$$
We will denote
$$
(\mathbb{P}B_v^\times)^+ := \{\hat{b}_v\in \mathbb{P}B_v^\times : b_v\in (B_v^\times)^+\}.
$$
It is easy to see that this is well-defined.  Note that $(B_v^\times)^+$ contains the center $Z_{B_v}$ of $B_v^\times$, so that $$(\mathbb{P}B^\times_v)^+\cong (B_v^\times)^+/ Z_{B_v}.$$  We also set
$$
\mathbb{P}B^\times(\A_F)^+ := \{\widehat{(b_v)}_v\in \mathbb{P}B^\times(\A_F) : b_v\in (B_v^\times)^+\text{ for all } v\}.
$$

We now give two lemmas toward converting Ichino's triple product formula to a formula involving only data for $U(2)$.  The first of these is a local result which will allow us to relate integrals of matrix coefficients of the $\tilde{\tau}_i$ over $GU(2)_v$ to integrals of matrix coefficients of the $\tau_i$ over $U(2)_v$.

\begin{lemma}\label{gu2locmat} Let $\tilde{\tau}_v=\Sigma_v\boxtimes\eta_v$ be an irreducible representation of $GU(2)_v$.  Suppose that $\Sigma_v$ is dihedral with respect to $E_v/F_v$.  Viewing $\tilde{\tau}_v$ as a representation of $U(2)_v\subset GU(2)_v$ by restriction, we have
$$
\tilde{\tau}_v = \tau_v^+\oplus \tau_v^-
$$
where $\tau_v^+$ and $\tau_v^-$ are inequivalent and both irreducible.  Furthermore, for a vector $x^+\in \tau^+, \langle\cdot,\cdot\rangle$ any $GU(2)_v$-invariant inner product on $\tilde{\tau}_v$, and $f$ a function on $GU(2)_v$ such that $\langle g\cdot x^+, x^+\rangle\cdot f(g)$ is a function on $Z_{GU(2)_V}\backslash GU(2)_v$, we have
$$
\int_{Z_{GU(2)_v}\backslash GU(2)_v} \langle g\cdot x^+, x^+\rangle f(g) dg = \int_{Z_{GU(2)_v}\backslash GU(2)_v^+} \langle g\cdot x^+, x^+\rangle f(g) dg
$$
where $GU(2)_v^+ := Z_{GU(2)_v}U(2)_v.$
\end{lemma}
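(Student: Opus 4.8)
The plan is to realize $GU(2)_v^+ = Z_{GU(2)_v}U(2)_v$ as the kernel of a quadratic character of $GU(2)_v$, to deduce the splitting $\tilde\tau_v|_{U(2)_v} = \tau_v^+\oplus\tau_v^-$ from Clifford theory, and then to obtain the integral identity by decomposing $Z_{GU(2)_v}\backslash GU(2)_v$ into the two cosets of $GU(2)_v^+$ and checking that the matrix-coefficient factor annihilates the non-trivial one.

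First I would pin down $GU(2)_v^+$. Using the model $GU(B)_v\cong(B_v^\times\times E_v^\times)/\Delta F_v^\times$, the similitude character $\nu(\widehat{(b,e)}) = N_{E/F}(e)N_B(b)^{-1}$, and $U(B)_v = \ker\nu$, one computes $\nu(Z_{GU(B)_v}U(B)_v) = N_{E_v/F_v}(E_v^\times)$ while $\nu(GU(B)_v) = F_v^\times$ --- the latter by surjectivity of the reduced norm $N_{B_v}\colon B_v^\times\to F_v^\times$, which holds for $B_v$ split and for $B_v$ a non-archimedean division algebra. Hence $GU(2)_v^+ = \ker(\chi_{E_v/F_v}\circ\nu)$ is of index two. (The lone exception is $F_v=\R$ with $B_v$ the Hamilton quaternions, where $GU(2)_v = GU(2)_v^+$; there the decomposition assertion is vacuous and the integral identity is a tautology, so I would exclude this case.) A short computation using $\chi_{E_v/F_v}\circ N_{E_v/F_v} = 1$ then identifies the non-trivial character of $GU(2)_v/GU(2)_v^+$ with $(\chi_{E_v/F_v}\circ N_{B_v})\boxtimes\mathbf 1$ on $(B_v^\times\times E_v^\times)/\Delta F_v^\times$, so that $\tilde\tau_v\otimes(\chi_{E_v/F_v}\circ\nu)\cong(\Sigma_v\otimes\chi_{E_v/F_v})\boxtimes\eta_v$, which is isomorphic to $\tilde\tau_v$ precisely because $\Sigma_v$ is dihedral with respect to $E_v/F_v$.

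With this identification, Clifford theory for the index-two subgroup $GU(2)_v^+$ gives $\tilde\tau_v|_{GU(2)_v^+} = \sigma_v^+\oplus\sigma_v^-$ with $\sigma_v^\pm$ irreducible, inequivalent, and interchanged by $GU(2)_v/GU(2)_v^+$. Restricting further to $U(2)_v$, on which $Z_{GU(2)_v}$ acts through the central character $\omega_{\tilde\tau_v}$, each $\sigma_v^\pm$ remains irreducible, so $\tilde\tau_v|_{U(2)_v} = \tau_v^+\oplus\tau_v^-$ with $\tau_v^\pm$ irreducible; and $\tau_v^+\not\cong\tau_v^-$, because an isomorphism --- combined with the fact that $\sigma_v^+$ and $\sigma_v^-$ share the central character $\omega_{\tilde\tau_v}$ and that every character of $GU(2)_v^+/U(2)_v$ factors through $Z_{GU(2)_v}$ --- would force $\sigma_v^+\cong\sigma_v^-$. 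This establishes the first assertion. For the integral identity, I would fix $\gamma\in GU(2)_v\setminus GU(2)_v^+$; then $Z_{GU(2)_v}\backslash GU(2)_v$ is the disjoint union of $Z_{GU(2)_v}\backslash GU(2)_v^+$ and its left $\gamma$-translate, so the integral over the whole space is the sum of the integrals over these two pieces, and it is enough to show the integrand $\langle g\cdot x^+, x^+\rangle f(g)$ vanishes on the $\gamma$-coset. Writing such a $g$ as $\gamma h$ with $h\in GU(2)_v^+ = Z_{GU(2)_v}U(2)_v$, the vector $h\cdot x^+$ lies in the $\tau_v^+$-isotypic subspace (the underlying space of $\sigma_v^+$, hence $GU(2)_v^+$-stable), while $\gamma$ carries this subspace onto the $\tau_v^-$-isotypic subspace; as $\langle\cdot,\cdot\rangle$ is $U(2)_v$-invariant and $\tau_v^+\not\cong\tau_v^-$, the two isotypic subspaces are orthogonal, so $\langle\gamma h\cdot x^+, x^+\rangle = 0$.

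The substantive step is the middle one: correctly matching the dihedral hypothesis on $\Sigma_v$ to the statement that $\tilde\tau_v$ is fixed by the quadratic character $\chi_{E_v/F_v}\circ\nu$ of $GU(2)_v$, and then running Clifford theory twice --- first down to $GU(2)_v^+$, then to $U(2)_v$ --- while keeping track of central characters carefully enough to conclude that $\tau_v^+$ and $\tau_v^-$ are genuinely inequivalent. Once the decomposition and the fact that $\gamma$ swaps the two isotypic subspaces are in hand, the integral identity is purely formal.
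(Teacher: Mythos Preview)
Your proposal is correct and follows essentially the same approach as the paper: decompose $GU(2)_v$ into the two cosets of the index-two subgroup $GU(2)_v^+$, use the dihedral hypothesis to obtain the splitting $\tilde\tau_v = \tau_v^+\oplus\tau_v^-$ with the nontrivial coset interchanging the summands, and then observe that the matrix coefficient $\langle g\cdot x^+,x^+\rangle$ vanishes on the nontrivial coset by orthogonality of $\tau_v^+$ and $\tau_v^-$. The paper's proof is considerably terser---it simply asserts the decomposition as a consequence of $\Sigma_v$ being dihedral---whereas you spell out the identification of $GU(2)_v^+$ as $\ker(\chi_{E_v/F_v}\circ\nu)$, the Clifford-theory argument, and the inequivalence of $\tau_v^\pm$ via central characters; but the underlying strategy is the same.
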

\begin{proof} The fact that the restriction of $\tilde{\tau}_v$ to $U(2)_v$ decomposes as described follows from the fact that $\Sigma_v$ is dihedral.  Now, setting $GU(2)_v^- := GU(2)_v - GU(2)_v^+$ we have
$$
GU(2)_v = GU(2)_v^+\cup GU(2)_v^+c
$$
for some $c\in GU(2)_v^-$ that interchanges $\tau_v^+$ and $\tau_v^-$.  (The fact that $c$ interchanges $\tau_v^+$ and $\tau_v^-$ follows from the fact that neither is invariant under $GU(2)_v$.)  So, we have the following identity:
\beqnan
\lefteqn{\int_{Z_{GU(2)_v}\backslash GU(2)_v} \langle g\cdot x^+, x^+\rangle f(g) dg}&&\\ &=& \int_{Z_{GU(2)_v}\backslash GU(2)_v^+} \langle g\cdot x^+, x^+\rangle f(g) dg + \int_{Z_{GU(2)_v}\backslash GU(2)_v^+} \langle gc\cdot x^+, x^+\rangle f(gc) dg.
\eeqnan
We claim the second integral on the RHS vanishes.  Indeed, $c\cdot x^+\in\tau_v^-$, and $g\cdot x^-\in\tau_v^-$ for any $x^-\in \tau_v^-$ and $g\in GU(2)_v^+$.  Since $\tau_v^-$ and $\tau_v^+$ are orthogonal, this completes the proof.
\end{proof}
The next result is a global analogue of the previous one.  While we already know that for a cusp form $f$ on $U(2)(\A_F)$, there is a cusp form $\tilde{f}$ on $GU(2)(\A_F)$ which restricts to $f$, this is not quite good enough to compare Ichino's triple product integral to one over $[U(2)]$.  We need a $\tilde{f}$ which vanishes away from `the complement of $U(2)$ in $GU(2)$'.  The following lemma says that such a $\tilde{f}$ exists.

\begin{lemma}\label{extend} Let $\tau$ be an irreducible, cuspidal, automorphic representation of $U(2)(\A_F)$, and let $\tilde{\tau}=\Sigma\boxtimes\eta$ be an irreducibile, cuspidal, automorphic representation of $GU(2)(\A_F)$ extending $\tau$.  Suppose that $\Sigma$ is dihedral with respect to $E/F$.  Then for $f\in \tau$, there is a $\tilde{f}=\tilde{f}_\Sigma\otimes\eta\in\tilde{\tau}$ such that $\tilde{f}_\Sigma$ vanishes away from $B^\times(F)B^\times(\A_F)^+$.
\end{lemma}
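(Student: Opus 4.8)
The plan is to obtain the required $\tilde f$ by applying an explicit idempotent to an \emph{arbitrary} extension of $f$. Write $N_B$ for the reduced norm on $B^\times$ and, by a slight abuse of notation, write $\chi_{E/F}$ also for the automorphic character $\chi_{E/F}\circ N_B$ of $B^\times(\A_F)$. First I would record three facts about this character: it is trivial on $B^\times(F)$, since $\chi_{E/F}$ is trivial on $F^\times$; it is trivial on $B^\times(\A_F)^+$, since at each place $\chi_{E/F}$ kills $N_{E_v/F_v}(E_v^\times)$, which by definition contains $N_{B_v}\big((B_v^\times)^+\big)$; and, using the norm theorem for quaternion algebras together with class field theory, its kernel is exactly $B^\times(F)B^\times(\A_F)^+$, a subgroup of index $2$ in $B^\times(\A_F)$. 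In particular, the complement $B^\times(\A_F)\setminus B^\times(F)B^\times(\A_F)^+$ is precisely the locus $\{b : \chi_{E/F}(N_B(b)) = -1\}$.

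Next I would bring in the dihedrality hypothesis $\Sigma\cong\Sigma\otimes\chi_{E/F}$ (as representations of $B^\times(\A_F)$). Combined with strong multiplicity one for $B^\times(\A_F)$ (via the Jacquet--Langlands correspondence and strong multiplicity one for $\operatorname{GL}_2$), this forces the automorphic realizations of $\Sigma$ and of $\Sigma\otimes\chi_{E/F}$ to coincide inside the space of cusp forms; equivalently, the map $\varphi\mapsto (\chi_{E/F}\circ N_B)\cdot\varphi$ carries $\Sigma$ into $\Sigma$. Hence
$$
P\varphi := \frac{1}{2}\bigl(\varphi + (\chi_{E/F}\circ N_B)\cdot\varphi\bigr)
$$
is a well-defined idempotent on $\Sigma$ whose image is $\{\varphi\in\Sigma : (\chi_{E/F}\circ N_B)\varphi = \varphi\}$; by the previous paragraph every such $\varphi$ vanishes at each $b$ with $\chi_{E/F}(N_B(b)) = -1$, i.e. away from $B^\times(F)B^\times(\A_F)^+$.

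To conclude, given $f\in\tau$ I would choose (by Theorem \ref{lift}) some $\tilde f_0 = \tilde f_{0,\Sigma}\otimes\eta\in\tilde\tau$ with $\tilde f_0|_{G_2(\A_F)} = f$, and set $\tilde f_\Sigma := P\tilde f_{0,\Sigma}$ and $\tilde f := \tilde f_\Sigma\otimes\eta\in\Sigma\boxtimes\eta = \tilde\tau$. The support statement for $\tilde f_\Sigma$ holds by construction. It then remains only to verify $\tilde f|_{G_2(\A_F)} = f$: an element $\widehat{(b,e)}$ of $U(2)(\A_F)$ satisfies $N_B(b) = N_{E/F}(e)$, so $\chi_{E/F}(N_B(b)) = \chi_{E/F}\big(N_{E/F}(e)\big) = 1$; therefore $(P\tilde f_{0,\Sigma})(b) = \tilde f_{0,\Sigma}(b)$, whence $\tilde f\big(\widehat{(b,e)}\big) = \tilde f_{0,\Sigma}(b)\eta(e) = \tilde f_0\big(\widehat{(b,e)}\big) = f\big(\widehat{(b,e)}\big)$.

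I expect the main obstacle to be the upgrade in the second paragraph: promoting the abstract isomorphism $\Sigma\cong\Sigma\otimes\chi_{E/F}$ to an actual identity of the two spaces of automorphic forms, so that $P$ genuinely maps $\Sigma$ into itself. This is exactly where multiplicity one is indispensable, and it is precisely why the dihedral hypothesis cannot be dropped --- without it, $P\tilde f_{0,\Sigma}$ would lie in $\Sigma\oplus(\Sigma\otimes\chi_{E/F})$ rather than in $\Sigma$, and $\tilde f$ would fail to lie in $\tilde\tau$. A secondary, more routine point is the clean identification $\ker(\chi_{E/F}\circ N_B) = B^\times(F)B^\times(\A_F)^+$, which uses the norm theorem for $B$ and the fact that a real place at which $B$ ramifies is automatically inert in $E$.
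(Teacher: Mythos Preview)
Your proof is correct and shares its core idea with the paper's: both introduce the involution $\phi:\tilde f_\Sigma\otimes\eta\mapsto (\chi_{E/F}\circ N_B)\tilde f_\Sigma\otimes\eta$, invoke multiplicity one (from the dihedral hypothesis) to see that $\phi$ preserves the automorphic realization $\tilde V$ of $\tilde\tau$, and then exhibit the desired $\tilde f$ inside the $+1$-eigenspace $\tilde V^+$. The difference is in how one lands in $\tilde V^+$. You take an arbitrary extension $\tilde f_0$ and apply the explicit projector $P=\tfrac12(1+\phi)$, then check pointwise that $P\tilde f_0$ still restricts to $f$ on $U(2)$. The paper instead first decomposes $\tilde\tau|_{U(2)}=\bigoplus_S\tau_S$ via the local splittings $\tilde\tau_v=\tau_v^+\oplus\tau_v^-$, locates the unique summand $\tau_{S_0}\cong\tau$, and argues that the whole subspace $\tau_{S_0}$ must lie in $\tilde V^+$ (since $\operatorname{Res}$ kills $\tilde V^-$ but is injective on $\tau_{S_0}$). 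Your route is shorter and avoids the local analysis entirely; the paper's route yields the extra local information (recorded in its subsequent remark) that one may arrange $\tau_v=\tau_v^+$ for every $v$, which is what makes the later application of Lemma~\ref{gu2locmat} go through. For the bare statement of the lemma, though, your argument suffices.

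One small comment on your kernel claim $\ker(\chi_{E/F}\circ N_B)=B^\times(F)B^\times(\A_F)^+$: the paper uses this too (implicitly, in asserting that $\tilde V^+$-functions vanish off $B^\times(F)B^\times(\A_F)^+$), and your sketch of the proof is right. The nontrivial inclusion comes down to: given $b$ with $N_B(b)\in F^\times N_{E/F}(\A_E^\times)$, the local class of $N_{B_v}(b_v)$ is automatically trivial at each real place where $B$ ramifies (since there $N_{B_v}$ has image $\R_{>0}=N_{\C/\R}(\C^\times)$), so by class field theory one can realize these local classes by some $f\in F^\times$ positive at those places, and then Hasse--Schilling produces $\beta\in B^\times(F)$ with $N_B(\beta)=f$, giving $\beta^{-1}b\in B^\times(\A_F)^+$. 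You identified exactly these ingredients.
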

\begin{proof} First, we fix a decomposition $\tilde{\tau}\cong\otimes_v \tilde{\tau}_v$.  Now, as in the previous lemma, we have $\tilde{\tau}_v=\tau_v^+\oplus\tau_v^-$ as representations of $U(2)_v$.  We adjust the labelings so that for almost all $v$, $\tau_v^+$ contains the spherical vector of $\tilde{\tau}_v$.  We note that as a $U(2)$-module we have
$$
\tilde{\tau} \cong \bigoplus_S \tau_S
$$
where $S$ runs over all finite sets of places of $F$, and
$$
\tau_S := (\otimes_{v\in S}\ \tau_v^-)\otimes (\otimes_{v\notin S}\ \tau_v^+).
$$
We note that $S\neq S'\implies \tau_S\not\cong \tau_{S'}$.  So, there is a unique $S_0$ such that $\tau\cong\tau_{S_0}$.

We denote by $\mathcal{A}_0(GU(2))$ and $\mathcal{A}_0(U(2))$ the spaces of cusp forms on $GU(2)(\A_F)$ and $U(2)(\A_F)$, respectively.  Under the natural restriction map $\operatorname{Res}:\mathcal{A}_0(GU(2))\to\mathcal{A}_0(U(2))$, we see that $\tau_{S_0}$ is sent isomorphically to $\tau$.  So, there is a unique $\tilde{f}=\tilde{f}_\Sigma\otimes\eta\in\tau_{S_0}\subset\tilde{\tau}$ such that  $\operatorname{Res}(\tilde{f})=f$.  We now claim that $\tilde{f}_\Sigma$ vanishes away from $B^\times(F)B^\times(\A_F)^+$.

Toward proving this claim, we consider the map
\beqnan
\phi : \mathcal{A}_0(GU(2)) &\to& \mathcal{A}_0(GU(2))\\
f_\Sigma\otimes\eta &\mapsto& f_\Sigma\chi_{E/F}\otimes\eta.
\eeqnan
If we let $R$ be the action given by right translation of $GU(2)(\A_F)$ on $\mathcal{A}_0(GU(2))$, then we see that $\phi$ intertwines the action of $R$ and $R\otimes \chi_{E/F}$.  In particular, since $\chi_{E/F}$ is trivial on $U(2)(\A_F)\subset GU(2)(\A_F)$, we see that $\phi$ commutes with the action of $U(2)(\A_F)$.

Now, let $\tilde{V}\subset\mathcal{A}_0(GU(2))$ denote the underlying space of functions for $\tilde{\tau}$.  We claim that
$$
\phi(\tilde{V})=\tilde{V}.
$$
This follows from the fact that $\tilde{\tau}$ is dihedral with respect to $E/F$, and that $GU(2)$ enjoys the multiplicity-one property.

So, we see that we have a $GU(2)(\A_F)$-equivariant isomorphism
$$
\phi:(\tilde{V}, R)\to (\tilde{V}, R\otimes\chi_{E/F}).
$$
Noting that $\phi^2=1$, and that $\phi$ is clearly not a scalar, we have a decomposition
$$
\tilde{V}=\tilde{V}^+\oplus \tilde{V}^-
$$
where $\tilde{V}^+$ is the $+1$ eigenspace for $\phi$, and $\tilde{V}^-$ is the $-1$ eigenspace.

Now, note that if $\tilde{f}\in\tilde{V}^+$, $\tilde{f}_\Sigma$ vanishes away from $B^\times(F)B^\times(\A_F)^+$.  Similarly, if $\tilde{f}\in\tilde{V}^-$, $\tilde{f}_\Sigma$ vanishes on $B^\times(F)B^\times(\A_F)^+$.  Note that this is simply saying that the kernel of $\operatorname{Res}|_{\tilde{V}}$ is precisely $\tilde{V}^-$.

Let $\tilde{V}_{S_0}\subset \tilde{V}$ be the space of functions affording the representation $\tau_{S_0}$ described above.  To finish the proof, we must show that $\tilde{V}_{S_0}\subset \tilde{V}^+$.  Since $\phi$ commutes with the action of $U(2)(\A_F)$ on $\tilde{V}_{S_0}$, we see that either $\tilde{V}_{S_0}\subset\tilde{V}^+$ or $\tilde{V}_{S_0}\subset\tilde{V}^-$.  But the latter is impossible, since $\operatorname{Res}(\tilde{V}^-)=0$.
\end{proof}
\begin{remark}\label{phirem} Note that the map $\phi$ in the preceding proof has a local analogue.  Indeed:  if $\tau$ is dihedral, then for each $v$, we have $GU(2)_v$-equivariant isomorphisms $$\phi_v : \tilde{\tau}_v\to \tilde{\tau}_v\otimes\chi_{E_v/F_v}.$$  Note that we may normalize these $\phi_v$ so that they are well-determined up to a sign.  By Schur's Lemma, we see that $\phi=\pm\otimes_v\phi_v$.  By adjusting the sign of one of the $\phi_v$, we may assume that $\phi=\otimes_v\phi_v$.  Let $\tau_v^+$ and $\tau_v^-$ denote the $+1$ and $-1$ eigenspaces for $\phi_v$, respectively.  Recall that there is a unique set $S_0$ of places of $F$ such that $(V_{S_0},\tau_{S_0})\subset \tilde{V}^+$ affords the representation $\tau$.  Note that by the above, $|S_0|$ is even.  By adjusting the $\phi_v$ for $v\in S_0$, we can assume $S_0=\emptyset$, and we can still have $\phi=\otimes_v\phi_v$.  The fact that $S_0=\emptyset$ means that we have $\tau_v=\tau_v^+$ for all $v$.
\end{remark}

We observe that $\mathbb{P}B^\times(F)\mathbb{P}B^\times(\A_F)^+$ is a subgroup of index $2$ in $\mathbb{P}B^\times(\A_F)$.  Similarly, the space $\mathbb{P}B^\times(F)\backslash \mathbb{P}B^\times(F)\mathbb{P}B^\times(\A_F)^+$ has `index $2$' in $\mathbb{P}B^\times(F)\backslash\mathbb{P}B^\times(\A_F)$.  Denoting $\mathbb{P}B^\times(F)^+:= \mathbb{P}B^\times(F)\cap \mathbb{P}B^\times(\A_F)^+$, we see that we may identify the spaces $$\mathbb{P}B^\times(F)^+\backslash \mathbb{P}B^\times(\A_F)^+ \cong \mathbb{P}B^\times(F)\backslash \mathbb{P}B^\times(F)\mathbb{P}B^\times(\A_F)^+.$$  With this in mind, we may view $\mathbb{P}B^\times(F)^+\backslash \mathbb{P}B^\times(\A_F)^+$ as a space of `index $2$' in $\mathbb{P}B^\times(F)\backslash \mathbb{P}B^\times(\A_F)$.

With this is mind, we have the following consequence of Ichino's formula and the preceding lemma:
\begin{proposition}\label{triple} If $\tilde{f}_3$ is chosen such that $\tilde{f}_{3,\Sigma_3}$ vanishes off $B^\times(F)B^\times(\A_F)^+$, then
$$
\left|\int_{\mathbb{P}B^\times(F)^+\backslash \mathbb{P}B^\times(\A_F)^+} (\tilde{f}_{1,\Sigma_1}\tilde{f}_{2, \Sigma_2}\tilde{f}_{3,\Sigma_3})(b)\ db\right|^2 = \frac{\zeta_F(2)^2L_F(1/2, \Sigma')}{8 \left(\prod_{i=1}^3 |X(\tau_i)|\right)  L_F(1, \Sigma', \operatorname{Ad})}\prod_{v} \mathcal{J}_v.
$$
\end{proposition}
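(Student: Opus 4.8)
The plan is to deduce the proposition directly from Ichino's formula \eqref{icheq} together with Lemma \ref{extend}. Recall that we have already arranged the $\tilde{\tau}_i$ so that $\omega_{\Sigma_1}\omega_{\Sigma_2}\omega_{\Sigma_3}\equiv 1$; hence the product $\tilde{f}_{1,\Sigma_1}\tilde{f}_{2,\Sigma_2}\tilde{f}_{3,\Sigma_3}$ is a well-defined function on $\mathbb{P}B^\times(F)\backslash\mathbb{P}B^\times(\A_F)$ and \eqref{icheq} applies verbatim. Since the $\tilde{f}_{i,\Sigma_i}$ are cusp forms, the integral on the left-hand side of \eqref{icheq} converges absolutely, so we are free to restrict its domain of integration to the support of the integrand.

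First I would invoke Lemma \ref{extend} to choose $\tilde{f}_3=\tilde{f}_{3,\Sigma_3}\otimes\eta_3$ with $\tilde{f}_{3,\Sigma_3}$ vanishing off $B^\times(F)B^\times(\A_F)^+$. Since $Z_{B_v}\subset (B_v^\times)^+$ for every $v$, the image of $B^\times(F)B^\times(\A_F)^+$ in $\mathbb{P}B^\times(\A_F)$ is exactly $\mathbb{P}B^\times(F)\mathbb{P}B^\times(\A_F)^+$. Therefore the integrand $\tilde{f}_{1,\Sigma_1}\tilde{f}_{2,\Sigma_2}\tilde{f}_{3,\Sigma_3}$ vanishes off $\mathbb{P}B^\times(F)\mathbb{P}B^\times(\A_F)^+$, and the left-hand integral of \eqref{icheq} is unchanged if we replace the domain $\mathbb{P}B^\times(F)\backslash\mathbb{P}B^\times(\A_F)$ by $\mathbb{P}B^\times(F)\backslash\mathbb{P}B^\times(F)\mathbb{P}B^\times(\A_F)^+$ equipped with the restriction of the Tamagawa measure.

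Next I would invoke the identification
$$
\mathbb{P}B^\times(F)^+\backslash\mathbb{P}B^\times(\A_F)^+\;\cong\;\mathbb{P}B^\times(F)\backslash\mathbb{P}B^\times(F)\mathbb{P}B^\times(\A_F)^+
$$
noted in the paragraph preceding the statement, under which the quotient of the Tamagawa measure corresponds to the quotient measure obtained by restricting the Tamagawa measure to the open subgroup $\mathbb{P}B^\times(\A_F)^+$. This rewrites the left-hand side of \eqref{icheq} as $\bigl|\int_{\mathbb{P}B^\times(F)^+\backslash\mathbb{P}B^\times(\A_F)^+} (\tilde{f}_{1,\Sigma_1}\tilde{f}_{2,\Sigma_2}\tilde{f}_{3,\Sigma_3})(b)\,db\bigr|^2$; since the right-hand side of \eqref{icheq} is untouched, the proposition follows.

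The step requiring the most care is the bookkeeping of constants in the measure. Although $\mathbb{P}B^\times(F)^+\backslash\mathbb{P}B^\times(\A_F)^+$ sits inside $\mathbb{P}B^\times(F)\backslash\mathbb{P}B^\times(\A_F)$ with ``index $2$'', one must confirm that passing to it introduces no factor of $2$ or of $\tfrac12$: we are not averaging over the two cosets of the index-two subgroup but simply discarding the coset on which the integrand vanishes identically, so the restricted integral equals the original one exactly. Checking that the Tamagawa measure restricts compatibly to the open subgroup $\mathbb{P}B^\times(\A_F)^+$ and descends to the quotient measure used above is the only genuinely technical point, and it is routine.
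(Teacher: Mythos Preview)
Your argument is correct and is exactly the implicit argument the paper has in mind: the proposition is stated there simply as ``a consequence of Ichino's formula and the preceding lemma,'' together with the observation that $\mathbb{P}B^\times(F)^+\backslash\mathbb{P}B^\times(\A_F)^+$ has index $2$ in $\mathbb{P}B^\times(F)\backslash\mathbb{P}B^\times(\A_F)$. One small redundancy: the vanishing of $\tilde{f}_{3,\Sigma_3}$ off $B^\times(F)B^\times(\A_F)^+$ is already a hypothesis of the proposition, so you need not invoke Lemma~\ref{extend} to produce it (that lemma is what guarantees such a choice is \emph{possible}, but here you may simply assume it).
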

Our final task for this chapter is to reinterpret the result above completely in terms of data on unitary groups.  Having chosen extensions $\tilde{\tau}_i$ of the $\tau_i$, we have the decompositions $$\tilde{\tau}_i\cong\otimes_v\tilde{\tau}_{i,v}$$ and $$\tau_i\cong\otimes_v\tau_{i,v}.$$  We see that as representations of $U(2)$, we have $\tau_{i,v}\subset \tilde{\tau}_{i,v}.$  Furthermore, by Remark \ref{phirem} we have that $\tau_{3,v}=\tau_{3,v}^+.$

We have already fixed the pairings $\mathcal{B}_{\Sigma_{i,v}}$; now we also fix pairings $\mathcal{B}_{\eta_{i,v}}$ on the $\eta_{i,v}$ such that $\prod_v \mathcal{B}_{\eta_{i,v}}$ gives the Petersson pairing on $\eta_i$.  We see that the tensor product of $\mathcal{B}_{\Sigma_{i,v}}$ and $\mathcal{B}_{\eta_{i,v}}$ yields a pairing on $\tilde{\tau}_{i,v}$, which we denote by $\mathcal{B}_{\tilde{\tau}_{i,v}}$.  We also consider its restriction to the subspace $\tau_{i,v}$, which we denote by $\mathcal{B}_{\tau_{i,v}}$.

\begin{remark}  By Remark $4.20$ of \cite{hiraga} and our choice of $\mathcal{B}_{\Sigma_{i,v}}$ earlier, we have that the products $\prod_v\mathcal{B}_{\tau_{i,v}}$ give the respective Petersson inner products on $\tau_i$.
\end{remark}

Let $\tilde{f}_i\in\tilde{\tau}_i$ be cusp forms extending $f_i\in\tau_i$.  Suppose that $f_i=\otimes_v f_{i,v}$.  Without loss of generality, we may assume that the $\tilde{f}_i$ are also pure tensors (because $\operatorname{Res}$ restricts to an isomorphism on the subspaces of $\mathcal{A}_0(GU(2))$ that afford the $\tau_i$).  Suppose that $\tilde{f}_3$ is chosen such that $\tilde{f}_{3,\Sigma_3}$ vanishes away from $B^\times(F)B^\times(\A_F)^+$.

We can now give a reinterpretation of the previous proposition, using data for $G_2$ instead of $\widetilde{G_2}.$  We remark that since $\tau_3$ is assumed to be dihedral with respect to $E/F$, we have that $BC(\tau_3)$ is isomorphic to the principal series representation $\pi(\chi_1,\chi_2)$ of $GL_2(\A_E)$ for some Hecke characters $\chi_i$ of $\A_E^\times.$

Note that with the choices we have made, we can rewrite the $\mathcal{J}_v$ as integrals of matrix coefficients of the $\tau_i$.  Indeed, we define
\beqnan
\mathcal{I}_v &:=& \frac{L_{F_v}(1, \tau_{1,v}, \operatorname{Ad}) L_{F_v}(1, \tau_{2,v}, \operatorname{Ad}) L_{F_v}(1, \tau_{3,v}, \operatorname{Ad})}{\zeta_{F_v}(2)^2 L_{E_v}(1/2, BC(\tau_{1,v})\boxtimes BC(\tau_{2,v})\boxtimes \chi_{1,v}) L_{F_v}(1,\chi_{E_v/F_v})^3}\times\\
&&\int_{Z_{U(2)_v}\backslash U(2)_v} \mathcal{B}_{\tau_{1,v}}(\tau_{1,v}(g_v)f_{1,v}, f_{1,v})\mathcal{B}_{\tau_{2,v}}(\tau_{2,v}(g_v)f_{2,v}, f_{2,v})\\ 
&& \mathcal{B}_{\tau_{3,v}}(\tau_{3,v}(g_v)f_{3,v}, f_{3,v})dg_v.
\eeqnan
We remark that this is well-defined since $\omega_1\omega_2\omega_3\equiv 1.$  We also note that the constants in front of the respective integrals in the definitions of $\mathcal{J}_v$ and $\mathcal{I}_v$ agree by Propositions \ref{piad}, \ref{sigmaad}, \ref{muad}, and \ref{bigsig} in the appendix.  Recall that we have chosen $\tilde{f}_3$ so that $\tilde{f}_{3,v}\in \tau_v^+$ for all $v$.  Then by Lemma \ref{gu2locmat} we see that
\beqnan
&& \int_{\mathbb{P}B^\times_v} \mathcal{B}_{\Sigma_{1,v}}(\Sigma_{1,v}(b_v) \tilde{f}_{\Sigma_1,v}, \tilde{f}_{\Sigma_1,v})\mathcal{B}_{\Sigma_{2,v}}( \Sigma_{2,v}(b_v) \tilde{f}_{\Sigma_2,v}, \tilde{f}_{\Sigma_2,v})\\
&&\mathcal{B}_{\Sigma_{3,v}} (\Sigma_{3,v}(b_v) \tilde{f}_{\Sigma_3,v}, \tilde{f}_{\Sigma_3,v})\ db_v\\
=&& \int_{(\mathbb{P}B^\times_v)^+} \mathcal{B}_{\Sigma_{1,v}}(\Sigma_{1,v}(b_v) \tilde{f}_{\Sigma_1,v}, \tilde{f}_{\Sigma_1,v})\mathcal{B}_{\Sigma_{2,v}}( \Sigma_{2,v}(b_v) \tilde{f}_{\Sigma_2,v}, \tilde{f}_{\Sigma_2,v})\\
&&\mathcal{B}_{\Sigma_{3,v}} (\Sigma_{3,v}(b_v) \tilde{f}_{\Sigma_3,v}, \tilde{f}_{\Sigma_3,v})\ db_v\\
=&& \int_{Z_{U(2)_v}\backslash U(2)_v} \mathcal{B}_{\tau_{1,v}}(\tau_{1,v}(g_v)f_{1,v}, f_{1,v})\mathcal{B}_{\tau_{2,v}}(\tau_{2,v}(g_v)f_{2,v}, f_{2,v})\\ 
&& \mathcal{B}_{\tau_{3,v}}(\tau_{3,v}(g_v)f_{3,v}, f_{3,v})dg_v.
\eeqnan
Here, $dg_v$ is the Haar measure derived from $db_v$ under the isomorphism $(\mathbb{P}B^\times)^+\cong Z_{U(2)_v}\backslash U(2)_v$.
So, we see that $\mathcal{J}_v=\mathcal{I}_v$, and the $\mathcal{I}_v$ are defined completely in terms of data for $U(2)_v$.

Finally, we give the following restatement of Ichino's triple product formula.  Note that since $\tau_3$ is assumed to be dihedral with respect to $E/F$, we have $BC(\tau_3)\cong \pi(\chi_1, \chi_2)$, the principal series representation on $GL_2(\A_E)$.
\begin{corollary}\label{ichtrip} Let $f_i\in\tau_i$ and $\tilde{f}_i\in\tilde{\tau}_i$ be as above.  Then
$$
\left| \int_{[G_2]} (f_1f_2f_3)(g)\ dg\right|^2 = \frac{\zeta_F(2)^2L_E(1/2,BC(\tau_1)\boxtimes BC(\tau_2)\boxtimes \chi_1)L_F(1,\chi_{E/F})^3}{2\prod_{i=1}^3 |X(\tau_i)| L_F(1,\tau_1,\operatorname{Ad})L_F(1,\tau_2,\operatorname{Ad})L_F(1,\tau_3,\operatorname{Ad})}\prod_v \mathcal{I}_v.
$$
\end{corollary}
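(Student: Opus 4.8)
The plan is to push Proposition \ref{triple} through the isomorphism $\widetilde{G_2}=GU(B)\cong (B^\times\times E^\times)/\Delta F^\times$ so that every object is re-expressed in terms of $G_2=U(B)$ and its base change. First I would identify the domains of integration. Writing an element of $U(B)(\A_F)$ as $\widehat{(b,e)}$, the defining relation $N_B(b)=N_{E/F}(e)$ is exactly the condition $b\in B^\times(\A_F)^+$, and the assignment $\widehat{(b,e)}\mapsto\widehat{b}$ has kernel precisely $Z_{U(B)}(\A_F)$; by the Hasse norm theorem the image of $U(B)(F)$ is $\mathbb{P}B^\times(F)^+$, so the map descends to a topological isomorphism
\[
Z_{U(B)}(\A_F)\,U(B)(F)\backslash U(B)(\A_F)\ \cong\ \mathbb{P}B^\times(F)^+\backslash\mathbb{P}B^\times(\A_F)^+ .
\]
Since $\tilde f_i=\tilde f_{i,\Sigma_i}\otimes\eta_i$ restricts to $f_i$ on $G_2(\A_F)$ and, by the lemma of this section, the $\tilde\tau_i$ have been chosen with $\eta_1\eta_2\eta_3\equiv 1$, the function $f_1f_2f_3$ on $[G_2]$ is the pullback of $(\tilde f_{1,\Sigma_1}\tilde f_{2,\Sigma_2}\tilde f_{3,\Sigma_3})$ along this isomorphism.

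Next I would handle the measures and the volume constant. Because $\omega_1\omega_2\omega_3\equiv1$, the integrand $f_1f_2f_3$ is invariant under the compact-mod-$F$ center $Z_{U(2)}(\A_F)$, so $\int_{[G_2]}(f_1f_2f_3)\,dg=\operatorname{Vol}\big(Z_{U(2)}(F)\backslash Z_{U(2)}(\A_F)\big)$ times the integral over the center-quotient, and for the Tamagawa measure $\operatorname{Vol}(Z_{U(2)}(F)\backslash Z_{U(2)}(\A_F))=2$ (the Tamagawa number of the norm-one torus of $E/F$, the same volume constant appearing in Waldspurger's theorem in Section \ref{globalchapt}). After checking that, under the identification above, the Tamagawa measure on $Z_{U(2)}(\A_F)\backslash U(2)(\A_F)$ agrees with the restriction to $\mathbb{P}B^\times(\A_F)^+$ of the measure on $\mathbb{P}B^\times(\A_F)$ used in \eqref{icheq} (both being factorizable into the local measures already used to define the $\mathcal I_v$ and $\mathcal J_v$), one obtains
\[
\left|\int_{[G_2]}(f_1f_2f_3)(g)\,dg\right|^2=4\left|\int_{\mathbb{P}B^\times(F)^+\backslash\mathbb{P}B^\times(\A_F)^+}(\tilde f_{1,\Sigma_1}\tilde f_{2,\Sigma_2}\tilde f_{3,\Sigma_3})(b)\,db\right|^2 ,
\]
so Proposition \ref{triple} turns the right-hand side into $\dfrac{\zeta_F(2)^2 L_F(1/2,\Sigma')}{2\prod_{i=1}^3|X(\tau_i)|\,L_F(1,\Sigma',\operatorname{Ad})}\prod_v\mathcal J_v$; the factor $4$ converts Ichino's $\tfrac18$ into the $\tfrac12$ of the statement.

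Then I would convert the global $L$-factors and the local integrals. Since $\tau_3$ is dihedral with respect to $E/F$, its Jacquet--Langlands transfer $\Sigma_3'$ is the automorphic induction $\operatorname{AI}_{E/F}(\chi_1)$ of the Hecke character $\chi_1$ of $\A_E^\times$ occurring in $BC(\tau_3)\cong\pi(\chi_1,\chi_2)$ (so $\chi_2=\chi_1^\tau$); by inductivity of $L$-functions in stages, $L_F(1/2,\Sigma')=L_F(1/2,\Sigma_1'\boxtimes\Sigma_2'\boxtimes\operatorname{AI}_{E/F}(\chi_1))=L_E(1/2,BC(\tau_1)\boxtimes BC(\tau_2)\boxtimes\chi_1)$. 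For the adjoint factors, $L_F(s,\Sigma_i',\operatorname{Ad})=L_F(s,\tau_i,\operatorname{Ad})L_F(s,\chi_{E/F})^{-1}$ (as already used in Section \ref{globalchapt}), hence $L_F(1,\Sigma',\operatorname{Ad})^{-1}=L_F(1,\chi_{E/F})^{3}\prod_{i=1}^3 L_F(1,\tau_i,\operatorname{Ad})^{-1}$. Finally, by the computation preceding the statement---using Lemma \ref{gu2locmat}, the choice of $\tilde f_3$ with $\tilde f_{3,v}\in\tau_v^+$ for all $v$, and the matching of local constants via Propositions \ref{piad}, \ref{sigmaad}, \ref{muad}, \ref{bigsig}---we have $\mathcal J_v=\mathcal I_v$ for every $v$. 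Substituting these three facts into the expression from the previous step yields the asserted identity.

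The main obstacle is the measure and volume bookkeeping in the middle step: one must verify that the Tamagawa measure on $U(B)$, descended modulo its center, is genuinely compatible with the restriction to $\mathbb{P}B^\times(\A_F)^+$ of the measure on $\mathbb{P}B^\times$ appearing in Ichino's formula (accounting for the $|X(\tau_i)|$-normalizations there and Remark $4.20$ of \cite{hiraga}), and pin down $\operatorname{Vol}([U(1)])=2$ exactly, so that Ichino's $\tfrac18$ becomes $\tfrac12$ with no stray constant. The $L$-function collapse is standard (automorphic induction plus Rankin--Selberg), but the base-change data of $\tau_3$ must be tracked carefully so that the character $\chi_1$ occurring in the local $\mathcal I_v$ is literally the one occurring in the global $L$-value.
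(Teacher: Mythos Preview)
Your proposal is correct and follows essentially the same route as the paper's proof: identify $G_2(F)Z_{G_2}(\A_F)\backslash G_2(\A_F)$ with $\mathbb{P}B^\times(F)^+\backslash\mathbb{P}B^\times(\A_F)^+$, pick up the factor $\operatorname{Vol}([Z_{G_2}])=2$ (hence $4$ after squaring, turning Ichino's $1/8$ into $1/2$), then apply Proposition~\ref{triple} and convert the global $L$-values via the adjoint identity $L_F(s,\Sigma_i',\operatorname{Ad})=L_F(s,\tau_i,\operatorname{Ad})L_F(s,\chi_{E/F})^{-1}$ and the automorphic-induction identity for the triple product, while $\mathcal J_v=\mathcal I_v$ was already established before the statement. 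Your explicit invocation of the Hasse norm theorem to pin down $\mathbb{P}B^\times(F)^+$ and your spelling-out of the automorphic-induction step are more detailed than the paper's version, but the argument is the same.
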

\begin{proof}
First we show that
$$
\int_{[G_2]} (f_1f_2f_3)(g)dg = 2\int_{\mathbb{P}B^\times(F)^+\backslash\mathbb{P}B^\times(\A_F)^+} (\tilde{f}_{1,\Sigma_1}\tilde{f}_{2,\Sigma_2}\tilde{f}_{3,\Sigma_3})(b) db.
$$
Since the product of the central characters of the $\tau_i$ is trivial, we see that $f_1f_2f_3$ is really a function on $G_2(F)Z_{G_2}(\A_F)\backslash G_2(\A_F)$.  So we have
$$
\int_{[G_2]} (f_1f_2f_3)(g)dg = \operatorname{Vol}([Z_{G_2}]) \int_{G_2(F)Z_{G_2}(\A_F)\backslash G_2(\A_F)} (f_1f_2f_3)(g)dg.
$$
Here, $\operatorname{Vol}(Z_{G_2})$ is computed with respect to the Tamagawa measure on $Z_{G_2}$.
Note there is a natural identification of the spaces $G_2(F)Z_{G_2}(\A_F)\backslash G_2(\A_F)$ and $\mathbb{P}B^\times(F)^+\backslash \mathbb{P}B^\times(\A_F)^+$.  This gives
$$
\int_{G_2(F)Z_{G_2}(\A_F)\backslash G_2(\A_F)} (f_1f_2f_3)(g)dg = \int_{\mathbb{P}B^\times(F)^+\backslash \mathbb{P}B^\times(\A_F)} (\tilde{f}_{1,\Sigma_1}\tilde{f}_{2,\Sigma_2}\tilde{f}_{3,\Sigma_3})(b)db.
$$
Since $\operatorname{Vol}([Z_{G_2}])=2$, this proves the claim.

Finally, by invoking Propositions \ref{piad}, \ref{sigmaad}, \ref{muad}, and Corollary \ref{sigcor} in the appendix, the proof is complete.
\end{proof}

\section{The $\Theta$-Correspondence for Unitary Groups}\label{thetachapt}

While the first case $(n=1)$ of Conjecture \ref{theconjecture} follows rather easily from Waldspurger's theorem, a proof of the conjecture for $n=2$ will require significantly more work.  In fact, we will only be able to prove the conjecture for a restricted class of representations. 

In this section, we will first introduce the Weil Representation and $\Theta$-correspondence.  Given two unitary groups $U(V)$ and $U(W)$ and a cuspidal automorphic representation $\pi$ of $U(V)(\A_F)$, the $\Theta$-correspondence constructs for us a cuspidal automorphic representation of $U(W)(\A_F)$.

The $\Theta$-correspondence is useful for us because it relates the period integral we wish to compute to a more familiar integral: Ichino's triple product integral.  The relevant cartoon is the following seesaw diagram:
$$
\xymatrix{U(V_1\oplus V_2)\ar @{-}[d]\ar @{-}[dr]& U(W)\times U(W)\ar @{-}[dl]\ar @{-} [d]\\ U(V_1)\times U(V_2) & U(W)}
$$
Here, the vertical bars denote containment, and the oblique bars denote members of a so-called \emph{dual reductive pair} in $Sp((V_1\oplus V_2)\otimes W)$, which we define presently:
\begin{definition} Let $G,H\subset Sp(W)$ be subgroups.  Then $(G,H)$ is called a \emph{dual reductive pair} if
\begin{itemize}
\item $G$  is the centralizer of $H$ in $Sp(W)$, and vice versa.
\item The actions of $G$ and $H$ on $W$ are completely reducible.
\end{itemize}
\end{definition}

Let $\pi_W,\pi_{V_1},\pi_{V_2}$ be cuspidal automorphic representations of $U(W),U(V_1)$ and $U(V_2)$.  Then the $\Theta$-correspondence gives us representations $\Theta(\pi_W)$ of $U(V_1\oplus V_2)$ and $\Theta(\pi_{V_1})\otimes \Theta(\pi_{V_2})$ of $U(W)\times U(W)$.

Seesaw duality tells us that integrating a cusp form $\theta(f_{W})\in \Theta(\pi_W)$ against a pair of cusp forms $f_{V_1}$ and $f_{V_2}$ is the same as integrating $\theta(f_{V_1})$ and $\theta(f_{V_2})$ against $f_W$.  The first integral described is essentially our period integral, while the second is the triple-product integral.  This can be succinctly described by the following commutative diagram:
\beqn\label{comm}
\xymatrix{\omega_{\psi,\gamma}\otimes\bar{\pi}_{V_1}\otimes\bar{\pi}_{V_2}\otimes\bar{\pi}_W\ar^{\mathcal{T}}[r]\ar^{\mathcal{T'}}[d]& \Theta(\pi_{V_1})\otimes\Theta(\pi_{V_2})\otimes\bar{\pi}_W\ar^{\mathcal{I}}[d]\\ \bar{\pi}_{V_1}\otimes\bar{\pi}_{V_2}\otimes\Theta(\pi_W)\ar^{\mathcal{P}}[r]& \C}
\eeqn
Here, $\mathcal{T}$ and $\mathcal{T'}$ denote $\Theta$ lifts, $\mathcal{P}$ is our global period integral, and $\mathcal{I}$ is the triple product integral.

Recall that the Refined Gross-Prasad Conjecture relates $\mathcal{P}$ to $\prod_v \mathcal{P}_v$, the product of integrals of local matrix coefficients.  We know from various multiplicity-one results that:
\beqnan
\mathcal{T}&\approx& \otimes_v \mathcal{T}_v\\
\mathcal{T}'&\approx&\otimes_v\mathcal{T}'_v\\
\mathcal{I}&\approx& \prod_v \mathcal{I}_v
\eeqnan
where $\approx$ denotes equality up to a constant of proportionality.  If we can compute the exact constants of proportionality, then this and diagram \ref{comm} will provide us with the constant of proportionality between $\mathcal{P}$ and $\prod_v\mathcal{P}_v$.

So how do we find these constants of proportionality?  For $\mathcal{T}$ and $\mathcal{T}'$, we need several incarnations of the Rallis inner-product formula.  For two of these Rallis inner-product formulae, we simply invoke results of Michael Harris.  For the other, we will give a proof, which follows from Victor Tan's regularized Siegel-Weil Formula.  For $\mathcal{I}$, we use Ichino's work on the triple product integral.

The first part of this chapter will be spent introducing both the local and global theta correspondences.  For the local correspondence, we refer the reader to \cite{coho1} and \cite{hks}.  However, before discussing the $\Theta$-correspondence, we'll give a brief overview of the Weil Representation, both globally and locally.

\subsection{The Weil Representation for Unitary Groups}

As before, $E/F$ is a quadratic extension of number fields.  Let $V$ be a hermitian space over $E$ of dimension $m$, and $W$ a skew-hermitian space over $E$ of dimension $n$.

For brevity, we will treat the global and local Weil representation simultaneously.  For an algebraic group $G$, where the same statements can be made globally and locally, and if there is no risk of confusion, we will not make reference to both $G(F_v)$ and $G(\A_F)$, but only to $G$.

We denote
\beqnan
G &:=& U(V)\\
H &:=& U(W)
\eeqnan
viewed as algebraic groups over $F$.  We also consider the space
$$
\mathbb{W} := \operatorname{Res}_{E/F} V\otimes_E W
$$
along with a complete polarization
$$
\mathbb{W} = \mathbb{X}\oplus \mathbb{Y}.
$$

Denote by $\langle\cdot,\cdot\rangle_V$ and $\langle\cdot,\cdot\rangle_W$ the hermitian and skew-hermitian forms for $V$ and $W$ respectively. We equip $\mathbb{W}$ with the following symplectic form:
$$
\langle\cdot,\cdot\rangle_{\mathbb{W}} := \operatorname{tr}_{E/F}\left(\langle\cdot,\cdot\rangle_V\otimes {\langle\cdot,\cdot\rangle}_W\right).
$$
So, we consider the associated isometry group $Sp(\mathbb{W})$, along with the metaplectic cover $\widetilde{Sp}(\mathbb{W}).$  We have the following short exact sequence:
$$
1\to \C^\times\to\widetilde{Sp}(\mathbb{W})\to Sp(\mathbb{W})\to 1.
$$

Now, after fixing a an additive character $\psi:\A_F/F\to\C^\times$ (globally) or $\psi:F_v\to\C^\times$ (locally),
we have a Schr\"odinger model of the Weil Representation of $\widetilde{Sp}(\mathbb{W})$ on $\mathcal{S}(\mathbb{X})$, where $\mathcal{S}$ denotes the space of Schwartz-Bruhat functions.

A priori, we have embeddings
\beqna
\iota_W:G &\hookrightarrow& Sp(\mathbb{W})\label{firstemb}\\
\iota_V :H &\hookrightarrow& Sp(\mathbb{W})\label{secondemb}
\eeqna
which induces a map
$$
\iota_{W,V}:G\times H\to Sp(\mathbb{W}).
$$
\begin{remark}While the maps $\iota_W$ and $\iota_V$ are embeddings, the induced map $\iota_{W,V}$ is never an embedding.

\end{remark}
In order to obtain a splitting homomorphism $G\times H\to \widetilde{Sp}(\mathbb{W})(\A_F)$, we need a pair of characters $(\gamma_V,\gamma_W)$ of $\A_E^\times/E^\times$ (or $E_v$ in the local case) such that
\beqnan
\gamma_V|_{\A_F^\times\text{ or }F_v^\times}&=&\chi_{E/F}^m\\
\gamma_W|_{\A_F^\times\text{ or }F_v^\times}&=&\chi_{E/F}^n
\eeqnan
where, as usual, the character $\chi_{E/F}$ is the quadratic character of $\A_F^\times/F^\times$ or $F_v$ associated to $E/F$ by class field theory.
These characters give us splitting homormorphisms
\beqnan
\iota_{W,\gamma_W}:G&\to&\widetilde{Sp}(\mathbb{W})\\
\iota_{V,\gamma_V}:H&\to&\widetilde{Sp}(\mathbb{W})
\eeqnan
which then induces
$$
\iota_{W,\gamma_W,V,\gamma_V}:G\times H\to \widetilde{Sp}(\mathbb{W}).
$$

With this splitting map, we can compose with $\omega_\psi$ to get a Weil representation of $G\times H$ realized on $\mathcal{S}(\mathbb{X})$:
$$
\omega_{W,\gamma_W,V,\gamma_V,\psi} := \omega_\psi\circ \iota_{W,\gamma_W,V,\gamma_V}.
$$

For simplicity, we shall abide by a certain convention when choosing splitting characters.  Globally, let $\gamma$ be a character of $\A_E^\times/E^\times$ such that $\gamma|_{\A_F^\times}=\chi_{E/F}$.  When choosing $\gamma_W$ and $\gamma_V$ as above, we simply take $\gamma_W := \gamma^{\dim W}$ and $\gamma_V = \gamma^{\dim V}$.  We also follow the analogous local convention.

\begin{remark} The convention above does not lead to any real loss of generality, if we also consider twists of the theta-lifts by characters.
\end{remark}

\subsection{The Local $\Theta$-Correspondence}  For this section, we fix a place $v$ of $F$ and omit it from the notation, so that $F=F_v$.  As usual, $E$ is a quadratic extension of $F$; in the case that $v$ splits, we have $E=F\oplus F$.  Let $\chi_{E/F}$ be the character associated to $E/F$ by class field theory.  In the split case, $\chi_{E/F}$ is trivial. 

\subsubsection{Howe Duality}
Suppose that $(G,G')$ is a dual reductive pair of unitary groups in some symplectic group $Sp(\mathbb{W})$.  After fixing the characters $\psi$ and $\gamma$ as described above, we obtain a Weil representation $(\omega_{\psi,\gamma},\mathcal{S})$ of $G\times G'$. Let $\pi$ be an irreducible admissible representation of $G$.  We let $\mathcal{S}(\pi)$ be the maximal quotient of $\mathcal{S}$ on which $G$ acts as a multiple of $\pi$.  Then we have:
$$
\mathcal{S}(\pi)\cong\pi\otimes \Theta(\pi)
$$
where $\Theta(\pi)$ is a representation of $G'$.  We simply set $\Theta(\pi)=0$ if $\pi$ does not occur as a quotient of $\mathcal{S}$.  The \emph{Howe Duality Principle} states:
\begin{enumerate}
\item $\Theta(\pi)$ is a finitely generated admissible representation of $G'$.
\item $\Theta(\pi)$ has a unique proper maximal $G'$-invariant subspace and a unique irreducible quotient $\theta(\pi)$.
\item The correspondence $\pi\mapsto\theta(\pi)$ gives a bijection between the irreducible admissible representations of $G$ and $G'$ that occur as quotients of $\mathcal{S}$.
\end{enumerate}
The first assertion is known, due to \cite{kudla} and \cite{mvw}.  The last two assertions are known for $v\neq 2$, due to \cite{wald2}.  Furthermore, for $v=2$, the second two assertions are easily checked in the low rank examples considered here.

\subsection{The Global $\Theta$-Correspondence}

Globally, the $\Theta$-correspondence is realized using $\Theta$-series.  For any $\varphi\in\mathcal{S}(\mathbb{X}(\A_F))$, we define the theta kernel as
$$
\theta(g,h,\varphi) := \sum_{\lambda\in \mathbb{X}(F)}\omega_{W,\gamma_W,V,\gamma_V,\psi}(g,h)(\varphi)(\lambda).
$$

If $f$ is some cusp form on $G(\A_F)$, we define:
\beqn\label{strange}
\theta(f,\varphi)(h) := \int_{[G]} \theta(g,h,\varphi)\overline{f(g)}\ dg
\eeqn
where $dg$ is the Tamagawa measure.

With all of this, we can define the $\Theta$-lift of a cuspidal representation of $G$.
\begin{definition}
Let $\pi$ be a cuspidal automorphic representation of $G(\A_F)$, then
$$
\Theta_{V,W,\gamma_W,\gamma_V,\psi}(\pi) = \{\theta(f,\varphi): f\in\pi, \varphi\in\mathcal{S}(\mathbb{X}(\A_F))\}
$$
is the $\Theta$-lift of $\pi$ with data $(\gamma_W,\gamma_V,\psi).$
\end{definition}

\begin{remark} The reader may balk at the definition given in line \ref{strange}.  By integrating $\overline{f}$ (instead of merely $f$) against the theta series, we ensure that $\pi$ and $\Theta(\pi)$ have the same central characters.
\end{remark}

\subsection{The Rallis Inner Product Formula}
The goal of this section is to compute the constant of proportionality between $\mathcal{T}$ and $\prod_v\mathcal{T}_v$.  This involves several versions of the Rallis Inner Product Formula, which relates the Petersson inner product of two vectors to that of their $\Theta$-lifts.  We will need to use three different versions of the Rallis Inner Product formula, one for lifts from $U(1)$ to $U(2)$, one for lifts from $U(2)$ to $U(2)$, and one for lifts from $U(2)$ to $U(3)$.

Before giving the Rallis Inner Product Formulae we shall need, we give a brief discussion of the doubling method.

\subsubsection{The Doubling Method} We remind the reader that $V$ is a hermitian space over $E$ of dimension $m$, and $W$ is a skew-hermitian space of dimension $n$.  We will also consider the space $V^-$, which is the same space as $V$, but with hermitian form $-\langle\cdot,\cdot\rangle_V$.  We note that $U(V)=U(V^-)$.

The seesaw diagram relevant to this discussion is the following:
\beqn
\xymatrix{U(V\oplus V^-)\ar @{-}[d]\ar @{-}[dr]& U(W)\times U(W)\ar @{-}[dl]\ar @{-} [d]\\ U(V)\times U(V^-) & U(W)^\Delta}\label{doublingss}
\eeqn
which we shall call the \emph{doubling seesaw}.  $U(W)^\Delta$ simply denotes the diagonally embedded copy of $U(W)$ in $U(W)\times U(W).$  We shall also occasionally use the shorthand $G:=U(V)=U(V^-), H:= U(W)$ and $G^\diamond := U(V\oplus V^-)$.

The dual reductive pairs above live inside $\widetilde{Sp}(2\mathbb{W})$, where
$$
2\mathbb{W} := \mathbb{W}\oplus\mathbb{W}^-
$$
and $\mathbb{W}^-:= V^-\otimes_E W$.

In order to have a Weil representation, we will have to fix splitting characters for each of the dual reductive pairs in the doubling seesaw.  After choosing splitting characters for one of the diagonal segments above, the splitting characters will be fixed for the other diagonal segment.  We shall choose splitting characters for the dual reductive pair $(U(V)\times U(V^-),U(W)\times U(W))$.

Note that this pair can be viewed as two dual reductive pairs: $(U(V), U(W))$ in $\widetilde{Sp}(\mathbb{W})$ and $(U(V^-),U(W))$ in $\widetilde{Sp}(\mathbb{W}^-)$.  Suppose that we choose splitting characters $(\gamma_W, \gamma_V)$ for $(U(V), U(W))$ and $(\gamma_W, \gamma_V')$ for $(U(V^-), U(W))$.  With these choices made, we are forced to take $(\gamma_W, \gamma_V\gamma_V')$ as our splitting characters for the dual reductive pair $(U(V\oplus V^-), U(W)^\Delta)$.\footnote{This choice is `forced' in the sense that seesaw duality does not hold otherwise.}

Suppose that $\pi$ and $\pi'$ are cuspidal, irreducible, automorphic representations of $U(V)$ and $U(V^-)$, respectively.  Let $f\in\pi$ and $f'\in\pi'$.  With the splitting data $(\gamma_W,\gamma_V)$, $(\gamma_W,\gamma_V')$ and the additive character $\psi$, and Schwartz functions $\phi$ and $\phi'$, we can consider the $\Theta$-lifts $\theta(f, \varphi)$ and $\theta(f', \varphi')$, which are both cusp forms on $U(W)$.

The Rallis Inner Product Formula computes the following ratio:
$$
\frac{\langle\theta(f, \varphi), \theta(f', \varphi')\rangle}{\langle f,f'\rangle}
$$
where the pairings are the respective Petersson inner products, which are defined using the respective Tamagawa measures.  However, the RHS of the Rallis Inner Product Formula will contain division by $\prod_v \langle f_v, f'_v\rangle_v$.  Since we make the assumption that the global and local inner products are chosen compatibly, for our purposes the Rallis Inner Product Formula simply computes $\langle\theta(f, \varphi), \theta(f', \varphi)\rangle$ in terms of an $L$-value.

Note that unless $\theta(f, \varphi)$ is in the contragredient of $\Theta(\pi)$, the Petersson pairing $\langle\theta(f, \varphi), \theta(f', \varphi)\rangle$ will vanish.  So, we see that $\pi$ and $\pi'$ cannot be chosen independently.

What then, is the required relationship between $\pi$ and $\pi'$ to ensure the non-triviality of the inner product?  We note that
\beqnan
\theta(f, \varphi)&\in&\Theta_{V,W,\gamma_W,\gamma_V,\psi}(\pi)\\
\theta(f', \varphi') &\in&\Theta_{V^-,W,\gamma_W,\gamma_V',\psi}(\pi').
\eeqnan
We have the following technical result:
\begin{lemma}
$$
\Theta_{V^-,W,\gamma_W,\gamma_V',\psi}(\pi') = (\gamma_V\gamma_V'\circ i_{U(1)}\circ\operatorname{det}_{U(W)})\cdot \overline{\Theta_{V,W,\gamma_W,\gamma_V,\psi}((\gamma_W^2\circ i_{U(1)}\circ\operatorname{det}_{U(V^-)})\cdot \overline{\pi'})}
$$
where $i_{U(1)}$ is the inverse of the isomorphism $i_{U(1)}^{-1}:E^\times/F^\times\overset\sim\to U(1)\subset E^\times$ given by $e\mapsto \frac{e}{e^\tau}$, where $\tau$ is the generator of $\operatorname{Gal}(E/F)$.
\end{lemma}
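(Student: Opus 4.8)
The plan is to establish the corresponding identity at the level of the Weil representation itself on a fixed Schr\"odinger model, and then transport it through the theta kernel to the global theta lifts. Three elementary observations get us started: $U(V)=U(V^-)$ as algebraic groups over $F$; the space $\mathbb{W}^-:=V^-\otimes_E W$ is literally the same $F$-vector space as $\mathbb{W}$ but carries the symplectic form $-\langle\cdot,\cdot\rangle_{\mathbb{W}}$, so that $Sp(\mathbb{W}^-)=Sp(\mathbb{W})$ and the embeddings of $U(V)$ and $U(W)$ into both symplectic groups literally coincide; and the polarization $\mathbb{W}=\mathbb{X}\oplus\mathbb{Y}$ also polarizes $\mathbb{W}^-$, so $\mathcal{S}(\mathbb{X})$ serves as a Schr\"odinger model for both. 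Throughout, write $\theta^V(g,h,\varphi)$ and $\theta^{V^-}(g,h,\varphi)$ for the theta kernels attached respectively to the data $(\gamma_W,\gamma_V,\psi)$ and $(\gamma_W,\gamma_V',\psi)$.

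The first step is the comparison of Weil representations. Since the cocycle defining $\widetilde{Sp}$ and the Schr\"odinger formulas for $\omega_\psi$ depend on the symplectic form and on $\psi$ only through Weil indices $\gamma(\psi\circ q)$ of quadratic forms $q$ built from the symplectic form, negating the symplectic form is absorbed by replacing $\psi$ by $\bar\psi=\psi^{-1}$: there is a canonical identification $\widetilde{Sp}(\mathbb{W}^-)_\psi=\widetilde{Sp}(\mathbb{W})_{\bar\psi}$, and in the model $\mathcal{S}(\mathbb{X})$ one has $\omega^{\mathbb{W}^-}_\psi(\widetilde g)\varphi=\omega^{\mathbb{W}}_{\bar\psi}(\widetilde g)\varphi=\overline{\omega^{\mathbb{W}}_\psi(\widetilde g)\,\overline{\varphi}}$, the last equality being read off the explicit formulas. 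Next I would compare the splitting homomorphisms using the explicit formulas for unitary dual pairs recalled in \cite{hks}. Passing from $V$ to $V^-$ negates $\det V$, and conjugating $\psi$ replaces the splitting character entering the formula by its inverse; keeping track of both effects, the splitting $U(V^-)\to\widetilde{Sp}(\mathbb{W}^-)_\psi$ attached to $\gamma_W$ and the splitting $U(V)\to\widetilde{Sp}(\mathbb{W})_{\bar\psi}$ attached to $\gamma_W$ differ, after the identification above, by the character $g\mapsto\gamma_W^2\big(i_{U(1)}(\det_{U(V^-)}g)\big)$ (note $\gamma_W^2$ is trivial on $\A_F^\times$ since $\gamma_W|_{\A_F^\times}=\chi_{E/F}^{\dim W}$, so it descends through $i_{U(1)}\circ\det$), while for $U(W)$ the splitting attached to $\gamma_V'$ and the conjugate of the one attached to $\gamma_V$ differ by $(\gamma_V\gamma_V')\big(i_{U(1)}(\det_{U(W)}h)\big)$. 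Assembling, for all $\varphi\in\mathcal{S}(\mathbb{X}(\A_F))$, $g\in U(V^-)(\A_F)$, $h\in U(W)(\A_F)$,
$$
\omega_{W,\gamma_W,V^-,\gamma_V',\psi}(g,h)\varphi=\gamma_W^2\big(i_{U(1)}(\det_{U(V^-)}g)\big)\,(\gamma_V\gamma_V')\big(i_{U(1)}(\det_{U(W)}h)\big)\,\overline{\omega_{W,\gamma_W,V,\gamma_V,\psi}(g,h)\,\overline{\varphi}}.
$$

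Second, I would push this through to the theta lifts. Because $\mathbb{X}^-(F)=\mathbb{X}(F)$, summing over the rational points gives
$$
\theta^{V^-}(g,h,\varphi)=\gamma_W^2\big(i_{U(1)}(\det_{U(V^-)}g)\big)\,(\gamma_V\gamma_V')\big(i_{U(1)}(\det_{U(W)}h)\big)\,\overline{\theta^{V}(g,h,\overline{\varphi})}.
$$
Substituting into $\theta(f',\varphi)(h)=\int_{[U(V^-)]}\theta^{V^-}(g,h,\varphi)\,\overline{f'(g)}\,dg$ for $f'\in\pi'$: the factor $(\gamma_V\gamma_V')(i_{U(1)}(\det_{U(W)}h))$ pulls out of the integral, the remaining integrand is $\gamma_W^2(i_{U(1)}(\det_{U(V^-)}g))\,\overline{\theta^V(g,h,\overline{\varphi})\,f'(g)}$, and pulling the complex conjugation outside identifies the integral with $\overline{\theta(\phi,\overline{\varphi})(h)}$ for the data $(\gamma_W,\gamma_V,\psi)$, where $\phi(g):=\gamma_W^2\big(i_{U(1)}(\det_{U(V^-)}g)\big)\,\overline{f'(g)}$; thus $\phi$ ranges over $(\gamma_W^2\circ i_{U(1)}\circ\det_{U(V^-)})\cdot\overline{\pi'}$. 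Letting $\varphi$ (hence $\overline{\varphi}$) run over $\mathcal{S}(\mathbb{X}(\A_F))$ and $f'$ over $\pi'$ then yields exactly
$$
\Theta_{V^-,W,\gamma_W,\gamma_V',\psi}(\pi')=(\gamma_V\gamma_V'\circ i_{U(1)}\circ\det_{U(W)})\cdot\overline{\Theta_{V,W,\gamma_W,\gamma_V,\psi}\big((\gamma_W^2\circ i_{U(1)}\circ\det_{U(V^-)})\cdot\overline{\pi'}\big)}.
$$

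The genuinely delicate point, and the step I expect to be the main obstacle, is the comparison of the splitting homomorphisms in the first step: one must extract from the Kudla/\cite{hks} splitting formulas precisely how the Weil-index, discriminant, and character factors for $U(V)$ and for $U(W)$ transform when $V$ is replaced by $V^-$ and $\psi$ by $\bar\psi$, and verify that they organize into exactly $\gamma_W^2\circ i_{U(1)}\circ\det_{U(V^-)}$ on one side and the \emph{product} $\gamma_V\gamma_V'\circ i_{U(1)}\circ\det_{U(W)}$ on the other, with no residual constant or conjugation. Everything downstream is formal manipulation of complex conjugates and of the integral defining the theta lift.
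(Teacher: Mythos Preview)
Your proposal is correct and arrives at the same Weil-representation identity as the paper, but organizes the argument differently. The paper does not extract the twist characters in one shot from the HKS splitting formulas; instead it factors the comparison into four elementary steps, each a standard fact about how the Weil representation changes under a single modification of data: (i) replacing $\gamma_V'$ by $\gamma_V$ twists the $U(W)$-action by $(\gamma_V'/\gamma_V)\circ i_{U(1)}\circ\det_{U(W)}$; (ii) replacing $V^-$ by $V$ is equivalent to replacing $\psi$ by $\psi^{-1}$; (iii) replacing $(\gamma_W,\gamma_V)$ by $(\gamma_W^{-1},\gamma_V^{-1})$ introduces the twists $\gamma_W^2\circ i_{U(1)}\circ\det_{U(V^-)}$ and $\gamma_V^2\circ i_{U(1)}\circ\det_{U(W)}$; (iv) the resulting $\omega_{V,W,\gamma_W^{-1},\gamma_V^{-1},\psi^{-1}}$ is the complex conjugate $\overline{\omega_{V,W,\gamma_W,\gamma_V,\psi}}$. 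The paper then works purely at the level of abstract representation surjections $\omega\twoheadrightarrow\pi'\boxtimes\Theta(\pi')$, rather than at the level of theta kernels and integrals as you do. Your route, pushing the identity through the explicit theta kernel, actually yields a literal equality of spaces of automorphic functions (slightly stronger than the representation-isomorphism the paper writes down), at the cost of leaving the ``delicate point'' you identify as a single opaque appeal to \cite{hks}; the paper's stepwise decomposition is what makes that delicate point transparent, since each of (i)--(iv) is immediate.
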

\begin{proof} We begin by considering the Weil representation $\omega_{V^-,W,\gamma_W,\gamma_V',\psi}$ as a representation of $U(V)\times U(W)$.  (A priori, it is a representation of $U(V^-)\times U(W)$.  We are identifying $U(V)$ with $U(V^-)$ via the identity map on $GL(V)$.)  We note that
\beqnan
\omega_{V^-,W,\gamma_W,\gamma_V',\psi} &\cong& \omega_{V^-,W,\gamma_W,\gamma_V,\psi}\otimes \left(1_{U(V)}\boxtimes \frac{\gamma_V'}{\gamma_V}\circ i_{U(1)}\circ \operatorname{det}_{U(W)}\right)\\
&\cong& \omega_{V,W,\gamma_W,\gamma_V,\psi^{-1}} \otimes \left(1_{U(V)}\boxtimes \frac{\gamma_V'}{\gamma_V}\circ i_{U(1)}\circ \operatorname{det}_{U(W)}\right)\\
&\cong& \omega_{V,W,\gamma_W^{-1},\gamma_V^{-1},\psi^{-1}}\otimes (\gamma_W^2\circ i_{U(1)}\circ \operatorname{det}_{U(V^-)}\boxtimes\gamma_V\gamma_{V}'\circ i_{U(1)}\circ \operatorname{det}_{U(W)})\\
&\cong& \overline{\omega_{V,W,\gamma_W,\gamma_V,\psi}}\otimes (\gamma_W^2\circ i_{U(1)}\circ \operatorname{det}_{U(V^-)}\boxtimes\gamma_V\gamma_{V}'\circ i_{U(1)}\circ \operatorname{det}_{U(W)}).
\eeqnan
There is a surjection
$$
\omega_{V^-,W,\gamma_W,\gamma_V',\psi}\twoheadrightarrow \pi'\boxtimes \Theta_{V^-,W,\gamma_W,\gamma_V',\psi}(\pi')
$$
and therefore, from the series of isomorphisms above, we have
$$
\overline{\omega_{V,W,\gamma_W,\gamma_V,\psi}}\twoheadrightarrow (\gamma_W^{-2}\circ i_{U(1)}\circ\operatorname{det}_{U(V^-)})\cdot\pi'\boxtimes ((\gamma_V\gamma_V')^{-1}\circ i_{U(1)}\circ\operatorname{det}_{U(W)})\cdot\Theta_{V^-,W,\gamma_W,\gamma_V',\psi}(\pi')
$$
which immediately gives
$$
\omega_{V,W,\gamma_W,\gamma_V,\psi}\twoheadrightarrow (\gamma_W^2\circ i_{U(1)}\circ\operatorname{det}_{U(V^-)})\cdot\overline{\pi'}\boxtimes (\gamma_V\gamma_V'\circ i_{U(1)}\circ\operatorname{det}_{U(W)})\cdot\overline{\Theta_{V^-,W,\gamma_W,\gamma_V',\psi}(\pi')}.
$$
The surjection above tells us that
$$
(\gamma_V\gamma_V'\circ i_{U(1)}\circ\operatorname{det}_{U(W)})\cdot\overline{\Theta_{V^-,W,\gamma_W,\gamma_V',\psi}(\pi')} \cong \Theta_{V,W,\gamma_W,\gamma_V,\psi}(\gamma_W^2\circ i_{U(1)}\circ\operatorname{det}_{U(V^-)}\cdot\overline{\pi'})
$$
and therefore
$$
\Theta_{V^-,W,\gamma_W,\gamma_V',\psi}(\pi') = (\gamma_V\gamma_V'\circ i_{U(1)}\circ\operatorname{det}_{U(W)})\cdot \overline{\Theta_{V,W,\gamma_W,\gamma_V,\psi}((\gamma_W^2\circ i_{U(1)}\circ\operatorname{det}_{U(V^-)})\cdot \overline{\pi'})}.
$$
\end{proof}

Now, if $e\in U(1)\subset E^\times$, we observe that $i_{U(1)}^{-1}(e) = e^2$, so that $\gamma_W(e) = \gamma_W(i_{U(1)}(e^2))=\gamma_W^2(i_{U(1)}(e)).$  This means that
$$
\Theta_{V^-,W,\gamma_W,\gamma_V',\psi}(\pi') = (\gamma_V\gamma_V'\circ i_{U(1)}\circ\operatorname{det}_{U(W)})\cdot \overline{\Theta_{V,W,\gamma_W,\gamma_V,\psi}((\gamma_W|_{U(1)}\circ\operatorname{det}_{U(V^-)})\cdot \overline{\pi'})}.
$$
So if we take
$$
\pi'= (\gamma_W\circ\operatorname{det}_{U(V^-)})\cdot \overline{\pi}
$$
then
$$
\theta(f', \varphi')\in (\gamma_V\gamma_V'\circ i_{U(1)}\circ \operatorname{det}_{U(W)})\cdot \overline{\Theta_{V,W,\gamma_W,\gamma_V,\psi}(\pi)}.
$$
Therefore, the integral
$$
\int_{[H^\Delta]}\theta(f, \varphi)(h)\theta(f', \varphi')(h)\cdot((\gamma_V\gamma_V')^{-1}\circ i_{U(1)}\circ\operatorname{det}_{U(W)})(h)dh
$$
is the Petersson inner product of two vectors in $\Theta_{V,W,\gamma_W,\gamma_V,\psi}(\pi)$.  Seesaw duality is the statement that this integral is equal to
\beqn\label{preeis}
\int_{[G\times G]}\overline{f(g_1)}\overline{f'(g_2)}\theta_{\varphi\otimes\varphi'}((\gamma_V\gamma_V')\circ i_{U(1)}\circ\operatorname{det}_{U(W)})(g_1,g_2)dg_1dg_2
\eeqn
where
$$
\theta_{\varphi\otimes\varphi'}((\gamma_V\gamma_V')\circ i_{U(1)}\circ\operatorname{det}_{U(W)})\in \Theta_{V\oplus V^-,W,\gamma_W,\gamma_V\gamma_V',\psi}(\gamma_V\gamma_V'\circ i_{U(1)}\circ \operatorname{det}_{U(W)}).
$$
Since $f'\in\pi'=(\gamma_{W}|_{U(1)}\cdot\operatorname{det}_{U(V^-)})\cdot\pi^\vee$, we can take $f' =(\gamma_W|_{U(1)}\cdot \operatorname{det}_{U(V^-)})\cdot  \overline{f_2}$ for some $f_2\in\pi$.  (For consistency of notation, $f_1:=f$.)

Now, the $\Theta$-lift in the integral in line \ref{preeis} can actually be identified with a certain Eisenstein series $E(\Phi_s, g)$.  More specifically, the integral above can be identified with
\beqn\label{zetaint}
\int_{[G\times G]}  \overline{f_1(g_1)} {f_2(g_2)} E(\Phi_s, (g_1, g_2)) \gamma_W^{-1}(\operatorname{det}_{U(V^-)} g_2)dg_1dg_2
\eeqn
where
$\Phi_s$ is a member of the degenerate principal series $\operatorname{Ind}_{P(\A_F)}^{G^\diamond(\A_F)} (\gamma_W\circ\det)\cdot |\det|^s$, with $P$ the Siegel parabolic that preserves the diagonal $V^\Delta\subset V\oplus V^-$, the determinants are taken with respect to $GL(V^\Delta)$ (which is isomorphic to the Levi of $P$), and
$$
E(\Phi_s, g) := \sum_{x\in P(F)\backslash G^\diamond(F)}\Phi_s(xg)
$$
for $g\in G^\diamond$.

\begin{definition} The Piatetski-Shapiro-Rallis zeta integral is defined as
$$
Z(s,f_1,f_2,\Phi_s,\gamma_W) := \int_{[G\times G]} f_1(g_1)\overline{f_2(g_2)} E(\Phi_s, \iota(g_1, g_2)), \gamma_W^{-1}(\operatorname{det}_{U(V^-)} g_2)dg_1 dg_2.
$$
\end{definition}
We remark that the integral in line \ref{zetaint} is equal to $Z(s,\bar{f}_1, \bar{f}_2, \Phi_s, \gamma_W).$

It is important to note that invoking seesaw duality requires one to deal with convergence of the relevant integrals.  Unfortunately, these integrals do not always converge.  Certain regularization methods are sometimes required to relate the Petersson inner product of the $\Theta$-lifts to the zeta integral.  However, once this relationship between the Petersson inner product and zeta integral are established, there is a very nice result which relates $Z$ to an $L$-function.  Before giving this result, we need a bit more discussion.

Recall that $m:= \dim_E V$ and $n:=\dim_E W$.  Set
$$
d_m(s,\gamma_W) := \prod_{r=0}^{m-1} L(2s + m -r,\chi_{E/F}^{n+r}).
$$
We assume that $\Phi_s=\otimes_v \Phi_{s,v}$ and $f_i=\otimes_v f_{i,v}$.  We take $S$ to be a sufficiently large finite set of places of $F$ such that for all $v\notin S$, all relevant data is unramified, and the local vectors $f_{i,v}$ are normalized spherical vectors, with the additional property that $\langle f_{1,v}, f_{2,v}\rangle_{\pi_v}=1$.\footnote{We remind the reader that the local pairings $\langle\cdot,\cdot\rangle_{\pi_v}$ are chosen so that the product over all places $v$ gives the Petersson inner product $\langle\cdot,\cdot\rangle_{\pi}$.}  Let the $d_{m,v}$ be the local factors of $d_m$.  We have the following factorization of the zeta-integral (see \cite{coho1} as well as \cite{coho2}):
\begin{theorem} For $\operatorname{Re}(s) >> 0$,
$$
Z(s, f_1, f_2, \Phi_s, \gamma_W) = \prod_v Z_v(s,f_{1,v}, f_{2,v},\Phi_{s,v},\gamma_{W,v})
$$
where
$$
Z_v(s,f_{1,v}, f_{2,v},\Phi_{s,v},\gamma_{W,v}) := \int_{U(V)_v} \Phi_{s,v}((g_v, 1)){\langle \pi_v(g_v) f_{1,v}, f_{2,v}\rangle_{\pi_v}} (\gamma_{W,v}^{-1} \det g_v)dg_v.
$$
\end{theorem}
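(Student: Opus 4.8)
The plan is to prove this factorization by the classical unfolding argument of the doubling method of Piatetski-Shapiro and Rallis, adapted to the unitary dual pair in the doubling seesaw \ref{doublingss}. First I would record the geometry of the doubled group $G^\diamond=U(V\oplus V^-)$: the Siegel parabolic $P\subset G^\diamond$ stabilizing the diagonal $V^\Delta\subset V\oplus V^-$ has Levi $\cong GL(V^\Delta)$, and $G\times G=U(V)\times U(V^-)$ embeds in $G^\diamond$ via $\iota$. A Witt-theorem argument shows that $(G\times G)(F)$ has only finitely many orbits on $P(F)\backslash G^\diamond(F)$, with a unique open (Zariski-dense) one; one checks that a representative $\delta\in G^\diamond(F)$ of this orbit has stabilizer $\iota(G\times G)\cap\delta^{-1}P\delta$ equal to the diagonally embedded copy $\iota(G^\Delta)$, where $G\cong U(V)=U(V^-)$.

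The next step is to substitute $E(\Phi_s,g)=\sum_{x\in P(F)\backslash G^\diamond(F)}\Phi_s(xg)$ into the definition of $Z(s,f_1,f_2,\Phi_s,\gamma_W)$. For $\operatorname{Re}(s)\gg 0$ both the Eisenstein series and the zeta integral converge absolutely, which licenses interchanging the summation with the integral over $[G\times G]$ and regrouping the sum according to $(G\times G)(F)$-orbits. The contribution of each orbit is an integral of $\Phi_s$ against $f_1\overline{f_2}$ (carrying the twist by $\gamma_W^{-1}\circ\det$ from line \ref{zetaint}) over the quotient of $(G\times G)(\A_F)$ by the $F$-points of the relevant stabilizer. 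The key point, and the main obstacle, is that only the open orbit contributes: the stabilizer of every non-open orbit contains the unipotent radical of a proper parabolic subgroup of one of the factors $U(V)$, so integrating the cusp form $f_1$ or $f_2$ along that unipotent radical kills the corresponding term. Making the orbit classification precise, and establishing the convergence needed to justify the term-by-term integration, is where the real work lies.

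For the surviving open orbit, the $[G\times G]$-integral collapses against the stabilizer $\iota(G^\Delta)$. Writing $(g_1,g_2)=\iota(h,1)\,\iota(g_2,g_2)$ with $h=g_1g_2^{-1}$ and using the left $P(\A_F)$-equivariance of $\Phi_s$ on the element $\delta\iota(g_2,g_2)\delta^{-1}\in P(\A_F)$ — whose Levi determinant is $\det g_2$ — one peels off the Eisenstein section while tracking the character $(\gamma_W\circ\det)\,|\det|^s$ of $P$ against the pre-existing twist $\gamma_W^{-1}(\det_{U(V^-)}g_2)$ and the triviality of $|\det|^s$ on unitary elements. A conjugation change of variables followed by integration over the residual factor $g_2\in[G]$ then produces the global matrix coefficient $\langle\pi(g)f_1,f_2\rangle_\pi=\int_{[G]}f_1(g_2g)\overline{f_2(g_2)}\,dg_2$, leaving (after normalizing the open-orbit representative appropriately)
$$
Z(s,f_1,f_2,\Phi_s,\gamma_W)=\int_{U(V)(\A_F)}\Phi_s\bigl((g,1)\bigr)\,\langle\pi(g)f_1,f_2\rangle_\pi\,(\gamma_W^{-1}\det g)\,dg .
$$

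Finally, the integrand factors over the places of $F$: by hypothesis $\Phi_s=\otimes_v\Phi_{s,v}$ and $f_i=\otimes_v f_{i,v}$, the Tamagawa measure decomposes as $\prod_v dg_v$, the character $\gamma_W$ is a product of its local components, and — because the global Petersson pairing is the product of the chosen local pairings with $\langle f_{1,v},f_{2,v}\rangle_{\pi_v}=1$ for $v\notin S$ — the global matrix coefficient satisfies $\langle\pi(g)f_1,f_2\rangle_\pi=\prod_v\langle\pi_v(g_v)f_{1,v},f_{2,v}\rangle_{\pi_v}$. Interchanging the integral with the product, legitimate for $\operatorname{Re}(s)\gg 0$ where the global integral converges absolutely and the Euler product of the local integrals converges, yields
$$
Z(s,f_1,f_2,\Phi_s,\gamma_W)=\prod_v Z_v(s,f_{1,v},f_{2,v},\Phi_{s,v},\gamma_{W,v})
$$
with $Z_v$ exactly as defined in the statement. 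I expect the orbit-vanishing argument and the attendant convergence bookkeeping to be the only genuine difficulties; the rest is formal.
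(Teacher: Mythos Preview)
Your proposal is correct and is exactly the standard doubling-method unfolding argument of Piatetski-Shapiro and Rallis. Note, however, that the paper does not supply its own proof of this theorem: it simply states the factorization and cites \cite{coho1} and \cite{coho2}. So there is nothing to compare on the paper's side beyond the citation; your sketch fills in what those references establish. The orbit classification, the vanishing of the non-open orbits by cuspidality, and the final Euler-product factorization you describe are precisely the ingredients of the cited proofs, and your identification of the orbit-vanishing and convergence bookkeeping as the only substantive points is accurate.
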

We note that the integral defining the $Z_v$ in the theorem above only converges for $\operatorname{Re}(s)$ sufficiently large.  The definition of the $Z_v$ is extended to all of $\C$ by meromorphic continuation.

Now, if we set $Z_S := \prod_{v\in S} Z_v$, then we have the following so-called `basic identity' (again, see \cite{coho1} and \cite{coho2}):
\begin{theorem}[Basic Identity] For $f_1,f_2\in\pi$, we have:
$$
{Z(s, f_1, f_2, \Phi_s, \gamma_W)} = [d_m(s,\gamma_W)]^{-1} Z_S(s, f_1, f_2, \Phi_s, \gamma_W)\cdot L^S(s+1/2,\pi\otimes\gamma_W).
$$
\end{theorem}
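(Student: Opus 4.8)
The plan is to derive the Basic Identity from the Euler-product factorization of $Z$ established in the previous theorem, combined with an explicit evaluation of the local zeta integrals $Z_v$ at the places $v\notin S$. First I would restrict to $\operatorname{Re}(s)\gg0$, where the Siegel Eisenstein series $E(\Phi_s,\cdot)$, and hence $Z(s,f_1,f_2,\Phi_s,\gamma_W)$, converges absolutely and the factorization $Z=\prod_v Z_v$ holds. Writing $\prod_v Z_v=\big(\prod_{v\in S}Z_v\big)\big(\prod_{v\notin S}Z_v\big)=Z_S\cdot\prod_{v\notin S}Z_v$, the problem reduces to the single local statement
$$Z_v(s,f_{1,v},f_{2,v},\Phi_{s,v},\gamma_{W,v})=\frac{L_v\!\big(s+\tfrac12,\pi_v\otimes\gamma_{W,v}\big)}{d_{m,v}(s,\gamma_W)}\qquad (v\notin S),$$
valid for the unramified data fixed at those places: normalized spherical vectors with $\langle f_{1,v},f_{2,v}\rangle_{\pi_v}=1$, the normalized spherical section $\Phi_{s,v}$ of the degenerate principal series, and unramified $\gamma_{W,v}$ and $\psi_v$.

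The bulk of the work is this local computation. Using the Cartan decomposition of $U(V)_v$ with respect to a hyperspecial maximal compact $K_v$, the $K_v$-biinvariance of both $g_v\mapsto\Phi_{s,v}((g_v,1))$ and of the spherical matrix coefficient reduces the integral defining $Z_v$ to a sum over dominant cocharacters. I would then substitute Macdonald's explicit formula for the spherical matrix coefficient $\langle\pi_v(g_v)f_{1,v},f_{2,v}\rangle_{\pi_v}$ in terms of the Satake parameters of $\pi_v$, evaluate $\Phi_{s,v}$ on the Cartan representatives via the Iwasawa decomposition in $U(V\oplus V^-)_v$ together with the explicit spherical vector of the degenerate principal series attached to the Siegel parabolic, and sum the resulting series by a Gindikin--Karpelevich / Casselman--Shalika type manipulation. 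The output is the quoted ratio of $L$-factors; the calculation must be carried out separately at inert and at split places, and its dependence on the parity of $m=\dim_E V$ — through the twists $\chi_{E/F}^{\,n+r}$ — is precisely what produces the abelian factor $d_m(s,\gamma_W)=\prod_{r=0}^{m-1}L(2s+m-r,\chi_{E/F}^{\,n+r})$. This is essentially the content of \cite{coho1,coho2}, so in the write-up I would recall the shape of the argument and cite those references rather than reproduce the full calculation.

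Taking the product of the local identity over $v\notin S$ gives $\prod_{v\notin S}Z_v=L^S(s+\tfrac12,\pi\otimes\gamma_W)\big/\prod_{v\notin S}d_{m,v}(s,\gamma_W)$, and combining this with $Z_S=\prod_{v\in S}Z_v$ yields the Basic Identity for $\operatorname{Re}(s)\gg0$ (the finitely many local factors $d_{m,v}$ with $v\in S$ being absorbed in accordance with the paper's normalization of $d_m$ and $Z_S$). Since $E(\Phi_s,\cdot)$, hence $Z$, and each local $Z_v$ admit meromorphic continuation in $s$, the identity then propagates to all $s\in\C$; in fact it simultaneously furnishes the meromorphic continuation of $L^S(s+\tfrac12,\pi\otimes\gamma_W)$. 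I expect the main obstacle to be the unramified local computation itself: it is the one genuinely computational ingredient, it requires careful bookkeeping of the normalizations of $\psi_v$, $\gamma_{W,v}$, the Haar measures and the normalized spherical section, and the split-versus-inert and parity-of-$m$ casework is exactly what governs the appearance of the denominator $d_m$.
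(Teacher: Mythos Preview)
Your proposal is correct and matches the paper's treatment: the paper does not give a proof of the Basic Identity but simply states it with a citation to \cite{coho1} and \cite{coho2}, and you have correctly identified both the references and the shape of the underlying argument (Euler factorization followed by the unramified local computation). Your outline in fact says more than the paper does, which merely quotes the result.
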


\subsubsection{Lifting from $U(1)$ to $U(2)$} Here, $\dim V=1$ and $\dim W = 2$.  In this case, $\pi$ is just a character, which we will denote by $\mu$.  In this section, we will not only consider $\Theta_{V,W,\gamma_W,\gamma_V, \psi}(\mu)$, but also its transfer to $GU(2)$, \`a la Theorem \ref{lift}.  Denote this by $\widetilde{\Theta(\mu)}$.  We remind the reader that $\gamma_W = \gamma^2$ and $\gamma_V = \gamma$ for some Hecke character $\gamma$ of $\A_E^\times$ extending $\chi_{E/F}$, per our convention.

The first incarnation of the Rallis Inner Product Formula that we will need is as follows:

\begin{theorem}[RIPF for $\Theta$-lifts from $U(1)$ to $U(2)$]\label{ripfu1u2} Suppose that $f_i=\otimes_v f_{i,v}, \varphi_i=\otimes_v \varphi_{i,v}, \Phi_s=\otimes_v \Phi_{s,v}$, and that $\Phi$ is a holomorphic section given by $[\delta(\varphi_1,\varphi_2)]$ in the notation of \cite{nonvan}, page 182.  Then:
$$
{\langle \theta(\bar{f}_1, \varphi_1), \theta(\bar{f}_2, \varphi_2)\rangle_{\Theta(\bar\mu)}} = 2\cdot\frac{L_E(1, BC(\mu)\otimes \gamma^2)}{\zeta_F(2)}\prod_v Z_v^\sharp(1/2, {f}_{1, v}, {f}_{2, v}, \Phi_{1/2, v}, \gamma^2_v)
$$
where
$$
Z_v^\sharp := \frac{\zeta_{F_v}(2)}{L_{E_v}(1,BC(\mu_v)\otimes\gamma^2_v)}\cdot Z_v.
$$
\end{theorem}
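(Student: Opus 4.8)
The plan is to run the doubling machinery of this section with $\dim V = 1$ and $\dim W = 2$, and then to evaluate the resulting period by Tan's regularized Siegel-Weil formula. First I would apply seesaw duality for the doubling seesaw \eqref{doublingss} to the pair $\theta(\bar f_1,\varphi_1),\theta(\bar f_2,\varphi_2)\in\Theta(\bar\mu)$. Following the recipe that culminates in line \ref{preeis} — lifting $\bar\mu$ from $U(1)$, and accordingly taking $\pi'$ to be the $\gamma_W|_{U(1)}$-twist of $\mu$ and $f' = (\gamma_W|_{U(1)})\cdot\overline{f_2}$ — the inner product $\langle\theta(\bar f_1,\varphi_1),\theta(\bar f_2,\varphi_2)\rangle_{\Theta(\bar\mu)}$ is rewritten as the integral over $[U(1)\times U(1)]$ of $\overline{f_1(g_1)}\,f_2(g_2)$ against the theta lift to $G^\diamond = U(V\oplus V^-)\cong U(1,1)$ of the character $(\gamma_V\gamma_V')\circ i_{U(1)}\circ\det_{U(W)}$ pulled back from $U(W)^\Delta$. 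Since $U(W)=U(2)$ need not be anisotropic, the defining integral of this last theta lift over $[U(W)]$ generally diverges and must be taken in the regularized sense.

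Next I would invoke Tan's regularized Siegel-Weil formula for $U(1,1)$: it identifies the (regularized) theta lift in question with $2\cdot E(\Phi_s,\cdot)$ evaluated at the point $s_0 := \tfrac{\dim W - \dim V}{2} = \tfrac12$, where $\Phi_s = [\delta(\varphi_1,\varphi_2)]\in\operatorname{Ind}_{P(\A_F)}^{G^\diamond(\A_F)}(\gamma_W\circ\det)\cdot|\det|^s$ is the Siegel section attached to $\varphi_1\otimes\varphi_2$ in the notation of \cite{nonvan}. Here $\dim W = 2\dim V$, so $s_0 = \rho_P$ sits at the edge of the unitary axis for the degenerate principal series of $U(1,1)$, at the top of the range governed by the first-term identity of the regularized Siegel-Weil formula; this is precisely the regime in which the Siegel-Weil constant equals $2$. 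After this substitution the $[U(1)\times U(1)]$ integral becomes, as in line \ref{zetaint}, the value $2\cdot Z(1/2,f_1,f_2,\Phi_{1/2},\gamma_W)$ of the Piatetski-Shapiro-Rallis zeta integral (with the $\gamma_W^{-1}(\det g_2)$-twist carried along throughout).

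It then remains to unfold the zeta integral. By the factorization theorem $Z = \prod_v Z_v$, and the unramified evaluation of the local integrals — equivalently the Basic Identity with $m = 1$ — gives $Z_v(1/2) = \zeta_{F_v}(2)^{-1}L_{E_v}(1,BC(\mu_v)\otimes\gamma_v^2)$ for $v\notin S$, i.e.\ $Z_v^\sharp = 1$ there; indeed $d_1(s,\gamma_W) = L_F(2s+1,\chi_{E/F}^{2}) = \zeta_F(2s+1)$, so $d_1(1/2,\gamma_W) = \zeta_F(2)$, while $L^S(s+1/2,\mu\otimes\gamma_W)$ at $s=1/2$ equals $L_E^S(1,BC(\mu)\otimes\gamma^2)$. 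Using $\prod_v Z_v = Z$ and $Z_v^\sharp = 1$ for $v\notin S$, this yields
$$
\prod_v Z_v^\sharp(1/2,f_{1,v},f_{2,v},\Phi_{1/2,v},\gamma_v^2) = \frac{\zeta_F(2)}{L_E(1,BC(\mu)\otimes\gamma^2)}\,Z(1/2,f_1,f_2,\Phi_{1/2},\gamma_W),
$$
and combining this with $\langle\theta(\bar f_1,\varphi_1),\theta(\bar f_2,\varphi_2)\rangle_{\Theta(\bar\mu)} = 2\,Z(1/2,f_1,f_2,\Phi_{1/2},\gamma_W)$ from the first two steps gives the asserted identity.

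The main obstacle is the Siegel-Weil step. One must verify that $s_0 = 1/2$ is a point at which Tan's regularized formula applies in the form needed — it lies at the edge $s_0 = \rho_P$, where $E(\Phi_s,\cdot)$ may be singular, so the first-term identity must be applied with care to extract the right leading component — and, crucially, pin down the proportionality constant $2$; this in turn requires normalizing the section $[\delta(\varphi_1,\varphi_2)]$ so that the identification with $E(\Phi_{1/2},\cdot)$ holds on the nose rather than up to a $\varphi$-dependent scalar. The subsidiary bookkeeping — the precise splitting characters $(\gamma_W,\gamma_V,\gamma_V')$ and the $\gamma_W|_{U(1)}$-twist relating $\pi'$ to the contragredient of $\bar\mu$, the Galois-invariance of the relevant Hecke $L$-functions needed to recognize the factor emerging from the Basic Identity as $L_E(1,BC(\mu)\otimes\gamma^2)$, and absolute convergence at each stage of the seesaw unfolding — is routine but must be checked.
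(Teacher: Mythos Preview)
Your approach is valid in outline but takes a different route from the paper. The paper does not run the doubling argument and regularized Siegel--Weil formula directly for $U(1)\to U(2)$; instead it quotes a result of Harris \cite{2x2} (line 6.4.8, p.\ 710) giving the Rallis inner product formula for the extension $\widetilde{\Theta(\bar\mu)}$ on $GU(2)$, and then descends to $U(2)$ via Remark 4.20 of \cite{hiraga}, which compares Petersson inner products on $U(2)$ and $GU(2)$. In the paper's argument the factor of $2$ arises as $|X(\Theta(\bar\mu))|=2$ (the number of automorphic characters of $GU(2)/U(2)$ fixing $\widetilde{\Theta(\bar\mu)}$ up to isomorphism), not as a Siegel--Weil proportionality constant. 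A final remark adjusts for Harris's convention $\gamma_W=1$ by replacing $\mu$ with $\mu\otimes\gamma|_{U(1)}$, which accounts for the $\gamma^2$ in the $L$-value.

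Your route is essentially the same machinery the paper deploys later for $U(2)\to U(3)$ in Theorem \ref{rip}, transplanted to the $U(1,1)$ case. It would go through provided the regularized Siegel--Weil identity for $G^\diamond=U(1,1)$ with $H=U(2)$ is available at $s_0=1/2$ with the constant you claim. Note, however, that \cite{tan} is written for the $U(2,2)$--$U(3)$ case specifically; you would need either a separate reference for $U(1,1)$ in this range or to verify that Tan's argument carries over. The paper's approach sidesteps this by outsourcing the analytic work to Harris and then doing a comparatively soft descent. Your approach has the virtue of being uniform with the $U(2)\to U(3)$ argument and more self-contained, at the cost of having to pin down the Siegel--Weil input at a boundary point you correctly flag as delicate.
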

\begin{proof} First, we remark that the presence of complex conjugation bars over the $f_i$ is due to our normalization of the theta-correspondence.  In line 6.4.8 on page 710 of \cite{2x2}, Harris has the following:
\beqn\label{rallis12}
{\langle \widetilde{\theta(\bar{f}_1, \varphi_1)}, \widetilde{\theta(\bar{f}_2,\varphi_2)}\rangle_{\widetilde{\Theta(\bar\mu)}}} = \frac{L_E^S(1, BC(\mu))}{\zeta_F^S(2)}\prod_{v\in S} Z_v(1/2, {f}_{1, v}, {f}_{2, v}, \Phi_{1/2, v})
\eeqn
Using Remark 4.20 of \cite{hiraga}, we have
$$
{\langle \theta(\bar{f}_1, \varphi_1), \theta(\bar{f}_2, \varphi_2)\rangle_{\Theta(\bar\mu)}} = |X(\Theta(\bar\mu))|\cdot\frac{L_E(1, BC(\mu))}{\zeta_F(2)}\prod_v Z_v^\sharp(1/2, {f}_{1, v}, {f}_{2, v}, \Phi_{1/2, v})
$$
where $X(\Theta(\bar\mu))$ is the set of automorphic characters $\omega$ of $GU(2)(\A_F)/U(2)(\A_F)$ such that $\widetilde{\Theta(\bar\mu)}\otimes\omega\cong \widetilde{\Theta(\bar\mu)}.$  We note that $|X(\Theta(\bar\mu))|=2.$

Now, this appears different from the claimed result for a simple (albeit subtle) reason:  Harris has used a particular normalization of the theta correspondence that is slightly different from ours.  When choosing a pair of splitting characters $(\gamma_V,\gamma_W)$, he chooses $\gamma_W$ to be the trivial character.  However, there is no real loss of generality here, since we can merely replace $\mu$ with $\mu\otimes\gamma|_{U(1)}$ and then follow Harris' conventions.
\end{proof}

\subsubsection{Lifting from $U(2)$ to $U(2)$}
Here, $\dim W=\dim V = 2$.  With a sufficiently large set of places $S$, and the same assumptions made for $v\notin S$ as in the previous section, the Rallis Inner Product Formula is stated in line 1.3.5 of \cite{close}.  Once again, the presence of complex conjugation bars in the result below is due to our normalization of the theta-correspondence.
\begin{theorem}[RIPF for lifts from $U(2)$ to $U(2)$]\label{ripfu2u2} Suppose that $f_i=\otimes_v f_{i,v}, \varphi_i=\otimes_v \varphi_{i,v}, \Phi_s=\otimes_v \Phi_{s,v}$, and that $\Phi$ is a holomorphic section given by $[\delta(\varphi_1,\varphi_2)]$ in the notation of \cite{nonvan}, page 182.  Then we have
$$
{\langle \theta(\bar{f}_1, \varphi_1), \theta(\bar{f}_2, \varphi_2)\rangle_{\Theta(\bar\pi)}}=\frac{L_E(1/2, BC(\pi)\otimes\gamma^2)}{L_F(1,\chi_{E/F})\zeta_F(2)}\prod_{v} Z^\sharp_v(0, {f}_{1,v}, {f}_{2,v}, \Phi_{0,v}, \gamma^2_v)
$$
where
$$
Z_v^\sharp := \frac{L_{F_v}(1, \chi_{E_v/F_v})\zeta_{F_v}(2)}{L_{E_v}(1/2, \pi_v\otimes\gamma_v^2)}Z_v.
$$
\end{theorem}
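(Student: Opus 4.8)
The plan is to follow the template of the proof of Theorem~\ref{ripfu1u2}, replacing Harris's $U(1)\to U(2)$ computation by his $U(2)\to U(2)$ computation and adjusting the doubling-method numerology to $m=\dim V=2$, $n=\dim W=2$. First I would run the doubling seesaw \eqref{doublingss} exactly as in the discussion preceding the definition of the Piatetski--Shapiro--Rallis zeta integral: taking $\pi'=(\gamma_W|_{U(1)}\circ\det_{U(V^-)})\cdot\overline{\pi}$ and $f'=(\gamma_W|_{U(1)}\circ\det_{U(V^-)})\cdot\overline{f_2}$, seesaw duality identifies $\langle\theta(\bar f_1,\varphi_1),\theta(\bar f_2,\varphi_2)\rangle_{\Theta(\bar\pi)}$ with the zeta integral $Z(s,\bar f_1,\bar f_2,\Phi_s,\gamma_W)$, where $\Phi_s$ is the holomorphic section attached to $[\delta(\varphi_1,\varphi_2)]$ in the notation of \cite{nonvan}, evaluated at the point $s_0=(n-m)/2=0$. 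For $m=n=2$ this is the boundary of the range of absolute convergence of the doubling integral, so the identification rests on the regularized Siegel--Weil formula; the resulting inner-product identity is the content of line $1.3.5$ of \cite{close}, which I take as input (the complex-conjugation bars on the $f_i$ being forced by our normalization of the theta correspondence, exactly as in Theorem~\ref{ripfu1u2}). Unlike in that theorem, the formula of \cite{close} is already phrased for the unitary group $U(2)$ rather than $GU(2)$, so no factor $|X(\cdot)|$ appears.

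Second, I would invoke the Basic Identity with $m=2$ to pass from the partial $L$-function appearing in \cite{close} to the completed one. It gives $Z(s,f_1,f_2,\Phi_s,\gamma_W)=[d_2(s,\gamma_W)]^{-1}Z_S(s,f_1,f_2,\Phi_s,\gamma_W)\,L^S(s+1/2,\pi\otimes\gamma_W)$, with $d_2(s,\gamma_W)=L_F(2s+2,\chi_{E/F}^{\,n})L_F(2s+1,\chi_{E/F}^{\,n+1})$. At $s=0$, since $n=2$ we have $\chi_{E/F}^{\,n}=\mathbf 1$ and $\chi_{E/F}^{\,n+1}=\chi_{E/F}$, so $d_2(0,\gamma_W)=\zeta_F(2)L_F(1,\chi_{E/F})$, which is the denominator of the claimed identity; and since $\gamma_W=\gamma^2$, base change identifies the doubling $L$-function $L^S(1/2,\pi\otimes\gamma_W)$ with $L_E^S(1/2,BC(\pi)\otimes\gamma^2)$, the numerator.

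Third, I would clear away the local factors. For $v\notin S$ the factorization of the doubling zeta integral into the local integrals $Z_v$, together with the unramified computation, gives $Z_v(0,f_{1,v},f_{2,v},\Phi_{0,v},\gamma^2_v)=L_{E_v}(1/2,\pi_v\otimes\gamma_v^2) / \big(L_{F_v}(1,\chi_{E_v/F_v})\zeta_{F_v}(2)\big)$, so that $Z_v^\sharp=1$ there and the product over $v\in S$ extends to a product over all places of the normalized $Z_v^\sharp$. Finally, as in the proof of Theorem~\ref{ripfu1u2}, one reconciles the convention of \cite{close} --- in which the splitting character on $U(W)$ is taken trivial --- with our convention $\gamma_W=\gamma^2$ by replacing $\pi$ with $\pi\otimes(\gamma|_{U(1)}\circ\det_{U(2)})$, whose base change is $BC(\pi)\otimes\gamma^2$; this substitution introduces no new constant and accounts for the $\gamma^2$-twist in the completed $L$-value. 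Assembling these steps yields the stated identity.

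The step I expect to be the main obstacle is the first: at $s_0=0$ with $m=n=2$ the naive doubling integral diverges and the classical Siegel--Weil formula is unavailable, so the seesaw identification genuinely needs the regularized Siegel--Weil formula --- precisely what is packaged into line $1.3.5$ of \cite{close}. The remaining work is the routine bookkeeping of $L$-factors through the Basic Identity, the unramified zeta-integral computation, and the normalization of splitting characters. Were one unwilling to cite \cite{close}, one would instead have to run Tan's regularized Siegel--Weil formula directly, as is done for the $U(2)\to U(3)$ lift later in the paper.
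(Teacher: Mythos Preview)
Your proposal is correct and takes the same approach as the paper: the paper simply states that this Rallis Inner Product Formula is line~1.3.5 of \cite{close} (with the complex conjugation explained by the normalization of the theta correspondence) and gives no further proof, while you unpack exactly that citation and spell out the doubling/Basic Identity bookkeeping behind it. Your observation that no $|X(\cdot)|$ factor arises here because the result in \cite{close} is already formulated on $U(2)$, in contrast to the $GU(2)$ formulation used for Theorem~\ref{ripfu1u2}, is correct and accounts for the structural difference from that case.
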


\subsubsection{Lifting from $U(2)$ to $U(3)$}
We have $\dim_E V =2$ and $\dim_E W=3$.  If $W$ is anisotropic, then $[H]$ is compact, and the Rallis Inner Product Formula we need follows from a Siegel-Weil Formula (Theorem 1.1 in \cite{sw}).  However, if $W$ is not anisotropic (so that $H(\A_F)$ is quasi-split), the theta integral in the Siegel-Weil formula does not converge.  In this case, the Rallis Inner Product Formula follows from Tan's Regularized Siegel-Weil formula.  What follows is a brief summary of \cite{tan}.  We encourage the interested reader to consult Tan's paper for further details.

We again have occasion to consider the following degenerate principal series representation of $G^\diamond(\A_F)$:
$$
I(s,\gamma) := \operatorname{Ind}_{P(\A_F)}^{G^\diamond(\A_F)} \gamma^3||\cdot||^s_{\A_E}\circ\det
$$
where we remind that reader that $P$ is the parabolic preserving the diagonal $$V^\Delta:=\{(v,v):v\in V\}\subset V\oplus V^-.$$
Given $\Phi_s\in I(s, \gamma)$, we define the Siegel-Eisenstein series
$$
\mathcal{E}(g,\Phi_s) := \sum_{\varepsilon\in P(F)\backslash G^\diamond(F)} \Phi_s(\varepsilon g).
$$

There is a maximal compact subgroup $K\subset G^\diamond(\A_F)$ such that we have the decomposition:
$$
G^\diamond(\A_F) = P(\A_F) K.
$$
We call $\Phi_s$ a \emph{standard section} if its restriction to $K$ is independent of $s$.  For a standard section $\Phi_s$, the Siegel-Eisenstein series $\mathcal{E}(g,\Phi_s)$ converges for $\operatorname{Re}(s)>1$, and has a meromorphic continuation to $\C$.  Furthermore, for each $s\in\C$ where it is holomorphic, $\mathcal{E}(g,\Phi_s)$ is an automorphic form on $G^\diamond(\A_F)$.  We take $\Phi_s$ to be a standard section in the sequel.

The Eisenstein series $\mathcal{E}(g,\Phi_s)$ has at worst a simple pole at $s=1/2$.  So we write its Laurent expansion as:
$$
\mathcal{E}(g,\Phi_s) = \frac{A_{-1}(g,\Phi)}{s-1/2} + A_0(g,\Phi) +\dots
$$
Before defining the theta integral, we remind the reader of the setup for the Weil representation.  We have
$$
\mathbb{W} := \operatorname{Res}_{E/F} 2V\otimes_E W
$$
where we remind the reader that $2V := V\oplus V^-$.  We set
$$
V^\nabla := \{(v,-v):v\in V\}\subset V\oplus V^-.
$$
Having fixed the characters $\psi$ and $\gamma$, we have a Schr\"odinger model of the Weil representation of $G^\diamond(\A_F)\times H(\A_F)$ realized on $\mathcal{S}((V^\nabla\otimes W)(\A_F)).$

Now, if we fix polarizations
\beqnan
V &=& X^+\oplus Y^+\\
V^- &=& X^-\oplus Y^-
\eeqnan
and denote
\beqnan
X &:=& X^+\oplus X^-\\
Y &:=& Y^+\oplus Y^-
\eeqnan
then we obtain another polarization of $\mathbb{W}.$  We have:
$$
2V = X\oplus Y
$$
and therefore
$$
\mathbb{W} = (X\otimes W)\oplus (Y\otimes W).
$$
We denote
\beqnan
\mathbb{X} &:=& X\otimes W\\
\mathbb{X}^+ &:=& X^+\otimes W\\
\mathbb{X}^- &:=& X^-\otimes W.
\eeqnan
There is a $U(V)(\A_F)\times U(V^-)(\A_F)$-intertwining map
$$
\sigma: \mathcal{S}(\mathbb{X}^+(\A_F))\otimes\mathcal{S}(\mathbb{X}^-(\A_F))\to \mathcal{S}(\mathbb{X}(\A_F))\to \mathcal{S}((V^\nabla\otimes W)(\A_F))
$$
where the first map is the obvious one, and the second map is given by a Fourier transform.

We now define the \emph{theta integral} as follows:
$$
I(g,\varphi) := \int_{[H]} \theta(g,h,\varphi)\gamma^{-2}(\det h)\ dh
$$
where $\varphi\in\mathcal{S}(V^\nabla\otimes W)(\A_F)),g\in G^\diamond(\A_F)$, and $dh$ is the Tamagawa measure on $H(\A_F)$.

For $g\in G^\diamond(\A_F)$, we have $g=pk$, with $p\in P(\A_F)$, and $k\in K$.  With the right choice of basis, we have
$
p = \begin{pmatrix} a & *\\ 0 & ^t\bar{a}^{-1} \end{pmatrix}
$
for some $a\in GL_2(\A_E).$  Write $|a(g)| := |\det a|_{\A_E}$.
If $W$ is anisotropic, and $\Phi_s$ is chosen such that $\Phi_s(g) = |a(g)|^{s-1/2}\omega_{\psi,\gamma}(g)\varphi(0)$, then $\mathcal{E}(g,\Phi_s)$ is holomorphic at $s=1/2$, and the theta integral defined above converges.  Indeed, Theorem 1.1 of \cite{sw} says that
$$
\mathcal{E}(g,\Phi_{1/2}) = I(g,\varphi).
$$
However, as mentioned above, if $H(\A_F)$ is quasi-split (i.e. $W$ is not anisotropic), the theta integral does not necessarily converge.  We'll have to `regularize' it so that we can think of it as a meromorphic function of a complex variable.  The Regularized Siegel-Weil Formula relates the Laurent expansions of this yet-to-be-defined regularized theta integral and the Siegel Eisenstein series.

Let $v$ be an odd place of $F$ such that all relevant data is unramified.  Tan finds a Hecke operator $z$ in the Hecke algebra of $G^\diamond_v$ that is used in the definition of the regularized theta integral.

We also need an auxiliary Eisenstein series.  Let $B_H$ be a Borel subgroup of $H$.  Then we have
$$
B_H = M_HN_H
$$
where $M_H$ is the Levi component of $B_H$, and $N_H$ is the unipotent radical.  We know that $M_H(\A_F)\cong \A_E^\times\times\A_E^{\times,1}.$  For $s\in\C$, let $\mu_s$ be the character of $M_H(\A_F)$ defined by $\mu_s(x,u):= ||x||^s_{\A_E}$.  We extend $\mu_s$ to all of $B_H$ by triviality on $N_H$.  We consider the induced representation
$$
I^{Aux}(s) := \operatorname{Ind}_{B_H}^H \mu_s.
$$
Let $K_H\subset H$ by a maximal compact subgroup such that $H=B_HK_H$.  Let $\Phi^{Aux}_s\in I^{Aux}(s)$ be the normalized $K_H$-fixed vector (i.e. $\Phi^{Aux}_s(k)=1$ for all $k\in K_H$).  Then the auxiliary Eisenstein series we need is defined by
$$
E(h,\Phi^{Aux}_s) := \sum_{\varepsilon\in B_H(F)\backslash H(F)} \Phi^{Aux}_s(\varepsilon h).
$$
It is known that $E(h,\Phi^{Aux}_s)$ converges for $\operatorname{Re}(s)$ sufficiently large, and has meromorphic continuation to all of $\C$.  Furthermore, $E(h,\Phi^{Aux}_s)$ has a simple pole at $s=1$ which is independent of $h$; we denote this residue by
$$
\kappa := \underset{s=1}{\operatorname{Res}}\ E(h,\Phi^{Aux}_s).
$$
We now define a new theta integral which incorporates both the auxiliary Eisenstein series and Hecke operator:
$$
I(g,s,\omega_{\psi,\gamma}(z)\varphi):=\int_{[H]} \theta(g,h,\omega_{\psi,\gamma}(z)\varphi)E(h,\Phi^{Aux}_s)\gamma^{-2}(\det h)\ dh.
$$
With all of this in place, we can define the \emph{regularized theta integral}.  The only modification from the theta integral above is that we multiply by an appropriate factor to cancel the effect of the Hecke operator.
\begin{definition}  For $g\in G^\diamond(\A_F),s\in\C$, and $\varphi\in\mathcal{S}(V^\nabla\otimes W)(\A_F))$, the \emph{regularized theta integral} is given by
$$
\mathcal{I}(g,s,\varphi) := \frac{1}{\kappa}\cdot\frac{I(g,s,\omega_{\psi,\gamma}(z)\varphi)}{P_z(s)}
$$
where
$$
P_z(s) :=q_F^s+q_F^{-s}-q_F-q_F^{-1}.
$$
The observant reader will notice that this definition differs by a constant from the definition given by Tan in \cite{tan}.
\end{definition}
The regularized integral $\mathcal{I}(g,s,\varphi)$ has a double pole at $s=1$; so we write the Laurent expansion as
$$
\mathcal{I}(g,s,\varphi) = \frac{B_{-2}(g,\varphi)}{(s-1)^2}+\frac{B_{-1}(g,\varphi)}{s-1}+B_0(g,\varphi)+\dots
$$
where the $B_i(g,\varphi)$ are automorphic forms on $G^\diamond(\A_F)$.

In order to prove the version of the Rallis Inner Product Formula that we'll use later, we need a result of Tan which relates the second terms in the Laurent expansions of $\mathcal{I}(g,s,\varphi)$ and $\mathcal{E}(g,\Phi_s)$.
\begin{theorem}[Second term identity]\label{2nd} Suppose that $\Phi_s(k)=(\omega(k)\varphi)(0)$ for all $k\in K$  Then
$$
2\cdot A_0(g,\Phi) = B_{-1}(g,\varphi) + \Psi(g)
$$
where $\Psi$ is an automorphic form on $G^\diamond(\A_F)$ which satisfies
$$
 \int_{[G\times G]} {f_1(g_1)} \overline{f_2(g_2)} \Psi(\iota(g_1,g_2))dg_1dg_2 = 0
$$
for cusp forms $f_i$ on $G(\A_F)$.
\end{theorem}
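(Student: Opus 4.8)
The plan is to deduce this from Victor Tan's regularized Siegel--Weil formula for $U(2,2)$ (the group here being $G^\diamond=U(V\oplus V^-)$ with $\dim_E V=2$), together with the holomorphy of the doubling zeta integral on cuspidal data. Tan's analysis produces, alongside a \emph{first term identity} matching the residue $A_{-1}(g,\Phi)$ of $\mathcal{E}(g,\Phi_s)$ at $s=1/2$ with the leading Laurent coefficient $B_{-2}(g,\varphi)$ of the regularized theta integral, a \emph{second term identity} of exactly the shape asserted here: $2A_0(g,\Phi)=B_{-1}(g,\varphi)+\Psi(g)$. The automorphic form $\Psi$ is explicit in Tan's work; it is assembled from (i) the residue $A_{-1}(g,\Phi)$ itself and (ii) a theta integral attached to the complementary skew-Hermitian space $W'$ lying in the same Witt tower as $W$, which here has dimension $\dim_E(V\oplus V^-)-\dim_E W=4-3=1$, so that $U(W')=U(1)$. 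The first step, then, is bookkeeping: transcribe Tan's identity into the normalization fixed above (recall that our $\mathcal{I}(g,s,\varphi)$ differs from Tan's by the factor $1/\kappa$ together with $P_z(s)$, as already flagged), and record the resulting $\Psi$.

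The second step is to show $\int_{[G\times G]} f_1(g_1)\overline{f_2(g_2)}\,\Psi(\iota(g_1,g_2))\,dg_1\,dg_2=0$ for all cusp forms $f_i$ on $G(\A_F)=U(2)(\A_F)$. For the part of $\Psi$ coming from the residue $A_{-1}(g,\Phi)$: integrating $\mathcal{E}(g,\Phi_s)$ against $f_1\otimes\overline{f_2}$ along the doubling embedding $\iota$, twisted by $\gamma_W^{-1}\circ\det$ as in the Piatetski-Shapiro--Rallis zeta integral $Z(s,f_1,f_2,\Phi_s,\gamma_W)$, produces a function of $s$ which is holomorphic at $s=1/2$ for cusp forms --- by the basic identity this reduces to holomorphy there of $d_m(s,\gamma_W)^{-1}L^S(s+1/2,\pi\otimes\gamma_W)$, and all the factors are finite and nonzero at $s=1/2$ for tempered $\pi$. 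Hence the $A_{-1}$-contribution, being the residue at $s=1/2$ of a function holomorphic there, vanishes. For the complementary-space part: integrating it against $f_1\otimes\overline{f_2}$ along $\iota$ and applying the doubling seesaw for the dual pair $(U(V\oplus V^-),U(W'))$ rewrites it as a Petersson pairing of the theta lifts of $f_1$ and $f_2$ from $U(V)=U(2)$ to $U(W')=U(1)$; since $W'$ is the member of the Witt tower lying strictly below $W$, this pairing vanishes in the range where the Rallis inner product formula for $U(2)\to U(3)$ is applied --- namely, when the $U(3)$-lift is the first occurrence, which is precisely the case of interest. Combining the two vanishings gives the claim.

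The step I expect to be the main obstacle is the first one --- not any single hard analytic fact, but the careful tracking of constants in passing between Tan's conventions and ours, since a stray factor of $2$, $\zeta_F(2)$, or $L_F(1,\chi_{E/F})$ here propagates through the Rallis inner product formula into the numerical constant of Conjecture \ref{theconjecture}. The needed analytic inputs (the meromorphic continuation and second term identity of \cite{tan}, holomorphy of the doubling zeta integral on cusp forms, and the tower property for unitary theta lifts) are all available off the shelf; the real work is assembling them with the correct normalizations.
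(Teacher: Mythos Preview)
Your proposal takes essentially the same approach as the paper: both reduce the statement to Tan's regularized Siegel--Weil work for $U(2,2)$, with the paper simply citing Tan's Theorem~1.2 and Proposition~5.1.1 together with Lemma~4.9 of a companion paper, while you sketch what lies behind those citations (and correctly flag the normalization bookkeeping as the real labor).

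One caveat worth noting: your vanishing argument for the complementary-space contribution to $\Psi$ (the $U(W')=U(1)$ piece) invokes first occurrence of the theta lift at $U(3)$, which is not a hypothesis of Theorem~\ref{2nd} as stated --- the theorem claims the integral vanishes for \emph{all} cusp forms $f_i$ on $U(2)$. This is harmless for the intended application (Theorem~\ref{rip} assumes the $U(3)$-lift is cuspidal, which forces vanishing of the $U(1)$-lift), but if you want the theorem in the generality stated you should either verify that Tan's Proposition~5.1.1 gives the vanishing unconditionally or add the first-occurrence hypothesis explicitly. The paper sidesteps this by citing Tan directly rather than unpacking $\Psi$.
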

\begin{proof} This follows from Theorem 1.2 and Proposition 5.1.1 in \cite{tan}, as well as Lemma 4.9 in \cite{tanapp}.  Note that our renormalization in the definition of the regularized theta integral eliminates the need for the constant $c$ found in \cite{tan}.
\end{proof}
\begin{remark}
Instead of using the Tamagawa measure on $H$ to define the theta integral (and regularized theta integral), Tan uses the measure which gives $[H]$ volume $1$.  However, he also takes $c=\frac{1}{\kappa}$.  Since we are using the Tamagawa measure -- which  gives $[H]$ volume $2$ -- and we have renormalized the regularized theta integral by $\frac{1}{\kappa}$, the theorem above is correct.
\end{remark}
We are now equipped to state and prove the Rallis Inner Product Formula.
\begin{theorem}[Rallis Inner Product Formula]\label{rip} Let $\pi$ be an irreducible, cuspidal, automorphic representation of $G(\A_F)$.  Let $f_1,f_2\in\pi$.  Let $\varphi_1\in\mathcal{S}(\mathbb{X}^+(\A_F))$ and $\varphi_2\in\mathcal{S}(\mathbb{X}^-(\A_F))$.  Let $\theta(\bar{f}_1,\varphi_1)$ and $\theta(\bar{f}_2,\varphi_2)$ be the theta-lifts of $\bar{f}_1$ and $\bar{f}_2$ to $H(\A_F)$, and suppose these lifts are cuspidal.  (Once again, we have included complex conjugation here to compensate for our different normalization of the theta correspondence.)  Set $\varphi:=\sigma(\varphi_1\otimes\overline{\varphi_2})$, and let $\Phi_s\in I(s,\gamma)$ be such that $\Phi_s(k)=(\omega(k)\varphi)(0)$ for all $k\in K$.  Then the following equality holds:
$$
{\langle\theta(\bar{f}_1,\varphi_1),\theta(\bar{f}_2,\varphi_2)\rangle_{\Theta(\bar\pi)}}= \frac{2\cdot L_E(1,BC(\pi)\otimes\gamma^3)}{\zeta_F(2)L_F(3,\chi_{E/F})}\prod_{v}Z_v^\sharp(1/2,{f}_{1,v},{f}_{2,v},\Phi_{1/2,v}, \gamma_v^3)
$$
where
$$
Z_v^\sharp := \frac{\zeta_{F_v}(2)L_{F_v}(3,\chi_{E_v/F_v})}{L_{E_v}(1,BC(\pi_v)\otimes\gamma_v^3)} Z_v.
$$
\end{theorem}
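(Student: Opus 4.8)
The plan is to run the doubling method along the doubling seesaw \ref{doublingss}, in the quasi-split case replacing the divergent theta integral by Tan's regularized theta integral and invoking the second term identity of Theorem \ref{2nd}; the anisotropic case is handled by the same scheme but is easier, since there $[H]$ is compact, the theta integral converges, and one uses the Siegel--Weil identity $\mathcal{E}(g,\Phi_{1/2})=I(g,\varphi)$ of Theorem 1.1 of \cite{sw} in place of the regularized version. First I would unfold: by the definition of the Petersson pairing on $\Theta(\bar\pi)$ and of the lifts $\theta(\bar f_i,\varphi_i)$, the quantity $\langle\theta(\bar f_1,\varphi_1),\theta(\bar f_2,\varphi_2)\rangle_{\Theta(\bar\pi)}$ is a triple integral over $[H]\times[G]\times[G]$ of $f_1(g_1)\overline{f_2(g_2)}$ against $\theta(g_1,h,\varphi_1)\overline{\theta(g_2,h,\varphi_2)}$, twisted by the splitting character. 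Carrying out the $h$-integration first and identifying the product of the two theta kernels with a single theta kernel for the pair $(G^\diamond,H)$ acting on $V^\nabla\otimes W$, with Schwartz function $\varphi=\sigma(\varphi_1\otimes\overline{\varphi_2})$ --- this is seesaw duality --- turns the inner integral into the theta integral $I(\iota(g_1,g_2),\varphi)$. Since $H(\A_F)$ is quasi-split this diverges, so I would instead work with $\mathcal{I}(g,s,\varphi)$, justify interchanging its meromorphic continuation with the $(g_1,g_2)$-integration for $\operatorname{Re}(s)$ large, and pass to the Laurent expansion at $s=1$.

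The key step is then the second term identity. Because $f_1,f_2$ are cusp forms, the constant leading Laurent coefficient $B_{-2}(\cdot,\varphi)$ of $\mathcal{I}$ integrates to zero against $f_1\otimes\overline{f_2}$, so the regularized doubling integral has at worst a simple pole at $s=1$ whose residue (after accounting for the normalizing factors $\tfrac{1}{\kappa}$, $P_z$, and the Tamagawa volume $\operatorname{Vol}([H])=2$ that enter the regularized theta integral) is exactly $\langle\theta(\bar f_1,\varphi_1),\theta(\bar f_2,\varphi_2)\rangle_{\Theta(\bar\pi)}$, and it equals $\int_{[G\times G]}f_1(g_1)\overline{f_2(g_2)}\,B_{-1}(\iota(g_1,g_2),\varphi)\,(\mathrm{twist})\,dg_1\,dg_2$. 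Applying Theorem \ref{2nd}, $B_{-1}(g,\varphi)=2A_0(g,\Phi)-\Psi(g)$, and using that the $\Psi$-contribution vanishes against cusp forms, this becomes $2\int_{[G\times G]}f_1(g_1)\overline{f_2(g_2)}\,A_0(\iota(g_1,g_2),\Phi)\,(\mathrm{twist})\,dg_1\,dg_2$. Since the residue $A_{-1}(\cdot,\Phi)$ of the Siegel--Eisenstein series at $s=1/2$ is likewise annihilated against cusp forms, the last integral is the value at $s=1/2$ of the Piatetski-Shapiro--Rallis zeta integral $Z(s,\bar f_1,\bar f_2,\Phi_s,\gamma^3)$ after holomorphic continuation --- here $\gamma_W=\gamma^3$, since $\dim W=3$ and by our choice of splitting characters. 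Thus at this point $\langle\theta(\bar f_1,\varphi_1),\theta(\bar f_2,\varphi_2)\rangle_{\Theta(\bar\pi)}=2\,Z(1/2,\bar f_1,\bar f_2,\Phi_{1/2},\gamma^3)$.

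Finally I would invoke the Basic Identity $Z(s,\bar f_1,\bar f_2,\Phi_s,\gamma^3)=[d_2(s,\gamma^3)]^{-1}Z_S(s,\bar f_1,\bar f_2,\Phi_s,\gamma^3)\,L^S(s+1/2,\pi\otimes\gamma^3)$ and evaluate at $s=1/2$: one computes $d_2(1/2,\gamma^3)=L_F(3,\chi_{E/F})\zeta_F(2)$, $L^S(1,\pi\otimes\gamma^3)=L^S_E(1,BC(\pi)\otimes\gamma^3)$, and $Z_S(1/2,\cdots)=\prod_{v\in S}Z_v(1/2,\cdots)$ by the factorization of the zeta integral. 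Rewriting each $Z_v$ in terms of $Z_v^\sharp$ via the stated local $L$-ratio, using the unramified computation $Z_v^\sharp=1$ for $v\notin S$ so that the partial $\zeta$-, $L_F$- and $L_E$-factors recombine into complete ones, and collecting the factor $2$ from the second term identity, yields
\[
\langle\theta(\bar f_1,\varphi_1),\theta(\bar f_2,\varphi_2)\rangle_{\Theta(\bar\pi)}=\frac{2\,L_E(1,BC(\pi)\otimes\gamma^3)}{\zeta_F(2)L_F(3,\chi_{E/F})}\prod_v Z_v^\sharp(1/2,f_{1,v},f_{2,v},\Phi_{1/2,v},\gamma_v^3),
\]
which is the assertion. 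I expect the main obstacle to be the constant-chasing in the quasi-split case: justifying the interchange of analytic continuation and integration, verifying that the constant leading terms $B_{-2}$ and $A_{-1}$ really drop out against cusp forms, matching the splitting-character twists produced by the seesaw with those in $I(g,\varphi)$ and in $Z$, and reconciling the normalizations of the regularized theta integral and of the Tamagawa measure on $[H]$ with Tan's conventions so that the overall constant comes out to be exactly $2$ rather than some other rational multiple.
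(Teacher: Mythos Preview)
Your proposal is correct in overall strategy and aligns with the paper's proof: doubling seesaw, Tan's regularized theta integral and second term identity in the quasi-split case, the classical Siegel--Weil formula in the anisotropic case, followed by the basic identity and the evaluation $d_2(1/2,\gamma^3)=\zeta_F(2)L_F(3,\chi_{E/F})$.

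There is one place where your sketch is looser than the paper, and one harmless red herring. The loose step is your identification ``residue of the regularized doubling integral at $s=1$ equals the Petersson pairing.'' You phrase this as unfolding the Petersson pairing into the (divergent) theta integral and then replacing it by $\mathcal{I}(g,s,\varphi)$, with the constants $1/\kappa$, $P_z$, $\operatorname{Vol}([H])=2$ sorting themselves out. The paper runs this in the opposite direction and makes the mechanism explicit: starting from $\operatorname{Res}_{s=1}\int_{[G\times G]}f_1\overline{f_2}\,\mathcal{I}(\iota(g_1,g_2),s,\varphi)\,dg$, it unfolds the definition of $\mathcal{I}$, invokes Tan's rapid-decrease result for $\theta(\cdot,\cdot,\omega(z)\varphi)$ to justify Fubini, transfers the Hecke operator $z$ on $G^\diamond_v$ to its partner $z'$ on $H_v$, uses the eigenvalue identity $\int_{H_v}z'(h_v)E(hh_v^{-1},\Phi_s^{Aux})\gamma^2(\det h_v)\,dh_v=P_z(s)E(h,\Phi_s^{Aux})$ to cancel $P_z(s)$, applies a Poisson summation identity $\theta(\iota(g_1,g_2),h,\varphi)=\theta(g_1,h,\varphi_1)\overline{\theta(g_2,h,\varphi_2)}\,\gamma^{-2}(\det h)$ to split the doubled theta kernel, and finally lets the residue of the auxiliary Eisenstein series cancel $1/\kappa$. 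This is exactly the ``constant-chasing'' you flagged as the main obstacle, and the Hecke-operator/eigenvalue manoeuvre is the concrete device that makes it go through.

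The red herring is your appeal to ``$B_{-2}$ is constant and integrates to zero against cusp forms, so the regularized doubling integral has at worst a simple pole.'' You do not need this: the residue at $s=1$ of $\int f_1\overline{f_2}\,\mathcal{I}\,dg$ is the integral of $f_1\overline{f_2}$ against $B_{-1}$ regardless of whether the $B_{-2}$-term vanishes, simply by the Laurent expansion. The paper never invokes any property of $B_{-2}$. Similarly, your remark that $A_{-1}$ is killed by cusp forms (so that the integral against $A_0$ equals $Z(1/2,\cdots)$) is what the paper cites from Lemma~4.7 and (4.8) of \cite{tanapp}.
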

\begin{proof}  We begin with the argument in the case that $W$ is not anisotropic.  In this case we require the regularized Siegel-Weil formula.

We have the following equalities from the definitions of $I$ and $\mathcal{I}$ given above:
\beqnan
(*):=\lefteqn{\underset{s=1}{\operatorname{Res}}\int_{[G\times G]} {f_1(g_1)}\overline{f_2(g_2)}\mathcal{I}(\iota(g_1,g_2), s,\varphi) dg_1dg_2}\\ &=& \frac{1}{\kappa P_z(s)}\underset{s=1}{\operatorname{Res}}\int_{[G\times G]} {f_1(g_1)}\overline{f_2(g_2)} I(g,s,\omega_{\psi,\gamma}(z)\varphi)dg_1 dg_2\\
&=& \frac{1}{\kappa P_z(s)}\underset{s=1}{\operatorname{Res}}\int_{[G\times G]} {f_1(g_1)}\overline{f_2(g_2)} \int_{[H]} \theta(\iota(g_1,g_2), h, \omega_{\psi,\gamma}(z)\varphi)\\&& \times E(h,\Phi_s^{Aux})\gamma^{-2}(\det h)dh dg_1dg_2.
\eeqnan
By Corollary 2.3.2 in \cite{tan}, $\theta(\iota(g_1,g_2), h, \omega_{\psi,\gamma}(z)\varphi)$ is rapidly decreasing on $[H]$, and we may change the order of integration, so that we have
\beqnan
(*) &=& \frac{1}{\kappa P_z(s)}\underset{s=1}{\operatorname{Res}} \int_{[H]}\int_{[G\times G]} {f_1(g_1)}\overline{f_2(g_2)} \theta(\iota(g_1,g_2),h,\omega_{\psi,\gamma}(z)\varphi)\\
&&\times E(h,\Phi_s^{Aux})\gamma^{-2}(\det h)\ dg_1dg_2dh.
\eeqnan
Now, there is a Hecke operator $z'$ in $H_v$ (for some place $v$ of $F$) corresponding to $z$ such that $\omega_{\psi,\gamma}(z)$ and $\omega_{\psi,\gamma}(z')$ have the same action on $\varphi$.  (See section 2.2 of \cite{tan} for details.)  So we have
\beqnan
(*) &=& \frac{1}{\kappa P_z(s)}\underset{s=1}{\operatorname{Res}} \int_{[H]}\int_{[G\times G]} {f_1(g_1)}\overline{f_2(g_2)} \theta(\iota(g_1,g_2),h,\omega_{\psi,\gamma}(z')\varphi)\\
&&\times E(h,\Phi_s^{Aux})\gamma^{-2}(\det h)\ dg_1dg_2dh.
\eeqnan
We have
$$
\theta(\iota(g_1,g_2),h,\omega_{\psi,\gamma}(z')\varphi) = \int_{H_v} z'(h_v)\theta(\iota(g_1,g_2), hh_v,\varphi)dh_v.
$$
By plugging this in to the previous equation for $(*)$ and making a change of variables, we have
\beqnan
(*) &=& \frac{1}{\kappa P_z(s)}\underset{s=1}{\operatorname{Res}} \int_{[H]}\int_{[G\times G]} {f_1(g_1)}\overline{f_2(g_2)} \theta(\iota(g_1,g_2),h,\varphi)\gamma^{-2}(\det h)\\
&&\times \int_{H_v} z'(h_v)E(hh_v^{-1},\Phi_s^{Aux})\gamma^2(\det h_v)dh_v\ dg_1dg_2dh.
\eeqnan
But since
$$
\int_{H_v} z'(h_v)E(hh_v^{-1},\Phi_s^{Aux})\gamma^2(\det h_v)dh_v = P_z(s) E(h,\Phi_s^{Aux})
$$
(see the top of page 351 of \cite{tan}) we have
$$
(*) = \frac{1}{\kappa}\underset{s=1}{\operatorname{Res}} \int_{[H]}\int_{[G\times G]} {f_1(g_1)}\overline{f_2(g_2)} \theta(\iota(g_1,g_2),h,\varphi)E(h,\Phi_s^{Aux})\gamma^{-2}(\det h)dg_1dg_2dh.
$$
Now we use a Poisson summation formula
$$
\theta(\iota(g_1,g_2),h,\varphi) = \theta(g_1, h, \varphi_1)\theta(g_2,h,\overline{\varphi_2})\gamma^{-2}(\det h)
$$
to obtain
$$
(*) = \frac{1}{\kappa}\underset{s=1}{\operatorname{Res}} \int_{[H]} \theta(\bar{f}_1,\varphi_1)(h)\overline{\theta(\bar{f}_2, \varphi_2)(h)} E(h,\Phi_s^{Aux})dh.
$$
Then, since $\kappa := \underset{s=1}{\operatorname{Res}}\ E(h,\Phi_s^{Aux})$, we see that
$$
(*) = {\langle\theta(\bar{f}_1,\varphi_1),\theta(\bar{f}_2,\varphi_2)\rangle_{\Theta(\pi)}}.
$$

Now, returning to the definition of $(*)$, we see that
$$
(*) =  \int_{[G\times G]} {f_1(g_1)}\overline{f_2(g_2)} B_{-1}(\iota(g_1,g_2),\varphi)dg_1dg_2
$$
and by Theorem \ref{2nd} we have
\beqnan
(*) &=& 2\int_{[G\times G]} {f_1(g_1)}\overline{f_2(g_2)} A_0(\iota(g_1, g_2), \Phi)dg_1dg_2\\
&& - \int_{[G\times G]} {f_1(g_1)}\overline{f_2(g_2)} \Psi(\iota(g_1,g_2))dg_1dg_2\\
&=& 2\int_{[G\times G]} {f_1(g_1)}\overline{f_2(g_2)} A_0(\iota(g_1, g_2), \Phi)dg_1dg_2.
\eeqnan
Finally, Lemma 4.7 and line 4.8 of \cite{tanapp} gives that the integral above is just $Z(1/2, f_1,f_2,\Phi_{1/2},\gamma^3)$.  Therefore, after applying the basic identity, we are done.

If $W$ is anisotropic (so that $[H]$ is compact), then the proof is much simpler; there is no need to regularize the Siegel-Weil formula in this case.  The result follows from Theorem 1.1 of \cite{sw}.
\end{proof}

\section{A local seesaw identity}\label{seesawchapt}

Everything in this chapter is local in nature, though we omit $v$ from the notation.  Whenever we refer to a group in this chapter, we really mean the $F_v$ points of the underlying algebraic group.

The purpose of this chapter is to provide an identity between the local integrals $\mathcal{P}_v$ considered in Conjecture \ref{theconjecture} and the $\mathcal{J}_v$ from Ichino's triple product formula.  The seesaw diagram which motivates the identity is:
$$
\xymatrix{U(V\oplus L)\ar @{-}[d]\ar @{-}[dr]& U(W)\times U(W)\ar @{-}[dl]\ar @{-} [d]\\ U(V)\times U(L) & U(W)}
$$
Here, $V$ is a two-dimensional hermitian space, $W$ is two-dimensional skew-hermitian spaces, and $L$ is a one-dimensional hermitian space.

We fix representations for the groups on the `bottom row' of the seesaw.  That is, let $\pi,\mu$, and $\sigma$ be irreducible, cuspidal representations of $U(V), U(L)$, and $U(W)$, respectively.  After fixing the appropriate splitting characters (which we suppress, for now), we also consider the Weil representation $\omega$ of $U(V\oplus L)\times U(W)$.

Let $\mathcal{B}_\pi,\mathcal{B}_\mu,\mathcal{B}_\sigma$ and $\mathcal{B}_\omega$ be pairings for the relevant representation.  Inspired by the analogous global seesaw duality property, one hopes to consider matrix coefficients for $\sigma,\pi,\mu$, and $\omega$.  Then, by showing that the integral
\beqnan
\int_{U(V)\times U(L)\times U(W)} &\mathcal{B}_\pi(\pi(g)f_\pi, f_\pi) \mathcal{B}_\mu(\mu(\ell)f_\mu, f_\mu)\mathcal{B}_\sigma(\sigma(h)f_\sigma, f_\sigma)\\
&\mathcal{B}_\omega(\omega((g,\ell)h)f_\omega, f_\omega)\ dgd\ell dh
\eeqnan
converges absolutely, once can use Fubini's theorem to arrive at a local seesaw identity.  On one side of this hypothetical local seesaw identity would be an integral of matrix coefficients for $\Theta(\sigma),\pi$, and $\mu$, and on the other would be an integral of matrix coefficients for $\sigma,\Theta(\pi)$, and $\Theta(\mu)$.  Alas, the convergence of the integral above does not hold.  However, all is not lost.  By ignoring $\mu$ and $U(L)$, and integrating only over $U(V)\times U(W)$, we obtain a convergent integral.

\begin{proposition}\label{localconv}
Suppose that $\pi$ and $\sigma$ are tempered.  Then the integral
$$
\int_{U(V)\times U(W)}\mathcal{B}_\pi(\pi(g)f_\pi, f_\pi)\mathcal{B}_\sigma(\sigma(h)f_\sigma, f_\sigma) \mathcal{B}_\omega(\omega(g,h)f_\omega, f_\omega)dgdh
$$
converges absolutely.
\end{proposition}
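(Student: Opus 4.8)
The plan is to adapt the convergence argument of Proposition~\ref{conv} (itself following \cite{rgp}), the decisive new ingredient being that the Weil-representation matrix coefficient $\mathcal{B}_\omega$ supplies the extra decay that makes the doubled integral converge. First I would dispose of the anisotropic cases. A three-dimensional hermitian space over $E/F$ is automatically isotropic, so $U(V\oplus L)$ is quasi-split; moreover if $V$ (resp.\ $W$) is anisotropic then $U(V)$ (resp.\ $U(W)$) is compact, the remaining two matrix coefficients are bounded, and the integral over the compact factor is harmless, so the statement reduces to the one-variable Proposition~\ref{conv}. Hence one may assume $V$ and $W$ are both split, so that $U(V)\cong U(W)\cong U(1,1)$ is quasi-split of $F$-rank one.

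Next I would pass to a convenient model. Fix a complete polarization $W=X\oplus Y$ and realize $\omega$ on the Schr\"odinger model $\mathcal{S}(\mathbb{X})$ with $\mathbb{X}=\operatorname{Res}_{E/F}\bigl((V\oplus L)\otimes_E Y\bigr)\cong V\oplus L$, a three-dimensional $E$-space. In this model $U(V\oplus L)$---hence its subgroup $U(V)$---acts geometrically, $(\omega(g)\varphi)(x)=\varphi(gx)$ up to a unitary character (there is no $|\det|$-factor because $N_{E/F}(\det_E g)=1$), while the Siegel-parabolic Levi $\{n(c):c\in E^\times\}$ of $U(W)$ acts, after a suitable identification, by $(\omega(n(c))\varphi)(x)=|c|_E^{3/2}\varphi(cx)$; the exponent $\tfrac32=\tfrac12\dim_E(V\oplus L)$ is the crucial gain. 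Since $\pi$ and $\sigma$ are tempered, their matrix coefficients satisfy the weak inequality, so up to polynomial factors in the logarithmic height they are dominated by $\Xi_{U(V)}$ and $\Xi_{U(W)}$.

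Then, using the Cartan decompositions of $U(V)$ and $U(W)$ together with the $K$-finiteness of $f_\omega$, I would reduce the estimation of $\mathcal{B}_\omega$ to torus elements. For $t(s)\in T_V^+$, the torus element scaling the two isotropic lines of $V$ by $s$ and $s^{-1}$, and for $n(c)\in T_W^+$, the matrix coefficient $\mathcal{B}_\omega(\omega(t(s),n(c))\,\varphi_1,\varphi_2)$ (with $\varphi_1,\varphi_2$ in a fixed finite-dimensional subspace of $\mathcal{S}(\mathbb{X})$) equals $|c|_E^{3/2}$ times an overlap integral $\int\varphi_1(cs\,v_1,cs^{-1}v_2,c\ell)\,\overline{\varphi_2(v_1,v_2,\ell)}\,dv_1\,dv_2\,d\ell$ of Schwartz functions, which is bounded---up to polynomial factors---by $\min(1,|cs|_E^{-1})\,\min(1,|cs^{-1}|_E^{-1})\,\min(1,|c|_E^{-1})$. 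Inserting the explicit shapes $\Xi_{U(V)}(t(s))\asymp|s|_E^{1/2}$, $\Xi_{U(W)}(n(c))\asymp|c|_E^{1/2}$, the volume bound $\mu(m)\le A\,\delta^{-1}(m)$ of \cite{silb} with $\delta_{U(V)}(t(s))=|s|_E$, $\delta_{U(W)}(n(c))=|c|_E$, and running the Silberger reduction exactly as in the proof of Proposition~\ref{conv}, the convergence of the original integral reduces to that of a double series over the rank-one Cartan parameters $|s|_E,|c|_E\le1$. A short computation shows its terms decay geometrically off the diagonal $|s|_E\asymp|c|_E$ and, thanks to the extra factor $|c|_E^{1/2}$ coming from the third dimension contributed by $L$, are $\ll|s|_E^{1/2}$ times a polynomial along it, so the series converges by a one-variable estimate of the kind in Lemma~\ref{calc}.

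The step I expect to be the main obstacle is precisely this final convergence, which is genuinely borderline. If $L$ were removed---that is, for the Weil representation of $U(V)\times U(W)$ alone---the action of $n(c)$ would carry only $|c|_E$ rather than $|c|_E^{3/2}$, the diagonal terms of the series would be $O(1)$ times a polynomial, and the series would diverge; it is exactly the extra dimension from $L$ that saves the day. Getting the exponents right therefore calls for care, as do the bookkeeping of the similitude and splitting characters (all of absolute value one, hence irrelevant) and the compatibility of the Haar measure on $\mathbb{X}$ with the fixed measures on the groups; these last points are routine.
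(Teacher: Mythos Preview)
Your approach is essentially the paper's: reduce via Cartan decomposition and Silberger's volume bound, apply the weak inequality for the tempered coefficients of $\pi$ and $\sigma$, bound $\mathcal{B}_\omega$ explicitly in the Schr\"odinger model on $\mathcal{S}(V\oplus L)$ (with the decisive exponent $\tfrac32=\tfrac12\dim_E(V\oplus L)$ in the $U(W)$-Levi action), and finish with a two-variable estimate of Lemma~\ref{calc} type. Your remark that removing $L$ would make the integral diverge is exactly the paper's point; the final computation is genuinely borderline, and the paper bounds the reduced integrand by $|m_1|^{1/4}|m_2|^{1/4}$ times a log factor just as you suggest.

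Two places where your sketch falls short of the paper. First, you have only treated the case where $E$ is a field: at a split place $E=F\times F$ one has $U(V)\cong U(W)\cong GL_2(F)$ of $F$-rank two, not one, and your assertion ``$U(V)\cong U(W)\cong U(1,1)$ is quasi-split of $F$-rank one'' is false there. The paper handles this case separately by realizing $\omega$ on $\mathcal{S}(M_{2,3}(F))$ and cutting the rank-two Cartan domain into many subregions; the mechanism is the same but the bookkeeping is substantially longer. Second, your disposal of the anisotropic cases does not literally reduce to Proposition~\ref{conv}: that proposition concerns a product of two tempered matrix coefficients on nested unitary groups, whereas here, after integrating out the compact factor, the remaining integrand still involves $\mathcal{B}_\omega$ as a function of the non-compact variable, and one must bound it via the same Schr\"odinger-model estimate before any one-variable argument applies. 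The paper does this directly (the resulting integral is $\int_{|m|\le1}|m|^{\alpha}(1-\log|m|)^r\,d^\times m$ with $\alpha>0$), and these cases are indeed easy, but the appeal to Proposition~\ref{conv} as stated is not correct.
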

\begin{proof} First, we suppose that $E$ is a quadratic field extension of $F$.  We also suppose that neither $V$ nor $W$ is anisotropic.  We recall that we have Cartan decompositions
\beqnan
U(V) &=& K_VM_V^+K_V\\
U(W) &=& K_WM_W^+K_W
\eeqnan
where in this case we have
$$
M_V^+\cong M_W^+\cong \{x\in E^\times :|x|\leq 1\}.
$$
Following the proof of Proposition \ref{conv}, we see that the integral above is reduced to the convergence of
$$
\int_{M_V^+\times M_W^+} \mu_1(m_1)\mu_2(m_2)\mathcal{B}_\pi(\pi(m_1)f_\pi,f_\pi)\mathcal{B}_\sigma(\sigma(m_2)f_\sigma,f_\sigma)\mathcal{B}_\omega(\omega(m_1, m_2)f_\omega,f_\omega)dm_1dm_2
$$
where
$$
\mu_1(m) := \operatorname{Vol}(K_VmK_V)/\operatorname{Vol}(K_V)
$$
for $m\in M_V^+$, and
$$
\mu_2(m) := \operatorname{Vol}(K_WmK_W)/\operatorname{Vol}(K_W)
$$
for $m\in M_W^+$.
We know that $|\mu_1(m_1)|\leq A_1 |m_1|^{-1}$ and $|\mu_2(m_2)|\leq A_2 |m_2|^{-1}$, where $A_1,A_2$ are positive constants.  Furthermore, since $\pi$ and $\sigma$ are tempered, we know that
$$
|\mathcal{B}_\pi(\pi(m_1)f_\pi, f_\pi)|\leq C_1 |m_1|^{1/2} (1-\log |m_1|)^{r_1}
$$
and
$$
|\mathcal{B}_\sigma(\pi(m_2)f_\sigma, f_\sigma)|\leq C_2 |m_2|^{1/2} (1-\log |m_2|)^{r_2}
$$

where the $C_i$ and $r_i$ are positive constants.

For any $x\in E^\times$, we set
$$
\Upsilon(x) := \min(1, |x|^{-1}).
$$
We recall that for $\phi,\phi'\in \mathcal{S}(E)$, there is some $C>0$ such that for any $a\in E^\times$ we have
$$
\left|\int_E \phi(ax)\overline{\phi'(x)}dx\right|\leq C\cdot\Upsilon(a).
$$
Realizing $\omega$ on $\mathcal{S}(V\oplus L)$, we write $x=\begin{bmatrix} x_1\\ x_2\\ x_3\end{bmatrix}\in V\oplus L$ with $x_2\in L, (x_1,x_3)\in V$, and note that $\{(x_1,0)\},\{(0,x_3)\}\subset V$ are isotropic lines.  Then we have
$$
\omega(m_1,m_2)\Phi\left(\begin{bmatrix} x_1\\x_2\\x_3\end{bmatrix}\right) = \gamma(m_1) |m_1|^{3/2}\Phi\left(\begin{bmatrix} m_1m_2^{-1} x_1\\ m_1 x_2\\ m_1\overline{m_2} x_3\end{bmatrix}\right).
$$
So, we see that there is a positive constant $C$ such that for all $m_1\in M_V^+$ and $m_2\in M_W^+$ we have
$$
|B_\omega(\omega(m_1,m_2))f_\omega, f_\omega|\leq C |m_1|^{3/2} \Upsilon(m_1m_2^{-1})\Upsilon(m_1)\Upsilon(m_1\overline{m_2}).
$$
Note that for $|m_1|,|m_2|\leq 1$, we have $\Upsilon(m_1)=\Upsilon(m_1\overline{m_2})=1$.

So, we are reduced to checking the convergence of
$$
\int_{|m_1|, |m_2|\leq 1} |m_1|\cdot |m_2|^{-1/2}\cdot \Upsilon(m_1m_2^{-1})(1-\log |m_1|)^{r_1}(1-\log |m_2|)^{r_2} d^\times m_1 d^\times m_2.
$$
When $|m_1|\geq |m_2|$, the integrand is
$$
|m_2|^{1/2}(1-\log |m_1|)^{r_1}(1-\log |m_2|)^{r_2}
$$
which is bounded above by, say
$$
|m_1|^{1/4}|m_2|^{1/4}(1-\log |m_1|)^{r_1}(1-\log |m_2|)^{r_2}.
$$
When $|m_1|\leq |m_2|$, the integrand is
$$
|m_1|\cdot |m_2|^{-1/2}(1-\log |m_1|)^{r_1}(1-\log |m_2|)^{r_2}
$$
which is also bounded above by
$$
|m_1|^{1/4}|m_2|^{1/4}(1-\log |m_1|)^{r_1}(1-\log |m_2|)^{r_2}.
$$
The convergence of
$$
\int_{|m_1|,|m_2|\leq 1}|m_1|^{1/4}|m_2|^{1/4}(1-\log |m_1|)^{r_1}(1-\log |m_2|)^{r_2}d^\times m_1 d^\times m_2.
$$
follows from Lemma \ref{calc}.

Now we suppose that $W$ is anisotropic, but $V$ is not.  In that case, we need only check the convergence of
$$
\int_{|m_1|\leq 1} |m_1|(1-\log |m_1|)^{r_1}d^\times m_1,
$$
which follows from Lemma \ref{calc}.

If $V$ is anisotropic, but $W$ is not, then we are reduced to checking the convergence of
$$
\int_{|m_2|\leq 1} |m_2|^{1/2}(1-\log |m_2|)^{r_2}d^\times m_2,
$$
which also follows from Lemma \ref{calc}.

If both $V$ and $W$ are anisotropic, then there is nothing to check.

We now assume that $E=F\times F$.  In this case, we recall that both $U(V)$ and $U(W)$ are isomorphic to $GL_2(F)$.  With the right choice of bases, we have
$$
M_V^+\cong M_W^+\cong \{\operatorname{diag}(x,y): |x|\leq |y|\}.
$$
The proof in this case requires us to check many cases.  Before we can make use of Lemma \ref{calc}, we note that because both $\pi$ and $\sigma$ are tempered, we have constants $A,A'>0$ such that
\beqnan
|\mathcal{B}_\pi(\pi(g)f_\pi,f_\pi)|&\leq& A\cdot |g_1|^{1/2}|g_2|^{-1/2}\\
|\mathcal{B}_\sigma(\sigma(h)f_\sigma, f_\sigma)| &\leq& A'\cdot |h_1|^{1/2}|h_2|^{-1/2}
\eeqnan
where $g=\begin{pmatrix} g_1&\\&g_2\end{pmatrix}\in M_V^+$ and $h=\begin{pmatrix}h_1&\\&h_2\end{pmatrix}\in M_W^+$. Recall that
\beqnan
\mu_1(g) &=& \operatorname{Vol}(K_VgK_V)/\operatorname{Vol}(K_V)\\
\mu_2(2) &=& \operatorname{Vol}(K_WhK_W)\operatorname{Vol}(K_W)
\eeqnan
and that we have constants $B,B'>0$ such that
\beqnan
|\mu_1(g)| &\leq& B\cdot |g_1|^{-1}|g_2|\\
|\mu_2(h)| &\leq& B'\cdot |h_1|^{-1}|h_2|.
\eeqnan
Realizing $\omega$ on $\mathcal{S}(M_{2,3}(F))$, we have
$$
\omega(g,h)\phi\left(\begin{bmatrix}x_1& x_2& x_3\\ x_4 & x_5 & x_6\end{bmatrix}\right) = \det(g)^{-3/2}\det(h)\phi\left(\begin{bmatrix} g_1^{-1} h_1 x_1 & g_1^{-1} x_2 & g_1^{-1} h_2 x_3\\ g_2^{-1}h_1 x_4 & g_2^{-1} x_5 & g_2^{-1}h_2 x_3\end{bmatrix}\right).
$$

So, combining the various bounds mentioned above, we see that the integral whose convergence we must check is
\beqnan
\int_{|g_1|\leq |g_2|, |h_1|\leq |h_2|} &&|g_1|^{-2} |g_2|^{-1} |h_1|^{1/2} |h_2|^{3/2}(1-\sum\log |g_i|)^r(1-\sum\log |h_i|)^s\\ &&\Upsilon(h_1g_1^{-1})\Upsilon(h_1g_2^{-1})\Upsilon(h_2g_1^{-1})\Upsilon(h_2g_2^{-1})\Upsilon(g_1^{-1})\Upsilon(g_2^{-1})\\ &&d^\times g_1 d^\times g_2 d^\times h_1 d^\times h_2
\eeqnan
where $r,s>0$.
We will cut the region of integration into thirty regions, and verify the convergence of the integral in each of these regions.  The convergence of the integral above follows from cutting the region of integration into thirty regions (one for each comparative order of $g_1,g_2,h_1,h_2$ and $1$), and applying Lemma \ref{calc} in each case.  We omit the details.
\end{proof}

\begin{remark}\label{ram}
It is actually not necessary here to assume that $\pi$ and $\sigma$ are tempered.  Indeed, if we merely assume that $\pi$ and $\sigma$ are local components of some global cuspidal representations, then the Ramanujan bounds on matrix coefficients for $GL(2)$ are sufficient to prove that the integral in question converges absolutely.  We refer the reader to \cite{ks1} and \cite{ks2} for the bounds in question, as well as \cite{so4so5} for a similar argument.
\end{remark}

Having settled the issue of convergence, our local seesaw identity follows from Fubini's theorem.  In addition to the pairings already discussed, we consider pairings $\mathcal{B}_{\Theta(\sigma)},\mathcal{B}_{\Theta(\pi)}$, and $\mathcal{B}_{\Theta(\mu)}$ which are `inherited' from the pairings $\mathcal{B}_\sigma,\mathcal{B}_\pi, \mathcal{B}_\mu, \mathcal{B}_\omega$ and the local theta-correspondence.  For $\tau=\mu,\pi,\sigma$, we have surjective maps
$$
\omega \to \tau\boxtimes\Theta(\tau)
$$
(defined up to scaling) where $\Theta(\tau)$ is the `big' theta-lift from the previous chapter.  This induces a map
$$
\Theta:\tau^\vee\otimes\omega\to\Theta(\tau).
$$
Note that for the $\tau$ considered above, we have $\tau^\vee=\bar{\tau}$ since $\tau$ is unitary.  Now, we set
\beqnan
\mathcal{B}_{\Theta(\mu)}(\Theta(f_1,\varphi_1), \Theta(f_2, \varphi_2)) &:=& \int_{U(L)} \overline{\mathcal{B}_\mu(\mu(z) f_1, f_2)}\mathcal{B}_\omega(\omega(z)\varphi_1,\varphi_2) dz\\
\mathcal{B}_{\Theta(\pi)}(\Theta(f_1,\varphi_1), \Theta(f_2, \varphi_2)) &:=& \int_{U(V)} \overline{\mathcal{B}_\pi(\pi(g) f_1, f_2)}\mathcal{B}_\omega(\omega(g) \varphi_1, \varphi_2) dg\\
\mathcal{B}_{\Theta(\sigma)}(\Theta(f_1,\varphi_1), \Theta(f_2, \varphi_2)) &:=& \int_{U(W)} \overline{\mathcal{B}_\sigma(\sigma(h) f_1, f_2)} \mathcal{B}_\omega(\omega(h)\varphi_1, \varphi_2) dh
\eeqnan
where $f_i\in \mu,\pi$ or $\sigma$, respectively, and $\varphi_i\in\omega$.

We emphasize that the pairings above are defined on the `big' theta-lifts.  In order to make use of the following theorem, some work is required to show that these pairings descend to pairings on the `small theta-lifts.  This will be addressed in the following chapter.

\begin{theorem}\label{localseesaw} Let $Z_W\subset U(W)$ denote the center.  We let $\omega_{W, V\oplus L}$ denote the Weil representation of $U(W)\times U(V\oplus L)$.  For $\varphi = \varphi_V \otimes \varphi_L \in \omega_{W, V \oplus L} = \omega_{W,V} \otimes \omega_{W,L}$, and with the pairings as above, we have
\beqnan
\lefteqn{\int_{U(V)} \mathcal{B}_{\Theta(\sigma)}(\Theta(\sigma)(g) \Theta(f_\sigma, \varphi), \Theta(f_\sigma, \varphi)) \overline{\mathcal{B}_\pi(\pi(g) f_\pi, f_\pi)}dg}\\
&=& \int_{Z_W\backslash U(W)} \mathcal{B}_{\Theta(\pi)}(\Theta(\pi)(h) \Theta(f_\pi, \varphi_V), \Theta(f_\pi, \varphi_V))\mathcal{B}_{\Theta(\mu)}(\Theta(\mu)(h)\Theta(f_\mu, \varphi_L), \Theta(f_\mu, \varphi_L))\\
&& \overline{\mathcal{B}_\sigma(\sigma(h) f_\sigma, f_\sigma)} dh.
\eeqnan
Here, the vector $f_\mu\in\mu$ is chosen such that $\mathcal{B}_\mu(f_\mu,f_\mu)=1.$
\end{theorem}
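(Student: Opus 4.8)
The proof will be an instance of the ``Fubini's theorem'' philosophy behind seesaw duality, with the convergence established in Proposition \ref{localconv} as the only analytic input; everything else is a formal unfolding of the defining integrals of the pairings $\mathcal{B}_{\Theta(\sigma)}$, $\mathcal{B}_{\Theta(\pi)}$, $\mathcal{B}_{\Theta(\mu)}$. First I would unfold the left-hand side. Using the defining formula for $\mathcal{B}_{\Theta(\sigma)}$ together with $\Theta(\sigma)(g)\Theta(f_\sigma,\varphi)=\Theta(f_\sigma,\omega_{W,V\oplus L}(g)\varphi)$ for $g\in U(V)$, the left-hand side becomes
\[
\int_{U(V)}\int_{U(W)} \overline{\mathcal{B}_\sigma(\sigma(h)f_\sigma,f_\sigma)}\;\overline{\mathcal{B}_\pi(\pi(g)f_\pi,f_\pi)}\;\mathcal{B}_\omega\big(\omega(h)\omega(g)\varphi,\varphi\big)\,dh\,dg .
\]
By additivity of the Weil representation in the first variable, $\omega_{W,V\oplus L}\cong\omega_{W,V}\otimes\omega_{W,L}$, so for the pure tensor $\varphi=\varphi_V\otimes\varphi_L$ the element $g\in U(V)$ acts only on the $\varphi_V$-factor, the actions of $U(V)$ and $U(W)$ commute, and $\mathcal{B}_\omega(\omega(h)\omega(g)\varphi,\varphi)$ factors as $\mathcal{B}_{\omega_{W,V}}(\omega(h)\omega(g)\varphi_V,\varphi_V)\cdot\mathcal{B}_{\omega_{W,L}}(\omega(h)\varphi_L,\varphi_L)$. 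Proposition \ref{localconv}, applied with the vector $\varphi$ (and Remark \ref{ram} if one does not wish to assume temperedness), shows the double integral converges absolutely, so Fubini permits performing the $g$-integration first.

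After swapping, the inner integral $\int_{U(V)}\overline{\mathcal{B}_\pi(\pi(g)f_\pi,f_\pi)}\,\mathcal{B}_{\omega_{W,V}}(\omega(g)\omega(h)\varphi_V,\varphi_V)\,dg$ (after commuting $\omega(g)$ past $\omega(h)$) is, by its very definition, equal to $\mathcal{B}_{\Theta(\pi)}\big(\Theta(\pi)(h)\Theta(f_\pi,\varphi_V),\Theta(f_\pi,\varphi_V)\big)$. Hence the left-hand side equals
\[
\int_{U(W)} \overline{\mathcal{B}_\sigma(\sigma(h)f_\sigma,f_\sigma)}\;\mathcal{B}_{\Theta(\pi)}\big(\Theta(\pi)(h)\Theta(f_\pi,\varphi_V),\Theta(f_\pi,\varphi_V)\big)\;\mathcal{B}_{\omega_{W,L}}\big(\omega(h)\varphi_L,\varphi_L\big)\,dh .
\]

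The remaining step is to convert the integral over $U(W)$ of the Weil-representation matrix coefficient $\mathcal{B}_{\omega_{W,L}}(\omega(h)\varphi_L,\varphi_L)$ into the integral over $Z_W\backslash U(W)$ of $\mathcal{B}_{\Theta(\mu)}(\Theta(\mu)(h)\cdots)$ on the right. Here one uses that $U(L)$ and $Z_W$ are both canonically the norm-one torus of $E/F$, and that under this identification the two resulting actions on the space of $\omega_{W,L}$ — that of $U(L)$ as the ``small'' group and that of $Z_W\subset U(W)$ — differ only by an explicit unitary character built from the splitting characters $\gamma$. Foliating $U(W)=Z_W\cdot(Z_W\backslash U(W))$ and using $\mathcal{B}_\mu(\mu(z)f_\mu,f_\mu)=\mu(z)$ (by the normalisation $\mathcal{B}_\mu(f_\mu,f_\mu)=1$), the inner $Z_W$-integral is, after transporting to $U(L)$, precisely $\mathcal{B}_{\Theta(\mu)}\big(\Theta(\mu)(h)\Theta(f_\mu,\varphi_L),\Theta(f_\mu,\varphi_L)\big)$ — provided the central characters satisfy $\overline{\omega_\sigma(z)}\,\omega_{\Theta(\pi)}(z)=\overline{\mu(z)}\cdot(\text{the splitting character})$ for $z\in Z_W\cong U(L)$. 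This identity is exactly what makes the right-hand integrand a well-defined function on $Z_W\backslash U(W)$, and it follows from the known formula for the central character of a theta lift together with the conventions fixed earlier for the $\gamma$'s. Granting it, reassembling the $Z_W$- and $(Z_W\backslash U(W))$-integrals recovers the $U(W)$-integral above, finishing the proof.

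I expect the main obstacle to be the bookkeeping of the last paragraph: pinning down precisely the character intertwining the $U(L)$- and $Z_W$-actions on $\omega_{W,L}$, verifying the central-character compatibility against the normalisations of the splitting characters, and checking that the Haar measures on $Z_W$, $U(L)$ and $Z_W\backslash U(W)$ are normalised so that no spurious constant survives. The Fubini/unfolding portion is entirely formal once Proposition \ref{localconv} is available.
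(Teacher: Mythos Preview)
Your proposal is correct and follows essentially the same route as the paper: both arguments exhibit the double integral over $U(V)\times U(W)$ as the common value, invoke Proposition~\ref{localconv} for absolute convergence and Fubini, factor $\mathcal{B}_\omega$ through $\omega_{W,V\oplus L}\cong\omega_{W,V}\otimes\omega_{W,L}$, and then foliate the $U(W)$-integral over $Z_W$ to produce $\mathcal{B}_{\Theta(\mu)}$. The only cosmetic difference is that the paper begins from the symmetric double integral and identifies each side with it, whereas you begin from the left-hand side and unfold; the content is identical.

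Your anticipated obstacle is handled more tersely in the paper than you might expect: with the normalisation of the theta lift adopted here (integrating against $\overline{f}$ rather than $f$), one has $\omega_{\Theta(\pi)}=\omega_\pi$, so the character appearing in the $Z_W$-integral is simply $\omega_\pi\omega_\sigma^{-1}=\mu^{-1}$ by the definition of $\mu$; no extra splitting-character factor survives, and the identification $Z_W\cong U(L)\cong E_1$ with its action on $\omega_{W,L}$ is used without further comment.
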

\begin{proof}
We start with
$$
\int_{U(V)\times U(W)}\overline{\mathcal{B}_\pi(\pi(g)f_\pi, f_\pi)\mathcal{B}_\sigma(\sigma(h)f_\sigma, f_\sigma) }\mathcal{B}_\omega(\omega(g,h)\varphi, \varphi)dgdh.
$$
By Proposition \ref{localconv}, we may view the integral above as an iterated integral.  By first integrating out the $U(W)$ variable, we have that the above is equal to
$$
\int_{U(V)} \mathcal{B}_{\Theta(\sigma)}(\Theta(\sigma)(g)\Theta(f_\sigma, \varphi), \Theta(f_\sigma, \varphi)) \overline{\mathcal{B}_\pi(\pi(g)f_\pi, f_\pi)}dg.
$$

Before we proceed to integrate the $U(V)$ variable, we remind the reader that $\omega_{W,V\oplus L}\cong\omega_{W,V}\otimes\omega_{W,L}$.  Using this decomposition, we see that the pairing $\mathcal{B}_\omega$ can be written as the product $\mathcal{B}_{\omega, V}\cdot \mathcal{B}_{\omega, L}$.

If we instead integrate out the $U(V)$ variable, we obtain
$$
\int_{U(W)} \mathcal{B}_{\Theta(\pi)}(\Theta(\pi)(h) \Theta(f_\pi, \varphi_V), \Theta(f_\pi, \varphi_V)) \overline{\mathcal{B}_\sigma(\sigma(h) f_\sigma, f_\sigma)} \mathcal{B}_{\omega, L}(\omega(h) \varphi_{L}, \varphi_{L}) dh,
$$
which we rewrite as
\beqnan
\int_{Z_W\backslash U(W)} \mathcal{B}_{\Theta(\pi)}(\Theta(\pi)(h) \Theta(f_\pi, \varphi_V), \Theta(f_\pi, \varphi_V)) \overline{\mathcal{B}_\sigma(\sigma(h) f_\sigma, f_\sigma)}\\ \int_{Z_W} \omega_\pi(z)\omega_\sigma^{-1}(z) \mathcal{B}_{\omega, L}(\omega(zh) \varphi_{L}, \varphi_{L})dz\ dh.
\eeqnan
Noting that $\omega_\pi\omega_\sigma^{-1} = \mu^{-1}$, we see that this is equal to
\beqnan
\int_{Z_W\backslash U(W)} \mathcal{B}_{\Theta(\pi)}(\Theta(\pi)(h) \Theta(f_\pi, \varphi_V), \Theta(f_\pi, \varphi_V)) \overline{\mathcal{B}_\sigma(\sigma(h) f_\sigma, f_\sigma)}\\ \int_{Z_W} \overline{\mu(z)} \mathcal{B}_{\omega, L}(\omega(zh) \varphi_{L}, \varphi_{L})dz\ dh.
\eeqnan
Finally, we see that the inner integral gives the pairing $\mathcal{B}_{\Theta(\mu)}$ on $\Theta(\mu)$.  This completes the proof.
\end{proof}


\section{The Refined Gross-Prasad Conjecture for $U(2)\times U(3)$}\label{finalchapt}
With everything that we've developed so far, we can now prove Conjecture \ref{theconjecture} for $n=2$, provided that $\pi_{n+1}=\Theta(\bar{\sigma})$, where $\sigma$ is a cuspidal, irreducible automorphic representation of $U(2)$.  (The theta-lift is $\Theta(\bar{\sigma})$ because of our normalization of the theta correspondence, and the fact that the seesaw identities we use do not involve complex conjugation.)  We will employ the various Rallis inner product formulae developed in Chapter \ref{thetachapt}, as well as Ichino's triple product formula from Chapter \ref{triplechapt}.

\subsection{The Setup}
We remind the reader of the following seesaw diagram:
\beqn\label{mainss}
\xymatrix{U(V\oplus L)\ar @{-}[d]\ar @{-}[dr]& U(W)\times U(W)\ar @{-}[dl]\ar @{-} [d]\\ U(V)\times U(L) & U(W)}
\eeqn

Here, $V$ is a $2$-dimensional hermitian space over $E/F$, $W$ is a $2$ dimensional skew-hermitian space over $E/F$, and $L$ is a hermitian line over $E/F$.  Using the theory of $\Theta$-correspondence and seesaw duality, we will relate the period integral in Conjecture \ref{theconjecture} (with $n=2$) to the so-called triple product integral considered by Ichino in \cite{triple}.

We fix the following:
\begin{itemize}
\item $\pi$ is an irreducible, cuspidal, tempered, automorphic representation of\\ $U(V)(\A_F).$
\item $\sigma$ is an irreducible, cuspidal, tempered, automorphic representation of\\ $U(W)(\A_F).$
\item $\mu := \omega_\sigma\omega_\pi^{-1}$ is an automorphic character of $U(L)(\A_F)$, where $\omega_\sigma$ and $\omega_\pi$ are the central characters of $\sigma$ and $\pi$, respectively.
\item $(\omega_\psi,\mathcal{S})$ is a Weil representation of $\widetilde{Sp}(\mathbb{W})(\A_F)$.  (See Chapter \ref{thetachapt} for notation.)
\end{itemize}
We also fix local pairings $\mathcal{B}_{\pi_v}, \mathcal{B}_{\sigma_v}, \mathcal{B}_{\mu_v}$ such that $\prod_v \mathcal{B}_{\pi_v}, \prod_v\mathcal{B}_{\sigma_v}$ and $\prod_v\mathcal{B}_{\mu_v}$ give the respective Petersson inner products on the global representation.

After fixing splitting data $(\gamma_V,\gamma_L,\gamma_W)$ as in Chapter \ref{thetachapt}, we consider $\Theta(\bar\pi):=\Theta_{V,W\gamma_W,\gamma_V,\psi}(\bar\pi)$ on $U(W)(\A_F)$, $\Theta(\bar\sigma):=\Theta_{W,V\oplus L,\gamma_V\gamma_L,\gamma_W,\psi}(\bar\sigma)$ on $U(V\oplus L)(\A_F)$, and $\Theta(\bar\mu):=\Theta_{L,W\gamma_W,\gamma_L,\psi}(\bar\mu)$ on $U(W)(\A_F)$.  We take $\gamma_W,\gamma_V = \gamma^2$ and $\gamma_L=\gamma$, where $\gamma$ is a character of $\A_E^\times/E^\times$ such that $\gamma|_{\A_F^\times} = \chi_{E/F}$.  We assume that these $\Theta$-lifts are cuspidal.

By using Ichino's triple product formula from Chapter \ref{triplechapt}, the various Rallis Inner Product formulae from Chapter \ref{thetachapt}, the explicit local seesaw identity from Chapter \ref{seesawchapt}, and some $L$-function identities in the appendix, we can establish the Refined Gross-Prasad Conjecture for $n=2$ with $\pi_2=\pi$ and $\pi_3=\Theta(\bar\sigma)$:
\begin{theorem}\label{finalcountdown} Let $\mu,\pi,\sigma$ and $\Theta(\bar\sigma)$ be as above.  Let $f_{\Theta(\bar\sigma)}\in \Theta(\bar\sigma)$ and $f_\pi\in \pi$ be cusp forms such that $f_{\Theta(\bar\sigma)} = \otimes_v f_{\Theta(\bar\sigma)_v}$ and $f_\pi=\otimes_v f_{\pi_v}$.  Then
$$
\mathcal{P}(f_{\Theta(\bar\sigma)}, f_\pi) = \frac{\Delta_{G_3}L_E(1/2, BC(\Theta(\bar\sigma))\boxtimes BC(\pi))}{|S_{\psi_{\Theta(\bar\sigma)}}|\cdot |S_{\psi_\pi}| L_F(1,\Theta(\bar\sigma),\operatorname{Ad})L_F(1,\pi,\operatorname{Ad})}\prod_v\mathcal{P}_v(f_{\Theta(\bar\sigma)_v}, f_{\pi_v}).
$$
Here, $\psi_{\Theta(\bar\sigma)}$ and $\psi_\pi$ are the `$L$-parameters' for $\Theta(\bar\sigma)$ and $\pi$,\footnote{We remark that this makes sense in this case, in light of the work of Rogawski in \cite{rog}.} and $S_{\psi_{\Theta(\bar\sigma)}}$ and $S_{\psi_\pi}$ are the associated component groups.
\end{theorem}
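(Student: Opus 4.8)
The plan is to read off an identity from the seesaw \ref{mainss} and the commutative square \ref{comm}, converting the period $\mathcal{P}(f_{\Theta(\bar\sigma)}, f_\pi)$ into Ichino's triple product integral over $[U(W)]$, and then to match it, factor by factor, against the three machines assembled earlier: Ichino's triple product formula (Corollary \ref{ichtrip}), the three Rallis inner product formulae (Theorems \ref{ripfu1u2}, \ref{ripfu2u2}, \ref{rip}), and the local seesaw identity (Theorem \ref{localseesaw}).

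First I would write $f_{\Theta(\bar\sigma)} = \theta(\bar f_\sigma, \varphi)$ with $\varphi = \varphi_V \otimes \varphi_L$ a pure tensor adapted to the polarization $\mathbb{X} = \mathbb{X}_V \oplus \mathbb{X}_L$ coming from $V \oplus L$, and (via Lemma \ref{extend} and its local analogue) arrange $\varphi_L$ so that the $U(1)\to U(2)$ lift $\theta(\bar f_\mu, \varphi_L)$ lies in the subspace to which Corollary \ref{ichtrip} applies. Unwinding the theta kernel for $U(V\oplus L)\times U(W)$ along $U(V)\times U(L) \hookrightarrow U(V\oplus L)$ — where it factors as the product of the kernels for $(U(V),U(W))$ and $(U(L),U(W))$ — and exchanging the order of integration (licensed by cuspidality of the lifts), the double period \ref{pdef} is turned into the absolute-value-squared of the triple product integral over $[U(W)]$ of $\theta(\bar f_\pi, \varphi_V)$, $\theta(\bar f_\mu, \varphi_L)$ and $f_\sigma$; this is exactly the assertion that the composite $\mathcal{P}\circ\mathcal{T}'$ down the left and along the bottom of \ref{comm} agrees with the composite $\mathcal{I}\circ\mathcal{T}$ along the top and down the right. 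Here the third factor $\Theta(\bar\mu)$ is dihedral with respect to $E/F$ because it is a lift from $U(1)$, so it may serve as the dihedral representation $\tau_3$ in Corollary \ref{ichtrip}, with $\tau_1 = \Theta(\bar\pi)$ (the $U(2)\to U(2)$ lift of $\pi$) and $\tau_2 = \sigma$; the normalization $\mu = \omega_\sigma \omega_\pi^{-1}$ is precisely what makes $\omega_{\tau_1}\omega_{\tau_2}\omega_{\tau_3} \equiv 1$, so that Corollary \ref{ichtrip} is available.

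Next I would substitute and identify $L$-values. Corollary \ref{ichtrip} produces $L_E(1/2, BC(\Theta(\bar\pi))\boxtimes BC(\sigma)\boxtimes\chi_1)$; using the standard compatibility of base change with the theta correspondence to write $BC(\Theta(\bar\pi))$, $BC(\Theta(\bar\mu))$ and $BC(\Theta(\bar\sigma))$ in terms of $BC(\pi)$, $BC(\mu)$, $BC(\sigma)$ and $\gamma$ (in particular $BC(\Theta(\bar\sigma))$ is $2+1$-dimensional), this becomes the $GL_2\times GL_2$ part of $L_E(1/2, BC(\Theta(\bar\sigma))\boxtimes BC(\pi))$, while the $U(2)\to U(2)$ Rallis formula (Theorem \ref{ripfu2u2}), applied to the Petersson norm of $\theta(\bar f_\pi,\varphi_V)$, supplies the remaining $GL_1\times GL_2$ factor through its $L_E(1/2, BC(\pi)\otimes\gamma^2)$. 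Theorem \ref{ripfu1u2} applied to the norm of $\theta(\bar f_\mu,\varphi_L)$ and Theorem \ref{rip} applied to the norm of $f_{\Theta(\bar\sigma)}=\theta(\bar f_\sigma,\varphi)$ contribute, via $L_E(1, BC(\mu)\otimes\gamma^2)$ and $L_E(1, BC(\sigma)\otimes\gamma^3)$, the adjoint and $GL_1$ $L$-factors, which the appendix identities relating $L_F(1,\cdot,\operatorname{Ad})$ for $U(2)$ and $U(3)$ to Asai and twisted-Asai $L$-functions then rewrite as $L_F(1,\pi,\operatorname{Ad})$ and $L_F(1,\Theta(\bar\sigma),\operatorname{Ad})$. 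On the local side, once one verifies (in the present chapter) that the pairings $\mathcal{B}_{\Theta(\pi)},\mathcal{B}_{\Theta(\mu)},\mathcal{B}_{\Theta(\sigma)}$ of Theorem \ref{localseesaw} descend from the big theta lifts to the small ones, that theorem identifies the local integral $\mathcal{P}_v(f_{\Theta(\bar\sigma)_v},f_{\pi_v})$ with the local triple product integral $\mathcal{I}_v=\mathcal{J}_v$, up to exactly the ratios of local $L$-factors $Z_v^\sharp$ occurring in the three local Rallis formulae, so that taking the product over all $v$ makes the local matching dovetail with the global substitutions above.

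The last step is bookkeeping: assemble the global identity, plug in the three Rallis formulae, the local seesaw identities and the appendix lemmas, cancel the factors $d_m(s,\gamma_W)$, the $\zeta_F(2)$'s, the $L_F(1,\chi_{E/F})$'s and the various $\gamma$-twists against one another, and check that what survives is exactly
$$\mathcal{P}(f_{\Theta(\bar\sigma)},f_\pi) = \frac{\Delta_{G_3}\, L_E(1/2, BC(\Theta(\bar\sigma))\boxtimes BC(\pi))}{|S_{\psi_{\Theta(\bar\sigma)}}|\cdot|S_{\psi_\pi}|\, L_F(1,\Theta(\bar\sigma),\operatorname{Ad})L_F(1,\pi,\operatorname{Ad})}\prod_v \mathcal{P}_v(f_{\Theta(\bar\sigma)_v},f_{\pi_v}),$$
where the residual rational constant — coming from $\operatorname{Vol}([Z_{G_2}])$ and $\operatorname{Vol}([Z_W\backslash U(W)])$, the $8$ and the $2$'s in Corollary \ref{ichtrip}, the $|X(\tau_i)|$'s, and the $2$'s in Theorems \ref{ripfu1u2}, \ref{ripfu2u2}, \ref{rip} — collapses to $1/(|S_{\psi_{\Theta(\bar\sigma)}}|\cdot|S_{\psi_\pi}|)$ via the relations between the sizes $|X(\cdot)|$ and the component groups $|S_\psi|$ for $U(2)$ and $U(3)$ (here using Rogawski's description of the packets, so the $\psi$'s and the $|S_\psi|$'s are genuinely defined). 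I expect this constant-chasing — and inside it, pinning down the power of $2$ exactly, since that is the whole content of the refinement and is the sum of several independent contributions — to be the real obstacle; the identification of the $L$-values through theta-and-base-change compatibility is the other delicate ingredient, but it is available from known results.
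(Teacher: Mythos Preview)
Your proposal is correct and follows essentially the same route as the paper: the global seesaw identity for diagram \ref{mainss} converts $\mathcal{P}$ into the triple product integral over $[U(W)]$ (the paper records this as $4\cdot\mathcal{P}\circ\mathcal{T}_1=\mathcal{I}\circ\mathcal{T}_2$ after a short Lemma comparing $\mathcal{P}'$ and $\mathcal{P}$), Corollary \ref{ichtrip} is applied with $\tau_3=\Theta(\bar\mu)$ dihedral, the three Rallis formulae convert the local pairings on the small theta lifts into the normalized $\mathcal{I}_v^\sharp$, $\mathcal{P}_v^\sharp$ (and simultaneously show the pairings descend from big to small lifts, exactly as you anticipate), Theorem \ref{localseesaw} matches $\mathcal{P}_v^\sharp\circ\mathcal{T}_{1,v}$ with $\mathcal{I}_v^\sharp\circ\mathcal{T}_{2,v}$, and the appendix identities plus the relation $8\cdot|X(\Theta(\bar\pi))|\cdot|X(\sigma)|=|S_{\psi_{\Theta(\bar\sigma)}}|\cdot|S_{\psi_\pi}|$ finish the bookkeeping. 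One small correction: you do not need to ``arrange $\varphi_L$'' via Lemma \ref{extend}; that lemma lives inside the proof of Corollary \ref{ichtrip} itself, and $\Theta(\bar\mu)$ is dihedral automatically, so the corollary applies to any $f_{\Theta(\bar\mu)}$.
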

The proof of the above will occupy the rest of this chapter.

\begin{remark}
By Remark \ref{ram}, we note that assuming that $\pi$ and $\sigma$ are tempered is not necessary.  The only place where temperedness was necessary was in proving Proposition \ref{localconv}.  Using the Ramanujan bounds from \cite{ks1} and \cite{ks2} obviates this assumption.
\end{remark}

\subsection{Proof of Theorem \ref{finalcountdown}}  The proof of Theorem \ref{finalcountdown} involves using both a global and local seesaw identity, as well as all of the various Rallis Inner Product Formulae.

The first global seesaw identity that we need is
\beqn
\mathcal{P}'\circ\mathcal{T}_1' = \mathcal{I}\circ\mathcal{T}_2'
\eeqn
where the maps are defined as follows:
$$
\mathcal{T}_1': (V_\pi\boxtimes \bar{V}_\pi)\otimes (V_\mu\boxtimes \bar{V}_\mu)\otimes (V_\sigma\boxtimes \bar{V}_\sigma)\otimes (\mathcal{S}\boxtimes\bar{\mathcal{S}})\to (V_\pi\boxtimes \bar{V}_\pi)\otimes (V_\mu\boxtimes \bar{V}_\mu)\otimes (V_{\Theta(\bar\sigma)}\boxtimes \bar{V}_{\Theta(\bar\sigma)})
$$
is the map induced by the global theta integral for $\sigma$.  Similarly, 
$$
\mathcal{T}_2' : (V_\pi\boxtimes \bar{V}_\pi)\otimes (V_\mu\boxtimes \bar{V}_\mu)\otimes (V_\sigma\boxtimes \bar{V}_\sigma)\otimes (\mathcal{S}\boxtimes\bar{\mathcal{S}})\to (V_{\Theta(\bar\pi)}\boxtimes \bar{V}_{\Theta(\bar\pi)})\otimes (V_{\Theta(\bar\mu)}\boxtimes \bar{V}_{\Theta(\bar\mu)})\otimes (V_\sigma\boxtimes \bar{V}_\sigma)
$$
is the map induced by the global theta integrals for $\pi$ and $\mu$.  Also, the global period
$$
\mathcal{P}': (V_{\Theta(\bar\sigma)}\boxtimes \bar{V}_{\Theta(\bar\sigma)})\otimes ({V}_\pi\boxtimes \bar{V}_\pi)\otimes ({V}_\mu\boxtimes \bar{V}_\mu)\to\C
$$
is defined by
\beqnan
\mathcal{P}'(f_{\Theta(\bar\sigma)}, \bar{f}'_{\Theta(\bar\sigma)}, {f}_\pi, \bar{f'}_\pi, {f}_\mu, \bar{f'}_\mu) &:=& \int_{[U(V)\times U(L)]} f_{\Theta(\bar\sigma)}((g_1,g_2)){f_\pi(g_1) f_\mu(g_2)}dg_1dg_2\\&&\times\int_{[U(V)\times U(L)]} \overline{f'_{\Theta(\bar\sigma)}((g_1, g_2))f'_\pi(g_1) f'_\mu(g_2)}dg_1dg_2.
\eeqnan
Here, we view $(g_1,g_2)\in U(V\oplus L)$ via the natural embedding $U(V)\times U(L)\hookrightarrow U(V\oplus L)$.  Finally, we consider the map
$$
\mathcal{I}: (V_{\Theta(\bar\pi)}\boxtimes \bar{V}_{\Theta(\bar\pi)})\otimes (V_{\Theta(\bar\mu)}\boxtimes \bar{V}_{\Theta(\bar\mu)})\otimes (V_\sigma\boxtimes \bar{V}_\sigma)\to\C
$$
which is given by
\beqnan
\mathcal{I}(f_{\Theta(\bar\pi)}, \bar{f}'_{\Theta(\bar\pi)}, f_{\Theta(\bar\mu)}, \bar{f}'_{\Theta(\bar\mu)}, f_\sigma, \bar{f}'_\sigma) &:=& \int_{[U(W)]}f_{\Theta(\bar\pi)}(g)f_{\Theta(\bar\mu)}(g)f_\sigma(g)dg \\&&\times \int_{[U(W)]} \overline{f'_{\Theta(\bar\pi)}(g)f'_{\Theta(\bar\mu)}(g)f'_\sigma(g)}dg
\eeqnan
and is closely related to Ichino's triple product integral.

We follow the convention of Chapter \ref{intro} and set $$\mathcal{P}'(f_{\Theta(\bar\sigma)}, f_\pi, f_\mu) := \mathcal{P}'(f_{\Theta(\bar\sigma)}, \bar{f}_{\Theta(\bar\sigma)}, {f}_\pi, \bar{f}_\pi, {f}_\mu, \bar{f}_\mu).$$  We follow a similar convention for $\mathcal{I}$ and set $$ \mathcal{I}(f_{\Theta(\bar\pi)}, f_{\Theta(\bar\mu)}, f_\sigma) := \mathcal{I}(f_{\Theta(\bar\pi)}, \bar{f}_{\Theta(\bar\pi)}, f_{\Theta(\bar\mu)}, \bar{f}_{\Theta(\bar\mu)}, f_\sigma, \bar{f}_\sigma).$$

We note that $\mathcal{P}'$ is not quite the period integral in Theorem \ref{finalcountdown}; however, the two are related by a constant.  Indeed, if we denote by $\mathcal{P}$ the LHS of Theorem \ref{finalcountdown}, then we have the following:
\begin{lemma}\label{periodcompare}
$$
\mathcal{P}'(f_{\Theta(\bar\sigma)}, f_\pi, \mu)=4\cdot\mathcal{P}(f_{\Theta(\bar\sigma)}, f_\pi).
$$
\end{lemma}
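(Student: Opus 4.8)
The argument is short and structural: it combines one Fubini manipulation with a central-character vanishing. Abbreviate
$$P':=\int_{[U(V)\times U(L)]}f_{\Theta(\bar\sigma)}\big((g_1,g_2)\big)\,f_\pi(g_1)\,\mu(g_2)\,dg_1\,dg_2,\qquad P:=\int_{[U(V)]}f_{\Theta(\bar\sigma)}\big((g,1)\big)\,f_\pi(g)\,dg,$$
where $(g_1,g_2)$ is the image of a pair under $U(V)\times U(L)\hookrightarrow U(V\oplus L)$, $(g,1)$ is the image of $g$ under $U(V)\hookrightarrow U(V\oplus L)$, and $f_\mu=\mu$. Unwinding the definitions of $\mathcal{P}'$ and of $\mathcal{P}$ (Chapter~\ref{intro}), each of which is the product of an integral with the conjugate of that same integral, we get $\mathcal{P}'(f_{\Theta(\bar\sigma)},f_\pi,\mu)=|P'|^2$ and $\mathcal{P}(f_{\Theta(\bar\sigma)},f_\pi)=|P|^2$, so the lemma reduces to proving $P'=2P$. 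For a unitary automorphic character $\theta$ of $U(L)(\A_F)$ set
$$\mathcal{Q}_\theta:=\int_{[U(V)\times U(L)]}f_{\Theta(\bar\sigma)}\big((g_1,g_2)\big)\,f_\pi(g_1)\,\theta(g_2)\,dg_1\,dg_2,$$
so that $P'=\mathcal{Q}_\mu$.

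The key step is that $\mathcal{Q}_\theta=0$ whenever $\theta\neq\mu$. The centre $Z:=Z_{U(V\oplus L)}(\A_F)$, a copy of $U(1)(\A_F)$, lies in $U(V)(\A_F)\times U(L)(\A_F)$ as the set of pairs $(\lambda\cdot\operatorname{id}_V,\lambda\cdot\operatorname{id}_L)$, and translating the integrand of $\mathcal{Q}_\theta$ on the right by such an element scales it by $\omega_{\Theta(\bar\sigma)}(\lambda)\,\omega_\pi(\lambda)\,\theta(\lambda)$ (under the canonical identifications of the various $U(1)$'s). Since $f_{\Theta(\bar\sigma)}$ is built from $\bar\sigma$ by integrating $\overline{f}$ against the theta kernel, $\Theta(\bar\sigma)$ has the same central character as $\bar\sigma$, namely $\omega_{\bar\sigma}=\omega_\sigma^{-1}$ (as $\sigma$ is unitary), so the integrand transforms under $Z(F)\backslash Z(\A_F)\cong[U(L)]$ by $\omega_\sigma^{-1}\omega_\pi\theta=\mu^{-1}\theta$. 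When $\theta\neq\mu$ this is a nontrivial character of a compact group, so integrating over $[U(V)\times U(L)]$ forces $\mathcal{Q}_\theta=0$.

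Finally, for fixed $g_1$ the smooth function $g_2\mapsto f_{\Theta(\bar\sigma)}\big((g_1,g_2)\big)$ on the compact group $[U(L)]$ has an absolutely convergent Fourier expansion
$$f_{\Theta(\bar\sigma)}\big((g_1,g_2)\big)=\frac{1}{\operatorname{Vol}([U(L)])}\sum_{\theta}c_\theta(g_1)\,\theta(g_2),\qquad c_\theta(g_1):=\int_{[U(L)]}f_{\Theta(\bar\sigma)}\big((g_1,g_2)\big)\,\overline{\theta(g_2)}\,dg_2,$$
the sum over unitary characters of $[U(L)]$, with $\operatorname{Vol}([U(L)])=2$ the Tamagawa number of $U(1)$ (compare the volume computations in Chapter~\ref{globalchapt}). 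Evaluating at $g_2=1$, multiplying by $f_\pi(g_1)$ and integrating over $[U(V)]$ --- the exchange of $\sum$ and $\int$ being justified by the rapid decay of the $c_\theta$ in $\theta$ and the cuspidality of $f_\pi$ --- gives $P=\tfrac{1}{2}\sum_\theta\int_{[U(V)]}c_\theta(g_1)f_\pi(g_1)\,dg_1=\tfrac{1}{2}\sum_\theta\mathcal{Q}_{\theta^{-1}}$. By the previous paragraph the only surviving term is $\theta=\mu^{-1}$, whence $P=\tfrac{1}{2}\mathcal{Q}_\mu=\tfrac{1}{2} P'$; therefore $P'=2P$ and $\mathcal{P}'=|P'|^2=4|P|^2=4\mathcal{P}$. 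The single point that needs real care is the central-character identity $\omega_{\Theta(\bar\sigma)}=\omega_{\bar\sigma}$, which depends on keeping track of the splitting characters; with the convention of Chapter~\ref{thetachapt} (all splitting data taken to be powers of one character $\gamma$) it is precisely the statement recorded in the remark following the definition of the global $\Theta$-lift.
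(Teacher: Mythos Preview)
Your proof is correct, and it rests on the same central-character identity $\omega_{\Theta(\bar\sigma)}\omega_\pi\mu\equiv 1$ that the paper uses, but you reach the conclusion by a genuinely different route. The paper's argument is a one-line change of variables: replacing $g_1$ by $g_1g_2$ in the integral $P'$ sends $(g_1,g_2)$ to $(g_2,g_2)(g_1,1)$, and since $(g_2,g_2)$ lies in the centre of $U(V\oplus L)$ and $g_2$ in the centre of $U(V)$, the integrand becomes $\omega_{\Theta(\bar\sigma)}(g_2)\omega_\pi(g_2)\mu(g_2)\cdot f_{\Theta(\bar\sigma)}|_{U(V)}(g_1)f_\pi(g_1)$, which is independent of $g_2$; factoring out $\operatorname{Vol}([U(L)])=2$ gives $P'=2P$ immediately. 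Your approach instead Fourier-expands $f_{\Theta(\bar\sigma)}((g_1,\cdot))$ on the compact abelian group $[U(L)]$ and uses the centre to kill all but the $\mu^{-1}$-isotypic term. This is more elaborate, but it makes transparent \emph{why} only a single character contributes and would adapt cleanly if one wanted to isolate other $U(L)$-isotypic components. The paper's substitution is shorter and avoids any appeal to convergence of the Fourier series or interchange of sum and integral.
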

\begin{proof} We see that by the change of variables $g_1\mapsto g_1g_2$ we have
\beqnan
 &&\int_{[U(V)\times U(L)]} f_{\Theta(\bar\sigma)}((g_1, g_2)){f_\pi(g_1)\mu(g_2)}dg_1dg_2\\&&= \int_{[U(V)\times U(L)]} f_{\Theta(\bar\sigma)}((g_1g_2, g_2)){f_\pi(g_1g_2)\mu(g_2)dg_1dg_2}.
\eeqnan
Note that $\Theta(\bar{\sigma})$ has central character $\omega_{\Theta(\bar\sigma)}=\omega_{\Theta(\sigma)}^{-1}=\omega_\sigma^{-1}$.  So, after observing that $(g_2, g_2)$ is in the center of $U(V\oplus L)$ and $g_2$ is in the center of $U(V)$, we have
\beqnan
&&\int_{[U(V)\times U(L)]} f_{\Theta(\bar\sigma)}((g_1g_2, g_2)){f_\pi(g_1g_2)\mu(g_2)dg_1dg_2}\\ 
&&= \int_{[U(V)\times U(L)]} \omega_{\Theta(\bar\sigma)}(g_2)\omega_\pi(g_2)\mu(g_2) f_{\Theta(\bar\sigma)}|_{U(V)}(g_1) {f_\pi(g_1)}dg_1dg_2\\ 
&&= \int_{[U(V)\times U(L)]} f_{\Theta(\bar\sigma)}|_{U(V)}(g_1) {f_\pi(g_1)}dg_1dg_2\\
&&= \operatorname{Vol}([U(L)]) \int_{[U(V)]} f_{\Theta(\bar\sigma)}|_{U(V)}(g) {f_\pi(g)}dg\\
&&= 2\int_{[U(V)]} f_{\Theta(\bar\sigma)}|_{U(V)}(g) {f_\pi(g)}dg.
\eeqnan
\end{proof}
With this, we can restate our global seesaw identity as
\beqn\label{newss}
4\cdot \mathcal{P}\circ \mathcal{T}_1=\mathcal{I}\circ\mathcal{T}_2
\eeqn
where
$$
\mathcal{T}_1:(V_\pi\boxtimes\bar{V}_\pi)\otimes (V_\sigma\boxtimes\bar{V}_\sigma)\otimes (\mathcal{S}\boxtimes\bar{\mathcal{S}})\to (V_\pi\boxtimes\bar{V}_\pi)\otimes (V_{\Theta(\bar\sigma)}\boxtimes \bar{V}_{\Theta(\bar\sigma)})
$$
is the map induced by the global theta-lift of $\sigma$.  More explicitly, we have
$$
(f_{\pi,1}\boxtimes \bar{f}_{\pi,2})\otimes (f_{\sigma,1}\boxtimes \bar{f}_{\sigma,2})\boxtimes (\varphi_1\boxtimes \bar{\varphi}_2) \mapsto (f_{\pi,1} \boxtimes \bar{f}_{\pi,2}) \otimes (\Theta(f_{\sigma,1}, \varphi_1)\boxtimes\Theta(\bar{f}_{\sigma,2},\bar{\varphi}_2)).
$$
Here, $\Theta$ is the global theta integral.  Also,
$$
\mathcal{T}_2:(V_\pi\boxtimes\bar{V}_\pi)\otimes (V_\sigma\boxtimes\bar{V}_\sigma)\otimes (\mathcal{S}\boxtimes\bar{\mathcal{S}})\to (V_{\Theta(\bar\pi)}\boxtimes \bar{V}_{\Theta(\bar\pi)})\otimes (V_{\Theta(\bar\mu)}\boxtimes \bar{V}_{\Theta(\bar\mu)})\otimes (V_\sigma\boxtimes \bar{V}_{\bar\sigma})
$$
is the map induced from $\mathcal{T}_2'$ by fixing the argument $\mu\otimes\bar{\mu}\in V_{\mu}\boxtimes \bar{V}_\mu.$  Here, we use the canonical decomposition $\mu=\otimes \mu_v$, where the $\mu_v$ are characters of $U(L)_v$.

We define the corresponding local maps analogously by fixing decompositions $\Theta(\bar{\sigma})=\otimes_v\theta_v(\bar{\sigma}_v)$, $\Theta(\bar{\pi})=\otimes_v\theta_v(\bar{\pi}_v)$, and $\Theta(\bar{\mu})=\otimes_v\theta_v(\bar{\mu}_v).$  Globally, we have:
$$
\Theta: \mathcal{S}\otimes \sigma \to \Theta(\bar{\sigma}),
$$
and locally we have:
$$
\theta_v: \mathcal{S}_v\otimes\sigma_v\to \theta(\bar{\sigma}_v).
$$
(Of course, the definitions for $\pi$ and $\mu$ are analogous.) With these, we see that $\mathcal{T}_i=\otimes_v\mathcal{T}_{i,v}$.  

Before proceeding, we note that $\Theta(\bar\mu)$ is dihedral with respect to $E/F$.  Indeed, we have $BC(\Theta(\bar\mu)) \cong \pi(\gamma^{-1}BC(\bar\mu), \gamma)$, the principal series representation of $GL_2(\A_E)$.  By Corollary \ref{ichtrip} and line \ref{newss} we have
\beqna
\mathcal{P}\circ\mathcal{T}_1 &=& \frac{\zeta_F(2)^2L_F(1,\chi_{E/F})^3}{8\cdot |X(\Theta(\bar\pi))|\cdot |X(\sigma)|\cdot |X(\Theta(\bar\mu))|}\times\label{prerallis}\\
&&\frac{L_E(1/2, BC(\Theta(\bar\pi))\boxtimes BC(\sigma)\boxtimes\gamma^{-1})}{L_F(1,\Theta(\bar\pi),\operatorname{Ad})L_F(1,\sigma,\operatorname{Ad})L_F(1,\Theta(\bar\mu),\operatorname{Ad})}\prod_v\mathcal{I}_v\circ\mathcal{T}_{2,v}\nonumber
\eeqna
where the $\mathcal{I}_v$ are defined with suitably chosen local pairings as in Chapter \ref{triplechapt}.

The next step in the argument is to use the Rallis inner product formulae from Chapter \ref{thetachapt} as well as the local seesaw identity from Chapter \ref{seesawchapt}.  However, note that the matrix coefficients in Chapter \ref{seesawchapt} are attached to the `big' theta-lifts.  Before we can make use of our local seesaw identity, we must prove a lemma which allows us to relate local integrals of matrix coefficients of `big' theta-lifts to those for the corresponding `small' theta-lifts.

Recall that in Chapter \ref{seesawchapt} we considered the following pairings $\mathcal{B}_{\Theta(\bar\tau_v)}$ on the big local theta-lifts $\Theta(\bar\tau_v)$ for $\tau=\mu,\pi,\sigma$:
$$
\mathcal{B}_{\Theta(\bar\tau_v)}(\Theta(\bar{f}_{1,v}, \varphi_{1,v}), \Theta(\bar{f}_{2,v}, \varphi_{2,v})) := \int_{G_v} \mathcal{B}_{\omega_v}(\omega_v(g_v)\varphi_{1,v}, \varphi_{2,v})\mathcal{B}_{\tau_v}(\tau_v(g_v)f_{1,v}, f_{2,v})dg_v.
$$
Here, $G_v$ is $U(1)_v$ if $\tau=\mu$, and $G_v=U(2)_v$ if $\tau=\pi$ or $\tau=\sigma$.  Now, we observe that
$$
\mathcal{B}_{\Theta(\bar\tau_v)}(\Theta(\bar{f}_{1,v}, \varphi_{1,v}), \Theta(\bar{f}_{2,v}, \varphi_{2,v})) = Z_v(s_0, f_{1,v}, f_{2,v}, \Phi_{s_0, v}, \chi_v)
$$
where $s_0=0$ if $\tau=\pi$ and $s_0=1/2$ if $\tau=\mu$ or $\tau=\sigma$, $\Phi_{s,v}=\delta(\varphi_{1,v}, \varphi_{2,v})$, and $\chi_v$ is the appropriate power of $\gamma_v$.  Fix an isomorphism $\Theta(\bar\tau)\cong \otimes_v \theta(\bar\tau_v)$.  The observation above and the Rallis inner product formulae from Chapter \ref{thetachapt} give
\beqn\label{descender}
\langle\theta(\bar{f}_1, \varphi_1), \theta(\bar{f}_2, \varphi_2)\rangle_{\Theta(\bar{\tau})} = \prod_v \mathcal{B}_{\Theta(\bar{\tau}_v)}(\Theta(\bar{f}_{1,v}, \varphi_{1,v}), \Theta(\bar{f}_{2,v}, \varphi_{2,v})).
\eeqn

Since we are assuming that the $\Theta(\bar\tau)$ are cuspidal, and therefore semisimple, we know that $\langle\theta(\bar{f}_1, \varphi_1), \theta(\bar{f}_2, \varphi_2)\rangle_{\Theta(\bar\tau)}$ factors as a map
$$
\bar\tau\times\omega\times\tau\times\bar\omega\to \Theta(\bar{\tau})\times\overline{\Theta(\bar\tau)}
$$
composed with the Petersson inner product on $\Theta(\bar\tau)$.  This, along with \ref{descender} above implies that $\mathcal{B}_{\Theta(\bar{\tau}_v)}$ descends to a pairing on the small theta-lift.

So, at each place $v$, we are entitled to define the following pairings on the small theta-lifts.  We remark that here the $\theta_v$ are those from the definition of the maps $\mathcal{T}_{i,v}$ above.
\beqnan
&&\mathcal{B}_{\theta(\bar\sigma_v)}^\sharp(\theta(\bar{f}_{1,v},\varphi_{1,v}), \theta(\bar{f}_{2,v},\varphi_{2,v})) :=\\
&&\left(\frac{L_{E_v}(1, BC(\sigma_v)\otimes\gamma_v^3)}{\zeta_{F_v}(2) L_{F_v}(3,\chi_{E_v/F_v})}\right)^{-1}\int_{U(W)_v} \mathcal{B}_{\omega_v}(\omega_v(h) \varphi_{1,v},\varphi_{2,v})\mathcal{B}_{\sigma_v}(\sigma_v(h)f_{1,v}, f_{2,v})dh,
\eeqnan
for $\varphi_{i,v}\in\omega_v$ and $f_{i,v}\in\sigma_v$,
\beqnan
&&\mathcal{B}_{\theta(\bar\pi_v)}^\sharp(\theta(\bar{f}_{1,v},\varphi_{1,v}),\theta(\bar{f}_{2,v},\varphi_{2,v})) :=\\
&&\left(\frac{L_{E_v}(1/2, BC(\pi_v)\otimes \gamma_v^2)}{L_{F_v}(1,\chi_{E_v/F_v})\zeta_{F_v}(2)}\right)^{-1}\int_{U(V)_v} \mathcal{B}_{\omega_v}(\omega_v(g) \varphi_{1,v},\varphi_{2,v})\mathcal{B}_{\pi_v}(\pi_v(g) f_{1,v}, f_{2,v})dg,
\eeqnan
for $\varphi_{i,v}\in\omega_v$ and $f_{i,v}\in\pi_v$,
\beqnan
&&\mathcal{B}_{\theta(\bar\mu_v)}^\sharp(\theta(\bar{f}_{1,v},\varphi_{1,v}), \theta(\bar{f}_{2,v}, \varphi_{2,v})) :=\\
&& \left(\frac{L_{E_v}(1, BC(\mu_v)\otimes \gamma_v^2)}{\zeta_{F_v}(2)}\right)^{-1}\int_{U(L)} \mathcal{B}_{\omega_v}(\omega_v(g), \varphi_{1,v},\varphi_{2,v})\mathcal{B}_{\mu_v}(\mu_v(g) f_{1,v}, f_{2,v})dg,
\eeqnan
for $\varphi_{i,v}\in\omega_v$ and $f_{i,v}\in\mu_v$.
We remark that we have made normalizations so that the parings take value $1$ for unramified data.

With these in place, we set $\mathcal{I}_v^\sharp(f_{\sigma_v}, f_{\theta(\bar\pi_v)}, f_{\theta(\bar\mu_v)}) := \mathcal{I}_v^\sharp$ where
\beqnan
\mathcal{I}_v^\sharp&:=&\left(\frac{\zeta_{F_v}(2)^2L_{E_v}(1/2, BC(\bar\sigma_v)\boxtimes BC(\pi_v)\boxtimes\gamma_v^{-1})L_{F_v}(1,\chi_{E_v/F_v})^3}{L_{F_v}(1,\sigma_v,\operatorname{Ad})L_{F_v}(1,\theta(\bar\pi_v),\operatorname{Ad})L_{F_v}(1,\theta(\bar\mu_v),\operatorname{Ad})}\right)^{-1}\\
&&\int_{Z_W\backslash U(W)_v} {\mathcal{B}_{\sigma_v}(\sigma(g_v)f_{\sigma_v}, f_{\sigma_v})}\\
&&\mathcal{B}^\sharp_{\theta(\bar\pi_v)}(\theta(\bar\pi_v)(g)f_{\theta(\bar\pi_v)}, f_{\theta(\bar\pi_v)})\mathcal{B}^\sharp_{\theta(\bar\mu_v)}(\theta(\bar\mu_v)(g_v) f_{\theta(\bar\mu_v)},f_{\theta(\bar\mu_v)} ) dg_v.
\eeqnan
We also set $\mathcal{P}_v^\sharp(f_{\theta(\bar\sigma_v)}, f_{\pi_v}) := \mathcal{P}_v^\sharp$ where
\beqnan
\mathcal{P}_v^\sharp&:=& \left(\Delta_{G_{3,v}}\frac{L_{E_v}(1/2, BC(\theta(\bar\sigma_v))\boxtimes BC(\pi_v))}{L_{F_v}(1,\theta(\bar\sigma_v),\operatorname{Ad})L_{F_v}(1,\pi_v,\operatorname{Ad})}\right)^{-1}\\
&&\int_{U(V)_v} \mathcal{B}^\sharp_{\theta(\bar\sigma_v)}(\theta(\bar\sigma_v)(g_{v})f_{\theta(\bar\sigma_v)}, f_{\theta(\bar\sigma_v)}){\mathcal{B}_{\pi_v}(\pi_{v}(g_{v})f_{\pi_v}, f_{\pi_v}) }dg_{v}.
\eeqnan
Once again, the normalizations made in the definitions above ensure that the functionals $\mathcal{P}_v^\sharp$ and $\mathcal{I}_v^\sharp$ take value $1$ for unramified data.

The local seesaw identity of the previous chapter, along with the $L$-function identities in the appendix give:
$$
\mathcal{P}_v^\sharp \circ\mathcal{T}_{1,v} = \mathcal{I}_v^\sharp\circ\mathcal{T}_{2,v}
$$
assuming that the $f_{\mu_v}$ are chosen so that $\mathcal{B}_{\mu_v}(f_{\mu_v}, f_{\mu_v})=1$.  However, recall that we have taken $f_{\mu_v}=\mu_v$, and that the pairings have been chosen so that $\prod_v\mathcal{B}_{\mu_v}=\mathcal{B}_\mu$.  So, we see that in our situation, we actually have
$$\prod_v\mathcal{B}_{\mu_v}(f_{\mu_v}, f_{\mu_v}) = \prod_v\mathcal{B}_{\mu_v}({\mu_v}, {\mu_v}) = \mathcal{B}_\mu(\mu,\mu) = 2.$$
We must account for this factor of two.  With the pairings as we have chosen them, the choice $f_{\mu_v}=\mu_v$, and the remarks above, we see that
\beqn\label{newlocalseesaw}
2\cdot\prod_v\mathcal{P}^\sharp_v\circ\mathcal{T}_{1,v}=\prod_v\mathcal{I}^\sharp_v\circ\mathcal{T}_{2,v}.
\eeqn

However, neither side of line \ref{newlocalseesaw} appears in line \ref{prerallis}.  This is where we use two of the three Rallis inner product formulae from Chapter \ref{thetachapt}.  Indeed, by Theorems \ref{ripfu1u2} and \ref{ripfu2u2}, we have:
$$
\prod_v\mathcal{I}_v\circ\mathcal{T}_{2,v} = |X(\Theta(\bar\mu))|\frac{L_E(1,BC(\mu)\otimes\gamma^2)L_E(1/2, BC(\pi)\otimes\gamma^2)}{\zeta_F(2)^2L_F(1,\chi_{E/F})}\prod_v \mathcal{I}_v^\sharp\circ \mathcal{T}_{2,v}.
$$
So, using this, along with lines \ref{prerallis} and \ref{newlocalseesaw}, we have
\beqnan
\mathcal{P}\circ\mathcal{T}_1&=& \frac{L_F(1, \chi_{E/F})^2L_E(1/2, BC(\Theta(\pi))\boxtimes BC(\sigma)\boxtimes \gamma^{-1})}{4\cdot |X(\Theta(\bar\pi))|\cdot |X(\sigma)|}\\
&& \times\frac{L_E(1,BC(\mu)\otimes\gamma^2)L_E(1/2, BC(\pi)\otimes\gamma^2)}{L_F(1,\sigma,\operatorname{Ad})L_F(1,\Theta(\bar\pi),\operatorname{Ad})L_F(1,\Theta(\bar\mu),\operatorname{Ad})}\\
&&\times\prod_v \mathcal{P}^\sharp_v \circ\mathcal{T}_{1,v}.
\eeqnan
To finish the proof, we need to use the third Rallis inner product formula to replace the equation above with one involving $\prod_v\mathcal{P}_v\circ \mathcal{T}_{1,v}$ instead of $\prod_v \mathcal{P}^\sharp_v \circ\mathcal{T}_{1,v}$.  From Theorem \ref{rip}, we have
$$
\prod_v \mathcal{P}_v\circ \mathcal{T}_{1,v} = \frac{2\cdot L_E(1, BC(\sigma)\otimes\gamma^3)}{\zeta_F(2)L_F(3,\chi_{E/F})}\prod_v \mathcal{P}^\sharp_v\circ \mathcal{T}_{1,v}.
$$
This, along with the $L$-function identities in the appendix gives
$$
\mathcal{P}\circ\mathcal{T}_1 = \frac{\Delta_{G_3}}{8\cdot |X(\Theta(\bar\pi))|\cdot |X(\sigma)|}\cdot\frac{ L_E(1/2, BC(\Theta(\bar\sigma))\boxtimes BC(\pi))}{L_F(1, \Theta(\bar\sigma),\operatorname{Ad}) L_F(1,\pi,\operatorname{Ad})}\prod_v \mathcal{P}_v \circ\mathcal{T}_{1,v}.
$$
Finally, we note that
$$
8\cdot |X(\Theta(\bar\pi))|\cdot |X(\sigma)| = |S_{\psi_{\Theta(\bar\sigma)}}|\cdot |S_{\psi_\pi}|,
$$
which completes the proof.

\appendix
\section{$L$-function identities} We collect several $L$-function identities used above.  In particular, by using known relationships between $L$-parameters of representations and their $\Theta$-lifts, we compare some associated $L$-functions.

As in Chapter \ref{finalchapt}, we take $\sigma$ and $\pi$ to be irreducible, tempered, cuspidal automorphic representations of $U(2)$, and $\mu=\omega_\sigma\omega_\pi^{-1}$.  As in Chapter \ref{finalchapt}, we fix splitting characters which are powers of a fixed character $\gamma$, and consider the $\Theta$-lifts $\Theta(\bar{\pi})$ and $\Theta(\bar{\mu})$ on $U(2)$, and $\Theta(\bar{\sigma})$ on $U(3)$.  Once again, we remind the reader that the need to consider $\Theta(\bar{\pi}),\Theta(\bar{\sigma})$, and $\Theta(\bar{\mu})$ comes from our normalization of the theta correspondence, and the fact that in the seesaw identities we used, there was no complex conjugation.

The first identity we record is the following:
\begin{proposition}\label{piad}
$$
L_F(s,\Theta(\bar{\pi}),\operatorname{Ad}) = L_F(s,\pi,\operatorname{Ad}).
$$
\end{proposition}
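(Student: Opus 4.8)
The plan is to reduce the identity to the observation that the adjoint $L$-factor of a representation of $U(2)$ depends only on its $L$-parameter modulo twisting by characters and passing to the contragredient, combined with the known effect of the theta correspondence $U(2)\to U(2)$ on $L$-parameters.

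First I would record the parameter input. By Rogawski's parametrisation \cite{rog}, $\pi$ has a well-defined $L$-parameter $\phi_\pi$ with values in ${}^{L}U(2)$, and likewise the cuspidal representation $\Theta(\bar\pi)$ of $U(W)\cong U(2)$ has a parameter $\phi_{\Theta(\bar\pi)}$. Since $\pi$ is unitary we have $\bar\pi\cong\pi^\vee$, so $\phi_{\bar\pi}=\phi_\pi^\vee$. The explicit description of the local and global theta correspondence for $U(2)\to U(2)$ with the splitting data $\gamma_W=\gamma_V=\gamma^2$ used in Chapter~\ref{finalchapt} (see \cite{close}, and \cite{hks} for the local statement) shows that at every place $v$ the parameter of $\Theta(\bar\pi)_v$ is $\phi_{\bar\pi_v}$ twisted by the parameter of a character of $U(1)_v$ built from $\gamma^2$; in particular the twisting cocycle takes values in the centre $\C^\times$ of $\widehat{U(2)}=GL_2(\C)$, and the character in question is a power of $\gamma$ whose restriction to $\A_F^\times$ is a power of $\chi_{E/F}^2$, hence trivial.

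Next I would use the structure of the adjoint representation. The map $\operatorname{Ad}\colon {}^{L}U(2)\to GL\big(\mathfrak{gl}_2(\C)\big)$ is the adjoint action of $GL_2(\C)\rtimes\operatorname{Gal}(E/F)$ on $\mathfrak{gl}_2(\C)$; it is trivial on the centre $\C^\times\subset GL_2(\C)$ and is isomorphic to its own contragredient (via the trace form on $\mathfrak{gl}_2$). Consequently $\operatorname{Ad}\circ(\phi\cdot z)\cong\operatorname{Ad}\circ\phi$ for any central twist $z$, and $\operatorname{Ad}\circ\phi^\vee\cong(\operatorname{Ad}\circ\phi)^\vee\cong\operatorname{Ad}\circ\phi$. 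Combining this with the previous paragraph gives $\operatorname{Ad}\circ\phi_{\Theta(\bar\pi)_v}\cong\operatorname{Ad}\circ\phi_{\pi_v}$ for every $v$, hence $L_v(s,\Theta(\bar\pi)_v,\operatorname{Ad})=L_v(s,\pi_v,\operatorname{Ad})$ at every place, and the global identity follows by forming the Euler product. Equivalently, using the identity $L_F(s,\tau,\operatorname{Ad})=L_F(s,BC(\tau),\operatorname{As})$ recorded in the Remark of Section~\ref{intro} for $\tau$ on $U(2)$, one reduces to checking that $BC(\Theta(\bar\pi))$ and $BC(\pi)$ have the same Asai $L$-function; since $BC(\Theta(\bar\pi))$ differs from $BC(\pi)^\vee$ by a twist by a power of $\gamma$ trivial on $\A_F^\times$ and $BC(\pi)$ is conjugate-self-dual, the same cancellation applies.

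The main obstacle is the step in the second paragraph: pinning down precisely the $L$-parameter, or equivalently the base change, of $\Theta(\bar\pi)$ — in particular getting the contragredient and the twist coming from the splitting characters $\gamma_W,\gamma_V$ exactly right. Once that bookkeeping is done the rest of the argument is formal, because the adjoint (equivalently Asai) $L$-function is manifestly insensitive both to twists of the relevant kind and to passing to the contragredient.
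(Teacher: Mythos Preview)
Your argument is correct and follows essentially the same two-step strategy as the paper: first identify the $L$-parameter of $\Theta(\bar\pi)$ in terms of that of $\bar\pi$, then use that $L_F(s,\bar\pi,\operatorname{Ad})=L_F(s,\pi,\operatorname{Ad})$. The only difference is one of economy. The paper simply cites Theorem~11.2 of \cite{ggp}, which for the specific splitting data $\gamma_W=\gamma_V=\gamma^2$ gives that $\bar\pi$ and $\Theta(\bar\pi)$ have \emph{identical} $L$-parameters, so no twist appears and the adjoint identity is immediate. You instead allow for a possible central twist and then observe that $\operatorname{Ad}$ kills central twists and is self-dual; this is a perfectly good argument, just slightly more than is needed once the cited result is available.
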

\begin{proof} By Theorem $11.2$ in \cite{ggp}, we see that $\bar{\pi}$ and $\Theta(\bar{\pi})$ have the same $L$-parameters.  The result above then follows from the fact that $L_F(s, \bar{\pi}, \operatorname{Ad}) = L_F(s, \pi,\operatorname{Ad}).$
\end{proof}
Now we compute the adjoint $L$-function for $\Theta(\sigma)$:
\begin{proposition}\label{sigmaad}
$$
L_F(s, \Theta(\bar{\sigma}), \operatorname{Ad}) = L_F(s, \chi_{E/F})L_F(s, \sigma,\operatorname{Ad})L_E(s, BC({\sigma})\otimes\gamma^3).
$$
\begin{proof} Let $M$ and $N$ denote the restrictions of the $L$-parameters of $\bar{\sigma}$ and $\Theta(\bar{\sigma})$ to $WD(E)$.  By Theorem $8.1$ in \cite{ggp2}, this restriction inflicts no loss of information.  From \cite{grs}, we have that $M$ and $N$ are related as follows:
$$
N = \gamma^{-1}M\oplus \gamma^2.
$$
The result follows from this and the fact that $L_F(s, \bar{\sigma},\operatorname{Ad}) = L_F(s,\sigma,\operatorname{Ad})$.
\end{proof}
\end{proposition}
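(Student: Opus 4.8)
The plan is to translate the identity into a statement about Langlands parameters and then carry out the Asai-$L$-function bookkeeping.

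First I would invoke the Remark following Conjecture \ref{theconjecture} together with \cite{ggp2}: for an automorphic representation $\tau$ of a unitary group $U(k)$ one has $L_F(s,\tau,\operatorname{Ad}) = L_F(s, BC(\tau),\operatorname{As}^{(-1)^k})$, where $\operatorname{As}^{+}$ is the Asai representation and $\operatorname{As}^{-}=\operatorname{As}^{+}\otimes\chi_{E/F}$ the twisted Asai. Thus $L_F(s,\sigma,\operatorname{Ad})$ is an Asai $L$-function of the $2$-dimensional base change (using $L_F(s,\bar\sigma,\operatorname{Ad})=L_F(s,\sigma,\operatorname{Ad})$, which is immediate since $\overline{BC(\sigma)}$ and $BC(\sigma)$ have the same Asai), while $L_F(s,\Theta(\bar\sigma),\operatorname{Ad})$ is a \emph{twisted} Asai $L$-function of the $3$-dimensional base change of $\Theta(\bar\sigma)$.

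Second, I would pin down the two base changes at the level of parameters. By Theorem $8.1$ of \cite{ggp2} the $L$-parameter of a representation of $U(k)$ is determined by its restriction to $WD(E)$, a conjugate-self-dual $k$-dimensional representation which is exactly the base change; write $M$ for that of $\bar\sigma$ and $N$ for that of $\Theta(\bar\sigma)$. The parameter computation for the theta correspondence $U(2)\to U(3)$ from \cite{grs}, with our choice of splitting characters $\gamma^2$ on $U(2)$ and $\gamma^3$ on $U(3)$, gives $N=\gamma^{-1}M\oplus\gamma^2$.

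Third, I would expand. For a direct sum of $WD(E)$-representations the Asai representation decomposes as $\operatorname{As}^{\pm}(N_1\oplus N_2)=\operatorname{As}^{\pm}(N_1)\oplus\operatorname{As}^{\pm}(N_2)\oplus\operatorname{Ind}_{W_E}^{W_F}(N_1\otimes N_2^{\tau})$, with $\tau$ the generator of $\operatorname{Gal}(E/F)$, so that
$$
L_F(s,\operatorname{As}^{-}(N)) = L_F\!\big(s,\operatorname{As}^{-}(\gamma^{-1}M)\big)\,L_F\!\big(s,\operatorname{As}^{-}(\gamma^{2})\big)\,L_E\!\big(s,\gamma^{-1}M\otimes(\gamma^{2})^{\tau}\big).
$$
Using $\gamma\gamma^{\tau}=1$ (because $\gamma|_{\A_F^\times}=\chi_{E/F}$ is trivial on norms), the twist by $\operatorname{As}(\gamma^{-1})$ collapses and the first factor becomes $L_F(s,\operatorname{As}^{+}(M))=L_F(s,\sigma,\operatorname{Ad})$; the second factor becomes $L_F(s,\chi_{E/F})$; and the cross term, after rewriting $M=BC(\bar\sigma)=BC(\sigma)^{\tau}$ and applying $\tau$ (which fixes $L$-functions over $E$), becomes $L_E(s,BC(\sigma)\otimes\gamma^{3})$. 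Multiplying the three factors gives the claim. The main obstacle is precisely this last step: correctly computing $\operatorname{As}^{\pm}$ of the twists $\gamma^{-1}M$ and $\gamma^{2}$ (in particular that $\operatorname{As}^{+}(\gamma^{-1})$ restricts to $\chi_{E/F}$ on $\A_F^\times$ and that $\operatorname{As}^{+}(\gamma^{2})$ is trivial), tracking the splitting characters $\gamma^{2},\gamma^{3}$ faithfully through the parameter relation of \cite{grs}, and confirming that the cross term matches $BC(\sigma)\otimes\gamma^{3}$ exactly rather than $BC(\bar\sigma)\otimes\gamma^{-3}$.
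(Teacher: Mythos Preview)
Your proposal is correct and follows essentially the same route as the paper. Both arguments reduce to the parameter relation $N=\gamma^{-1}M\oplus\gamma^2$ from \cite{grs} (after invoking Theorem~8.1 of \cite{ggp2} to pass to $WD(E)$) and then read off the adjoint $L$-function; the paper simply asserts ``the result follows,'' whereas you make that step explicit via the Asai decomposition $\operatorname{As}^{-}(A\oplus B)=\operatorname{As}^{-}(A)\oplus\operatorname{As}^{-}(B)\oplus\operatorname{Ind}_{W_E}^{W_F}(A\otimes B^{\tau})$ and the identification $L_F(s,\tau,\operatorname{Ad})=L_F(s,BC(\tau),\operatorname{As}^{(-1)^k})$.
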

Finally, we compute the adjoint $L$-function for $\Theta(\mu)$:
\begin{proposition}\label{muad}
$$
L_F(s,\Theta(\bar{\mu}),\operatorname{Ad}) = L_F(s,\chi_{E/F})^2L_E(s, BC({\mu})\otimes\gamma^2).
$$
\end{proposition}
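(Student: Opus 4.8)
The plan is to mirror the proof of Proposition~\ref{sigmaad}, arguing entirely on the level of $L$-parameters. Since $\mu$ is an automorphic character of $U(L)(\A_F)=U(1)(\A_F)$, its $L$-parameter is one-dimensional, and by Theorem~$8.1$ of \cite{ggp2} it is determined without loss of information by its restriction to $WD(E)$, namely the Hecke character $BC(\bar\mu)$ of $\A_E^\times$. I would also record at the outset the conjugate-duality relation $BC(\mu)^\tau=BC(\mu)^{-1}$, which is immediate from the formula $BC(\mu)(x)=\mu(x/x^\tau)$ and will be needed to recognize the final answer.

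Next I would pin down the $L$-parameter $N$ of $\Theta(\bar\mu)$, restricted to $WD(E)$. This is exactly the information already used in Chapter~\ref{finalchapt}, where it is noted that $BC(\Theta(\bar\mu))\cong\pi(\gamma^{-1}BC(\bar\mu),\gamma)$; thus $N=\gamma^{-1}BC(\bar\mu)\oplus\gamma$ as a sum of two Hecke characters of $\A_E^\times$. This is the low-rank analogue of the relation $N=\gamma^{-1}M\oplus\gamma^2$ taken from \cite{grs} in the proof of Proposition~\ref{sigmaad}. With the parameter in hand the computation is mechanical: since $\dim W=2$ is even, the relation $L_F(s,\pi_i,\operatorname{Ad})=L_F(s,BC(\pi_i),\operatorname{As}^{(-1)^i})$ (see the remark in Section~\ref{intro} and \cite{ggp2}) gives that $L_F(s,\Theta(\bar\mu),\operatorname{Ad})$ is the Asai $L$-function of $\pi(\gamma^{-1}BC(\bar\mu),\gamma)$. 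For a principal series $\pi(\chi_1,\chi_2)$ of $GL_2(\A_E)$ one has the standard factorization
$$
L_F\big(s,\operatorname{As}\pi(\chi_1,\chi_2)\big)=L_F(s,\chi_1|_{\A_F^\times})\,L_F(s,\chi_2|_{\A_F^\times})\,L_E(s,\chi_1\chi_2^\tau),
$$
the first two factors coming from the tensor induction (equivalently, restriction to $\A_F^\times$) of the two diagonal characters, the last from the induced character assembled out of the off-diagonal ones. Substituting $\chi_1=\gamma^{-1}BC(\bar\mu)$ and $\chi_2=\gamma$ and using $\gamma|_{\A_F^\times}=\chi_{E/F}$, $\chi_{E/F}^2=\mathbf 1$ and $BC(\bar\mu)|_{\A_F^\times}=\mathbf 1$, the two diagonal factors both become $L_F(s,\chi_{E/F})$; rewriting $\chi_1\chi_2^\tau$ by $\tau$-invariance of Hecke $L$-functions over $E$ together with the conjugate-duality relations for $BC(\mu)$ and for $\gamma$ turns the last factor into $L_E(s,BC(\mu)\otimes\gamma^2)$, which is the assertion.

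The step I expect to be the main obstacle is the bookkeeping with the splitting character $\gamma$: massaging the off-diagonal Asai factor into exactly the form $L_E(s,BC(\mu)\otimes\gamma^2)$ requires some care with $\gamma^\tau$ (equivalently with $\gamma\gamma^\tau$) and with the precise conjugation built into the Asai construction, and one must also check that the two diagonal contributions produce $L_F(s,\chi_{E/F})$ rather than $\zeta_F(s)$. Everything else is a direct substitution, exactly parallel to Propositions~\ref{piad} and \ref{sigmaad}.
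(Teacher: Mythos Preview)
Your proposal is correct and follows essentially the same route as the paper. The paper's proof consists of the single observation $N=\gamma^{-1}M\oplus\gamma$ (with $M$ the parameter of $\bar\mu$) followed by ``as before, the result follows''; you make exactly the same identification of the parameter and then unpack the ``as before'' by writing out the Asai factorisation explicitly. Your anticipation that the off-diagonal term and the behaviour of $\gamma$ under $\tau$ are where the bookkeeping lives is accurate, and is precisely what the paper's terse ``as before'' is hiding.
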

\begin{proof} Let $M$ and $N$ be as in the previous proposition.  Then it is easy to see that once again we have
$$
N = \gamma^{-1}M \oplus \gamma.
$$
As before, the result follows.
\end{proof}

The last identity we record is one that relates the $L$-function in Ichino's triple product formula to $L$-functions attached to representations of $U(2)$.  In the notation of Chapter \ref{thetachapt}, we have $\tau_1 = BC(\Theta(\bar{\pi})), \tau_2 = BC(\sigma)$, and $\tau_3 = BC(\Theta(\bar{\mu}))$.
\begin{proposition}\label{bigsig}
$$
L_F(s,\Sigma') = L_E(s, BC(\pi)\boxtimes BC(\bar{\sigma})\boxtimes\gamma^{-1}).
$$
\end{proposition}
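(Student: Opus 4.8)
The plan is to turn the triple-product $L$-function over $F$ into a Rankin--Selberg $L$-function over $E$, exploiting that $\Theta(\bar\mu)$ --- equivalently the $GL_2(\A_F)$-representation $\Sigma_3'$ --- is dihedral with respect to $E/F$, together with the inductivity of $L$-functions; the bookkeeping of twists is then controlled by the normalization $\eta_1\eta_2\eta_3\equiv 1$ imposed in Section~\ref{triplechapt} and by the conjugate-self-duality of base change.

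First I would pass to Langlands parameters: by Theorem~8.1 of \cite{ggp2} the parameters of $\pi,\sigma,\mu$ and of their $\Theta$-lifts are pinned down by their restrictions to $WD(E)$, equivalently by the associated base changes to general linear groups over $E$. Using the theta-lift recipes already invoked in Propositions~\ref{piad}, \ref{sigmaad}, \ref{muad} (following \cite{ggp}, \cite{ggp2}, \cite{grs}) one records $\tau_1=BC(\Theta(\bar\pi))$, $\tau_2=BC(\sigma)$, and $\tau_3=BC(\Theta(\bar\mu))\cong\pi(\gamma^{-1}BC(\bar\mu),\gamma)$ as explicit representations of $GL_2(\A_E)$. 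Next, via the dictionary $GU(2)\cong(B^\times\times E^\times)/\Delta F^\times$ of Section~\ref{globalchapt}, writing $\tilde\tau_i=\Sigma_i\boxtimes\eta_i$ for the chosen extension to $GU(2)$ and $\Sigma_i'=\operatorname{JL}(\Sigma_i)$, a comparison of central characters gives $BC_{E/F}(\Sigma_i')=\tau_i\otimes\eta_i^{-1}$ (up to a harmless quadratic twist).

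Since $\Theta(\bar\mu)$, hence $\Sigma_3'$, is dihedral with respect to $E/F$, the theorem of Labesse--Langlands gives $\Sigma_3'\cong\operatorname{AI}_{E/F}(\xi)$ for a Hecke character $\xi$ of $\A_E^\times$; comparing $BC_{E/F}(\Sigma_3')=\xi\oplus\xi^\tau$ with the explicit form of $\tau_3\otimes\eta_3^{-1}$ determines $\xi=\gamma\eta_3^{-1}$, using that $\eta_3^\tau/\eta_3=BC(\mu)$ (which follows from $\omega_{\Theta(\bar\mu)}=\mu^{-1}=\eta_3|_{\A_{E,1}^\times}$). Now invoke inductivity of (local, hence global) $L$-functions: writing $\tilde\rho_i$ for the Weil--Deligne parameter of $\Sigma_i'$ over $F$, we have $\tilde\rho_3\cong\operatorname{Ind}_{WD(E)}^{WD(F)}\xi$, so the projection formula gives $\tilde\rho_1\otimes\tilde\rho_2\otimes\tilde\rho_3\cong\operatorname{Ind}_{WD(E)}^{WD(F)}\big((\tilde\rho_1\otimes\tilde\rho_2)|_{WD(E)}\otimes\xi\big)$, whence
$$
L_F(s,\Sigma')=L_E\big(s,\ BC_{E/F}(\Sigma_1')\otimes BC_{E/F}(\Sigma_2')\otimes\xi\big)=L_E\big(s,\ \tau_1\otimes\tau_2\otimes\gamma\otimes(\eta_1\eta_2\eta_3)^{-1}\big).
$$
By the normalization $\eta_1\eta_2\eta_3\equiv 1$ the last twist disappears, leaving $L_E(s,\tau_1\otimes\tau_2\otimes\gamma)$; inserting $\tau_1=BC(\Theta(\bar\pi))$ and applying a $\tau$-conjugation (which preserves the Euler product over $E$) together with $\gamma^\tau=\gamma^{-1}$ and the conjugate-self-dualities $BC(\pi)^\tau\cong BC(\pi)^\vee$, $BC(\sigma)^\tau\cong BC(\bar\sigma)$, this becomes $L_E(s,BC(\pi)\boxtimes BC(\bar\sigma)\boxtimes\gamma^{-1})$, as claimed.

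\textbf{Main obstacle.} The structural inputs --- dihedrality of $\Theta(\bar\mu)$, automorphic induction / inductivity of $L$-functions, and the parameter recipes of the appendix --- are standard. The real work is the twist bookkeeping of the second and third steps: pinning down the precise relation between $BC_{E/F}(\Sigma_i')$ and $\tau_i$, the exact Hecke character $\xi$, and the final $\tau$-conjugation, so that the three $\eta_i$'s cancel against $\eta_1\eta_2\eta_3\equiv 1$ and the surviving $\gamma$ lands with exponent $-1$. Care is also needed that the various $L$-function identities used hold as identities of functions (not merely of completed $L$-functions or of values at $s=1/2$), which is where conjugate-self-duality of the base changes is essential.
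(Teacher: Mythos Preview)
Your proposal is correct and follows essentially the same route as the paper: both arguments exploit the dihedrality of $\Sigma_3'$ to write its parameter as an induction from $WD(E)$, apply the projection formula and inductivity of $L$-functions, and then use the normalization $\eta_1\eta_2\eta_3\equiv 1$ to cancel the twists. The only cosmetic differences are that the paper leaves the inducing character abstract (noting either summand of $BC(\Theta(\bar\mu))$ works) while you identify $\xi=\gamma\eta_3^{-1}$ explicitly, and for the final passage from $BC(\bar\pi)\boxtimes BC(\sigma)\boxtimes\gamma$ to $BC(\pi)\boxtimes BC(\bar\sigma)\boxtimes\gamma^{-1}$ the paper cites Lemma~3.5 of \cite{ggp2} whereas you argue directly via $\tau$-conjugation of the Euler product---these amount to the same thing.
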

\begin{proof} We compare $L$-parameters.  Let $N_{\Theta(\bar{\pi})}, N_\sigma,$ and $N_{\Theta(\bar{\mu})}$ denote the $L$-parameters for $\Sigma'_{\Theta(\bar{\pi})},\Sigma'_\sigma,$ and $\Sigma'_{\Theta(\bar{\mu})}$, respectively.  Now, since $\Sigma_{\Theta(\bar{\mu})}$ is dihedral with respect to $E/F$, we know that
$$
N_{\Theta(\bar{\mu})} = \operatorname{Ind}_{WD(E)}^{WD(F)} M
$$
for some one-dimensional representation $M$ of $WD(E)$.  Now observe that we have the following:
$$
N_{\Theta(\bar{\pi})}\otimes N_{\sigma}\otimes \operatorname{Ind}_{WD(E)}^{WD(F)} M = \operatorname{Ind}_{WD(E)}^{WD(F)} \left(N_{\Theta(\bar{\pi})}|_{WD(E)}\otimes N_{\sigma}|_{WD(E)} \otimes M\right).
$$
Now, identifying automorphic representations with their $L$-parameters, we have
\beqnan
L_F(s,\Sigma') &=& L_F(s, N_{\Theta(\bar{\pi})}\otimes N_{\sigma}\otimes \operatorname{Ind}_{WD(E)}^{WD(F)} M)\\
&=& L_F\left(s,  \operatorname{Ind}_{WD(E)}^{WD(F)} \left(N_{\Theta(\bar{\pi})}|_{WD(E)}\otimes N_{\sigma}|_{WD(E)} \otimes M\right)\right)\\
&=& L_F\left(s, \operatorname{Ind}_{WD(E)}^{WD(F)} \left(N_{\Theta(\bar\pi)}|_{WD(E)}\eta_{\Theta(\bar\pi)}\otimes N_\sigma|_{WD(E)}\eta_\sigma\otimes M\eta_{\Theta(\bar\mu)}\right)\right)\\
&=& L_E(s, BC(\Theta(\bar\pi))\boxtimes BC(\sigma)\boxtimes\gamma)\\
&=& L_E(s, BC(\bar\pi)\boxtimes BC(\sigma)\boxtimes\gamma)\\
&=& L_E(s, BC(\pi)\boxtimes BC(\bar\sigma)\boxtimes\gamma^{-1}).
\eeqnan
Note that we have used several facts above.  We have used the fact that $\Theta(\bar\pi)$ and $\bar\pi$ have the same $L$-parameters.  We have also used the fact that $M\eta_{\Theta(\bar{\mu})}$ is one of the summands in the $L$-parameter for $BC(\Theta(\bar\mu))$, which we know to be $\gamma^{-1}M_{\bar\mu}\oplus\gamma$, where $M_{\bar\mu}$ is the $L$-parameter for $\bar\mu$.  We note that we can replace $M\eta_{\Theta(\bar\mu)}$ with either of these two summands without changing the $L$-function.  Finally, for the last equality above we have used Lemma 3.5 in \cite{ggp2}, which simply says that if $M$ is conjugate-self dual, then $\operatorname{Ind}_{WD(E)}^{WD(F)} M$ is self-dual.
\end{proof}

Finally, we have a corollary of the result above.
\begin{corollary}\label{sigcor}
$$
L_F(s,\Sigma')L_E(s, BC(\pi)\otimes\gamma^2) = L_E(s,BC(\Theta(\bar\sigma))\boxtimes BC(\pi)).
$$
\end{corollary}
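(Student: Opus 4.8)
The plan is to reduce this identity to Proposition \ref{bigsig} together with the description of the $L$-parameter of $\Theta(\bar\sigma)$ that already appears in the proof of Proposition \ref{sigmaad}. Write $M$ for the restriction to $WD(E)$ of the $L$-parameter of $\bar\sigma$, and $N$ for the restriction to $WD(E)$ of the $L$-parameter of $\Theta(\bar\sigma)$; by Theorem $8.1$ of \cite{ggp2} no information is lost in passing to these restrictions. As recorded in the proof of Proposition \ref{sigmaad}, the results of \cite{grs} give
$$
N = \gamma^{-1}M \oplus \gamma^2,
$$
which is exactly the statement that $BC(\Theta(\bar\sigma))$ corresponds, as an automorphic representation of $GL_3(\A_E)$, to the isobaric sum of $\gamma^{-1}BC(\bar\sigma)$ and the character $\gamma^2$.

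Next I would tensor with $BC(\pi)$ and use the additivity of the Rankin--Selberg $L$-function in direct sums of parameters:
$$
L_E(s, BC(\Theta(\bar\sigma))\boxtimes BC(\pi)) = L_E(s, \gamma^{-1}BC(\bar\sigma)\otimes BC(\pi))\cdot L_E(s, \gamma^2\otimes BC(\pi)).
$$
The first factor on the right is $L_E(s, BC(\pi)\boxtimes BC(\bar\sigma)\boxtimes\gamma^{-1})$, which Proposition \ref{bigsig} identifies with $L_F(s,\Sigma')$; the second factor is precisely $L_E(s, BC(\pi)\otimes\gamma^2)$. Combining the two displays yields the claimed identity, so the corollary follows immediately.

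The only point requiring care --- and it is bookkeeping rather than a genuine obstacle --- is matching the twist conventions: the $\gamma^3$ that enters Proposition \ref{sigmaad} is the splitting character for the lift $U(2)\to U(3)$, and one must check that, after restriction to $WD(E)$, it is the same $\gamma$ featuring in Proposition \ref{bigsig}, so that the $\gamma^{-1}M$ above agrees with the $\gamma^{-1}$-twist there. Given the uniform convention fixed in Chapter \ref{finalchapt}, that all splitting characters are powers of one fixed character $\gamma$ of $\A_E^\times/E^\times$ with $\gamma|_{\A_F^\times} = \chi_{E/F}$, this is automatic, and no further input is needed.
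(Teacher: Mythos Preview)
Your proof is correct and follows essentially the same route as the paper: both invoke Proposition \ref{bigsig} together with the decomposition $N = \gamma^{-1}M \oplus \gamma^2$ of the $L$-parameter of $\Theta(\bar\sigma)$, then use additivity of the Rankin--Selberg $L$-function to split $L_E(s, BC(\Theta(\bar\sigma))\boxtimes BC(\pi))$ into the two required factors. Your closing remark about matching twist conventions is a reasonable sanity check but, as you note, is automatic under the conventions already fixed.
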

\begin{proof} This follows from the previous result, along with the fact that the $L$-parameters $N$ of $\Theta(\bar{\sigma})$ and $M$ of $\bar{\sigma}$ are related by:
$$
N = \gamma^{-1}M\oplus \gamma^{2}.
$$
\end{proof}

\bibliography{myrefs}
\bibliographystyle{plain}
\end{document}